\newtheorem{theorem}{Theorem}[section]
\newtheorem{lemma}[theorem]{Lemma}
\theoremstyle{definition}
\newtheorem{definition}[theorem]{Definition}
\newtheorem{assumption}[theorem]{Assumption}
\newtheorem{remark}[theorem]{Remark}
\numberwithin{equation}{section}
\begin{document}

\title [High-dimensional MEP2 with random noises]{{{\rmfamily  Effect of random noises on pathwise solutions to the high-dimensional modified Euler-Poincar\'{e} system}}}

\author{Lei Zhang}
\address{School of Mathematics and Statistics, Hubei Key Laboratory of Engineering Modeling  and Scientific Computing, Huazhong University of Science and Technology,  Wuhan 430074, Hubei, P.R. China.}
\email{lei\_zhang@hust.edu.cn}

\keywords{Stochastic perturbation; High-dimensional modified Euler-Poincar\'{e} system; Local and global  solutions; Blow-up criteria.}

\date{\today}
\maketitle
\begin{abstract}
In this paper, we study the Cauchy problem for the stochastically perturbed high-dimensional modified Euler-Poincar\'{e} system (MEP2) on the torus $\mathbb{T}^d$, $d\geq 1$. We first establish a local well-posedness framework in the sense of Hadamard for the MEP2 driven by general nonlinear multiplicative noises. Then two kinds of global existence and uniqueness results are demonstrated: One indicates that the MEP2 perturbed by nonlocal-type random noises with proper intensity admits a unique large global strong solution; The other one infers that, if the initial data is sufficiently small, then the MEP2 perturbed by linear multiplicative noise has a unique global solution with high probability. In the case of one dimension, we find that the stochastic MEP2  will break down in finite time when the initial data meets appropriate shape condition.
\end{abstract}

\section{Introduction}
The Camassa-Holm (CH) equation, which can be used to describe the unidirectional propagation of shallow water waves over a flat bottom \cite{Camassa-Holm,Johnson} or the propagation of axially symmetric waves in hyperelastic rods \cite{dai1998model}, has been studied extensively during the past decades. The most remarkable features of CH equation are the   existence of peakon solutions \cite{Camassa-Holm,Constantin-Strauss2000,el2009stability,lenells2004stability} and the wave breaking phenomena \cite{constantin1998,constantin1-2000,constantin2000,brandolese2014local,brandolese2014permanent}, which can not be characterized by the KdV equation \cite{gardner1967method,miles1981korteweg}. Recently, Holm et al. \cite{holm2009singular} extend the Euler-Poincar\'{e} equation \cite{holm1998euler} to the modified Euler-Poincar\'{e} system (MEP2) so as to combine its integrability property with free-surface elevation dynamics in its shallow-water interpretation. The MEP2 is defined as geodesic motion on the semidirect product Lie group with respect to a certain metric and is given as a set of Euler-Poincar\'{e} equations on the dual of the corresponding Lie algebra. To be more precise, considering the variational principle
$\delta\int \mathcal {L}(u,\overline{\rho}) \mathrm{d} t =0 $ with the Lagrangian
\begin{equation*}
\begin{split}
\mathcal {L}(u,\overline{\rho})= \frac{1}{2}\int_{\mathbb{R}^d} u\cdot (1-\alpha_1^2\Delta)u \mathrm{d} x + \frac{g}{2}\int_{\mathbb{R}^d}(\overline{\rho}-\overline{\rho}_0)(1-\alpha_2^2\Delta)(\overline{\rho}-\overline{\rho}_0)\mathrm{d} x,
\end{split}
\end{equation*}
where $\Delta$ denotes the $d$-dimensional Laplacian operator,  $\alpha_1,\alpha_2 \in \mathbb{R}^+$ are two length scales and $g > 0$ is the downward constant acceleration of gravity in application to shallow water waves. By substituting the variational derivatives for Lagrangian $\mathcal {L}(u,\overline{\rho})$ into the semidirect-product Euler-Poincar\'{e} equations, one obtains the MEP2 system in $\mathbb{R}^{d}$, $d\geq1$ formulated in coordinates, we refer to the works \cite{holm2009singular,marsden2013introduction,holm1998euler} for more details.

In this paper, we study the Cauchy problem for the following stochastically perturbed modified Euler-Poincar\'{e} system (SMEP2) on  the torus $\mathbb{T}^d \triangleq (\mathbb{R}/2\pi \mathbb{Z})^d$, $d\geq1$:
\begin{equation}\label{1.4}
\left\{
\begin{aligned}
&\mathrm{d}m+\left(u\cdot \nabla m+(\nabla u)^Tm+(\textrm{div}  u)m+\rho \nabla \overline{\rho} \right)\mathrm{d} t =g_1(t,m,\rho)\mathrm{d}\mathcal {W}_1,\\
&\mathrm{d}\rho+  \textrm{div}  (\rho u)= g_2(t,m,\rho)\mathrm{d}\mathcal {W}_2,\\
&m=(1-\alpha_1^2\Delta) u,\\
&\rho=(1- \alpha_2^2\Delta)(\overline{\rho}-\overline{\rho}_0),
\end{aligned} \quad t\geq 0,~x\in \mathbb{T}^d,
\right.
\end{equation}
which is endowed with the initial conditions
\begin{equation}\label{1.5}
\begin{split}
m(0,x)=m_0(x),\quad \rho(0,x)=\rho_0(x),\quad x\in \mathbb{T}^d.
\end{split}
\end{equation}
Here, $u$ denotes the velocity of fluid,  $m$ with the component $m_j=u_j- \Delta u_j$, $j=1,2,...d$ represents the momentum. The scalar functions $\rho $ and $\overline{\rho}$ stand for the total depth of the free surface (or density) and averaged depth, respectively. 
The driven stochastic processes $\mathcal {W}_1$ and $\mathcal {W}_2$ are independent cylindrical Wiener processes defined on separable Hilbert spaces. The precise assumptions on the coefficients $g_1 $ and $g_2 $ as well as further details are given in Subsection \ref{sec1.1}. Without loss of generality, we will assume that $\alpha_1=\alpha_2=g=1$.

The deterministic MEP2 without random noises ($g_1=g_2\equiv0$ in \eqref{1.4}) is closely related to two kinds of important models. The first one is the Euler-Poincar\'{e} equation (EP), which takes the form of
\begin{equation}\label{1.2}
\left\{
\begin{aligned}
&\partial_t m +u\cdot \nabla m+(\nabla u)^T \cdot m+m (\textrm{div} u)=0,\\
&m=(1-\alpha^2\Delta)u,\quad \alpha>0.
\end{aligned}
\right.
\end{equation}
The system \eqref{1.2} was first introduced by Holm et al. \cite{holm1998euler,holm1998euler1} as a framework for modeling and analyzing fluid dynamics, particularly for nonlinear shallow water waves, geophysical fluids and turbulence modeling.  EP can be considered as an evolutionary equation for a geodesic motion on a diffeomorphism group \cite{ebin1970groups,khesin2008geometry,younes2010shapes,holm2009geometric}, and it has important applications in computational anatomy (cf. \cite{holm2009geometric,younes2010shapes}).  EP has many further interpretations beyond fluid applications. For instance, it is exactly the same as the averaged template matching equation for computer vision (cf. \cite{holm2004soliton,holm2005momentum,hirani2001averaged}). The rigorous analysis of EP was initiated by Chae and Liu \cite{chae2012blow}, in which the authors established a fairly complete well-posedness theory for both weak and strong solutions. In \cite{li2013euler}, Li et al. proved that for a large class of smooth initial data, the corresponding solution to EP with $\alpha\neq 0$ blows up in finite time, which settles an open problem raised in \cite{chae2012blow}. The local well-posedness result is improved to Besov spaces by Yan and Yin \cite{yan2015initial}. The blow-up phenomena and ill-posedness problem for EP on torus $\mathbb{T}^d$ are investigated by Luo and Yin \cite{luo2020blow}. Moreover, it is shown that the data-to-solution map for EP is not uniformly continuous in \cite{zhao2018non,li2019non}. Besides, Tang \cite{tang2020noise} considered the effect of random noise on the dynamic behavior of pathwise solutions to EP. Especially, when $d=1$, Eq.\eqref{1.2} reduces to the celebrated Camassa-Holm equation  introduced in \cite{Camassa-Holm}, which has received much attention during the past twenty years since its derivation from the shallow water regime.

The other one is the so-called two-component Euler-Poincar\'{e} system (EP2):
\begin{equation}\label{1.3}
\left\{
\begin{aligned}
& \partial_t m +u\cdot \nabla m+(\nabla u)^T \cdot m+m (\textrm{div} u)=-g\rho \nabla\rho,\\
&\partial_t \rho +\textrm{div} (\rho u)=0,\\
&m= (1-\alpha^2\Delta)u.
\end{aligned}
\right.
\end{equation}
EP2 in one dimension was first introduced by Chen and Zhang \cite{chen2006two}, Falqui \cite{falqui2005camassa} and Constantin and Ivanov \cite{constantin2008integrable}, where the latest work gave a rigorous justification of the derivation in the context of shallow water regime. Later, Holm et al. \cite{holm2009singular,kohlmann2012note}; see also Kohlmann \cite{kohlmann2012note} and Holm and Tronci \cite{11holm2009geodesic}, extended the system to multi-dimensional case by considering the Hamilton principle $\delta \int \mathcal {L}(u,\rho) \mathrm{d} t=0$ with the Lagrangian given by
\begin{equation}\label{lg}
\begin{split}
\mathcal {L}(u,\rho)= \frac{1}{2}\int_{\mathbb{R}^d} u\cdot (1-\Delta)u \mathrm{d} x + \frac{g}{2}\int_{\mathbb{R}^d}(\rho-\rho_0)^2\mathrm{d} x.
\end{split}
\end{equation}

MEP2 (with $g_1=g_2=0$) can be regarded as a modification of the EP2 \eqref{1.3}, where the modification amounts to strengthening the norm for $\overline{\rho}$ from $L^2$ to $H^1$ in the Lagrangian \eqref{lg}. This main difference leads to the fact that, EP2 does not admit singular solutions in the variable $\rho$, while MEP2 admits peaked soliton solutions in both variables $u$ and $\rho$.  After its derivation, EP2 has been studied by several authors. For instance, in \cite{duan2014cauchy}, Duan and Xiang investigated the Cauchy problem for EP2 in Sobolev spaces by using the energy method. Later, Li and Yin \cite{li2017well} established the local well-posedness of EP2 in nonhomogeneous Besov spaces. In terms of the abstract Cauchy-Kowalevski lemma, they also proved the existence of local-in-time analytical solutions. In the case of $d=1$, EP2 has also attracted much attention owing the fact that the it describing the wave-breaking phenomena in finite time and admits solitary wave solutions interacting like solitons. To name a few, we would like to refer the readers to \cite{guan2010global,guan2011global,gui2010global,gui2011cauchy,grunert2012global} and the references therein.

The  local and global existence problem for the one-dimensional MEP2 in Sobolev spaces and Besov spaces have been studied by several authors, see e.g., \cite{guo2012wave,guan2010well,guan2011global,tan2011global} and the works cited therein. To our best knowledge, few works are available for the Cauchy problem of high dimensional MEP2 besides the resent work \cite{yan2022initial}, in which the author established the local theory of strong solutions in nonhomogeneous Besov spaces. It is worth pointing out that the existence of global solutions to the high-dimensional MEP2 is still an open problem, even though related results have been established for the one-dimensional MEP2 (cf. \cite{guan2010well,guan2011global}). The main difficulty arising from the fact that the well-known sign condition (cf. \cite{constantin1998,constantin1-2000}) can not be generalized to the high-dimensional cases. As far as we aware, the most relevant work to this problem is \cite{li2013euler} for EP \eqref{1.2}, in which the authors proved a global in time result by transforming EP into a scalar equation via special radial functions, while some structural conditions, such as non-positive monotone initial data similar to the one dimension equations, are still needed.

The importance of incorporating stochastic effects in the modeling of complex systems has been recognized during the past decades, and the dynamic behavior of fluid models perturbed by different kinds of noises has been widely studied.  To just mention a few, see for example \cite{27,28,29,30,31,32,33,34,52,53,zhang2024global} for the PDEs theory on some stochastic fluid models, and refer to recent works on stochastic dispersive equations \cite{35,36,37,54,rohde2021stochastic,ren2020distribution,rohde2020stochastic,miao2021well,
alonso2021local,galimberti2024global,albeverio2021stochastic}. Recently, in \cite{miao2021well}, Miao, Rohde and Tang studied the Cauchy problem for one dimensional Camassa-Holm (CH) type equations with high nonlinearity in the sense of Hadamard. In \cite{rohde2020stochastic, rohde2021stochastic}, the existence of global solutions and blow-up criteria for some shallow water wave equations under appropriate random noises are investigated. Also, the authors in \cite{ren2020distribution} provides a well-posedness result for abstract system which involves a series of CH-type equations. In \cite{holden2023global}, Holden et al.  proved the existence and uniqueness of global weak solutions for viscous CH equation (with dissipation term $\epsilon \partial_x^2 u$, $\epsilon>0$) perturbed by a convective, position-dependent noise. Later in \cite{galimberti2024global}, the authors further established the global-in-time existence result for dissipative weak martingale solutions by taking the limit $\epsilon \rightarrow 0$ in reasonable sense. Moreover, the compressible fluid flows perturbed by stochastic forcing has been systematically studied by Breit, Feireisl and Hofmanov\'{a} \cite{breit2016incompressible,breit2017compressible,breit2018local,breit2019stationary}.
It is worth pointing out that the appearance of stochastic perturbation in PDEs might lead to new phenomena. For instance, while uniqueness may fail for the deterministic transport equation, Flandoli et al. \cite{39} proved that a multiplicative stochastic perturbation of Brownian type is enough to render the equation well-posed; see also \cite{38}. In \cite{40}, Brze{\'z}niak et al. proved that the 2D Navier-Stokes system driven by degenerate noise has a unique invariant measure and hence exhibits ergodic behavior in the sense that the time average of a solution is equal to the average over all possible initial data, which is quite different with the deterministic case. Being inspired by the aforementioned results from the stochastic PDEs, especially \cite{39} and \cite{41}, it is interesting to ask the following question:

\vspace{0.3cm}
\textsf{Does proper random noise  have  regularisation effects on MEP2, allowing it to admit a global solution without additional structural conditions on initial data?}
\vspace{0.3cm}

The aim of this paper is devoted to provide an affirmative answer to the above question. More precisely, by perturbing the MEP2 with appropriate random noises, one can look for adequate conditions that allow the high-dimensional stochastic MEP2 admits global strong solution without further shape conditions. As a matter of fact, the regularization effect by noises has been discovered by several authors, for example, Flandoli \cite{39} showed that an ill-posed transport equation becomes well-posed when it was perturbed by stochastic forcing; Glatt-Holtz and Vicol \cite{41} proved that the 3D Euler equation with linear multiplicative noises admits global solutions in bounded domain; similar result has also been proved for the Boussinesq equations \cite{30}.

\subsection{Preliminaries}\label{sec1.1}

\subsubsection{Deterministic background}
Denote by $\mathscr{S}(\mathbb{T}^d;\mathbb{R}^n)$ the Schwartz space of all rapidly decreasing infinitely functions from $\mathbb{T}^d$ to $\mathbb{R}^n$. The space of tempered distributions is denoted by $\mathscr{S}'(\mathbb{T}^d;\mathbb{R}^n)$. Let $L^2(\mathbb{T}^d;\mathbb{R}^n)$ be the usual square-integrable Lebesgue space on $\mathbb{T}^d$ with the inner product and norm denoted by $(\cdot,\cdot)_{L^2}$ and $\|\cdot\|_{L^2}$, respectively. Define the complex trigonometric polynomials $e_{m}(x)=\exp(i m\cdot x)$, $m=(m_1,...,m_d)\in \mathbb{Z}^d$, and $\overline{e}_{m}$ denotes the complex conjugate. For any $s\in  \mathbb{R}$, the Sobolev space $H^s(\mathbb{T}^d;\mathbb{R}^n)$ of periodic functions can be characterized as $f\in \mathscr{S}'(\mathbb{T}^d;\mathbb{R}^n)$ such that
$$
 \|f\|_{H^s}^2\triangleq \sum_{m\in \mathbb{Z}^d}(|m|^2+1)^{s}a_{m}^2[f]<\infty,
$$
where $a_{m} [f]= (2\pi)^{-d}(f,\overline{e}_{m})_{L^2}$ denotes the Fourier coefficients of $f$. The spaces $H^s(\mathbb{T}^d;\mathbb{R}^n)$ are separable Hilbert spaces endowed with the inner product
$$
(f,g)_{H^s}=\sum_{m\in \mathbb{Z}^d}(|m|^2+1)^{s}a_{m} [f] \overline{a} _{m}[g] = ((1-\Delta)^\frac{s}{2} f,(1-\Delta)^\frac{s}{2} g)_{L^2}.
$$


Since the solutions to SMEP2 are not expected to be differentiable in time, we need to consider the fractional Sobolev spaces: For $1\leq q < \infty$, $ s\in \mathbb{R}$, the space $L^q([0,T];H^s(\mathbb{T}^d;\mathbb{R}^n))$ consists of all measurable functions $f:[0,T]\rightarrow H^s(\mathbb{T}^d;\mathbb{R}^n)$ such that
$ \int_0^T\|f(t)\|_{H^s}^qdt<\infty$. For any $\theta\in (0,1)$, we define
\begin{equation*}
\begin{split}
&W^{\theta,q}([0,T];H^s(\mathbb{T}^d;\mathbb{R}^n))\\
&\quad =\left\{f\in L^q([0,T];H^s(\mathbb{T}^d;\mathbb{R}^n)); \|f\|_{L^q([0,T];H^s)}^q+\int_0^T\int_0^T\frac{\|f(t)-f(t')\|_{H^s}^q}{|t-t'|^{1+\theta q}}\mathrm{d}t\mathrm{d}t'<\infty\right\}.
\end{split}
\end{equation*}

Since the unknown variable $(u,\gamma)$ in \eqref{1.6} is $\mathbb{R}^{d+1}$-valued and defined on $\mathbb{T}^d$, in order to write the vector field in a single form, we introduce the following notations:
\begin{equation*}
\begin{split}
\mathbb{L}^p(\mathbb{T}^d;\mathbb{R}^{d+1})&\triangleq L^p(\mathbb{T}^d;\mathbb{R}^{d})\times L^p(\mathbb{T}^d;\mathbb{R}),\quad 1\leq p \leq \infty,\\
\mathbb{H}^s(\mathbb{T}^d;\mathbb{R}^{d+1})&\triangleq H^s(\mathbb{T}^d;\mathbb{R}^{d})\times H^s(\mathbb{T}^d;\mathbb{R}),\quad s\in\mathbb{R},
\end{split}
\end{equation*}
and
\begin{equation*}
\begin{split}
\mathbb{W}^{k,p}(\mathbb{T}^d;\mathbb{R}^{d+1})&\triangleq W^{k,p}(\mathbb{T}^d;\mathbb{R}^{d})\times W^{k,p}(\mathbb{T}^d;\mathbb{R}),\quad k\geq 1,~~1\leq p \leq\infty.
\end{split}
\end{equation*}
Here for given two Banach spaces $\mathcal {X}$ and $\mathcal {Y}$, the Cartesian product space $\mathcal {X}\times\mathcal {Y}$ is again  a Banach spaces, which is equipped with the Cartesian product norm $$\|(u_1,u_2)\|_{\mathcal {X}\times\mathcal {Y}}^2=  \|u_1\|_{\mathcal {X}}^2+\|u_2\|_{\mathcal {Y}}^2  ,$$ for any $(u_1,u_2)\in \mathcal {X}\times\mathcal {Y}$. Moreover, if $\mathcal {X}$ and $\mathcal {Y}$ are Hilbert spaces, then $\mathcal {X}\times\mathcal {Y}$ is also a Hilbert space with the inner product
$$
(u,v)_{\mathcal {X}\times\mathcal {Y}}=(u_1,v_1)_{\mathcal {X}}+(u_2,v_2)_{\mathcal {Y}},\quad u=(u_1,u_2),~~v=(v_1,v_2) \in \mathcal {X}\times\mathcal {Y}.
$$
For the sake of simplicity, when a function is defined on $\mathbb{T}^d$ with values in $\mathbb{R}^n$, where $k,n$ are clear from the context, we shall omit the parentheses in notations of function spaces. For example, $\mathbb{H}^s(\mathbb{T}^d;\mathbb{R}^{d+1})=\mathbb{H}^s(\mathbb{T}^d)$, $\mathbb{W}^{k,p}(\mathbb{T}^d;\mathbb{R}^{d})=\mathbb{W}^{k,p}(\mathbb{T}^d)$ and so on.

\subsubsection{Stochastic setting}
To make sense of the stochastic forcing, let $\mathcal {S} =(\Omega,\mathcal {F},\mathbb{P},(\mathcal {F}_t)_{t\geq0})$ be a fixed complete filtered probability space, and $(\beta_j^i )_{j\geq1}$, $i=1,2$ be mutually independent real-valued standard Wiener processes relative to $(\mathcal {F}_t )_{t\geq0}$. Let $(e_j ^i)_{j\geq1}$ be a complete orthonormal system in a separate Hilbert space $\mathfrak{A}_i$, then one can formally define the mutually independent cylindrical Wiener processes $\mathcal {W}_i$ on $\mathfrak{A}_i$ by
\begin{eqnarray*}
\mathcal {W}_i(t,\omega) =\sum_{j\geq 1}  e_j^i\beta_j ^i(t,\omega),\quad i=1,2.\nonumber
\end{eqnarray*}
To ensue the convergence of the last series, we introduce an auxiliary space
\begin{eqnarray*}
\mathfrak{A}_{0,i}\triangleq \left\{u=\sum_{j\geq1} a_j^ie_j^i;~\sum_{j\geq1} \frac{(a_j^i)^2}{j^2}<\infty\right\}\supset \mathfrak{A}_i,\quad i=1,2,
\end{eqnarray*}
which is endowed with the norm $\|u\|_{\mathfrak{A}_{0,i}}^2=\sum_{j\geq1} \frac{(a_j^i)^2}{j^2}$, for any $u=\sum_{j\geq1} a_j^ie_j ^i\in \mathfrak{A}_i$, $i=1,2$. Note that the canonical injection $\mathfrak{A}_i\hookrightarrow\mathfrak{A}_{0,i}$ is Hilbert-Schmidt, which implies that $\mathcal {W} _i \in \mathcal {C}([0, T];\mathfrak{A}_{0,i})$ for any $T>0$, $\mathbb{P}$-almost surely, $i=1,2$.  We   denote by $L_2(\mathcal {Y},\mathcal {Z})$ the collection of Hilbert-Schmidt operators from a separable Hilbert space $\mathcal {Y}$ into another separable Hilbert space $\mathcal {Z}$ with the norm $
\|H\|_{L_2(\mathcal {Y};\mathcal {Z})}^2=\sum_{j\geq 1} \|H v_j\|_{\mathcal {Z}}^2<\infty $,
where $(v_j)_{j\geq1}$ is a complete orthogonal basis in $\mathcal {Y}$.

Now let $H$ be a $\mathcal {Z}$-valued predictable process in $ L^2(\Omega;L_{loc}^2([0,\infty);L_2(\mathfrak{A}_i,\mathcal {Z})))$. One can define the It\^{o} stochastic integration
\begin{eqnarray*}
\int_0^tH(r)\mathrm{d}\mathcal {W}_i=\sum_{j\geq 1} \int_0^tH(r)e_j^i\mathrm{d}\mathcal \beta_j^i(r) ,\quad i=1,2,
\end{eqnarray*}
which is actually a continuous $\mathcal {Z}$-valued square integrable martingale. Note that the above definition of the stochastic integration does not depend on the choice of $\mathfrak{A}_{0,i}$ (cf. \cite{43}).


In the following, we shall reformulate the SMEP2 into a single form. To this purpose, we define the Cartesian products $\mathfrak{A}=\mathfrak{A}_1\times \mathfrak{A}_2$ and the auxiliary space $\mathfrak{A}_0=\mathfrak{A}_{0,1}\times \mathfrak{A}_{0,2}$, then the canonical injection $\mathfrak{A}\hookrightarrow\mathfrak{A}_{0}$ is Hilbert-Schmidt, and $\mathcal {W}=(\mathcal {W}_1,\mathcal {W}_2)^T$ defines a cylindrical Wiener process on $\mathfrak{A}$, which belongs to $ \mathcal {C}_{\textrm{loc}}([0,\infty);\mathfrak{A}_0)$ $\mathbb{P}$-almost surely. Moreover, as the noise coefficients for the rewritten SMEP2 \eqref{1.8} becomes a matrix-valued Hilbert-Schmidt operator, for instance,
$$
 \begin{array}{l}
M =\left(                 
  \begin{array}{ccc}   
    M_{11}&M_{12}\cr  
    M_{21}&M_{22} \cr  
  \end{array}
\right),
\end{array}\quad
$$
with $M_{ij}\in L_2(V_j;U_j)$, $i,j\in \{1,2\}$, where $V_j$ and $U_j$, $j=1,2$ are separable Hilbert spaces, let us define the  canonical norm for $M$ by
$$
\|M\|_{L_2(V ;U )}^2\triangleq \sum_{i,j=1}^2\|M_{ij}\|_{L_2(V_j;U_j)}^2,
$$
where $V=V_1\times V_2$ and $U=U_1\times U_2 $.

\subsection{Assumptions and main results}
To give the statement of the main results for \eqref{1.4}, let us first transform the system \eqref{1.6} into convenient forms. It follows from \eqref{1.4}$_3$ that $u=\Lambda^{-2} m$, and from \eqref{1.4}$_4$ that $\gamma \triangleq \overline{\rho}-\overline{\rho}_0= \Lambda^{-2}\rho$, where $\Lambda^{s} =( 1 -\Delta)^{ \frac{s}{2}}$, $s\in \mathbb{R}$ denotes the Bessel potentials. By applying $\Lambda^{-2}$ to the first two equations in  \eqref{1.4}, and using the similar calculations for the deterministic counterpart \cite{yan2022initial}, the Cauchy problem \eqref{1.4}-\eqref{1.5} can be reformulated as
\begin{equation}\label{1.6}
\left\{
\begin{aligned}
&\mathrm{d}u+\left( u\cdot \nabla u + \mathscr{L}_1(u) + \mathscr{L}_2 (\gamma) \right)\mathrm{d} t=\Lambda^{-2}g_1(t,m,\rho)\mathrm{d}\mathcal {W}_1,\\
&\mathrm{d}\gamma+ \left(u\cdot \nabla \gamma +\mathscr{L}_3 (u,\gamma)\right)\mathrm{d} t= \Lambda^{-2}g_2(t,m,\rho)\mathrm{d}\mathcal {W}_2,\\
&u|_{t=0}=u_0= \Lambda^{-2} m_0,\\
& \gamma|_{t=0}=\gamma_0= \Lambda^{-2} \rho_0,
\end{aligned}\quad t\geq 0,~x\in \mathbb{T}^d,
\right.
\end{equation}
where
\begin{equation}\label{1.7}
\begin{split}
 \mathscr{L}_1 (u)= &\Lambda^{-2}\textrm{div}\left (\frac{1}{2}|\nabla u|^2I_d+ \nabla u \nabla u+\nabla u(\nabla u)^T-(\nabla u)^T  \nabla u-(\textrm{div}u) \nabla u  \right) \\
  &+\Lambda^{-2} \left ((\textrm{div}u) u +u\cdot ( \nabla u)^T\right),
\\
 \mathscr{L}_2 (\gamma)=& \Lambda^{-2}\textrm{div}\left (  \frac{1}{2}\left(\gamma^2+ |\nabla\gamma|^2\right)I_d-(\nabla\gamma)^T\nabla\gamma\right),\\
 \mathscr{L}_3 (u,\gamma)=&\Lambda^{-2} \textrm{div}\left (\nabla\gamma\nabla u +(\nabla\gamma)\cdot \nabla u -(\textrm{div}u)\nabla\gamma \right) + \Lambda^{-2}  \left((\textrm{div}u) \gamma\right),
\end{split}
\end{equation}
and $I_d$ denotes the $d\times d$ unit matrix. The system \eqref{1.6} can be regarded as a nonlocal transport system perturbed by the nonlinear multiplicative noise.

Defining $\textbf{y}=\begin{pmatrix}
u \\ \gamma \end{pmatrix}$, $\textbf{y}_0=\begin{pmatrix}
u_0 \\ \gamma_0 \end{pmatrix}$ and $
\mathcal {W}=\begin{pmatrix}
\mathcal {W}_1 \\ \mathcal {W}_2 \end{pmatrix}$, then \eqref{1.6}  can be understood in the following compact form:
\begin{equation}\label{1.8}
\left\{
\begin{aligned}
&\mathrm{d}\textbf{y}+ B(\textbf{y},\textbf{y})\mathrm{d} t+F(\textbf{y})\mathrm{d} t=G(t,\textbf{y}) \mathrm{d}\mathcal {W},\\
&\textbf{y}(\omega,0,x)=\textbf{y}_0(\omega,x),
\end{aligned}
\right.\quad t> 0,~x\in \mathbb{T}^d,
\end{equation}
where the bilinear form $B(\cdot,\cdot)$ is defined by
$$
B(\textbf{y}_1,\textbf{y}_2)=
\begin{pmatrix}
u_1\cdot \nabla u_2\\
     u_1\cdot \nabla \gamma_2
\end{pmatrix}
, \quad \textrm{for any}~~ \textbf{y}_i=
\begin{pmatrix}
u_i \\ \gamma_i
\end{pmatrix}, \quad i=1,2,
$$
and the nonlinear terms are defined by
$$
F(\textbf{y})=
\begin{pmatrix}
\mathscr{L}_1(u) + \mathscr{L}_2 (\gamma)\\
\mathscr{L}_3(u,\gamma)
\end{pmatrix},\quad G(t,\textbf{y})=\begin{pmatrix}
\Lambda^{-2}g_1(t,m,\rho)&0 \\
0&\Lambda^{-2}g_2(t,m,\rho)
\end{pmatrix}.
$$
for any $m=\Lambda^2 u$ and $\rho=\Lambda^2 \gamma$. Note that   \eqref{1.8} does not have the cancelation property, i.e., $(B(\textbf{y},\textbf{y}),\textbf{y})_{\mathbb{L}^2}=0$, due to the loss of divergence-free condition $\nabla\cdot  \textbf{y}=0$, which makes the construction of approximate solutions to be more subtle.

Let us give the rigorous definition of local/global strong pathwise solutions to \eqref{1.8}.

\begin{definition}\label{def1}
Let $s> 1+\frac{d}{2}$, $d\geq1$, and the initial data $\textbf{y}_0 \in \mathbb{H}^s(\mathbb{T}^d)$ be a $\mathcal {F}_0$-measurable random variable such that $\mathbb{E}(\|\textbf{y}_0\|_{\mathbb{H}^s }^2)<\infty$.

\begin{itemize}
\item [(1)]  A \textsf{local strong pathwise solution} of SMEP2 \eqref{1.8} is a pair $(\textbf{y}_0,\mathbbm{t})$, where $\mathbbm{t}$ is a $\mathbb{P}$-almost surely positive stopping time, i.e., $\mathbb{P}\{\mathbbm{t}>0\}=1$, and $\textbf{y}(\cdot)$ is a $\mathbb{H}^s(\mathbb{T}^d)$-valued $\mathcal {F}_t$-predictable processes satisfying
 $
\textbf{y}(\cdot\wedge \mathbbm{t}) \in L^2(\Omega, \mathcal {C} ([0,\infty),\mathbb{H}^s(\mathbb{T}^d)) $,
and
$$
 \textbf{y}(t\wedge \mathbbm{t})+ \int_0^{t\wedge \mathbbm{t}}B(\textbf{y}(r),\textbf{y}(r))\mathrm{d} r+\int_0^{t\wedge \mathbbm{t}}F(\textbf{y}(r))\mathrm{d} r=\textbf{y}_0+\int_0^{t\wedge \mathbbm{t}}G(r,\textbf{y}(r)) \mathrm{d}\mathcal {W}(r),
$$
for all $t>0$, $\mathbb{P}$-almost surely.

\item [(2)] The strong pathwise solution $(\textbf{y},\bar{\mathbbm{t}})$ is said to be \textsf{maximal}, if $\mathbb{P}\{\bar{\mathbbm{t}}>0\}=1$ and there is a sequence of stopping times $\mathbbm{t}_n$ increasingly tending to $\bar{\mathbbm{t}}$ as $n\rightarrow\infty$ such that for any $n\in\mathbb{N}^+$, $(\textbf{y},\mathbbm{t}_n)$ is a local strong pathwise solution such that
$$
 \sup_{t\in[0,\mathbbm{t}_n]}\|\textbf{y}(t)\|_{\mathbb{W}^{1,\infty}}> n  \quad \mbox{on}~ \{\mathbbm{t}<\infty\}.
$$
In addition, if $\mathbb{P} \{\bar{\mathbbm{t}}=\infty\}=1$, then the solution is said to be \textsf{global}.

\item [(3)] The strong pathwise solution is said to be \textsf{pathwise unique}, if for any given two local strong pathwise solutions $(\textbf{y}_1,\mathbbm{t}_1)$ and $(\textbf{y}_2,\mathbbm{t}_2)$, we have
$$
\mathbb{P}\left\{\textbf{1}_{\{\textbf{y}_1(0)=\textbf{y}_2(0)\}}\left(\textbf{y}_1(t)=\textbf{y}_2(t)\right),~~~ \forall t\in [0,\mathbbm{t}_1\wedge\mathbbm{t}_2]\right\}=1.
$$
\end{itemize}
\end{definition}

Note that in Definition 1.1, the local/global solutions are required to be strong both in the probabilistic sense (the stochastic basis is presupposed) and in the PDE sense.

The following assumptions will be valid throughout this paper.

\begin{assumption} \label{assume}
For all $s>1+\frac{d}{2}$,  we assume that the functions $g_i(t,m,\rho): [0,\infty)\times H^{s-2}(\mathbb{T}^3)\times H^{s-2}(\mathbb{T}^3)\mapsto \mathcal {L}_2(\mathfrak{A}_i,H^{s-2}(\mathbb{T}^3))$, $i=1,2$, are continuous in $(t,m,\rho)$ and the following conditions hold true:
\begin{itemize}
\item [(1)] (\textsf{Growth condition}) There exists two non-decreasing locally bounded continuous scaler functions $\mu_i,\chi_i:\mathbb{R}^+\mapsto\mathbb{R}^+$ such that for $m=\Lambda^{2}u$ and $\rho=\Lambda^{2}\gamma$
\begin{equation*}
\begin{split}
\|g_i(t,m,\rho)\|_{\mathcal {L}_2(\mathfrak{A}_i,H^{s-2})} \leq \mu_i (t) \chi_i(\|(u,\gamma)\|_{\mathbb{W}^{1,\infty}})(1+\|(u,\gamma)\|_{\mathbb{H}^{s}}),\quad i=1,2.
\end{split}
\end{equation*}
\item [(2)] (\textsf{Locally Lipschitz continuity})  There exists two non-decreasing locally bounded continuous scaler functions $\tilde{\mu}_2,\tilde{\chi}_2:\mathbb{R}^+\mapsto\mathbb{R}^+$ such that for $m_j=\Lambda^{2}u_j$ and $\rho_j=\Lambda^{2}\gamma_j$, $j=1,2$,
\begin{equation*}
\begin{split}
&\|g_i(t,m_1,\rho_1)-g_i(t,m_2,\rho_2)\|_{\mathcal {L}_2(\mathfrak{A}_1,H^{s-2})} \\
&\quad \leq \tilde{\mu}_i (t) \tilde{\chi}_i(\|(u_1,\gamma_1)\|_{\mathbb{H}^s}+\|(u_2,\gamma_2)\|_{\mathbb{H}^s})
\|(u_1-u_2,\gamma_1-\gamma_2)\|_{\mathbb{H}^{s}}
,\quad i=1,2.
\end{split}
\end{equation*}
\end{itemize}
\end{assumption}

\begin{remark} \label{rem1}  Comments on the above assumptions are provided.
\begin{itemize}
\item [(1)] An explicit example for Assumption \ref{assume} is as follows:
\begin{equation*}
\begin{split}
g_1(t,m,\rho) =c (1+\|\textbf{y}\|_{\mathbb{W}^{1,\infty}})^{\delta} m\quad \textrm{and} \quad g_2(t,m,\rho)&=c (1+\|\textbf{y}\|_{\mathbb{W}^{1,\infty}})^{\delta} \rho,
\end{split}
\end{equation*}
for all $m=\Lambda^{2}u$ and $\rho=\Lambda^{2}\gamma$, where $c $ and $\delta$ are non-negative numbers which describe the intensity of the effect of   random noises on the  equations, and the Wiener processes are replaced by a standard one-dimensional Brownian motions $W(t)$.  This type of noise  will be considered in analysing the existence of global strong solutions (cf. Theorem \ref{th3} and Theorem \ref{th4}) and the blow-up phenomena (cf. Theorem \ref{th5}).

\item [(2)] Note that the pseudo-differential operator $\Lambda^{-2}=(1-\Delta)^{-1}$ is a $S^{-2}$ multiplier (cf.  \cite[Proposition 2.78]{bahouri2011fourier}), which implies that, under the conditions in Assumption \ref{assume}, the diffusion  matric $G(t,\textbf{y})$ satisfies
\begin{equation}\label{a1}
\begin{split}
\|G(t,\textbf{y})\|_{\mathcal {L}_2(\mathfrak{A},\mathbb{H}^{s})} \leq \mu (t) \chi(\|\textbf{y}\|_{\mathbb{W}^{1,\infty}})(1+\|\textbf{y}\|_{\mathbb{H}^{s}}),
\end{split}
\end{equation}
and
\begin{equation}\label{a2}
\begin{split}
\|G(t,\textbf{y})-G(t,\textbf{z})\|_{\mathcal {L}_2(\mathfrak{A},\mathbb{H}^{s})} \leq \tilde{\mu} (t) \tilde{\chi}(\|\textbf{y}\|_{\mathbb{H}^{s}}
+\|\textbf{z}\|_{\mathbb{H}^{s}})\|\textbf{y}-\textbf{z}\|_{\mathbb{H}^{s}},
\end{split}
\end{equation}
where $\mu \triangleq\max\{\mu _1,\mu _2 \}$, $\chi  \triangleq\max\{\chi _1,\chi _2 \}$, $\tilde{\mu } \triangleq\max\{\tilde{\mu} _1 ,\tilde{\mu} _2\}$ and $\tilde{\chi}  \triangleq\max\{\tilde{\chi} _1 ,\tilde{\chi} _2 \}$ are locally bounded nondecreasing continuous functions.
\end{itemize}
\end{remark}

\vspace{0.1cm}

Our first main result is concerned with the local well-posedness of strong pathwise solution to the SMEP2 driven by nonlinear multiplicative noise.
\begin{theorem} [\textsf{Local existence with general noises}]\label{th1}
Let $s>1+\frac{d}{2}$, $d\geq1$, and $\textbf{y}_0$ be a $\mathbb{H}^s$-valued $\mathcal {F}_0$-measurable random variable such that $\mathbb{E}\|\textbf{y}_0\|_{\mathbb{H}^s}^2<\infty$. Under the  Assumption \ref{assume}, we obtain the following conclusions:
\begin{itemize}
\item [(1)] The system \eqref{1.8} admits a unique maximal local strong pathwise solutions $(\textbf{y},\mathbbm{t})$ in the sense of Definition \ref{def1}. Moreover, for any fixed $\epsilon>0$ and $T>0$, there is a sufficiently small  $\delta=\delta(\epsilon,T,\textbf{y}_0)>0$ such that if
$
\|\textbf{y}_0-\textbf{z}_0\|_{L^ \infty(\Omega;\mathbb{H}^s)}<\delta,$
then a stopping time $\mathbbm{t}\in (0,T]$ exists such that
\begin{equation*}
\begin{split}
\mathbb{E}\sup_{t\in [0,\mathbbm{t}]}\|\textbf{y}(t)-\textbf{z}(t)\|_{\mathbb{H}^s} ^2 <\epsilon,\quad \mathbb{P}\textrm{-a.s.}
\end{split}
\end{equation*}

\item [(2)] The local solution $(\textbf{y},\mathbbm{t})$ is also a $\mathbb{W}^{1,\infty}$-valued $\mathcal {F}_t$ adapted process for all $t< \mathbbm{t} $, and the norm inflation of $\| \textbf{y}(t)\|_{\mathbb{H}^s}$ and the norm inflation $\| \textbf{y} \|_{\mathbb{W}^{1,\infty}}$ has the following relationship:
$$
 \mathbb{P}\left(\textbf{1}_{\{\limsup \limits_{t\rightarrow\mathbbm{t}}\| \textbf{y}(t)\|_{\mathbb{H}^s}=\infty\}} =\textbf{1}_{\{\limsup\limits _{t\rightarrow\mathbbm{t}}\| \textbf{y}(t)\|_{\mathbb{W}^{1,\infty}}=\infty\}}\right)=1.
$$
\end{itemize}
\end{theorem}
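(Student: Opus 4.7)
The plan is to combine a Friedrichs mollifier regularization with a $\mathbb{W}^{1,\infty}$ cut-off and then to use a stochastic-compactness / pathwise-uniqueness argument in the spirit of Yamada-Watanabe. First I would introduce the regularized system obtained by replacing $B(\textbf{y},\textbf{y})$ with $J_\epsilon B(J_\epsilon\textbf{y},J_\epsilon\textbf{y})$, $F(\textbf{y})$ with $J_\epsilon F(J_\epsilon\textbf{y})$, and by multiplying the diffusion $G$ by a smooth truncator $\theta_R(\|\textbf{y}\|_{\mathbb{W}^{1,\infty}})$. Because of $J_\epsilon$, together with Assumption \ref{assume} and $\theta_R$, the resulting drift and diffusion are globally Lipschitz with linear growth on $\mathbb{H}^s$, so a standard fixed-point argument in $L^2(\Omega;C([0,T];\mathbb{H}^s))$ produces a unique global approximation $\textbf{y}^{\epsilon,R}$.

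The main analytic task is to derive $\epsilon$-uniform a priori estimates on $\textbf{y}^{\epsilon,R}$. Applying It\^{o}'s formula to $\|\textbf{y}^{\epsilon,R}\|_{\mathbb{H}^s}^2$, using the commutator bound \eqref{1.q4} and standard Kato-Ponce / Moser product estimates, and exploiting that each of the nonlocal operators $\mathscr{L}_1,\mathscr{L}_2,\mathscr{L}_3$ in \eqref{1.7} is $\Lambda^{-2}\mathrm{div}$ of a quadratic expression in $(\nabla u,\nabla \gamma)$, the deterministic drift can be controlled by $C(1+\|\textbf{y}^{\epsilon,R}\|_{\mathbb{W}^{1,\infty}})\|\textbf{y}^{\epsilon,R}\|_{\mathbb{H}^s}^2$; this is where the missing cancellation $(B(\textbf{y},\textbf{y}),\textbf{y})_{\mathbb{L}^2}=0$ is compensated by the extra $\mathbb{W}^{1,\infty}$-factor. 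The stochastic integral is handled through \eqref{a1} and BDG. Introducing $\tau_R^{\epsilon}=T\wedge\inf\{t\geq 0:\|\textbf{y}^{\epsilon,R}(t)\|_{\mathbb{W}^{1,\infty}}\geq R\}$ and invoking stochastic Gr\"onwall yields bounds in $L^2(\Omega;C([0,\tau_R^\epsilon];\mathbb{H}^s))$ that are uniform in $\epsilon$.

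The hardest step is the passage to the limit $\epsilon\to 0$. I would estimate the difference $\textbf{y}^{\epsilon_1,R}-\textbf{y}^{\epsilon_2,R}$ in the one-derivative-lower space $\mathbb{H}^{s-1}$ (the usual price of a quasilinear hyperbolic argument). Applying It\^{o}'s formula in $\mathbb{H}^{s-1}$, using \eqref{1.q1}-\eqref{1.q3}, the Lipschitz bound \eqref{a2}, and stochastic Gr\"onwall gives a Cauchy sequence in $L^2(\Omega;C([0,\tau_R^\epsilon];\mathbb{H}^{s-1}))$; interpolating with the uniform $\mathbb{H}^s$ bound and invoking lower semicontinuity of the stronger norm produces a limit $\textbf{y}^R$ in the desired class, with $\tau_R:=\liminf_\epsilon\tau_R^\epsilon$. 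Pathwise uniqueness at this level is a direct byproduct of the same $\mathbb{H}^{s-1}$ energy estimate applied to two solutions. Finally, removing the cut-off amounts to setting $\mathbbm{t}_n$ as in Definition \ref{def1} with $R=n$ and extending by uniqueness on overlaps, producing the maximal stopping time $\mathbbm{t}=\lim_n \mathbbm{t}_n$.

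For the continuous-dependence claim in Part (1), I would apply It\^{o}'s formula to $\|\textbf{y}-\textbf{z}\|_{\mathbb{H}^s}^2$ up to $\sigma_R=T\wedge\inf\{t:\|\textbf{y}\|_{\mathbb{W}^{1,\infty}}\vee\|\textbf{z}\|_{\mathbb{W}^{1,\infty}}\geq R\}$, use \eqref{a2} together with stochastic Gr\"onwall to dominate the expected supremum by $C(R,T)\|\textbf{y}_0-\textbf{z}_0\|_{L^\infty(\Omega;\mathbb{H}^s)}^2$, and control the complementary event $\{\sigma_R<\mathbbm{t}\}$ via Chebyshev together with the $\mathbb{H}^s$ a priori estimate; choosing first $R=R(\epsilon,T,\textbf{y}_0)$ large and then $\delta$ small delivers the announced bound. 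For Part (2), the $\mathbb{W}^{1,\infty}$-valued adaptedness is inherited from the $\mathbb{H}^s$-valued predictability of $\textbf{y}(\cdot\wedge\mathbbm{t})$ through the embedding $\mathbb{H}^s\hookrightarrow\mathbb{W}^{1,\infty}$ since $s>1+d/2$. The equivalence of the two blow-up criteria then reduces to the reverse inclusion: on the event $\{\sup_{t<\mathbbm{t}}\|\textbf{y}\|_{\mathbb{W}^{1,\infty}}<\infty\}$, substituting this boundedness into the same $\mathbb{H}^s$ energy inequality and applying stochastic Gr\"onwall with a BDG control of the martingale keeps $\|\textbf{y}\|_{\mathbb{H}^s}$ finite up to $\mathbbm{t}$, contradicting the defining property $\sup_{t\in[0,\mathbbm{t}_n]}\|\textbf{y}(t)\|_{\mathbb{W}^{1,\infty}}>n$ on $\{\mathbbm{t}<\infty\}$ in the limit $n\to\infty$.
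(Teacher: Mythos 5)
Your overall skeleton (Friedrichs mollification plus a $\mathbb{W}^{1,\infty}$ cut-off, uniform estimates, limit, removal of the cut-off) matches the paper's strategy, but two of your key steps would fail as written. First, you truncate only the diffusion $G$ and then claim the regularized drift is globally Lipschitz with linear growth; it is not. The mollified convection $J_\epsilon B(J_\epsilon\textbf{y},J_\epsilon\textbf{y})$ is bilinear, hence quadratic in $\|\textbf{y}\|_{\mathbb{H}^s}$ (only the loss of derivatives is cured by \eqref{1.q2}), so global existence of the approximations is not immediate, and your subsequent reliance on the $\epsilon$-dependent stopping time $\tau_R^{\epsilon}=T\wedge\inf\{t:\|\textbf{y}^{\epsilon,R}(t)\|_{\mathbb{W}^{1,\infty}}\geq R\}$ reintroduces exactly the obstruction the cut-off is meant to remove: $\inf_{\epsilon}\tau_R^{\epsilon}$ may degenerate to zero, and a Cauchy property "in $L^2(\Omega;C([0,\tau_R^{\epsilon}];\mathbb{H}^{s-1}))$" is not a statement in a fixed space. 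The paper's construction \eqref{2.15}--\eqref{2.16} applies $\varpi_R(\|\textbf{y}\|_{\mathbb{W}^{1,\infty}})$ to $B$, $F$ and $G$ simultaneously, which is what makes the approximations global and the bounds of Lemma \ref{lem:2.2} uniform in $\epsilon$ without any random time. Relatedly, your limit step (Cauchy in $\mathbb{H}^{s-1}$, interpolation, lower semicontinuity) only yields a limit in $L^\infty_t\mathbb{H}^s$ with convergence in $\mathbb{H}^{s'}$, $s'<s$; it does not produce the continuity in time with values in $\mathbb{H}^s$ required by Definition \ref{def1}, which the paper obtains either by the limsup/liminf norm argument for the truncated system or, at the sharp index $s>1+\frac d2$, by the density-stability scheme with mollified data (Lemmas \ref{lem:2.9}--\ref{lem:2.11} and the abstract Cauchy lemma); the paper's compactness/Gy\"ongy--Krylov construction by itself only covers $s>4+\frac d2$.

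Second, your proof of the continuous-dependence part of (1) rests on an It\^o--Gr\"onwall estimate of $\|\textbf{y}-\textbf{z}\|_{\mathbb{H}^s}^2$ giving a bound $C(R,T)\|\textbf{y}_0-\textbf{z}_0\|_{L^\infty(\Omega;\mathbb{H}^s)}^2$. Such a Lipschitz estimate in the solution norm cannot be closed for this quasilinear system: measuring the difference in the same space $\mathbb{H}^s$ as the solutions loses one derivative in the top-order transport and stretching terms, and one is forced into expressions of the type $\|u_{j}-u_{k}\|_{H^{s-1}}^2\|\textbf{y}_{j}\|_{\mathbb{H}^{s+1}}^2$ (this is precisely the structure of Lemma \ref{lem:2.9}); moreover a Lipschitz data-to-solution map in $\mathbb{H}^s$ would sit uneasily with the non-uniform dependence established in Theorem \ref{th2}. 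The paper handles this by mollifying the two initial data, estimating $\|\textbf{y}_{1,j}-\textbf{y}_{2,j}\|_{\mathbb{H}^s}$ with the paired bounds of Lemmas \ref{lem:2.9}--\ref{lem:2.10} (where the derivative loss is paid by the factor $\|J_{1/j}\textbf{y}_0\|_{\mathbb{H}^{s+1}}\lesssim j\|\textbf{y}_0\|_{\mathbb{H}^s}$), and then choosing first the mollification parameter and afterwards $\delta$; some argument of this kind is indispensable and is missing from your plan. Your treatment of part (2), by contrast, is essentially the paper's: mollify, apply It\^o and the commutator estimates, and compare the $\mathbb{H}^s$- and $\mathbb{W}^{1,\infty}$-exit times via Gr\"onwall and BDG.
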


\begin{remark} \label{rem2} We would like to make a few comments on Theorem \ref{th1}:
\begin{itemize}
\item [(1)] The proof of Theorem \ref{th1}(1) relies on looking at the SMEP2 as a system of SDEs in Hilbert spaces due to the lack of \textsf{cancelation property}, i.e., $(B(\textbf{y}_\epsilon,\textbf{y}_\epsilon),\textbf{y}_\epsilon)_{\mathbb{L}^2}=0$, and this can be achieved by mollifying the convection terms $u\cdot \nabla u$ and $u\cdot \nabla \gamma$ in \eqref{1.8}. The main difficulty in carrying out this construction is the appearance of the norm $\|\textbf{y}\|_{\mathbb{W}^{1,\infty}}$ in $L^2$ moment estimates (cf. \eqref{2.12}), which prevent us from closing the a priori estimate for $\textbf{y}_\epsilon$ in $L^2(\Omega;H^s(\mathbb{T}^d))$. The usual approach is to introduce the exiting times $\mathbbm{t}_\epsilon=\inf_{t\geq0}\{\|\textbf{y}_\epsilon(t)\|_{\mathbb{W}^{1,\infty}}>r\}$ for $r>0$. However, the current case is strongly  different from the deterministic counterpart \cite{yan2022initial}, due to the lack of  the efficient method for estimating $\inf_{\epsilon>0} \mathbbm{t}_\epsilon$ which may degenerate  to zero. To overcome this difficulty, we shall introduce $W^{1,\infty}$-truncation  functions to the nonlinear terms in system \eqref{1.8} to obtain new approximations $\{\textbf{y}_{R,\epsilon}\}$. The second difficulty arises  from the loss of the compact embedding from $L^2(\Omega; \mathcal {X})$ into $L^2(\Omega; \mathcal {Y})$ even though $\mathcal {X}\subset\subset\mathcal {Y}$ (i.e., $\mathcal{X}$ is compactly embedded into $\mathcal{Y}$), and thus one can not directly extract a  convergence subsequence of $\textbf{y}_{R,\epsilon}$. Our method is first to prove the tightness of the measures $\{\mu_{R,\epsilon}\}$ induced by $\{\textbf{y}_{R,\epsilon}\}$. Then we prove that the regularized SMEP2 with truncation admits a smooth global martingale solution when $s>4+\frac{d}{2}$. After proving a pathwise uniqueness result for the SMEP2, one can prove by Gy\"{o}ngy-Krylov Lemma (cf. \cite[Theorem 2.8]{47}) that the original system \eqref{1.8} has a local unique strong pathwise solution in $\mathbb{H}^s(\mathbb{T}^d)$ with $s>4+\frac{d}{2}$. Thanks to the density embedding $H^s(\mathbb{T}^d)\subset H^{s-3}(\mathbb{T}^d)$, it is successfully proved by a density-stability argument (cf. \cite{41}) that the SMEP2 admits a local pathwise solution in the sharp case of $s>1+\frac{d}{2}$.


\item [(2)]  Theorem \ref{th1}(2) provides a blow-up criteria of strong pathwise solution in Sobolev spaces, which   informs us that
although the Sobolev embedding implies $\| \textbf{y} \|_{\mathbb{W}^{1,\infty}}\leq C\| \textbf{y}\|_{\mathbb{H}^s}$, for $s>1+\frac{d}{2}$,  the $\mathbb{H}^s$-norm of the solution $\textbf{y}(t)$ will not blow up before the $\mathbb{W}^{1,\infty}$-norm of $\textbf{y}(t)$. Indeed, this characteristic also appears in the study for classic CH-type equation in one dimension (cf. \cite{constantin1-2000,constantin1998,guan2010well}), which has been used as a cornerstone to prove the wave breaking mechanism in finite time.

\item [(3)]  Note that the system (1.11) is driven by two independent noises $\mathcal {W}_1$ and $\mathcal {W}_2$, while the case that $\mathcal {W}_1=\mathcal {W}_2= \mathscr{W}$ for some cylindrical Wiener process $\mathscr{W}$
can be handled in the same way. There is no loss of generality in assuming that there are two separable Hilbert spaces $\mathcal {U}$ and $\mathcal {U}_0$ such that $\mathcal {U}\mapsto \mathcal {U}_0$ is Hilbert-Schmidt. Indeed, assuming that $\{e_k\}$ is a complete orthonormal basis of $\mathcal {U}$ and $\{W_k\}$ be a sequence of one-dimensional Brownian motion, then we can define $\mathscr{W}=\sum_{k\geq 1} e_kW_k \in \mathcal {C}([0,T];\mathcal {U}_0)$, and the system (1.11) can be written as
$$\mathrm{d}\textbf{y}+ B(\textbf{y},\textbf{y})\mathrm{d} t+F(\textbf{y})\mathrm{d} t=\mathscr{G}(t,\textbf{y}) \mathrm{d}\mathscr{W},$$
where $B(\textbf{y},\textbf{y})$, $F(\textbf{y})$ are defined as before, and
$
\mathscr{G}(t,\textbf{y})=\begin{pmatrix}
\Lambda^{-2}g_1(t,m,\rho)\\
\Lambda^{-2}g_2(t,m,\rho)
\end{pmatrix}.
$
 \end{itemize}
 \end{remark}

Now let us address the global existence problem by virtue of the random noises with specific structure. The first result is stated by the following theorem.

\begin{theorem} [\textsf{I. Global existence with large initial data}] \label{th3}

Let $s>1+\frac{d}{2}$, $d\geq1$, and $(u_0,\gamma_0)$ be a $\mathbb{H}^s$-valued $\mathcal {F}_0$-measurable initial random variable in $L^2(\Omega;\mathbb{H}^s(\mathbb{T}^d))$. Assume that the real valued parameters $ \delta$ and $c$ satisfy one of the following two conditions:
$$
\delta>\frac{1}{2}\quad and \quad c\neq0,
$$
or
$$
\delta=\frac{1}{2} \quad and \quad |c|> \sqrt{\varrho},
$$
where $\varrho>0$ is a general constant obtained in the estimate \eqref{4..3} below.  Then the corresponding local maximal strong solution $(u,\gamma,\mathbbm{t})$ to the system
\begin{equation}\label{1.14}
\left\{
\begin{aligned}
&\mathrm{d}u+\left( u\cdot \nabla u + \mathscr{L}_1(u) + \mathscr{L}_2 (\gamma) \right)\mathrm{d} t=c(1+\|(u,\gamma)\|_{\mathbb{W}^{1,\infty}})^{\delta} u\mathrm{d}W,\\
&\mathrm{d}\gamma+ \left(u\cdot \nabla \gamma +\mathscr{L}_3 (u,\gamma)\right)\mathrm{d} t= c(1+\|(u,\gamma)\|_{\mathbb{W}^{1,\infty}})^{\delta} \gamma\mathrm{d}W,\\
& u(0,x)=u_0(x),~~~\gamma (0,x)= \gamma_0 (x),
\end{aligned}\quad t\geq 0,~x\in \mathbb{T}^d
\right.
\end{equation}
exists globally in time $\mathbb{P}$-almost surely, that is,  $\mathbb{P}\{\mathbbm{t}=\infty\}=1$.
\end{theorem}

\begin{remark} \label{rem3} 
As far as we know, Theorem \ref{th3} seems to be the first result concerning  the existence of global solutions for MEP2 in high dimensions from the probability point of view, which informs us that the random noises $c(1+\|\textbf{y}\|_{\mathbb{W}^{1,\infty}})^{\delta} \textbf{y}$ of nonlocal-type (since the norm $\|\textbf{y}\|_{\mathbb{W}^{1,\infty}}$ depends on the whole information of $\textbf{y}$ on $\mathbb{T}^d$) with proper intensity have a regularization effect to the solutions of MEP2.
\end{remark}

Whether or not the MEP2 perturbed by nonlocal-type noises admits global solutions when the intensity belongs to the region $0\leq\delta<\frac{1}{2}$ (maybe with some restrictions on $c$)?
The next theorem provides a partial positive answer to above problem when $\delta=0$.

\begin{theorem} [\textsf{II. Global existence with small initial data}] \label{th4}
Let $s>1+\frac{d}{2}$, $d\geq1$ and $ c \neq0$. Assume that $(u_0,\gamma_0)$ is a $\mathbb{H}^s$-valued $\mathcal {F}_0$-measurable random variable in $L^2(\Omega;\mathbb{H}^s(\mathbb{T}^d))$. Let $(u,\gamma,\mathbbm{t})$ be the corresponding maximal strong pathwise solution to the system
\begin{equation}\label{1.15}
\left\{
\begin{aligned}
&\mathrm{d}u+\left( u\cdot \nabla u + \mathscr{L}_1(u) + \mathscr{L}_2 (\gamma) \right)\mathrm{d} t=c u\mathrm{d}W,\\
&\mathrm{d}\gamma+ \left(u\cdot \nabla \gamma +\mathscr{L}_3 (u,\gamma)\right)\mathrm{d} t= c \gamma \mathrm{d}W,\\
& u(0,x)=u_0(x),~~~\gamma (0,x)= \gamma_0 (x),
\end{aligned}\quad t\geq 0,~x\in \mathbb{T}^d.
\right.
\end{equation}
For any arbitrary parameters $R>1$ and $\kappa \geq2$, there exists a positive constant $\widetilde{C}$, depending only on $s$ and $d$, such that whenever
\begin{equation}\label{tt}
\begin{split}
\|(u_0,\gamma_0)\|_{\mathbb{H}^s}\leq \frac{c^2}{2\widetilde{C}R\kappa} ,\quad \mathbb{P}\textrm{-a.s.,}
\end{split}
\end{equation}
then we have $
 \mathbb{P}\{\mathbbm{t}=\infty\} \geq  1- \frac{1}{R^{ \frac{\kappa-1}{2\kappa}}}$.
In other words, the strong pathwise solution to the system \eqref{1.15} exists globally in time with high probability.
\end{theorem}

\begin{remark}\label{rem4}
In view of  Theorem \ref{th4}, when the nonlinear multiplicative noise is reduced to the linear case (i.e., $\delta=0$ in \eqref{1.14}, which means that the strength of the noise becomes weaker), the SMEP2 still has a global strong pathwise solution for sufficiently small initial data. Moreover, estimate \eqref{tt} implies that the smaller norm $\|(u_0,\gamma_0)\|_{\mathbb{H}^s}$ (resp. $\kappa$ is larger) corresponds to larger probability $ \mathbb{P}\{\mathbbm{t}=\infty\}$   (resp. $1- 1/R^{ \frac{\kappa-1}{2\kappa}}$ is larger).
\end{remark}

Our final result gives a negative answer to the global existence with proper structure condition on initial data. Due to the technique reasons,  the system will be restricted in one dimension, that is,
\begin{equation}\label{1.19}
\left\{
\begin{aligned}
&\mathrm{d}u+ (u u_x +\partial_xG\star (u^2+\frac{1}{2}u_x^2+\frac{1}{2}\gamma^2-\gamma_x^2))\mathrm{d} t=c  u\mathrm{d}W,\\
&\mathrm{d}\gamma+u\gamma_x+ G\star ((u_x\gamma_x)_x+u_x\gamma)=c \gamma\mathrm{d}W ,\\
&u|_{t=0}=u_0,\quad \gamma|_{t=0}=\gamma_0 ,
\end{aligned}\quad t\geq 0,~x\in \mathbb{T}^d.
\right.
\end{equation}
Here the sign $\star$ denotes the spatial convolution, and $G(\cdot)$ is the associated Green's function of the operator $\Lambda^{-2}=(1-\partial_x^2)^{-1}$, which can be formulated explicitly by
$$
\Lambda^{-2} f= G\star f, \quad G(x)=\frac{\cosh (x-2\pi [\frac{x}{2\pi}]-\pi)}{2\sinh (\pi)},\quad \forall f\in L^2(\mathbb{T}).
$$

Our main result is stated by the following theorem.
\begin{theorem} [\textsf{Blow-up criteria}]\label{th5}
Let $s>\frac{3}{2}$, $ c\in \mathbb{R} \backslash \{0\}$, $\lambda\in (0,1)$ and $(u,\gamma,\mathbbm{t})$ be the unique local maximal pathwise solution to system \eqref{1.19} with respect to a $\mathcal {F}_0$-measurable initial data $(u_0,\gamma_0)$ in $L^2(\Omega;\mathbb{H}^s(\mathbb{T}))$ in the sense of Definition \ref{def1}. If there exists a point $x_0\in \mathbb{T}$ such that
\begin{equation}\label{1.17}
\begin{split}
\mathbb{P}\left\{(\partial_xu_0)(x_0)< -\frac{ c^2  }{2\lambda}-\sqrt{\frac{c^4}{4\lambda^2}+  \|u_0\|_{H^1}^2+\|\gamma_0\|_{H^1}^2} \right\}=1.
\end{split}
\end{equation}
Then the solution $(u,\gamma,\mathbbm{t})$ will blow up in finite time. Moreover, the wave breaking phenomena occurs with the positive probability
$$
\mathbb{P}\left\{\lim _{t\rightarrow \mathbbm{t}} \inf_{x\in \mathbb{T}}u_x(t,x)=-\infty\right\} >0.
$$
\end{theorem}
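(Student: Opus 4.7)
The strategy is to adapt the classical slope-at-characteristic wave-breaking argument for Camassa–Holm-type equations to the stochastic setting, using an It\^{o}-calculus substitution that removes the multiplicative noise from the scalar SDE governing the slope. Pathwise on $[0, \mathbbm{t})$ I would introduce the random flow $q(\cdot, x_0)$ solving the ODE $\partial_t q = u(t, q)$, $q(0) = x_0$. Since $s > 3/2$ and the Sobolev embedding yield $u(t,\cdot) \in C^1(\mathbb{T})$ pathwise, $q$ is a $C^1$ diffeomorphism of $\mathbb{T}$. Setting $p(t) := u_x(t, q(t, x_0))$, one has $\inf_{x \in \mathbb{T}} u_x(t, x) \leq p(t)$, so it suffices to prove $p \to -\infty$ in finite time on a set of positive probability. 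Differentiating the $u$-equation in $x$ and applying the It\^{o}–Wentzell formula along $q$ (which has $C^1$ paths in $t$, so no quadratic cross-term appears), together with the pseudodifferential identity $\partial_x^2 G\star F = G\star F - F$, yields the scalar SDE
\[
dp(t) = \Bigl[-\tfrac{1}{2} p(t)^2 + R(t)\Bigr] dt + c\, p(t)\, dW,
\]
where $F := u^2 + \tfrac12 u_x^2 + \tfrac12\gamma^2 - \gamma_x^2$ and $R(t) := \bigl(u^2 + \tfrac12\gamma^2 - \gamma_x^2 - G\star F\bigr)(t, q(t, x_0))$.

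Next, applying It\^{o}'s formula to $\mathcal{E}(t) := \|u(t)\|_{H^1}^2 + \|\gamma(t)\|_{H^1}^2$ and using that the deterministic MCH2 Hamiltonian is conserved in one space dimension, only the multiplicative It\^{o} correction survives, so $d\mathcal{E} = c^2 \mathcal{E}\, dt + 2c \mathcal{E}\, dW$, which integrates to the almost-sure identity $\mathcal{E}(t) = \mathcal{E}_0\, e^{2cW(t) - c^2 t}$. Combined with Sobolev embedding on $\mathbb{T}$ and the uniform bound $\|G\|_{L^\infty} = \tfrac12 \coth\pi$, this gives a pointwise estimate $R(t) \leq K_0\, \mathcal{E}(t)$ for an explicit constant $K_0 > 0$. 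I would then introduce the noise-removing substitution $\psi(t) := p(t) \exp\bigl(-cW(t) + c^2 t/2\bigr)$: a direct It\^{o} computation shows that the martingale part of $d\psi$ cancels exactly, producing a pathwise random ODE
\[
\psi'(t) = e^{cW(t) - c^2 t/2}\Bigl[-\tfrac{1}{2}\psi(t)^2 + R(t)\, e^{-2cW(t) + c^2 t}\Bigr] \leq e^{cW(t) - c^2 t/2}\Bigl[-\tfrac{1}{2}\psi(t)^2 + K_0 \mathcal{E}_0\Bigr].
\]

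The hypothesis \eqref{1.17} is calibrated so that, after a Young-type split with the parameter $\lambda$ (used to absorb the residual $c^2$-scale corrections arising when the $\psi$-margin is translated back to the $p$-variable), one obtains $\psi_0^2 > 2 K_0 \mathcal{E}_0/(1-\lambda)$; since $|\psi|$ is monotone once $\psi_0 < 0$, this margin is preserved in time and one obtains the differential inequality $\psi'(t) \leq -\tfrac{1-\lambda}{2}\, \psi(t)^2\, e^{cW(t) - c^2 t/2}$. Dividing by $-\psi^2$ and integrating yields
\[
|\psi(t)|^{-1} \leq |p_0|^{-1} - \frac{1-\lambda}{2}\int_0^t e^{cW(s) - c^2 s/2}\, ds,
\]
so $\psi(t_*) = -\infty$ for some $t_* \leq T$ once the integral on the right exceeds $\tfrac{2}{(1-\lambda)|p_0|}$. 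Restricting to the event $\Omega_* := \{\sup_{s \in [0, T]} |W(s)| \leq M\}$, which has strictly positive probability for every $T, M > 0$ by the Brownian small-ball estimate, the integral is bounded below by $T\, e^{-|c|M - c^2 T/2}$, so for $T$ large enough the blow-up condition is met on $\Omega_*$. Back-substituting gives $p(t_*) = -\infty$ on $\Omega_*$, and the blow-up criterion in Theorem \ref{th1}(2) then forces $\mathbbm{t} < \infty$ and $\inf_{x\in\mathbb{T}} u_x(t, x) \to -\infty$ as $t \to \mathbbm{t}$ on the positive-probability set $\Omega_*$.

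The main obstacle is the rigorous derivation of the stochastic conservation law $\mathcal{E}(t) = \mathcal{E}_0\, e^{2cW - c^2 t}$: for a pathwise $H^s$-solution with $s > 3/2$, the cancellation of the nonlinear transport and nonlocal drift terms in $d\mathcal{E}$ rests on 1D-specific Hamiltonian identities, and one must either mollify and pass to the limit with a commutator estimate or work at the $(m,\rho)$-level of the Galerkin approximations built in Theorem \ref{th1}. A secondary technical point is the Young's-inequality calibration that recovers the sharp threshold $\tfrac{c^2}{2\lambda} + \sqrt{\tfrac{c^4}{4\lambda^2} + \|u_0\|_{H^1}^2 + \|\gamma_0\|_{H^1}^2}$ in \eqref{1.17} rather than a cruder sufficient condition on $(\partial_x u_0)(x_0)$.
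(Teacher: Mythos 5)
Your overall strategy coincides with the paper's: remove the noise by the exponential substitution (your $\psi=p\,e^{-cW+c^2t/2}$ is exactly the transformed slope $\tilde u_x=\mu u_x$ of the paper, evaluated along the same characteristic flow), use the conservation of the transformed $H^1$ energy (Lemma \ref{lem:4.1}, equivalent to your $\mathcal E(t)=\mathcal E_0e^{2cW-c^2t}$), and run a Riccati comparison. The proof fails, however, at the closing step. Blow-up of $\psi$ requires $\int_0^{t}e^{cW(s)-c^2s/2}\,ds$ to exceed $\tfrac{2}{(1-\lambda)|p_0|}$, but this integral does \emph{not} grow without bound: since $cW(s)-c^2s/2\to-\infty$ a.s., it converges a.s. to a finite exponential functional of Brownian motion, and your lower bound $T\,e^{-|c|M-c^2T/2}$ on the event $\{\sup_{[0,T]}|W|\le M\}$ tends to $0$ as $T\to\infty$, so ``for $T$ large enough the blow-up condition is met'' is false. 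This is precisely why the threshold \eqref{1.17} carries the $\tfrac{c^2}{2\lambda}$ terms: in the paper one tracks $H(t)=\inf_x\tilde u_x$ (via Constantin's theorem), obtains $\tfrac{d}{dt}H\le-\tfrac12\mu^{-1}\bigl(H^2-E(0)\bigr)$ with the sharp constant $\tfrac12$ (see \eqref{4.37}), shows the margin $H(t)\le-\sqrt{E(0)}$ persists, integrates to \eqref{4.39}, and restricts to the event $\Omega_\lambda=\{e^{cW(t)}\ge\lambda\ \forall t\}$, on which the available integral ``budget'' is at least $\tfrac{2\lambda}{c^2}$; the contradiction then requires $\lambda H(0)^2+c^2H(0)-\lambda E(0)>0$ with $H(0)<0$, i.e.\ exactly $H(0)<-\tfrac{c^2}{2\lambda}-\sqrt{\tfrac{c^4}{4\lambda^2}+E(0)}$, which is \eqref{1.17}. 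Your reading of $\lambda$ as a Young-split parameter, and your $c$-independent threshold $\psi_0^2>2K_0\mathcal E_0/(1-\lambda)$, cannot reproduce this: without tying the initial slope quantitatively to the finite, $c^2$-scaled budget available on a positive-probability event, the Riccati argument does not close, and the sharp constant $\tfrac12$ (not a generic $K_0$ involving $\|G\|_{L^\infty}$) is needed to recover \eqref{1.17} exactly.

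A second genuine gap is the conclusion. Even granting finite-time blow-up, the theorem asserts wave breaking, $\inf_{x}u_x(t,x)\to-\infty$ as $t\to\mathbbm{t}$, and this does not follow from Theorem \ref{th1}(2), which only equates $H^s$ blow-up with blow-up of the $\mathbb{W}^{1,\infty}$ norm of $(u,\gamma)$ — a priori this could be driven by $\gamma$, $\gamma_x$, or $u_x$ becoming large from above. The paper's Step 1 establishes the refined one-dimensional criterion that the $H^s$ norm explodes if and only if $\inf_x u_x\to-\infty$, using the $(\tilde m,\tilde\rho)$ formulation, the transport of $\tilde\rho$ along the characteristic flow (Lemma \ref{lem:4.3}), the $H^1$ conservation, and an a priori bound on $\|\tilde\gamma_x\|_{L^\infty}$ along the flow; your proposal omits this entirely. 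Finally, a smaller technical point: the pointwise Riccati computation (and Constantin's theorem) needs $u\in C^1([0,\mathbbm{t});H^2)$, so for $3/2<s\le 3$ one must argue by density from $s>3$, which should be made explicit rather than subsumed under the It\^{o}--Wentzell step.
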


\begin{remark}\label{rem5}
Comparing Theorem \ref{th5} with Theorem \ref{th4}, one find that the effect of the structure of initial data (cf. \eqref{1.17}) on existence results is larger than that of linear multiplicative noise. For any sufficiently small initial data $(u_0,\gamma_0)$, although Theorem \ref{th4} (when $d=1$) ensures the existence of global solutions, only one point property for the initial data will make the solution blow up in finite time.
\end{remark}

\subsection{Plan of the paper}

Section \ref{section2} is devoted to the proof of Theorem \ref{th1}. First, we establish the relationship between the exploring time of $\|\textbf{y}\|_{\mathbb{H}^s}$ and the exploring time of $\|\textbf{y}\|_{W^{1,\infty}}$, which is also important in the proof of Theorem \ref{th5}. Then by using the Gy\"{o}ngy-Krylov lemma and the abstract Cauchy theorem, we establish the local well-posedness for the SMEP2 in $\mathbb{H}^s(\mathbb{T}^d)$ with $s>1+\frac{d}{2}$. In Section \ref{section3}, we first prove that the MEP2 perturbed by nonlocal-type noise admits a unique global strong solution in the regime $\delta \geq \frac{1}{2}$.  Then when $\delta=0$,  we show that the SMEP2 with small initial data has a unique global-in-time solution with high probability. Finally, we establish a blow-up criteria in one dimension.

\section{Local well-posedness}\label{section2}
The aim of this section is to prove Theorem \ref{th1}. To this end, we would like to first prove the second part Theorem \ref{th1}(2), the proof of the existence and uniqueness of local strong solutions is long, which will be achieved in next sections by applying approximate scheme.

\subsection{Proof of Theorem \ref{th1}(2)}

\begin{proof}[\emph{\textbf{Proof.}}]  The proof will be divided into two steps.

{\textsf{Step 1:}} We prove that for any $m,n\in \mathbb{N}^+$, if $\mathbbm{t}_{1}$ and $\mathbbm{t}_{2}$ are stopping times defined by
\begin{equation*}
\begin{split}
&\mathbbm{t}_{1}=\lim\limits_{m\rightarrow\infty}\mathbbm{t}_{1,m},\quad \textrm{where}~~\mathbbm{t}_{1,m}=\inf\{t\geq 0; ~ \|\textbf{y}(t)\|_{\mathbb{H}^s}\geq m\},\\
 &\mathbbm{t}_{2}= \lim\limits_{n\rightarrow\infty}\mathbbm{t}_{2,n},\quad \textrm{where}~~ \mathbbm{t}_{2,n}=\inf\{t\geq 0; ~ \|\textbf{y}(t)\|_{\mathbb{W}^{1,\infty}}\geq n\}.
\end{split}
\end{equation*}
then we have
\begin{equation}\label{yy}
\begin{split}
\mathbbm{t}_{1}=\mathbbm{t}_{2},\quad \mathbb{P}\textrm{-a.s.}
\end{split}
\end{equation}
Clearly, $\mathbbm{t}_{1,m}$ and $\mathbbm{t}_{2,n}$ are both nondecreasing stopping times. Indeed, from the Sobolev embedding $\mathbb{H}^s(\mathbb{T}^d)\hookrightarrow \mathbb{W}^{1,\infty}(\mathbb{T}^d)$ for $s>1+\frac{d}{2}$, we infer that there exists a constant $C>0$ such that
$\|\textbf{y}\|_{\mathbb{W}^{1,\infty}}\leq C \|\textbf{y}\|_{\mathbb{H}^s}$. Then we get
\begin{equation*}
\begin{split}
\sup_{t\in [0,\mathbbm{t}_{1,m}]} \|\textbf{y}(t)\|_{\mathbb{W}^{1,\infty}} \leq C \sup_{t\in [0,\mathbbm{t}_{1,m}]}  \|\textbf{y}(t)\|_{\mathbb{H}^s}\leq ([C]+1)m.
\end{split}
\end{equation*}
Hence, it follows from the definition of $\mathbbm{t}_{1,m}$ and $\mathbbm{t}_{2,n}$ that $\mathbbm{t}_{1,m}\leq \mathbbm{t}_{2,([C]+1)m}\leq \mathbbm{t}_{2}$, for all $m\geq1$, which implies that
\begin{equation}\label{2.1}
\begin{split}
 \mathbb{P}\{\mathbbm{t}_{1}\leq \mathbbm{t}_{2}\}=1.
\end{split}
\end{equation}

Next, we prove the inverse inequality, i.e.,
$$
\mathbb{P}\{\mathbbm{t}_{1}\geq\mathbbm{t}_{2}\}=1,
$$
which combined with \eqref{2.1} leads to \eqref{yy}. To this end, we first claim that
\begin{equation}\label{2.2}
\begin{split}
 \mathbb{P}\left\{\sup_{t\in [0,\mathbbm{t}_{2,n}\wedge k]}\|\textbf{y}(t)\|_{\mathbb{H}^s}<\infty\right\}=1,\quad \textrm{for all}~ n,k\in\mathbb{N}^+.
\end{split}
\end{equation}
Since the strong solution $\textbf{y}(t)$ is a $\mathcal {F}_t$-predictable process taking velues in $\mathbb{H}^s(\mathbb{T}^d)$, the convection term $B(\textbf{y},\textbf{y})=(u\cdot \nabla u, u\cdot \nabla \gamma)$ is just a $\mathbb{H}^{s-1}$-valued process, which prevents us applying the It\^{o} formula in Hilbert space to  \eqref{1.8}. We overcome this difficulty by regularizing   SMEP2 via the Fredriches mollifier $J_\epsilon$ (cf. \cite[Section 2]{tang2020noise}) as
\begin{equation}\label{2.3}
\begin{split}
\mathrm{d}J_\epsilon\textbf{y}+ J_\epsilon B(\textbf{y},\textbf{y})\mathrm{d} t+ J_\epsilon F(\textbf{y})\mathrm{d} t=J_\epsilon G(t,\textbf{y}) \mathrm{d}\mathcal {W}.
\end{split}
\end{equation}
Then \eqref{2.3} can be regarded as a system of SDEs in $\mathbb{H}^s(\mathbb{T}^d)$. By applying the It\^{o} formula in Hilbert spaces (cf.  \cite[Theorem 4.32]{43}) to $\|J_\epsilon\textbf{y}(t)\|_{\mathbb{H}^s}^2$, we get
\begin{equation}\label{2.4}
\begin{split}
\|J_\epsilon\textbf{y}(t)\|_{\mathbb{H}^s}^2=& \|J_\epsilon\textbf{y}(0)\|_{\mathbb{H}^s}^2+2\int_0^t (J_\epsilon\textbf{y}, J_\epsilon G(r,\textbf{y})\mathrm{d}\mathcal {W})_{\mathbb{H}^s}\\
   &+ 2\int_0^t(\Lambda^sJ_\epsilon\textbf{y},\Lambda^sJ_\epsilon(B(\textbf{y},\textbf{y}) +J_\epsilon F(\textbf{y})))_{\mathbb{L}^2}\mathrm{d} r +\int_0^t \|J_\epsilon G(r,\textbf{y})\|_{L_2(\mathfrak{U}_1,\mathbb{H}^{s})}^2\mathrm{d}r.
\end{split}
\end{equation}
By using the Burkholder-Davis-Gundy (BDG) inequality (cf. \cite{43}) and   Assumption \ref{assume}, we get for any $k\geq 1$
\begin{equation}\label{2.5}
\begin{split}
&\mathbb{E}\sup_{t\in[0,\mathbbm{t}_{2,n}\wedge k]}\left|\int_0^t (J_\epsilon\textbf{y} , J_\epsilon G(r,\textbf{y} )\mathrm{d}\mathcal {W})_{\mathbb{H}^s} \right|\\
&\quad \leq C\mathbb{E}  \left(\sup_{t\in [0,\mathbbm{t}_{2,n}\wedge k]}\|J_\epsilon\textbf{y} (t) \|_{\mathbb{H}^{s}}^2\int_0^{\mathbbm{t}_{2,n}\wedge k} \mu^2  (t)\chi ^2( \|J_\epsilon\textbf{y} \|_{\mathbb{W}^{1,\infty}} )(1+\|J_\epsilon\textbf{y}(r) \|_{\mathbb{H}^{s}})^2 \mathrm{d}r\right)^{\frac{1}{2}}\\
&\quad\leq \frac{1}{2}\mathbb{E}\sup_{t\in [0,\mathbbm{t}_{2,n}\wedge k]}\|J_\epsilon\textbf{y}(t) \|_{\mathbb{H}^{s}}^2+C_{R}\mathbb{E}\int_0^{\mathbbm{t}_{2,n}\wedge k} \mu^2  (r)\chi ^2( \|J_\epsilon\textbf{y} \|_{\mathbb{W}^{1,\infty}} )(1+\|J_\epsilon\textbf{y} (r)\|_{\mathbb{H}^{s}})^2 \mathrm{d}r\\
&\quad\leq \frac{1}{2}\mathbb{E}\sup_{t\in [0,\mathbbm{t}_{2,n}\wedge k]}\|J_\epsilon\textbf{y}(t) \|_{\mathbb{H}^{s}}^2+C_{R}\chi ^2( n)\mathbb{E}\int_0^{\mathbbm{t}_{2,n}\wedge k} \mu^2  (r)(1+\|\textbf{y} (r)\|_{\mathbb{H}^{s}}^2) \mathrm{d}r,
\end{split}
\end{equation}
where the last inequality used the boundedness of $J_\epsilon$ (cf. \cite[estimate (2.2)]{tang2020noise}) and the definition of $\mathbbm{t}_{2,n}$.
Similarly, the forth term on the R.H.S. of \eqref{2.4} can be estimated as
\begin{equation}\label{2.6}
\begin{split}
 \mathbb{E}\sup_{t\in[0,\mathbbm{t}_{2,n}\wedge k]}\left|\int_0^t \|J_\epsilon G(r,\textbf{y})\|_{L_2(\mathfrak{U}_1,\mathbb{H}^{s})}^2\mathrm{d}r\right| \leq  C_{R}\chi ^2( n)\mathbb{E}\int_0^{\mathbbm{t}_{2,n}\wedge k} \mu^2  (t) (1+\|\textbf{y} (r)\|_{\mathbb{H}^{s}})^2 \mathrm{d}r.
\end{split}
\end{equation}
For the third term on the R.H.S.  of \eqref{2.4}, we get by using the symmetry property of $J_\epsilon$ (cf. \cite{tang2020noise}) that
\begin{equation}\label{2.7}
\begin{split}
&(\Lambda^sJ_\epsilon\textbf{y},\Lambda^s J_\epsilon B(\textbf{y},\textbf{y})  )_{\mathbb{L}^2}+(\Lambda^sJ_\epsilon\textbf{y},\Lambda^sJ_\epsilon F(\textbf{y}) )_{\mathbb{L}^2}\\
&\quad = (\Lambda^sJ_\epsilon^2 u,[\Lambda^s,  (u\cdot \nabla)] u)_{L^2} +(\Lambda^sJ_\epsilon^2 u,(u\cdot \nabla) \Lambda^s u)_{L^2}\\
&\quad \quad+(\Lambda^sJ_\epsilon^2 \gamma,[\Lambda^s,  (u\cdot \nabla)] \gamma)_{L^2} +(\Lambda^sJ_\epsilon^2 \gamma,(u\cdot \nabla) \Lambda^s \gamma)_{L^2}\\
&\quad \quad+(\Lambda^sJ_\epsilon u,\Lambda^sJ_\epsilon ( \mathscr{L}_1(u) + \mathscr{L}_2 (\gamma))_{L^2}+(\Lambda^sJ_\epsilon \gamma,\Lambda^s J_\epsilon\mathscr{L}_3(u,\gamma))_{L^2}\\
 &\quad\triangleq  A_{\epsilon,1}+ A_{\epsilon,2}+\cdots+ A_{\epsilon,6}.
\end{split}
\end{equation}
To estimate the first and third terms in \eqref{2.7}, we need the following commutator estimate:
\begin{lemma}[\cite{kato1988commutator}] \label{lem:commutator}
Let $s>0$, and $f,g\in H^s(\mathbb{T}^d)\cap W^{1,\infty}(\mathbb{T}^d)$, there holds
$$
\|[\Lambda^s,f\cdot \nabla]g\|_{L^2}\leq C \left(\|\Lambda^s f \|_{L^{2}} \|\nabla g\|_{L^\infty} +  \|\nabla f\|_{L^\infty} \|\Lambda^{s-1}g\|_{L^2} \right).
$$
\end{lemma}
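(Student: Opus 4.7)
The statement is the classical Kato--Ponce commutator estimate in its vector-field form. The plan is to reduce it to the scalar Kato--Ponce commutator bound and then sketch the paraproduct proof of that scalar version.

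First, since $\Lambda^s = (1-\Delta)^{s/2}$ is a Fourier multiplier depending only on $\xi$, it commutes with each $\partial_{x_i}$. Therefore
\[
[\Lambda^s, f\cdot\nabla]g \;=\; \sum_{i=1}^d [\Lambda^s, f_i]\,\partial_i g,
\]
which reduces the problem to the scalar commutator bound
\[
\|[\Lambda^s,h]\,v\|_{L^2} \;\lesssim\; \|\Lambda^s h\|_{L^2}\|v\|_{L^\infty} + \|\nabla h\|_{L^\infty}\|\Lambda^{s-1}v\|_{L^2}
\]
applied with $h=f_i$ and $v=\partial_i g$. Summing over $i$, using $\|\partial_i g\|_{L^\infty}\le\|\nabla g\|_{L^\infty}$, and invoking the Bessel identity to pass between $\|\Lambda^{s-1}\partial_i g\|_{L^2}$ and the norm appearing on the right-hand side of the lemma, yields the claim.

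Second, to establish the scalar estimate I would use the Littlewood--Paley machinery already introduced in the paper. Decompose
\[
hv = T_h v + T_v h + R(h,v)
\]
via Bony's paraproduct, and split the commutator accordingly. The commutator of $\Lambda^s$ with the high--low piece $T_v h$ and with the remainder $R(h,v)$ does not use any cancellation: dyadic Bernstein estimates place the full $s$ derivatives on $h$ and only an $L^\infty$ norm on $v$, producing the term $\|\Lambda^s h\|_{L^2}\|v\|_{L^\infty}$. The low--high piece $[\Lambda^s,T_h]v$ is the only part genuinely requiring the commutator structure: on each dyadic block of $v$ one Taylor-expands the symbol $\langle\xi+\eta\rangle^s - \langle\xi\rangle^s \approx s\langle\xi\rangle^{s-1}\eta$, extracting exactly one derivative from $h$ and leaving $s-1$ derivatives on $v$, which yields $\|\nabla h\|_{L^\infty}\|\Lambda^{s-1}v\|_{L^2}$ after summing over dyadic scales.

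The main obstacle is precisely this low--high paraproduct commutator, since it is the only place where one must exhibit true symbol cancellation rather than merely distribute derivatives through the product rule: a direct estimate of either $\Lambda^s(T_h v)$ or $T_h(\Lambda^s v)$ is one derivative too costly, and only their difference has the right order. Making this rigorous requires a Coifman--Meyer / discrete Taylor-expansion bound on the frequency-localised kernel. Since the full result is stated and proved in Kato (1988), in a self-contained write-up one could simply quote that reference, which is the route taken in the excerpt.
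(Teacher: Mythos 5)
The paper does not actually prove this lemma: it is quoted verbatim from Kato--Ponce (1988), so there is no in-paper argument to compare with. Your outline — reducing via $[\Lambda^s,f\cdot\nabla]g=\sum_i[\Lambda^s,f_i]\partial_i g$ (legitimate, since $\Lambda^s$ commutes with $\partial_i$) and then proving the scalar commutator bound by Bony's decomposition, with the genuine cancellation isolated in the low--high piece $[\Lambda^s,T_h]v$ — is the standard modern route and is sound in structure.

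The one genuine gap is the step where you "invoke the Bessel identity to pass between $\|\Lambda^{s-1}\partial_i g\|_{L^2}$ and the norm appearing on the right-hand side of the lemma." That passage cannot be made: the scalar estimate applied with $v=\partial_i g$ produces $\|\nabla f\|_{L^\infty}\|\Lambda^{s-1}\nabla g\|_{L^2}$, and $\|\Lambda^{s-1}\nabla g\|_{L^2}$ dominates $\|\Lambda^{s-1}g\|_{L^2}$ rather than the other way around, so no identity lets you descend to the weaker norm. In fact the inequality as literally stated is false: on $\mathbb{T}^d$ take $g(x)=N^{-s}\cos(Nx_1)$ and a fixed non-constant smooth $f$; then $\|[\Lambda^s,f\cdot\nabla]g\|_{L^2}$ stays of order one as $N\to\infty$, while $\|\Lambda^s f\|_{L^2}\|\nabla g\|_{L^\infty}+\|\nabla f\|_{L^\infty}\|\Lambda^{s-1}g\|_{L^2}=O(N^{1-s}+N^{-1})\to0$ for $s>1$. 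What your argument genuinely proves is the bound with $\|\Lambda^{s-1}\nabla g\|_{L^2}$ (equivalently, up to constants, $\|\Lambda^{s}g\|_{L^2}$) in the second term, and that is exactly the form the paper uses in its applications, e.g.\ in \eqref{2.8} and \eqref{2.21}, where the bound appears as $\|\nabla u\|_{L^\infty}\|\Lambda^{s-1}\nabla u\|_{L^2}$; the displayed statement of the lemma evidently carries a typo. So: keep your proof, but state and prove the correct inequality rather than smoothing over the mismatch with an appeal to a "Bessel identity."
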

By Lemma \ref{lem:commutator}, we have
\begin{equation}\label{2.8}
\begin{split}
A_{\epsilon,1}+A_{\epsilon,3} \leq& \|\Lambda^sJ_\epsilon^2 u\|_{L^2}\|[\Lambda^s,  (u\cdot \nabla)] u\|_{L^2}+\|\Lambda^sJ_\epsilon^2 \gamma\|_{L^2}\|[\Lambda^s,  (u\cdot \nabla)] \gamma\|_{L^2}\\
\leq& C\| u\|_{H^s}(\|\Lambda^s u\|_{L^2}\|\nabla u\|_{L^\infty}+\|\nabla u\|_{L^\infty}\|\Lambda^{s-1} \nabla u\|_{L^2} )\\
 & +C\|\gamma\|_{H^s}(\|\Lambda^s u\|_{L^2}\|\nabla \gamma\|_{L^\infty}+\|\nabla u\|_{L^\infty}\|\Lambda^{s-1} \nabla \gamma\|_{L^2} )  \\
\leq& C\|\nabla u\|_{L^\infty}\| u\|_{H^s}^2+C(\|\nabla u\|_{L^\infty}+\|\nabla \gamma\|_{L^\infty})\| u\|_{H^s} \| \gamma\|_{H^s}.
\end{split}
\end{equation}
Moreover, we get by integrating parts that
\begin{equation*}
\begin{split}
A_{\epsilon,2}+A_{\epsilon,4}  = &(\Lambda^sJ_\epsilon  u,[J_\epsilon,(u\cdot \nabla)]\Lambda^s u )_{L^2} + (\Lambda^sJ_\epsilon \gamma,[J_\epsilon,(u\cdot \nabla)]\Lambda^s \gamma )_{L^2}\\
 & -\int_{\mathbb{T}^d}\textrm{div}u (\Lambda^sJ_\epsilon  u)^2 \mathrm{d} x-\int_{\mathbb{T}^d}\textrm{div}u (\Lambda^sJ_\epsilon  \gamma)^2 \mathrm{d} x.
\end{split}
\end{equation*}
It   follows again from   Lemma \ref{lem:commutator} that
\begin{equation} \label{2.9}
\begin{split}
A_{\epsilon,2}+A_{\epsilon,4}   \leq& \|\Lambda^sJ_\epsilon  u\|_{L^2}\|[J_\epsilon,(u\cdot \nabla)]\Lambda^s u]\|_{L^2} + \|\Lambda^sJ_\epsilon \gamma\|_{L^2}\|[J_\epsilon,(u\cdot \nabla)]\Lambda^s \gamma\|_{L^2}\\
 & +\|\textrm{div}u\|_{L^\infty}\|\Lambda^sJ_\epsilon  u\|^2_{L^2}+\|\textrm{div}u\|_{L^\infty} \|\Lambda^sJ_\epsilon  \gamma\|^2_{L^2}\\
\leq& \|\nabla u\|_{L^\infty}(\| u\|_{H^s}^2 + \| \gamma\|_{H^s}^2).
\end{split}
\end{equation}
To deal with the first term involved in $A_{\epsilon,5} $, we need
\begin{lemma}[\cite{grafakos2014kato}]\label{lem:moser}
Let $p\in [1,\infty)$, $p,p_i,q_i\in (1,\infty]$, $i=1,2$ such that $
\frac{1}{p}=\frac{1}{p_1}+\frac{1}{q_1}=\frac{1}{p_2}+\frac{1}{q_2}$.
Then for any $s>0$, there exists a constant $c>0$ such that
$$
\|\Lambda^s(fg)\|_{L^p}\leq C(\|\Lambda^sf\|_{L^{p_1}}\|g\|_{L^{q_1}}+\| f\|_{L^{p_2}}\|\Lambda^sg\|_{L^{q_2}}).
$$
\end{lemma}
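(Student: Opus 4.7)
The plan is to prove this Kato-Ponce style fractional Leibniz rule by Bony's paraproduct decomposition, leveraging the Littlewood-Paley blocks $\{\triangle_j\}_{j\geq -1}$ already introduced in the paper. Writing
$$
fg = T_fg + T_gf + R(f,g),
$$
where $T_fg = \sum_j S_{j-1}f\,\triangle_j g$ is the low-high paraproduct and $R(f,g) = \sum_{|j-k|\leq 1}\triangle_j f\,\triangle_k g$ is the diagonal remainder, I would estimate $\|\Lambda^s(fg)\|_{L^p}$ by treating each of these three pieces separately and then summing.

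For the two paraproduct terms, the crucial feature is that the frequencies of $S_{j-1}f$ and $\triangle_j g$ are separated by a dyadic gap, so each dyadic piece of $T_fg$ is spectrally localized in an annulus of size $2^j$. Bernstein's inequality then converts the operator $\Lambda^s$ into a factor $2^{js}$, and combining H\"older's inequality (with exponents $(p_2,q_2)$) on each frequency piece with the Littlewood-Paley square-function characterization $\|h\|_{L^p}\approx \|(\sum_j |\triangle_j h|^2)^{1/2}\|_{L^p}$ for $p\in(1,\infty)$ yields the bound $\|\Lambda^s T_fg\|_{L^p}\lesssim \|f\|_{L^{p_2}}\|\Lambda^sg\|_{L^{q_2}}$, and symmetrically $\|\Lambda^s T_gf\|_{L^p}\lesssim \|\Lambda^sf\|_{L^{p_1}}\|g\|_{L^{q_1}}$.

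For the remainder $R(f,g)$, both factors sit at comparable frequency $2^j$, so the output of $\triangle_k(\triangle_jf\,\triangle_jg)$ need not be concentrated at a single scale but is instead supported for all $k \leq j+N_0$. Applying H\"older to get $\|\triangle_k(\triangle_jf\,\triangle_jg)\|_{L^p}\lesssim \|\triangle_jf\|_{L^{p_1}}\|\triangle_jg\|_{L^{q_1}}$, inserting the weight $2^{ks}$, and using the geometric summation $\sum_{k\leq j+N_0} 2^{(k-j)s}\lesssim 1$ (which is exactly where the hypothesis $s>0$ is essential) reduces the problem to a bilinear $\ell^2$ sum that is controlled via Cauchy-Schwarz and another square-function identification by $\|\Lambda^sf\|_{L^{p_1}}\|g\|_{L^{q_1}}$.

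The main obstacle is bookkeeping the remainder term $R(f,g)$: unlike the two paraproducts, its output is not neatly frequency-localized, so the positivity $s>0$ must be invoked through the geometric series, and one must be careful to distribute the H\"older exponents so that \emph{both} admissible choices $(p_1,q_1)$ and $(p_2,q_2)$ appear on the right-hand side of the final inequality (one from each paraproduct, with the remainder being flexible). An alternative clean route is to recognize the symbol $\sigma(\xi,\eta)=(1+|\xi+\eta|^2)^{s/2}(1+|\xi|^2)^{-s/2}$ as a Coifman-Meyer bilinear multiplier and invoke their boundedness theorem directly, but the paraproduct approach is self-contained once the Littlewood-Paley machinery already set up in Section 1.1 is in hand.
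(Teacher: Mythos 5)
First, a point of comparison: the paper does not prove this lemma at all — it is quoted as a black box from \cite{grafakos2014kato} — so there is no internal argument to match; your proposal supplies a proof where the paper supplies only a citation. Your paraproduct scheme is the standard route to the Kato--Ponce inequality, and its core is sound: the two paraproducts are frequency-localized in dyadic annuli, so $\Lambda^s$ trades for $2^{js}$; the low-frequency factor $S_{j-1}$ is dominated pointwise by a maximal function and absorbs the $L^\infty$-type exponent; the square-function characterization of $L^p$ closes those two estimates; and the remainder is exactly where $s>0$ enters through the geometric sum $\sum_{k\le j+N_0}2^{(k-j)s}\lesssim 1$. For the exponent configurations the paper actually uses (essentially $p=2$ with the $L^\infty$ norm always on the \emph{undifferentiated} factor), your argument is complete.

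Two caveats keep it from proving the lemma exactly as stated. (i) The statement allows $p=1$ and allows the differentiated factor to carry the $L^\infty$ exponent (e.g. $p_1=\infty$ or $q_2=\infty$). Your argument excludes both: the equivalence $\|h\|_{L^p}\approx\bigl\|\bigl(\sum_j|\triangle_jh|^2\bigr)^{1/2}\bigr\|_{L^p}$ you rely on fails at $p=1$, and when the $\Lambda^s$-factor is measured in $L^\infty$ the weighted square function $\bigl(\sum_j 2^{2js}|\triangle_jg|^2\bigr)^{1/2}$ is \emph{not} controlled by $\|\Lambda^sg\|_{L^\infty}$ (that would amount to $L^\infty\hookrightarrow B^{0}_{\infty,2}$ at the level of $\Lambda^s g$, which is false); these endpoint cases are precisely the contribution of Grafakos--Oh and need a different mechanism. (ii) Your side remark on the alternative route is inaccurate as stated: the symbol $\sigma(\xi,\eta)=(1+|\xi+\eta|^2)^{s/2}(1+|\xi|^2)^{-s/2}$ is unbounded in the region $|\eta|\gg|\xi|$ and hence is not a Coifman--Meyer multiplier on all of frequency space; the Coifman--Meyer theorem applies only to the individual paraproduct pieces, i.e. after the very decomposition you already perform, so it is not a way to bypass it.
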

Due to the facts that $\Lambda^{-2}\textrm{div}$ is a $S^{-1}$-multiplier, $\Lambda^{-2}$ is a $S^{-2}$-multiplier and $H^{s-1}(\mathbb{T}^d)\subset H^{s-2}(\mathbb{T}^d)$, we get from Lemma \ref{lem:moser} that
\begin{equation*}
\begin{split}
&(\Lambda^sJ_\epsilon u,\Lambda^sJ_\epsilon \mathscr{L}_1(u) )_{L^2}\\
&\quad \leq  C\| u\|_{H^s}(\|\frac{1}{2}|\nabla u|^2\textbf{I}_d+ \nabla u \nabla u+\nabla u(\nabla u)^T-(\nabla u)^T  \nabla u-(\textrm{div}u) \nabla u\|_{H^{s-1}} \\
&\quad\quad+\left\|(\textrm{div}u) u +u\cdot ( \nabla u)^T\right\|_{H^{s-2}})\\
&\quad \leq   C\| u\|_{H^s}(\|\nabla u\|_{L^\infty} \|\nabla u\|_{H^{s-1}}+ \|\textrm{div}u \|_{L^\infty} \|\nabla u \|_{H^{s-1}} + \|\textrm{div}u \|_{H^{s-1}} \|\nabla u \|_{L^\infty}\\
&\quad\quad+\|\textrm{div}u\|_{L^\infty}\| u\|_{H^{s-1}} +\|\textrm{div}u\|_{H^{s-1}}\| u\|_{L^\infty} +\|u\|_{L^\infty}\| \nabla u \|_{H^{s-1}}) \\
&\quad \leq   C\| u\|_{W^{1,\infty}} \| u\|_{H^s} ^2.
\end{split}
\end{equation*}
For the second term involved in $A_{\epsilon,5} $, we have
\begin{equation*}
\begin{split}
(\Lambda^sJ_\epsilon u,\Lambda^sJ_\epsilon \mathscr{L}_2(\gamma) )_{L^2} &\leq   C\| u\|_{H^s} \|\frac{1}{2} (\gamma^2+ |\nabla\gamma|^2 )\textbf{I}_d-(\nabla\gamma)^T\nabla\gamma \|_{H^{s-1}}\\
&\leq   C\| u\|_{H^s}  (\|\gamma\|_{L^\infty}   \|\gamma\|_{H^{s-1}}+\|\nabla\gamma\|_{L^\infty}    \|\nabla\gamma\|_{H^{s-1}})\\
&\leq   C\| \gamma\|_{W^{1,\infty}}  \| u\|_{H^s}  \|\gamma\|_{H^{s}} .
\end{split}
\end{equation*}
It follows from the last two estimates that
\begin{equation}\label{2.10}
\begin{split}
A_{\epsilon,5} \leq  C\left(\| u\|_{W^{1,\infty}} \| u\|_{H^s} ^2+\| \gamma\|_{W^{1,\infty}}  \| u\|_{H^s}  \|\gamma\|_{H^{s}}\right).
\end{split}
\end{equation}
For $A_{\epsilon,6} $, one can estimate as
\begin{equation}\label{2.11}
\begin{split}
A_{\epsilon,6}
\leq& C\|\gamma\|_{H^{s}} (\|\nabla\gamma\|_{L^\infty} \|\nabla u\|_{H^{s-1}} +\|\nabla\gamma\| _{H^{s-1}}\|\nabla u\| _{L^\infty} +\| \textrm{div}u\|_{L^\infty} \|\nabla \gamma\|_{H^{s-1}}\\
 &+\| \textrm{div}u\|_{H^{s-1}} \|\nabla \gamma\| _{L^\infty}+\| \textrm{div}u\|_{H^{s-1}} \| \gamma\| _{L^\infty}+\| \textrm{div}u\| _{L^\infty}\| \gamma\|_{H^{s-1}} )\\
\leq& C(\| \gamma\|_{W^{1,\infty}}\|u\|_{H^{s}}\|\gamma\|_{H^{s}}+\|\nabla u\| _{L^\infty}\|\gamma\|_{H^{s}}^2).
\end{split}
\end{equation}

Putting the estimates \eqref{2.8}-\eqref{2.11} into \eqref{2.7},  using the H\"{o}lder inequality and the definition of $\mathbbm{t}_{2,n}$, the third term on the R.H.S. of \eqref{2.4} can be estimated as
\begin{equation} \label{2.12}
\begin{split}
 &\mathbb{E}\sup_{t\in[0,\mathbbm{t}_{2,n}\wedge k]}\left| \int_0^t(\Lambda^sJ_\epsilon\textbf{y},\Lambda^sJ_\epsilon(B(\textbf{y},\textbf{y}) +J_\epsilon F(\textbf{y})))_{\mathbb{L}^2}\mathrm{d} r \right|\\
 &\quad\leq C\mathbb{E}\int_0^{\mathbbm{t}_{2,n}\wedge k}(\|u\|_{W^{1,\infty}}+\| \gamma\|_{W^{1,\infty}})(\|u\|_{H^{s}}^2+\|\gamma\|_{H^{s}}^2)\mathrm{d} r \\
 &\quad\leq Cn\mathbb{E}\int_0^{\mathbbm{t}_{2,n}\wedge k} \|\textbf{y}(r)\|_{\mathbb{H}^{s}}^2 \mathrm{d} r.
\end{split}
\end{equation}
Thereby, we get from \eqref{2.4}-\eqref{2.6} and \eqref{2.12} that
\begin{equation} \label{2.13}
\begin{split}
 \mathbb{E}\left(1+\sup_{t\in [0,\mathbbm{t}_{2,n}\wedge k]}\|J_\epsilon\textbf{y}(t) \|_{\mathbb{H}^{s}}^2\right)\leq& 1+2\mathbb{E}\|\textbf{y}(0)\|_{\mathbb{H}^s}^2+C_{R}(\chi ^2( n)+n)\\
 &\times\int_0^{ k} (\mu^2  (r)+1)\mathbb{E}\left(1+\sup_{r'\in [0,\mathbbm{t}_{2,n}\wedge r]}\|\textbf{y} (r')\|_{\mathbb{H}^{s}}^2\right) \mathrm{d}r.
\end{split}
\end{equation}
Notice that the R.H.S. of \eqref{2.13} is independent of $\epsilon$, and the convergence $J_\epsilon \textbf{y}\rightarrow \textbf{y}$ holds strongly in $\mathcal {C}([0,T];\mathbb{H}^s(\mathbb{T}))$, $\mathbb{P}$-almost surely. After taking the limit as $\epsilon\rightarrow 0$ in \eqref{2.13} by applying the Dominated Convergence Theorem,  we get by applying the Gronwall inequality that
\begin{equation*}
\begin{split}
   \mathbb{E}\sup_{t\in [0,\mathbbm{t}_{2,n}\wedge k]}\| \textbf{y}(t) \|_{\mathbb{H}^{s}}^2\leq
   (1+2\mathbb{E}\|\textbf{y}(0)\|_{\mathbb{H}^s}^2)e^ {C_{R}(\chi ^2( n)+n)\int_0^{ k} (\mu^2  (r)+1)\mathrm{d}r },
\end{split}
\end{equation*}
which proves the claim in \eqref{2.2}.  From \eqref{2.2} and the monotonicity of stopping times $\mathbbm{t}_{1,m}$, we deduce that for all $n,k\in \mathbb{N}^+$
\begin{equation*}
\begin{split}
 \mathbb{P}(\mathbbm{t}_{2,n}\wedge k\leq \mathbbm{t}_{1})&\geq \mathbb{P}\left(\bigcup_{m\geq1}\{\mathbbm{t}_{2,n}\wedge k\leq \mathbbm{t}_{1,m}\}\right)\\
 & \geq \mathbb{P}\left(\bigcup_{m\geq1}\{ \sup_{t\in [0,\mathbbm{t}_{2,n}\wedge k]}\| \textbf{y}(t) \|_{\mathbb{H}^{s}} <m       \}\right)\\
 &= \mathbb{P}\left( \sup_{t\in [0,\mathbbm{t}_{2,n}\wedge k]}\| \textbf{y}(t) \|_{\mathbb{H}^{s}} <\infty       \right) =1,
\end{split}
\end{equation*}
which means that all the sets $\{\mathbbm{t}_{2,n}\wedge k\leq \mathbbm{t}_{1}\}_{n,k\geq 1}$ have full measure. Therefore, we deduce from the nondecreasing property of $\mathbbm{t}_{2,n}$ that
\begin{equation} \label{2.14}
\begin{split}
 \mathbb{P}(\mathbbm{t}_{2} \leq \mathbbm{t}_{1})&=  \mathbb{P}\left(\lim_{n\rightarrow\infty}\mathbbm{t}_{2,n} \leq \mathbbm{t}_{1}\right)=\mathbb{P}\left(\bigcap_{n\geq1} \{\mathbbm{t}_{2,n} \leq \mathbbm{t}_{1}\}\right) \\
 &=\mathbb{P}\left(\bigcap_{n\geq1}\bigcap_{k\geq1}\{\mathbbm{t}_{2,n}\wedge k\leq \mathbbm{t}_{1}\}\right)= 1.
\end{split}
\end{equation}
 By  \eqref{2.1} and  \eqref{2.14}, we get
$
\mathbb{P}(\mathbbm{t}_{2} = \mathbbm{t}_{1})=1.
$

{\textsf{Step 2:}} We verify that $\mathbbm{t}_1=\mathbbm{t}_2\triangleq\mathbbm{t}^*$ is actually the maximal existence time $\mathbbm{t}$ of solution $\textbf{y}$.  Otherwise, we assume that $\mathbbm{t}^*< \mathbbm{t}$ on $\{\mathbbm{t}<\infty\}$. Then by the uniqueness of solution, the pair $(\textbf{y},\mathbbm{t}^*)$ is a local strong pathwise solution. Note that for a given $n>0$, we may have $\mathbb{P}(\mathbbm{t}_{2,n}=0)\neq0$. However, for almost every $\omega\in \Omega$, there exists $n>0$ such that $\mathbbm{t}_{2,n}(\omega)>0$. In terms of the fact of $\mathbbm{t}_{2,n}\nearrow \mathbbm{t}_2$, we deduce from the Sobolev embedding $\mathbb{H}^s(\mathbb{T}^d)\subset \mathbb{W}^{1,\infty}(\mathbb{T}^d)$ and the  continuity of $\textbf{y}(t)$ on $\mathbb{W}^{1,\infty}$ that, for any $n\geq 1$,
\begin{equation*}
\begin{split}
n =\sup_{t\in [0,\mathbbm{t}_{2,n}]}\|\textbf{y}(t)\| _{\mathbb{W}^{1,\infty}} &\lesssim\sup_{t\in [0,\mathbbm{t}_{2,n}]}\|\textbf{y}(t)\| _{\mathbb{H}^s} \\
&\lesssim  \sup_{t\in [0,\mathbbm{t}^*]}\|\textbf{y}(t)\| _{\mathbb{H}^s} \leq C, \quad \textrm{on} ~~ \{\mathbbm{t}<\infty\},
\end{split}
\end{equation*}
for some positive constant $C$ independent of $n$. This is a contradiction, and we get from the uniqueness of solution that $\mathbbm{t}^*= \mathbbm{t}$ $\mathbb{P}$-almost surely. Moreover, by using the definitions of $\mathbbm{t}_1$ and $\mathbbm{t}_2$, we see that
$
 \textbf{1}_{\{\limsup \limits_{t\rightarrow\mathbbm{t}}\| \textbf{y}(t)\|_{\mathbb{H}^s}=\infty\}} =\textbf{1}_{\{\limsup\limits _{t\rightarrow\mathbbm{t}}\| \textbf{y}(t)\|_{\mathbb{W}^{1,\infty}}=\infty\}}$,  $\mathbb{P}$-a.s. The proof of Theorem \ref{th1}(2) is now completed.
\end{proof}

\subsection{Regularization of SMEP2}

For each $R>0$, we introduce the function
$$
\varpi_R (x) \triangleq \varpi \left(\frac{x}{R}\right), \quad \textrm{for all} ~ x\geq0
$$
with support in $[0,2R]$, where $\varpi:[0,\infty)\mapsto [0,1]$ is a smooth decreasing function given by
\begin{equation*}
\varpi (x) \triangleq
\begin{cases}
1,&0\leq x \leq 1,\cr
0,& x > 2,
\end{cases}\quad \textrm{and}\quad \sup_{x\in [0,\infty)}|\varpi'(x)| \leq C<\infty.
\end{equation*}
For example, the function $\varpi (\cdot)$ can be obtained by mollifying the   function $f(x)=1$ when $0\leq x \leq 1$; $f(x)=0$ when $x>1$. Then the truncated SMEP2 can be formulated by
\begin{equation}
\left\{
\begin{aligned}\label{2.15}
&\mathrm{d}\textbf{y}+ \varpi_R  (\|\textbf{y}\|_{\mathbb{W}^{1,\infty}}) B(\textbf{y},\textbf{y})\mathrm{d} t\\
&\quad =\varpi_R  (\|\textbf{y}\|_{\mathbb{W}^{1,\infty}})F(\textbf{y})\mathrm{d} t+\varpi _R (\|\textbf{y}\|_{\mathbb{W}^{1,\infty}})G(t,\textbf{y}) \mathrm{d}\mathcal {W},\quad t>0,~x\in \mathbb{T}^d,\\
&\textbf{y}(\omega,0,x)=\textbf{y}_0(\omega,x) ,\quad x\in \mathbb{T}^d.
\end{aligned}
\right.
\end{equation}

Unlike the fluid models such as the Euler equations and Navier-Stokes equations, the SMEP2 do not possess the cancelation property, that is,
 $
(B(\textbf{y},\textbf{y}),\textbf{y})_{L^2}=0.
 $
Hence the existence and uniqueness of global approximate solutions to \eqref{2.15} can not be guaranteed by classical Garlekin approximation method.

We overcome the difficulty by regularizing the convection term $B(\textbf{y},\textbf{y})$ and considering
\begin{equation}
\left\{
\begin{aligned}\label{2.16}
&\mathrm{d}\textbf{y}=\mathcal {F}_{R,\epsilon}(\textbf{y})\mathrm{d} t+\mathcal {G}_R(t,\textbf{y})\mathrm{d}\mathcal {W},\\
& \mathcal {F}_{R,\epsilon}(\textbf{y}) = -\varpi_R  (\|\textbf{y}\|_{\mathbb{W}^{1,\infty}}) J_\epsilon B(J_\epsilon\textbf{y},J_\epsilon\textbf{y}) + \varpi_R  (\|\textbf{y}\|_{\mathbb{W}^{1,\infty}})F(\textbf{y}),\\
& \mathcal {G}_R(t,\textbf{y}) = \varpi_R  (\|\textbf{y}\|_{\mathbb{W}^{1,\infty}})G(t,\textbf{y}) ,\\
& J_\epsilon B(J_\epsilon\textbf{y},J_\epsilon\textbf{y})= (J_\epsilon[
   ( J_\epsilon u\cdot \nabla) J_\epsilon u],J_\epsilon[(
     J_\epsilon u\cdot \nabla) J_\epsilon\gamma])^T,\\
& \textbf{y}(\omega,0)=\textbf{y}_0(\omega),
\end{aligned}
\right.
\end{equation}
where $J_\epsilon$ is the classical Fredriches mollifier. By using  Assumption \ref{assume}, one can verify that the coefficients in \eqref{2.16}$_1$ are actually locally bounded:
\begin{equation*}
\begin{split}
\|\mathcal {G}_R(t,\textbf{y})\|_{\mathcal {L}_2(\mathfrak{U}_1,\mathbb{H}^{s})} &\leq \chi(R)\mu (t)(1+\|\textbf{y}\|_{\mathbb{H}^{s}}),\\
\|\mathcal {F}_{R,\epsilon}(\textbf{y})\|_{\mathcal {L}_2(\mathfrak{U}_1,\mathbb{H}^{s})} &\leq  (\frac{C}{\epsilon}+1)R\|\textbf{y}\|_{\mathbb{H}^{s}},
\end{split}
\end{equation*}
for any $t\leq T$ and some constant $C>0$, which indicates that \eqref{2.16} can be regarded as a system of SDEs in Hilbert spaces $\mathbb{H}^s(\mathbb{T}^d)$. Moreover, by Assumption \ref{assume}, it is also not difficult to verify that
$$
\mathcal {F}_{R,\epsilon}(\textbf{y}):\mathbb{H}^{s}(\mathbb{T}^d)\mapsto \mathbb{H}^{s}(\mathbb{T}^d),\quad \mathcal {G}_R(t,\textbf{y}): \mathbb{H}^{s}(\mathbb{T}^d)\mapsto L_2(\mathfrak{U}_1,\mathbb{H}^{s}(\mathbb{T}^d))
$$
both are locally Lipchitz continuous functionals. Therefore, by classcial theory for SDEs in Hilbert spaces, there exits a time $T_{R,\epsilon}>0$ such that the SDEs \eqref{2.16} admits a local strong solution $\textbf{y}_{R,\epsilon}\in \mathcal {C}([0,T_{R,\epsilon});\mathbb{H}^{s}(\mathbb{T}^d))$ $\mathbb{P}$-almost surely. In a similar manner as we did in the proof of Theorem \ref{th1}(2), one can show that if $T_{R,\epsilon}<\infty$, then $
\limsup _{t\rightarrow T_{R,\epsilon}}\| \textbf{y}_{R,\epsilon}(t)\|_{\mathbb{W}^{1,\infty}}=\infty$, $\mathbb{P}$-a.s. However,  due to the appearance of the cut-off function $\varpi_R  (\|\textbf{y}_{R,\epsilon}\|_{\mathbb{W}^{1,\infty}})$, the solutions $\textbf{y}_{R,\epsilon}$ exists globally.

We summarize the above discussion into the following lemma:
\begin{lemma} \label{lem:2.1}
Let $s> 4+\frac{d}{2}$, $d\geq 1$, $R\geq1$  and $\epsilon>0$. Suppose that $\textbf{y}_0$ is a $\mathcal {F}_0$-measurable random variable in $L^2(\Omega;\mathbb{H}^s(\mathbb{T}^d))$, and the conditions (A1)-(A2) hold. Then for any given $T>0$, the SDEs \eqref{2.16} admits a unique strong solution $\textbf{y}_{R,\epsilon} \in \mathcal {C}([0,T];\mathbb{H}^s(\mathbb{T}^d))$, $\mathbb{P}$-almost surely.
\end{lemma}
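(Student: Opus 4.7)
The plan is to regard \eqref{2.16} as an abstract stochastic evolution equation in the separable Hilbert space $\mathbb{H}^s(\mathbb{T}^d)$ whose drift and diffusion coefficients are locally Lipschitz and, more importantly, satisfy a \emph{linear} growth bound with constants depending on $R$ and $\epsilon$ but not on the solution size. Once these two properties are verified, the classical existence-uniqueness theorem for Hilbert-space-valued SDEs with locally Lipschitz coefficients (cf.\ Pr\'{e}v\^{o}t-R\"{o}ckner or Da Prato-Zabczyk) furnishes a unique maximal strong solution $\textbf{y}_{R,\epsilon}\in C([0,T_{R,\epsilon});\mathbb{H}^s(\mathbb{T}^d))$ together with the standard blow-up alternative in $\mathbb{H}^s$, and a direct It\^{o}-BDG-Gr\"{o}nwall argument based on the linear growth bound rules out finite-time explosion, giving $T_{R,\epsilon}=+\infty$ $\mathbb{P}$-a.s.

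For the local Lipschitz property, the scalar cut-off $\textbf{y}\mapsto \varpi_R(\|\textbf{y}\|_{\mathbb{W}^{1,\infty}})$ is Lipschitz on $\mathbb{H}^s$-balls since $\mathbb{H}^s\hookrightarrow\mathbb{W}^{1,\infty}$ (as $s>1+\frac{d}{2}$) and $\varpi'$ is uniformly bounded. For the mollified convection $J_\epsilon B(J_\epsilon\textbf{y},J_\epsilon\textbf{y})$, the bound \eqref{1.q2} makes $J_\epsilon$ a bounded operator between arbitrary Sobolev scales with operator norm controlled by a power of $\epsilon^{-1}$, so the bilinear map is smooth from $\mathbb{H}^s$ into $\mathbb{H}^s$ with an explicit $\epsilon$-dependent Lipschitz constant. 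For the non-local drift $\varpi_R F(\textbf{y})$ defined in \eqref{1.7}, every summand is $\Lambda^{-2}$ (or $\Lambda^{-2}\textrm{div}$, an $S^{-2}$ resp.\ $S^{-1}$ multiplier) applied to a quadratic expression in $u,\gamma$ and their first derivatives; combining the two-derivative gain with Moser's tame estimate (Lemma \ref{lem:moser}) yields local Lipschitz continuity into $\mathbb{H}^s$. Finally Assumption \ref{assume}(2), together with the two-derivative smoothing $\Lambda^{-2}:H^{s-2}\to H^s$, gives the local Lipschitz property of $\mathcal{G}_R$ into $\mathcal{L}_2(\mathfrak{A},\mathbb{H}^s)$.

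To upgrade the local solution to a global one, I apply It\^{o}'s formula to $\|\textbf{y}_{R,\epsilon}(t)\|_{\mathbb{H}^s}^2$ on $[0,T\wedge T_{R,\epsilon}\wedge \tau_n]$, where $\tau_n=\inf\{t\geq 0:\,\|\textbf{y}_{R,\epsilon}(t)\|_{\mathbb{H}^s}\geq n\}$. Using the linear growth bounds already recorded immediately above the lemma, the Burkholder-Davis-Gundy inequality, and Gr\"{o}nwall's lemma, one obtains
\[
\mathbb{E}\sup_{t\in[0,T\wedge\tau_n]}\|\textbf{y}_{R,\epsilon}(t)\|_{\mathbb{H}^s}^2 \,\leq\, C(R,\epsilon,T)\bigl(1+\mathbb{E}\|\textbf{y}_0\|_{\mathbb{H}^s}^2\bigr),
\]
with a constant independent of $n$; sending $n\to\infty$ via Fatou contradicts the $\mathbb{H}^s$-blow-up alternative on $\{T_{R,\epsilon}<T\}$ and hence forces $T_{R,\epsilon}=+\infty$ $\mathbb{P}$-a.s.\ for every $T>0$. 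The only genuinely delicate step is the local Lipschitz verification for $F(\textbf{y})$ in $\mathbb{H}^s$; this calculation parallels the commutator/Moser estimates that will be carried out in detail for the energy estimate in \eqref{2.7}-\eqref{2.11}, and relies crucially on the two-derivative smoothing of $\Lambda^{-2}$.
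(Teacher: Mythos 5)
Your proposal is correct and follows essentially the same route as the paper: both treat \eqref{2.16} as an SDE in $\mathbb{H}^s(\mathbb{T}^d)$ whose coefficients, thanks to the $\mathbb{W}^{1,\infty}$ cut-off and the mollifier bound \eqref{1.q2}, are locally Lipschitz and of globally linear growth (with constants depending on $R,\epsilon$), so standard Hilbert-space SDE theory gives a unique local solution which is then continued globally. The only cosmetic difference is the no-blow-up step: you close it with an It\^{o}--BDG--Gr\"{o}nwall moment bound in $\mathbb{H}^s$ against the $\mathbb{H}^s$ blow-up alternative, while the paper invokes the $\mathbb{W}^{1,\infty}$ blow-up criterion of Theorem \ref{th1}(2) and observes that the cut-off forbids $\mathbb{W}^{1,\infty}$ explosion; both rest on exactly the same linear-growth structure.
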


\subsection{Momentum estimates}
In order to taking the limit $R\rightarrow \infty$ and $\epsilon\rightarrow 0$ in suitable sense for  approximation solutions $\{\textbf{y}_{R,\epsilon}\}_{R\geq1,0<\epsilon <1}$, we shall first establish some a priori uniform estimates for approximate solutions for any given $R\geq1$.

\begin{lemma} \label{lem:2.2}
Let $s> 4+\frac{d}{2} $, $d\geq1$ and $R>1$. Suppose that the conditions in Assumption \ref{assume} hold, and $\textbf{y}_0\in L^r(\Omega; \mathbb{H}^s(\mathbb{T}^d))$ is a $\mathcal {F}_0$-measurable random variable. For any $T>0$, let $ \textbf{y}_{R,\epsilon}$ be the unique strong solution to \eqref{2.16}. Then for any $p\geq 3$, $\alpha\in (0,\frac{1}{2}) $ and $\beta\in (0,\frac{1}{2}-\frac{1}{p})$, we have
\begin{equation}\label{2.18}
\begin{split}
\textbf{y}_{R,\epsilon}\in & L^p(\Omega;\mathcal {C}([0,T];\mathbb{H}^s(\mathbb{T}^d))) \bigcap L^p(\Omega;\mathbb{W}^{\alpha,p}(0,T;\mathbb{H}^{s-1}(\mathbb{T}^d) )) \\
& \bigcap L^p(\Omega;\mathcal {C}^\beta([0,T];\mathbb{H}^{s-1}(\mathbb{T}^d) )),\quad \forall \epsilon \in (0,1).
\end{split}
\end{equation}
Moreover, there exists some positive constant $C$ independent of $\epsilon$ such that
\begin{equation}\label{2.19}
\begin{split}
\sup_{0<\epsilon<1}\mathbb{E}\left\|\int_0^\cdot\mathcal {G}_R(r,\textbf{y}_{R,\epsilon})\mathrm{d}\mathcal {W}\right\|_{\mathbb{W}^{\alpha,p}(0,T;\mathbb{H}^{s-1} )} ^p &\leq C,\\
\sup_{0<\epsilon<1}\mathbb{E}\left\|\textbf{y}_{R,\epsilon}-\int_0^\cdot\mathcal {G}_R(r,\textbf{y}_{R,\epsilon})\mathrm{d}\mathcal {W}\right\|_{\mathbb{W}^{1,p}(0,T;\mathbb{H}^{s-1} )} ^p &\leq C.
\end{split}
\end{equation}
\end{lemma}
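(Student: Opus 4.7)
The plan is to establish all three regularity statements via an Itô-based energy estimate for the mollified system, combined with the standard time-regularity result for stochastic integrals.

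First I would apply Itô's formula in the Hilbert space $\mathbb{H}^s(\mathbb{T}^d)$ to the functional $\textbf{y}\mapsto \|\textbf{y}\|_{\mathbb{H}^s}^{2p}$, evaluated along $\textbf{y}_{R,\epsilon}$. For fixed $\epsilon>0$, the mollification makes $J_\epsilon B(J_\epsilon \textbf{y},J_\epsilon\textbf{y})$ take values in $\mathbb{H}^s$, so the formula applies. The key point is that the truncation $\varpi_R(\|\textbf{y}_{R,\epsilon}\|_{\mathbb{W}^{1,\infty}})$ vanishes outside $\{\|\textbf{y}_{R,\epsilon}\|_{\mathbb{W}^{1,\infty}}\le 2R\}$, so the $\mathbb{W}^{1,\infty}$-quantities appearing in the nonlinear estimates \eqref{2.8}--\eqref{2.11} can be bounded by a constant depending only on $R$. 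Combined with the commutator identity \eqref{1.q4} (which is uniform in $\epsilon$) and Moser's estimate (Lemma~\ref{lem:moser}) applied to $F(\textbf{y})$, this yields
$$
2p\bigl(\Lambda^s \textbf{y}_{R,\epsilon},\,\Lambda^s \mathcal{F}_{R,\epsilon}(\textbf{y}_{R,\epsilon})\bigr)_{\mathbb{L}^2}\|\textbf{y}_{R,\epsilon}\|_{\mathbb{H}^s}^{2p-2}\;\lesssim\; C(R)\bigl(1+\|\textbf{y}_{R,\epsilon}\|_{\mathbb{H}^s}^{2p}\bigr)
$$
with a constant \emph{independent of $\epsilon$}. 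A parallel bound for $\|\mathcal{G}_R(t,\textbf{y}_{R,\epsilon})\|^2_{L_2(\mathfrak{A};\mathbb{H}^s)}$ follows from \eqref{a1} together with $\varpi_R\le 1$ supported where $\|\textbf{y}\|_{\mathbb{W}^{1,\infty}}\le 2R$. After applying the BDG inequality to absorb a $\frac{1}{2}\mathbb{E}\sup_t\|\textbf{y}_{R,\epsilon}\|^{2p}_{\mathbb{H}^s}$ into the left-hand side and invoking Grönwall's lemma, one gets
$$
\sup_{0<\epsilon<1}\mathbb{E}\sup_{t\in[0,T]}\|\textbf{y}_{R,\epsilon}(t)\|_{\mathbb{H}^s}^{p}\;\le\; C(R,T,p)\bigl(1+\mathbb{E}\|\textbf{y}_0\|_{\mathbb{H}^s}^p\bigr),
$$
which is the $C([0,T];\mathbb{H}^s)$ part of \eqref{2.18}.

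Next, I would decompose $\textbf{y}_{R,\epsilon}(t)=\textbf{y}_0+\int_0^t\mathcal{F}_{R,\epsilon}(\textbf{y}_{R,\epsilon}(r))\,dr+\int_0^t\mathcal{G}_R(r,\textbf{y}_{R,\epsilon}(r))\,d\mathcal{W}(r)$ and treat each summand separately. For the drift part, $\mathcal{F}_{R,\epsilon}(\textbf{y}_{R,\epsilon})$ takes values in $\mathbb{H}^{s-1}$ (the loss of one derivative from $B$ is the only issue, and Moser estimates control $F$), with $\mathbb{H}^{s-1}$-norm bounded by $C(R)\|\textbf{y}_{R,\epsilon}\|_{\mathbb{H}^s}^2$; combined with the Step~1 bound this places $\int_0^\cdot\mathcal{F}_{R,\epsilon}\,dr$ in $L^p(\Omega;\mathbb{W}^{1,p}(0,T;\mathbb{H}^{s-1}))$, uniformly in $\epsilon$, which gives the second inequality in \eqref{2.19}. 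For the stochastic integral, I would invoke the classical fractional regularity result for Itô integrals (see, e.g., \cite{41}): for any $\alpha\in(0,\tfrac12)$ and $p\ge 2$,
$$
\mathbb{E}\biggl\|\int_0^\cdot H(r)\,d\mathcal{W}(r)\biggr\|_{\mathbb{W}^{\alpha,p}(0,T;X)}^{p}\;\lesssim\;\mathbb{E}\int_0^T\|H(r)\|_{L_2(\mathfrak{A};X)}^{p}\,dr,
$$
applied with $X=\mathbb{H}^{s-1}$ (in fact $\mathbb{H}^s$). Using the bound on $\mathcal{G}_R$ and Step~1, this yields the first inequality in \eqref{2.19}.

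Finally, the Hölder part of \eqref{2.18} follows by summing the two decomposition pieces: the drift lies in $\mathbb{W}^{1,p}(0,T;\mathbb{H}^{s-1})\hookrightarrow C^{1-1/p}([0,T];\mathbb{H}^{s-1})$, and the stochastic integral lies in $\mathbb{W}^{\alpha,p}(0,T;\mathbb{H}^{s-1})\hookrightarrow C^{\alpha-1/p}([0,T];\mathbb{H}^{s-1})$ whenever $\alpha>1/p$. Given $\beta\in(0,\tfrac12-\tfrac{1}{p})$, choose $\alpha\in(\beta+\tfrac{1}{p},\tfrac12)$ to obtain $\textbf{y}_{R,\epsilon}\in L^p(\Omega;C^\beta([0,T];\mathbb{H}^{s-1}))$. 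The main obstacle is the first paragraph: controlling the highest-order contribution from $J_\epsilon B(J_\epsilon\textbf{y}_{R,\epsilon},J_\epsilon\textbf{y}_{R,\epsilon})$ in $\mathbb{H}^s$ uniformly in $\epsilon$. This requires the commutator trick $(\Lambda^s J_\epsilon u,(u\cdot\nabla)\Lambda^s J_\epsilon u)_{L^2}$ plus $(\Lambda^s J_\epsilon u,[J_\epsilon,u\cdot\nabla]\Lambda^s u)_{L^2}$ exactly as in Step~2 of the proof of Theorem~\ref{th1}(2), with the truncation $\varpi_R$ now playing the role previously filled by the stopping time $\mathbbm{t}_{2,n}$ in supplying a uniform $\mathbb{W}^{1,\infty}$-bound.
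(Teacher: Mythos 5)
Your argument is correct, and for the core of the lemma it coincides with the paper's proof: the same Itô energy estimate in $\mathbb{H}^s$ (the paper first treats $\|\cdot\|_{\mathbb{H}^s}^2$ and then $(\|\cdot\|_{\mathbb{H}^s}^2)^{p/2}$, which is only cosmetically different from your one-shot $\|\cdot\|_{\mathbb{H}^s}^{2p}$), the same use of the cut-off $\varpi_R$ in place of a stopping time to supply the uniform $\mathbb{W}^{1,\infty}$ bound, the same commutator-plus-integration-by-parts device to keep the $J_\epsilon B(J_\epsilon\cdot,J_\epsilon\cdot)$ contribution uniform in $\epsilon$, the Moser estimates for $F$, BDG plus Gr\"onwall, and the same decomposition drift/stochastic-integral with the Flandoli--Gatarek fractional estimate to obtain \eqref{2.19}. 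The one place where you genuinely depart from the paper is the H\"older statement in \eqref{2.18}: you deduce $C^\beta([0,T];\mathbb{H}^{s-1})$ pathwise from \eqref{2.19} via the deterministic embeddings $\mathbb{W}^{1,p}(0,T;\mathbb{H}^{s-1})\hookrightarrow C^{1-1/p}$ and $\mathbb{W}^{\alpha,p}(0,T;\mathbb{H}^{s-1})\hookrightarrow C^{\alpha-1/p}$ with $\alpha p>1$ and $\alpha\in(\beta+\tfrac1p,\tfrac12)$, whereas the paper instead proves the increment bound $\mathbb{E}\|\textbf{y}_{R,\epsilon}(t)-\textbf{y}_{R,\epsilon}(r)\|_{\mathbb{H}^{s-1}}^p\leq C|t-r|^{p/2}$ (its \eqref{2.34}) and invokes the Kolmogorov continuity theorem. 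Both routes are standard and give the same range $\beta\in(0,\tfrac12-\tfrac1p)$ with constants independent of $\epsilon$; your embedding argument recycles \eqref{2.19} directly and avoids a separate increment computation, while the paper's Kolmogorov route needs only the (cheaper) moment bound on increments and produces a continuous modification without tuning $\alpha$ to $\beta$ and $p$. No gap affects the conclusion.
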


\begin{proof}[\emph{\textbf{Proof.}}] The proof consists of two steps.

{\textsf{Step 1}}: By applying the Bessel potential $\Lambda^s$ to  \eqref{2.16}$_1$ and then the It\^{o} formula to $\|\textbf{y} _{R,\epsilon}\|_{\mathbb{H}^s}^2=(\Lambda^s\textbf{y}_{R,\epsilon} ,\Lambda^s\textbf{y} _{R,\epsilon}) _{\mathbb{L}^2}$, we get
\begin{equation}\label{2.20}
\begin{split}
\| \textbf{y} (t)\|_{\mathbb{H}^s}^2=& \| \textbf{y} (0)\|_{\mathbb{H}^s}^2-2\int_0^t \int_{\mathbb{T}^d}\varpi_R  (\|\textbf{y}\|_{\mathbb{W}^{1,\infty}}) \Lambda^s \textbf{y} \cdot\Lambda^s    J_\epsilon B(J_\epsilon\textbf{y},J_\epsilon\textbf{y}) \mathrm{d}x\mathrm{d}r\\
&+2\int_0^t \int_{\mathbb{T}^d}\varpi_R  (\|\textbf{y}\|_{\mathbb{W}^{1,\infty}})\Lambda^s \textbf{y} \cdot  F(\textbf{y}) \mathrm{d}x\mathrm{d}r \\
&+\int_0^t\varpi_R^2  (\|\textbf{y}\|_{\mathbb{W}^{1,\infty}}) \| G (r,\textbf{y} )\|_{L_2(\mathfrak{U}_1,\mathbb{H}^{s})}^2\mathrm{d}r \\
   &+ 2\int_0^t\int_{\mathbb{T}^d} \varpi_R  (\|\textbf{y}\|_{\mathbb{W}^{1,\infty}}) \Lambda^s\textbf{y}\cdot\Lambda^sG (r,\textbf{y} )\mathrm{d}x \mathrm{d} \mathcal {W} \\
   =&\| \textbf{y} (0)\|_{\mathbb{H}^s}^2+I_1(t)+I_2(t)+I_3(t)+I_4(t).
\end{split}
\end{equation}
Here and in the proof of Lemma \ref{lem:2.2}, we shall omit the subscripts $R$ and $\epsilon$ of $\textbf{y} _{R,\epsilon}$ for simplicity. For $I_1(t)$, by commutating the operator $\Lambda^s$ with $J_\epsilon$ (cf. \cite[idendity (2.3)]{tang2020noise}) and then integrating by parts, we obtain
\begin{equation}\label{2.21}
\begin{split}
 &\int_{\mathbb{T}^d} \Lambda^s \textbf{y} \cdot\Lambda^s    J_\epsilon B(J_\epsilon\textbf{y},J_\epsilon\textbf{y}) \mathrm{d}x\\
 &\quad = \int_{\mathbb{T}^d}\Lambda^s J_\epsilon u \cdot ([\Lambda^s, J_\epsilon u ] \cdot \nabla J_\epsilon u ) \mathrm{d}x-\frac{1}{2}\int_{\mathbb{T}^d}|\Lambda^s J_\epsilon u|^2\textrm{ div} (J_\epsilon u)   \mathrm{d}x\\
     &\quad\quad +\int_{\mathbb{T}^d}\Lambda^s J_\epsilon \gamma \cdot ([\Lambda^s, J_\epsilon u ] \cdot \nabla J_\epsilon \gamma ) \mathrm{d}x-\frac{1}{2}\int_{\mathbb{T}^d}|\Lambda^s J_\epsilon \gamma|^2\textrm{ div} (J_\epsilon u)\mathrm{d}x.
\end{split}
\end{equation}
By using Lemma \ref{lem:commutator} and the fact that $\|J_\epsilon u \|_{L^\infty}\leq \| u \|_{L^\infty}$ for any $u\in L^\infty(\mathbb{T}^d) $, we have
\begin{equation*}
\begin{split}
&\int_{\mathbb{T}^d}\Lambda^s J_\epsilon u \cdot ([\Lambda^s, J_\epsilon u ] \cdot \nabla J_\epsilon u ) \mathrm{d}x\\
 &\quad\leq C\| u \|_{H^s}(\|\Lambda^s  J_\epsilon u\|_{L^2}\|\nabla J_\epsilon u\|_{L^\infty}+ \|\nabla  J_\epsilon u\|_{L^\infty}\|\Lambda^{s-1} \nabla J_\epsilon u \|_{L^2}) \\
 &\quad\leq C\|\nabla u\|_{L^\infty}\| u \|_{H^s}^2,\\
&\int_{\mathbb{T}^d}|\Lambda^s J_\epsilon u|^2\textrm{ div} (J_\epsilon u)   \mathrm{d}x  \leq \|\textrm{div} (J_\epsilon u) \|_{L^\infty}\|\Lambda^{s} J_\epsilon u \|_{L^2}^2 \\
&\quad\leq C\|\nabla u\|_{L^\infty}\| u \|_{H^s}^2 .
\end{split}
\end{equation*}
Similarly, the third and the forth terms on the R.H.S. of \eqref{2.21} can be estimated as
\begin{equation*}
\begin{split}
&\int_{\mathbb{T}^d}\Lambda^s J_\epsilon \gamma \cdot ([\Lambda^s, J_\epsilon u ] \cdot \nabla J_\epsilon \gamma ) \mathrm{d}x-\frac{1}{2}\int_{\mathbb{T}^d}|\Lambda^s J_\epsilon \gamma|^2\textrm{ div} (J_\epsilon u)\mathrm{d}x \\
 & \quad \leq C(\|\nabla \gamma\|_{L^\infty}+\|\nabla u\|_{L^\infty})(\| u \|_{H^s}^2+\| \gamma \|_{H^s}^2 ).
\end{split}
\end{equation*}
Plugging the last three estimates into \eqref{2.21} yields that
\begin{equation}\label{2.22}
\begin{split}
\mathbb{E}\sup_{r\in [0,t]}|I_1(r)|&\leq C\mathbb{E}\int_0^t \varpi_R  (\|\textbf{y}\|_{\mathbb{W}^{1,\infty}})\left|\int_{\mathbb{T}^d}\Lambda^s \textbf{y} \cdot\Lambda^s    J_\epsilon B(J_\epsilon\textbf{y},J_\epsilon\textbf{y}) \mathrm{d}x\right| \mathrm{d}r \\
&\leq C\mathbb{E}\int_0^t \varpi_R  (\|\textbf{y}\|_{\mathbb{W}^{1,\infty}}) (\|\nabla \gamma\|_{L^\infty}+\|\nabla u\|_{L^\infty})  (\| u \|_{H^s}^2+\| \gamma \|_{H^s}^2 )  \mathrm{d}r\\
&\leq CR\int_0^t\mathbb{E} \|\textbf{y} (r)\|_{\mathbb{H}^{s}}^2 \mathrm{d}r.
\end{split}
\end{equation}
For $I_2(t)$, by using Lemma \ref{lem:moser} and the property $\|\Lambda^2f\|_{H^{s-2}}\approx\|f\|_{H^{s}}$ for any $f\in \mathscr{S}(\mathbb{T}^d)$, one can estimate $\mathscr{L}_1 (u)$, $\mathscr{L}_2 (\gamma)$ and $\mathscr{L}_3 (u,\gamma) $ as follows:
\begin{equation*}
\begin{split}
\|\mathscr{L}_1 (u) \|_{H^s} \leq& C(\| u\|_{L^\infty}+\|\nabla u\|_{L^\infty})\| u \|_{H^s} ,\\
\|\mathscr{L}_2 (\gamma) \|_{H^s}\leq&  C(\| \gamma\|_{L^\infty}+\|\nabla \gamma\|_{L^\infty})\| \gamma \|_{H^s},\\
\|\mathscr{L}_3 (u,\gamma) \|_{H^s} \leq& C (\| \gamma\|_{L^\infty}+\|\nabla \gamma\|_{L^\infty}+\|\nabla u\|_{L^\infty}) (  \|  u  \|_{H^{s }} +\|  \gamma\|_{H^{s }} ),
\end{split}
\end{equation*}
which lead to
\begin{equation}\label{2.23}
\begin{split}
\mathbb{E}\sup_{r\in [0,t]}|I_2(r)|\leq& C\mathbb{E}\int_0^t\varpi_R  (\|\textbf{y}\|_{\mathbb{W}^{1,\infty}})  \|\textbf{y} \|_{\mathbb{H}^{s}}\|F(\textbf{y}) \|_{\mathbb{H}^{s}} \mathrm{d}r \\
\leq&   C\mathbb{E}\int_0^t\varpi_R  (\|\textbf{y}\|_{\mathbb{W}^{1,\infty}})  \|\textbf{y}\|_{\mathbb{W}^{1,\infty}} \|\textbf{y} \|_{\mathbb{H}^{s}}^2 \mathrm{d}r
\leq  CR\int_0^t\mathbb{E} \|\textbf{y} (r)\|_{\mathbb{H}^{s}}^2 \mathrm{d}r.
\end{split}
\end{equation}
By assumption (1), one can estimate $I_3(t)$ as
\begin{equation}\label{2.24}
\begin{split}
\mathbb{E}\sup_{r\in [0,t]}|I_3(r)| &\leq C\mathbb{E}  \int_0^t\varpi_R^2  (\|\textbf{y}\|_{\mathbb{W}^{1,\infty}}) \mu^2 (t) \chi^2(\|\textbf{y}\|_{\mathbb{W}^{1,\infty}})(1+\|\textbf{y}\|_{\mathbb{H}^{s}}^2)\mathrm{d}r\\
&\leq    C \chi ^2(2R)\int_0^t\mu ^2 (r)(1+\mathbb{E}\|\textbf{y}(r)\|_{\mathbb{H}^{s}} ^2) \mathrm{d}r.
\end{split}
\end{equation}
For $I_4(t)$, one can use the BDG inequality and \eqref{2.24} to obtain
\begin{equation}\label{2.25}
\begin{split}
\mathbb{E}\sup_{r\in [0,t]}|I_4(r)|&\leq   C\mathbb{E}\left( \int_0^t\sum_{k\geq1}\left(\int_{\mathbb{T}^d} \varpi_R  (\|\textbf{y}\|_{\mathbb{W}^{1,\infty}}) \Lambda^s\textbf{y}\cdot\Lambda^sG _k(r,\textbf{y} )\mathrm{d}x\right)^2 \mathrm{d}r\right)^{\frac{1}{2}}\\
 &\leq   C\mathbb{E}\left( \int_0^t\mu ^2 (r) \varpi_R ^2 (\|\textbf{y}\|_{\mathbb{W}^{1,\infty}})\chi ^2(\|\textbf{y}\|_{\mathbb{W}^{1,\infty}})\| \textbf{y}\|_{\mathbb{H}^{s}}^2(1+\|\textbf{y}\|_{\mathbb{H}^{s}})  ^2 \mathrm{d}r\right)^{\frac{1}{2}}\\
 &\leq   \frac{1}{2} \mathbb{E} \sup_{r\in[0,t]}\| \textbf{y}(r)\|_{\mathbb{H}^{s}}^2+ C \chi ^2(2R)\int_0^t\mu ^2 (r)(1+\mathbb{E}\|\textbf{y}(r)\|_{\mathbb{H}^{s}} ^2) \mathrm{d}r.
\end{split}
\end{equation}
By taking the supremum on both sides of \eqref{2.20}, we deduce from the estimates \eqref{2.22}-\eqref{2.25} that
\begin{equation*}
\begin{split}
 \mathbb{E}\sup_{r\in [0,t]}\| \textbf{y} (r)\|_{\mathbb{H}^s}^2 \leq & 1+2\mathbb{E}\| \textbf{y} (0)\|_{\mathbb{H}^s}^2+ C (R+\chi ^2(2R))\\
&\times\mathbb{E}\int_0^t(1+\mu ^2 (r))\left(1+\mathbb{E}\sup_{\varsigma\in [0,r]}\|\textbf{y}(\varsigma)\|_{\mathbb{H}^{s}} ^2\right) \mathrm{d}r,
\end{split}
\end{equation*}
Thanks to the Gronwall inequality, we get
\begin{equation*}
\begin{split}
 \mathbb{E}\sup_{r\in [0,T]}\| \textbf{y} (r)\|_{\mathbb{H}^s}^2 \leq Ce^{(R+\chi ^2(2R))\int_0^T(1+\mu^2(r)) \mathrm{d}r} (1+\mathbb{E}\| \textbf{y} (0)\|_{\mathbb{H}^s}^2),
\end{split}
\end{equation*}
for any $T>0$, which combined with the continuity of $\mu(\cdot)$ yield  that the approximations are uniformly bounded in $L^2(\Omega;\mathcal {C}([0,T];\mathbb{H}^s(\mathbb{T}^d)))$.

Now we apply the It\^{o} formula to $\| \textbf{y} (r)\|_{\mathbb{H}^s}^p=(\| \textbf{y} (r)\|_{\mathbb{H}^s}^2)^{\frac{p}{2}}$ with $p>2$, and then use the identity \eqref{2.20}, one find
\begin{equation}\label{2.26}
\begin{split}
\| \textbf{y} (t)\|_{\mathbb{H}^s}^p &=\| \textbf{y} (0)\|_{\mathbb{H}^s}^p -p\int_0^t\varpi_R  (\|\textbf{y}\|_{\mathbb{W}^{1,\infty}})\| \textbf{y} (r)\|_{\mathbb{H}^s}^{p-2}(\textbf{y} , J_\epsilon B(J_\epsilon\textbf{y},J_\epsilon\textbf{y})- F(\textbf{y}))_{\mathbb{H}^s}\mathrm{d} r\\
&\quad +\frac{p}{2}\int_0^t\varpi_R  (\|\textbf{y}\|_{\mathbb{W}^{1,\infty}})\| \textbf{y} (r)\|_{\mathbb{H}^s}^{p-2}  \|G(r,\textbf{y})\|_{\mathcal {L}_2(\mathfrak{U}_1,\mathbb{H}^{s})}^2\mathrm{d} r   \\
&\quad+\frac{p(p-2)}{2} \sum_{k\geq 1}\int_0^t\varpi_R  (\|\textbf{y}\|_{\mathbb{W}^{1,\infty}})\| \textbf{y} (r)\|_{\mathbb{H}^s}^{p-4}(\textbf{y} , G_{k}( \textbf{y}))_{\mathbb{H}^s}^2\mathrm{d}r\\
&\quad+p\sum_{k\geq 1}\int_0^t\varpi_R  (\|\textbf{y}\|_{\mathbb{W}^{1,\infty}})\| \textbf{y} (r)\|_{\mathbb{H}^s}^{p-2}(\textbf{y} , G_{k}( \textbf{y}))_{\mathbb{H}^s}\mathrm{d}\beta_k\\
&= \| \textbf{y} (0)\|_{\mathbb{H}^s}^p + H_1(t)+ H_2(t)+ H_3(t)+ H_4(t).
\end{split}
\end{equation}
The term $H_1(t)$ can be treated as
\begin{equation}\label{2.27}
\begin{split}
\mathbb{E}\sup_{r\in [0,t]}|H_1(r)| &\leq C\mathbb{E}\int_0^t\varpi_R  (\|\textbf{y}\|_{\mathbb{W}^{1,\infty}})\| \textbf{y} (r)\|_{\mathbb{H}^s}^{p-1}\| J_\epsilon B(J_\epsilon\textbf{y},J_\epsilon\textbf{y})- F(\textbf{y})\|_{\mathbb{H}^s}\mathrm{d} r\\
&\leq C\mathbb{E}\int_0^t\varpi_R  (\|\textbf{y}\|_{\mathbb{W}^{1,\infty}})\| \textbf{y} (r)\|_{\mathbb{H}^s}^{p-1}\|\textbf{y}\|_{\mathbb{W}^{1,\infty}} \|\textbf{y} \|_{\mathbb{H}^{s}}\mathrm{d} r \\
&\leq CR\int_0^t \mathbb{E}\| \textbf{y} (r)\|_{\mathbb{H}^s}^{p} \mathrm{d} r.
\end{split}
\end{equation}
For $H_2(t)$, it follows from the Assumption \ref{assume} and Young inequality that
\begin{equation}\label{2.28}
\begin{split}
\mathbb{E}\sup_{r\in [0,t]}|H_2(r)| &\leq C\mathbb{E} \int_0^t\varpi_R  (\|\textbf{y}\|_{\mathbb{W}^{1,\infty}})\| \textbf{y} (r)\|_{\mathbb{H}^s}^{p-2}  \|G(r,\textbf{y})\|_{\mathcal {L}_2(\mathfrak{U}_1,\mathbb{H}^{s})}^2\mathrm{d} r\\
&\leq C\mathbb{E} \int_0^t\varpi_R  (\|\textbf{y}\|_{\mathbb{W}^{1,\infty}})  \mu^2 (t) \chi^2(\|\textbf{y}\|_{\mathbb{W}^{1,\infty}})(1+\|\textbf{y}\|_{\mathbb{H}^{s}}^2)\| \textbf{y} (r)\|_{\mathbb{H}^s}^{p-2}\mathrm{d} r\\
&\leq C\chi^2(2R) \int_0^t  \mu^2 (r)(1+\mathbb{E}\|\textbf{y}(r)\|_{\mathbb{H}^{s}}^p)\mathrm{d} r.
\end{split}
\end{equation}
In a similar manner,
\begin{equation}\label{2.29}
\begin{split}
\mathbb{E}\sup_{r\in [0,t]}|H_3(r)|   \leq C\chi^2(2R) \int_0^t  \mu^2 (r)(1+\mathbb{E}\|\textbf{y}(r)\|_{\mathbb{H}^{s}}^p)\mathrm{d} r.
\end{split}
\end{equation}
For the stochastic integral term $H_4(t)$, we get from \eqref{a1} and the BDG inequality that
\begin{equation}\label{2.30}
\begin{split}
\mathbb{E}\sup_{r\in [0,t]}|H_4(r)|   &\leq C\mathbb{E}\left(\int_0^t\mu^2 (t) \varpi_R  ^2(\|\textbf{y}\|_{\mathbb{W}^{1,\infty}})\chi^2(\|\textbf{y}\|_{\mathbb{W}^{1,\infty}})\| \textbf{y} (r)\|_{\mathbb{H}^s}^{2p-2} (1+\|\textbf{y}\|_{\mathbb{H}^{s}}^2)\mathrm{d} t\right)^{\frac{1}{2}}\\
&\leq C\chi(2R)\mathbb{E}\left[\sup_{r\in [0,t]}\| \textbf{y} (r)\|_{\mathbb{H}^s}^{\frac{p}{2}}\left( \int_0^t\mu^2 (t) \| \textbf{y} (r)\|_{\mathbb{H}^s}^{p-2} (1+\|\textbf{y}\|_{\mathbb{H}^{s}}^2)\mathrm{d} t\right)^{\frac{1}{2}}\right]\\
&\leq  \frac{1}{2} \mathbb{E}\sup_{r\in [0,t]} \| \textbf{y} (r)\|_{\mathbb{H}^s}^{p}+C\chi^2(2R)\int_0^t\mu^2 (t)  (1+\mathbb{E}\|\textbf{y}(r)\|_{\mathbb{H}^{s}}^p)\mathrm{d} r.
\end{split}
\end{equation}
Therefore, after taking supremum to \eqref{2.26} over the interval $[0, t]$,
we deduce from the estimates \eqref{2.27}-\eqref{2.30} that
\begin{equation*}
\begin{split}
\mathbb{E}\sup_{r\in [0,t]}\| \textbf{y} (t)\|_{\mathbb{H}^s}^p\leq &2\| \textbf{y} (0)\|_{\mathbb{H}^s}^p +C(R+\chi^2(2R))\int_0^t(1+\mu^2 (t))  (1+\mathbb{E}\|\textbf{y}(r)\|_{\mathbb{H}^{s}}^p)\mathrm{d} r\\
\leq& 2\| \textbf{y} (0)\|_{\mathbb{H}^s}^p +C(R+\chi^2(2R))\int_0^t(1+\mu^2 (r))  \mathrm{d} r\\
&+C(R+\chi^2(2R))\int_0^t(1+\mu^2 (r))\mathbb{E}\sup_{\varsigma\in [0,r]}\| \textbf{y} (\varsigma)\|_{\mathbb{H}^s}^p \mathrm{d} r.
\end{split}
\end{equation*}
An application of the  Gronwall inequality to above inequality yields that
\begin{equation*}
\begin{split}
\mathbb{E}\sup_{r\in [0,T]}\| \textbf{y} (r)\|_{\mathbb{H}^s}^p\leq& e^{C(R+\chi^2(2R))\int^T_0 (1+\mu^2 (r))\mathrm{d} r}\bigg(2\| \textbf{y} (0)\|_{\mathbb{H}^s}^p\\
&+ C\left(R+\chi^2(2R)\right)\int_0^T (1+\mu^2 (\varsigma)) e^{-C(R+\chi^2(2R))\int_0^\varsigma (1+\mu^2 (r))\mathrm{d} r}\mathrm{d}\varsigma\bigg),
\end{split}
\end{equation*}
for any $T>0$. As the function $\mu^2(t)$ is continuous and hence integrable on any finite interval $[0,T]$, there is a constant $C>0$ independent of $\epsilon$ such that
\begin{equation*}
\begin{split}
\mathbb{E}\sup_{r\in [0,T]}\| \textbf{y}_{R,\epsilon} (r)\|_{\mathbb{H}^s}^p\leq C,\quad \forall \epsilon\in (0,1),
\end{split}
\end{equation*}
which implies that $\{\textbf{y}_{R,\epsilon}\}_{0<\epsilon<1}$  is uniformly bounded in $L^p(\Omega;\mathcal {C}([0,T];\mathbb{H}^s(\mathbb{T}^d)))$.

{\textsf{Step 2 (H\"{o}lder regularity)}}: Since we do not
expect $\textbf{y}_{R,\epsilon}$ to be differentiable in time in the stochastic setting, we are inspired to consider the estimates on fractional time derivatives of order strictly less than $ \frac{1}{2}$. Notice that for  any $\alpha\in (0,1)$, we have
\begin{equation}\label{2.31}
\begin{split}
 \mathbb{E}\|\textbf{y}\|_{\mathbb{W}^{\alpha,p}(0,T;\mathbb{H}^{s-1} )} ^p\leq& C\left(\mathbb{E}\|\textbf{y}(0)\|_{\mathbb{H}^s}+\mathbb{E}\left\|\int_0^\cdot\mathcal {F}_{R,\epsilon}(\textbf{y})\mathrm{d} r\right\|_{\mathbb{W}^{1,p}(0,T;\mathbb{H}^{s-1} )} ^p\right.\\
 &+\left.\mathbb{E}\left\|\int_0^\cdot\mathcal {G}_R(r,\textbf{y})\mathrm{d}\mathcal {W}\right\|_{\mathbb{W}^{\alpha,p}(0,T;\mathbb{H}^{s-1} )} ^p\right),
\end{split}
\end{equation}
where we used the Sobolev embedding $W^{1,p}(0,T;\mathbb{H}^{s-1}(\mathbb{T}^d) )\hookrightarrow W^{\alpha,p}(0,T;\mathbb{H}^{s-1} (\mathbb{T}^d))$ for all $0<\alpha<1$. Let us estimate the second and third terms on the R.H.S. of \eqref{2.31}. First, we get by using the Minkowski inequality that
\begin{equation*}
\begin{split}
 \mathbb{E}\left\|\int_0^\cdot\mathcal {F}_{R,\epsilon}(\textbf{y})\mathrm{d} r\right\|_{\mathbb{W}^{1,p}(0,T;\mathbb{H}^{s-1} )} ^p &= \mathbb{E} \int_0^T\|\mathcal {F}_{R,\epsilon}(\textbf{y})\|_{\mathbb{H}^{s-1}}^p\mathrm{d} t +\mathbb{E} \int_0^T\left\|\int_0^t\mathcal {F}_{R,\epsilon}(\textbf{y})\mathrm{d} r\right\|_{\mathbb{H}^{s-1}}^p\mathrm{d} t\\
 &\leq C \mathbb{E} \int_0^T\|\mathcal {F}_{R,\epsilon}(\textbf{y})\|_{\mathbb{H}^{s-1}}^p\mathrm{d} t.
\end{split}
\end{equation*}
Due to the boundedness of the mollifier $J_\epsilon$ (cf. \eqref{1.7}) and the Moser-type estimate (cf. Lemma \ref{lem:moser}), we have
\begin{equation*}
\begin{split}
 &\| J_\epsilon[
   ( J_\epsilon u\cdot \nabla) J_\epsilon u]\|_{H^{s-1}}+\|J_\epsilon[(
     J_\epsilon u\cdot \nabla) J_\epsilon\gamma] \|_{H^{s-1}} \\
     &\quad \leq C\Big(\|J_\epsilon u\|_{L^\infty} \|\nabla J_\epsilon u\|_{H^{s-1}} + \|J_\epsilon u\| _{H^{s-1}}\|\nabla J_\epsilon u\| _{L^\infty} \\
     &\quad\quad \quad +\|J_\epsilon u\|_{L^\infty} \|\nabla J_\epsilon \gamma\|_{H^{s-1}}+ \|\nabla J_\epsilon \gamma\| _{L^\infty}\| J_\epsilon u\| _{H^{s-1}}\Big)\\
     &\quad \leq C (\|  u\|_{L^\infty} \|u\|_{H^{s}} + \|\nabla u\| _{L^\infty}\|u\| _{H^{s}}+\|u\|_{L^\infty} \|\gamma\|_{H^{s}}+ \|\nabla \gamma\| _{L^\infty}\| u\| _{H^{s}} )\\
     &\quad \leq C (\|  u\|_{W^{1,\infty}} + \|\nabla \gamma\| _{L^\infty} )(\|u\|_{H^{s}}+ \|\gamma\|_{H^{s}}),
\end{split}
\end{equation*}
and
\begin{equation*}
\begin{split}
  \|F(\textbf{y})\|_{\mathbb{H}^{s-1}}  \leq& C \Big( \|\nabla u\| _{L^\infty} \|\nabla u\| _{H^{s-1}} + \| u\| _{L^\infty} \|\nabla u\| _{H^{s-1}} + \| u\| _{H^s} \|\nabla u\| _{L^\infty} \\
 &  +\|\nabla \gamma\| _{L^\infty} \|\nabla \gamma\| _{H^{s-1}}+\|\gamma\| _{L^\infty} \|\gamma\| _{H^s}+  \|\nabla \gamma\| _{L^\infty} \|\nabla u\| _{H^{s-1}}\\
 &+  \|\nabla u\| _{L^\infty} \|\nabla \gamma\| _{H^{s-1}} + \|\nabla u\| _{L^\infty} \| \gamma\| _{H^s}+ \|\gamma\| _{L^\infty} \|\nabla u\| _{H^{s-1}}\Big)\\
 \leq& C (\|  u\|_{W^{1,\infty}} + \|\gamma\| _{W^{1,\infty}} )(\|u\|_{H^{s}}+ \|\gamma\|_{H^{s}}).
\end{split}
\end{equation*}
From the last two estimates, the definition of $\mathcal {F}_{R,\epsilon}(\textbf{y})$ and the uniform bound obtained in \textsf{Step 1}, we deduce that
\begin{equation}\label{2.32}
\begin{split}
 \mathbb{E}\left\|\int_0^\cdot\mathcal {F}_{R,\epsilon}(\textbf{y})\mathrm{d} r\right\|_{\mathbb{W}^{1,p}(0,T;\mathbb{H}^{s-1} )} ^p  &\leq C \mathbb{E} \int_0^T\varpi_R ^p (\|\textbf{y}\|_{\mathbb{W}^{1,\infty}})\|\textbf{y}\|_{W^{1,\infty}}^p\|\textbf{y}\|_{\mathbb{H}^s}^p\mathrm{d} t\\
 &\leq CR ^p  \mathbb{E} \sup_{t\in[0,T]}\|\textbf{y}(t)\|_{\mathbb{H}^s}^p <\infty.
\end{split}
\end{equation}
For the stochastic term in \eqref{2.31}, we have
\begin{equation}\label{2.33}
\begin{split}
 &\mathbb{E}\left\|\int_0^\cdot\mathcal {G}_R(r,\textbf{y})\mathrm{d}\mathcal {W}\right\|_{\mathbb{W}^{\alpha,p}(0,T;\mathbb{H}^{s-1} )} ^p \\
 &\quad\leq  C  \sum_{i=1,2}\mathbb{E}\int_0^T\mu_i  ^p(t)\varpi_R^p  (\|\textbf{y}\|_{\mathbb{W}^{1,\infty}}) \chi_i ^p(\|\textbf{y}\|_{\mathbb{W}^{1,\infty}})(1+\|\textbf{y}\|_{\mathbb{H}^{s}} ^p)   \mathrm{d}t\\
&\quad\leq C\sum_{i=1,2}\chi_i ^p(2R) \int_0^T\mu_i  ^p(t)\mathrm{d}t\left(1+\mathbb{E}\sup_{t\in[0,T]}\|\textbf{y}(t)\|_{\mathbb{H}^{s}} ^p\right) \leq  C.
\end{split}
\end{equation}
As a result, it follows from \eqref{2.31}-\eqref{2.33} that
\begin{equation*}
\begin{split}
\sup_{0<\epsilon<1}\mathbb{E}\left(\|\textbf{y}_{R,\epsilon}\|_{W^{\alpha,p}(0,T;\mathbb{H}^{s-1} )} ^p\right)\leq C,
\end{split}
\end{equation*}
for some constant $C>0$ independent of $\epsilon$, which implies that the approximations $\{\textbf{y}_{R,\epsilon}\}_{0<\epsilon<1}$ is uniformly bounded in $L^p(\Omega;W^{\alpha,p}(0,T;\mathbb{H}^{s-1}(\mathbb{T}^d) ))$, for any $T>0$.

In addition, similar to the proof in \textsf{Step 1}, one can also derive  $\mathbb{E}\sup_{\varsigma\in [r,t]}\| \textbf{y} (\varsigma)\|_{\mathbb{H}^s}^p\leq C$ for any $0<r<t<T$, which together with the H\"{o}lder inequality and BDG inequality lead  to
\begin{equation}\label{2.34}
\begin{split}
 &\mathbb{E} \left(\|\textbf{y}_{R,\epsilon}(t )-\textbf{y}_{R,\epsilon}(r)\|_{\mathbb{H}^{s-1} }^p\right)\\
 &\quad \leq C \mathbb{E}\left(\int_r^t\|\mathcal {F}_{R,\epsilon}(\textbf{y})\|_{\mathbb{H}^{s-1} }^2\mathrm{d}\varsigma\right)^{\frac{p}{2}}+C\mathbb{E}\left(\int_r^t\|\mathcal {G}_R(\varsigma,\textbf{y})\|_{L_2(\mathfrak{U}_1;\mathbb{H}^{s-1} )}^2\mathrm{d}\varsigma\right)^{\frac{p}{2}}\\
 &\quad \leq C \mathbb{E}\sup_{\varsigma\in [r,t]}\left( \|\mathcal {F}_{R,\epsilon}(\textbf{y})\|_{\mathbb{H}^{s-1} }^2+\|\mathcal {G}_R(\varsigma,\textbf{y})\|_{L_2(\mathfrak{U}_1;\mathbb{H}^{s-1} )}^2\right)|t-r|^{\frac{p}{2}}\\
  &\quad\leq C |t-r|^{\frac{p}{2}},
\end{split}
\end{equation}
where the constant $C>0$ is independent of $\epsilon$. Thanks to the Kolmogorov Continuity Theorem (cf. Theorem 3.3 in \cite{43}), the uniform estimate \eqref{2.34} implies that the approximation  $\textbf{y}_{R,\epsilon}$ has a continuous modification in $\mathcal {C}^\beta([0,T];\mathbb{H}^{s-1} )$, and
$$
\mathbb{E} \|\textbf{y}_{R,\epsilon}\|_{\mathcal {C}^\beta([0,T];\mathbb{H}^{s-1} )}^p \leq C,\quad \beta\in (0,\frac{1}{2}-\frac{1}{p}),
$$
for some positive constant $C $ independent of $\epsilon$. This completes the proof of Lemma \ref{lem:2.2}.
\end{proof}

\subsection{Smooth strong solutions}
\subsubsection{Martingale solutions} With the uniform bounds in Lemma \ref{lem:2.2}, we shall prove by the stochastic compactness method that the probability measures $\{\mathbb{P}\circ \textbf{y}_{R,\epsilon}^{-1}\}_{0<\epsilon<1}$ induced by the approximations  $\{\textbf{y}_{R,\epsilon}\}_{0<\epsilon<1}$ is weakly compact.

For any $s>1+\frac{d}{2} $, $d\geq 1$, $R>1$ and $0<\epsilon<1$, we consider  the phase space
\begin{equation*}
\begin{split}
\mathcal {X} ^s = \mathcal {X} _{\textbf{y}}^s \times \mathcal {X} _\mathcal {W},\quad \textrm{where}~~\mathcal {X} ^s _{\textbf{y}} = \mathcal {C}([0,T];\mathbb{H}^s(\mathbb{T}^d)) ,~~\mathcal {X} _\mathcal {W}= \mathcal {C}([0,T];\mathfrak{U}_1\times \mathfrak{U}_1),
\end{split}
\end{equation*}
On the given probability space $(\Omega,\mathcal {F},\mathbb{P})$, we define
\begin{equation*}
\begin{split}
\mu ^{R,\epsilon}=\mu _{\textbf{y}}^{R,\epsilon}\times \mu _ \mathcal {W} \in \textrm{Pr}(\mathcal {X}^s),\quad \textrm{where}~~ \mu _{\textbf{y}}^{R,\epsilon} =\mathbb{P}\circ (\textbf{y}_{R,\epsilon})^{-1} ,~~\mu _ \mathcal {W}  =\mathbb{P}\circ\mathcal {W}^{-1},
\end{split}
\end{equation*}
where $\textrm{Pr}(\mathcal {X}^s)$ is the collection of Borel probability measures on $\mathcal {X}^s$.

Recalling that a collection $\mathscr{O}\subset \textrm{Pr}(\mathcal {X}^s)$  is \textsf{tight} on $\mathcal {X}^s$ if, for every $\gamma>0$, there exists a compact set $K_\gamma\subset \mathcal {X}$ such that, $\nu(K_\gamma)\geq 1-\gamma$ for all $\nu\in \mathscr{O}$.

We have the following weakly compact result.

\begin{lemma}\label{lem:2.3}
Let $s>4+\frac{d}{2} $, $r>2$, $R>1$ and $0<\epsilon<1$, assume that the conditions \eqref{a1}-\eqref{a2} hold and consider any $\mu_0\in Pr(\mathcal {X} ^{s-1})$ with $\int_{\mathcal {X} ^{s-1}}|\textbf{y}|^r \mu_0(\mathrm{d}\textbf{y})$, for some $r>2$. Suppose that $\{ {\textbf{y}}_{R,\epsilon}\}_{0<\epsilon < 1}$ are solutions to SDEs \eqref{2.16} with respect to the initial data $\textbf{y}_0$ satisfying  $\mu_0=\mathbb{P}\circ \textbf{y}_0^{-1}$. Then the sequence of probability measures $\{\mu ^{R,\epsilon}\}_{0<\epsilon < 1}$ is tight on $\mathcal {X}^{s-1}$, and hence has a weakly convergent subsequence in $\textrm{Pr}(\mathcal {X} ^{s-1})$.
\end{lemma}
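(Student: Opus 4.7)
The plan is to establish tightness of the measures $\{\mu^{R,\epsilon}\}_{0<\epsilon<1}$ on $\mathcal{X}^{s-1}$ by an Arzel\`a--Ascoli/Aubin--Lions argument combined with the uniform moment bounds from Lemma \ref{lem:2.2}, and then apply Prokhorov's theorem. Since the product structure $\mu^{R,\epsilon}=\mu_{\textbf{y}}^{R,\epsilon}\times\mu_\mathcal{W}$ reduces the problem to proving tightness of each factor separately, and $\mu_\mathcal{W}$ is a single Radon measure on a Polish space (hence trivially tight), the core task is to show tightness of $\{\mu_{\textbf{y}}^{R,\epsilon}\}_{0<\epsilon<1}$ on $C([0,T];\mathbb{H}^{s-1}(\mathbb{T}^d))$.

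The first step is to construct the compact exhausting sets. Fix $p\geq 3$ and $\beta\in(0,\tfrac{1}{2}-\tfrac{1}{p})$ as in Lemma \ref{lem:2.2}, and for each $M>0$ set
\begin{equation*}
K_M \triangleq \left\{\textbf{y}\in C([0,T];\mathbb{H}^{s-1}(\mathbb{T}^d)):\ \|\textbf{y}\|_{C([0,T];\mathbb{H}^s)}+\|\textbf{y}\|_{C^\beta([0,T];\mathbb{H}^{s-1})}\leq M\right\}.
\end{equation*}
I would verify that $K_M$ is relatively compact in $C([0,T];\mathbb{H}^{s-1}(\mathbb{T}^d))$ using the classical Arzel\`a--Ascoli theorem: elements of $K_M$ are uniformly bounded in $\mathbb{H}^s(\mathbb{T}^d)$ (which embeds compactly into $\mathbb{H}^{s-1}(\mathbb{T}^d)$ by Rellich's theorem on the torus, so the pointwise values lie in a relatively compact subset of $\mathbb{H}^{s-1}$), and equicontinuous into $\mathbb{H}^{s-1}(\mathbb{T}^d)$ via the uniform H\"older bound.

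The second step is to use Markov's inequality with the moment bounds of Lemma \ref{lem:2.2}: for any $\gamma>0$, choose $M=M(\gamma)$ so large that
\begin{equation*}
\mathbb{P}\left(\textbf{y}_{R,\epsilon}\notin K_M\right)\leq \frac{1}{M^p}\,\mathbb{E}\left(\|\textbf{y}_{R,\epsilon}\|_{C([0,T];\mathbb{H}^s)}^p+\|\textbf{y}_{R,\epsilon}\|_{C^\beta([0,T];\mathbb{H}^{s-1})}^p\right)\leq \frac{C}{M^p}<\gamma,
\end{equation*}
uniformly in $\epsilon\in(0,1)$, where $C$ is the $\epsilon$-independent constant furnished by Lemma \ref{lem:2.2}. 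This gives $\mu_{\textbf{y}}^{R,\epsilon}(K_M)\geq 1-\gamma$ for every $\epsilon$, proving tightness of $\{\mu_{\textbf{y}}^{R,\epsilon}\}$. Combining with the tightness of $\mu_\mathcal{W}$ and applying Prokhorov's theorem on the Polish space $\mathcal{X}^{s-1}$ yields a weakly convergent subsequence.

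The main obstacle is really conceptual rather than computational: the argument relies crucially on the compactness of the embedding $\mathbb{H}^s(\mathbb{T}^d)\hookrightarrow\mathbb{H}^{s-1}(\mathbb{T}^d)$, which holds precisely because the domain is the compact torus. This is exactly the point flagged in the remark following Theorem \ref{th1}: on $\mathbb{R}^d$ the corresponding Sobolev embedding fails to be compact, so the sets $K_M$ above are no longer relatively compact in $C([0,T];\mathbb{H}^{s-1}(\mathbb{R}^d))$, and the present martingale-type approach to constructing solutions breaks down. Aside from this, the only mild technicality is to confirm that $\beta>0$ can indeed be chosen in the interval $(0,\tfrac{1}{2}-\tfrac{1}{p})$ for some admissible $p$, which follows from the hypothesis of $L^r$-integrability with $r>2$ once $p$ is taken appropriately.
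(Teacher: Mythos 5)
Your proposal is correct, but it reaches tightness by a different compactness criterion than the paper. The paper does not work with H\"older-in-time bounds directly: it invokes the Flandoli--G\k{a}tarek-type result that $\mathbb{W}^{1,p}(0,T;\mathbb{H}^{s-1})$ and $\mathbb{W}^{\alpha,p}(0,T;\mathbb{H}^{s})$ (with $\alpha p>1$) embed compactly into $C([0,T];\mathbb{H}^{s-1})$, splits each path as $\textbf{y}_{R,\epsilon}=\bigl(\textbf{y}_{R,\epsilon}-\int_0^\cdot\mathcal {G}_R\,\mathrm{d}\mathcal {W}\bigr)+\int_0^\cdot\mathcal {G}_R\,\mathrm{d}\mathcal {W}$, and applies Chebyshev to the two uniform bounds \eqref{2.19} of Lemma \ref{lem:2.2} (drift part in $\mathbb{W}^{1,p}$, stochastic integral in $\mathbb{W}^{\alpha,p}$), before concluding with Prokhorov exactly as you do. You instead use the vector-valued Arzel\`a--Ascoli theorem on the sets $K_M$ cut out by the $C([0,T];\mathbb{H}^{s})$ and $C^{\beta}([0,T];\mathbb{H}^{s-1})$ norms, which is legitimate because Lemma \ref{lem:2.2} also supplies the $\epsilon$-uniform $p$-th moment of the $C^\beta([0,T];\mathbb{H}^{s-1})$ norm (via Kolmogorov continuity) and of $\sup_{t\le T}\|\cdot\|_{\mathbb{H}^s}$; both routes ultimately rest on the same moment estimates and on the compactness of $\mathbb{H}^{s}(\mathbb{T}^d)\hookrightarrow\mathbb{H}^{s-1}(\mathbb{T}^d)$, which you rightly identify as the reason the argument is confined to the torus. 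Your route is somewhat more elementary (no fractional Sobolev embedding theorem, no drift/martingale decomposition), at the price of leaning on the H\"older modification, whereas the paper's route needs only the $\mathbb{W}^{\alpha,p}$ estimate for the stochastic integral. Two cosmetic points: in the Chebyshev step you should either split $\{\textbf{y}_{R,\epsilon}\notin K_M\}$ into the two events or absorb a factor $2^{p-1}$ when passing from $(a+b)^p$ to $a^p+b^p$; and the choice of $p$ and $\beta$ is not constrained by $r$ beyond what Lemma \ref{lem:2.2} already assumes --- any fixed $p\ge 3$ with $\beta\in(0,\tfrac12-\tfrac1p)$ works, the $r>2$ moment of $\mu_0$ entering only through the hypotheses of that lemma, exactly as in the paper's own proof.
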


\begin{proof}[\emph{\textbf{Proof.}}]
It suffices to prove that, for every $\eta>0$, there exists a relatively compact set $K_\eta\in \mathcal {X}^{s-1} $ such that $\mu ^{R,\epsilon} (\overline{K_\eta})\geq 1-\eta$, for all $\epsilon\in(0,1)$.

Indeed, choosing   $\alpha\in (0,\frac{1}{2}-\frac{1}{p})$ such that $\alpha p >1$. Due to the Theorem 2.1 in \cite{flandoli1995martingale}, one find that both $\mathbb{W}^{1,p}(0,T;\mathbb{H}^{s-1}(\mathbb{T}^d) )$ and $\mathbb{W}^{\alpha,p}(0,T;\mathbb{H}^{s}(\mathbb{T}^d)  )$ are compactly embedded in $\mathcal {C}([0,T];\mathbb{H}^{s-1}(\mathbb{T}^d) )$. For any $L>0$, the set
\begin{equation}\label{2.35}
\begin{split}
B(L)\triangleq \{\textbf{y}: \|\textbf{y}\|_{\mathbb{W}^{1,p}(0,T;\mathbb{H}^{s-1} )}< L \}\bigcap \{\textbf{y}: \|\textbf{y}\|_{\mathbb{W}^{\alpha,p}(0,T;\mathbb{H}^{s} )}< L \}
\end{split}
\end{equation}
is pre-compact in $\mathcal {X} _{\textbf{y}}^{s-1} $. Define the balls
\begin{equation*}
\begin{split}
B_1(L)&\triangleq  \left\{\textbf{y}: \|\int_0^\cdot\mathcal {G}_R(r,\textbf{y} )\mathrm{d}\mathcal {W} \|_{\mathbb{W}^{\alpha,p}(0,T;\mathbb{H}^{s-1} )}  < L \right\},\\
B_2(L) &\triangleq  \left\{\textbf{y}: \|\textbf{y} -\int_0^\cdot\mathcal {G}_R(r,\textbf{y} )\mathrm{d}\mathcal {W} \|_{\mathbb{W}^{1,p}(0,T;\mathbb{H}^{s-1} )} < L \right\}.
\end{split}
\end{equation*}
Simple calculation shows that $B_1(L)\bigcap B_2(L)\subseteq  B(L)$. By \eqref{2.35}, the uniform momentum estimates \eqref{2.19} and the Chebyshev inequality, we have
\begin{equation}\label{2.36}
\begin{split}
\mu _{\textbf{y}}^{R,\epsilon}\left(\overline{B(L)}^c \right) \leq& \mathbb{P}  \left( \left\|\textbf{y} -\int_0^\cdot\mathcal {G}_R(r,\textbf{y} )\mathrm{d}\mathcal {W} \right\|_{\mathbb{W}^{1,p}(0,T;\mathbb{H}^{s-1} )} \geq L  \right)\\
 & + \mathbb{P} \left( \left\|\int_0^\cdot\mathcal {G}_R(r,\textbf{y} )\mathrm{d}\mathcal {W} \right\|_{\mathbb{W}^{\alpha,p}(0,T;\mathbb{H}^{s-1} )}  \geq L  \right)  \\
 \leq& \mathbb{E} \left\|\textbf{y} -\int_0^\cdot\mathcal {G}_R(r,\textbf{y} )\mathrm{d}\mathcal {W} \right\|_{\mathbb{W}^{1,p}(0,T;\mathbb{H}^{s-1} )}^p +\frac{1}{L^p}  \mathbb{E}\left\|\int_0^\cdot\mathcal {G}_R(r,\textbf{y} )\mathrm{d}\mathcal {W} \right\|_{\mathbb{W}^{\alpha,p}(0,T;\mathbb{H}^{s-1} )}^p
 \\
\leq&  \frac{C}{L^p} ,
\end{split}
\end{equation}
for some positive constant $C$ independent of $\epsilon$. By choosing $L=L(\eta) >(C/\eta)^{\frac{1}{p}}$, one can derive from the estimate \eqref{2.36} that
$$
\mu _{\textbf{y}}^{R,\epsilon}\left(\overline{B(L)}\right)=1-\mu _{\textbf{y}}^{R,\epsilon}\left(\overline{B(L)}^c \right) > 1- \frac{C}{L^p}> 1-\eta,\quad \textrm{for all} ~~0<\epsilon<1,
$$
which implies that the collection of probability measures
$\{\mu _{\textbf{y}}^{R,\epsilon}\}_{0<\epsilon <1}$ is  tight on $\mathcal {X} _{\textbf{y}}^{s-1} $. Moreover, since the sequence $\{\mu_\mathcal {W}\}$ is constant, it is trivially weakly compact and hence tight. As a result, one  may finally infer that the sequence $\{\mu ^{R,\epsilon}\}_{0<\epsilon<1}$ is tight on $\mathcal {X} ^{s-1} $. This finishes the proof of Lemma \ref{lem:2.3}.
\end{proof}

Based on the weak compactness result in Lemma \ref{lem:2.3}, one can now start to prove the existence of global martingale solutions to the truncated SMEP2 \eqref{2.15}.
\begin{lemma} (\cite{yan2022initial})\label{lem:2.4}
Let $s>1+\frac{d}{2}$, $d\geq 1$. The functional $F(\textbf{y})$ defined as in \eqref{1.8} satisfies
\begin{equation}\label{2.37}
\begin{split}
\|F(\textbf{y})\|_{\mathbb{H}^s}\leq \|\textbf{y}\|_{\mathbb{W}^{1,\infty}}\|\textbf{y}\|_{\mathbb{H}^s},\quad \forall \textbf{y}\in \mathbb{H}^s(\mathbb{T}^d),
\end{split}
\end{equation}
\begin{equation}\label{2.38}
\begin{split}
\|F(\textbf{y}_1)-F(\textbf{y}_2)\|_{\mathbb{H}^s}\leq   C( \|\textbf{y}_1\|_{\mathbb{H}^s}+ \|\textbf{y}_2\|_{\mathbb{H}^s}) \|\textbf{y}_1-\textbf{y}_2\|_{\mathbb{H}^s} ,
\end{split}
\end{equation}
for any $\textbf{y}_1=(u_1,\gamma_1)$, $\textbf{y}_2=(u_2,\gamma_2) \in \mathbb{H}^s(\mathbb{T}^d)$.
\end{lemma}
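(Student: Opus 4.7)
The proof rests on exploiting that $\Lambda^{-2}$ is a Fourier multiplier of order $-2$ while $\Lambda^{-2}\mathrm{div}$ is of order $-1$, combined with the Moser-type product estimate recalled in Lemma \ref{lem:moser} together with the algebra property of $\mathbb{H}^s(\mathbb{T}^d)$ for $s>d/2$. My plan is to treat the three components $\mathscr{L}_1(u)$, $\mathscr{L}_2(\gamma)$ and $\mathscr{L}_3(u,\gamma)$ appearing in \eqref{1.7} separately and then sum the contributions.

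For the bound \eqref{2.37}, I observe that every summand in $\mathscr{L}_1,\mathscr{L}_2,\mathscr{L}_3$ has either the form $\Lambda^{-2}\mathrm{div}(P)$ with $P$ bilinear in $\nabla u$ and $\nabla\gamma$, or the form $\Lambda^{-2}(Q)$ with $Q$ a product of $u$ (resp.\ $\gamma$) with $\nabla u$. Invoking the smoothing bounds
\begin{equation*}
\|\Lambda^{-2}\mathrm{div}\,f\|_{H^s}\le C\|f\|_{H^{s-1}},\qquad \|\Lambda^{-2} g\|_{H^s}\le C\|g\|_{H^{s-2}}\le C\|g\|_{H^{s-1}},
\end{equation*}
the task reduces to controlling $\|AB\|_{H^{s-1}}$ for factors $A,B$ drawn from the components of $\textbf{y}$ and $\nabla\textbf{y}$. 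Choosing the exponents $(p,p_1,q_1,p_2,q_2)=(2,2,\infty,\infty,2)$ in Lemma \ref{lem:moser} yields, for $s>0$,
\begin{equation*}
\|AB\|_{H^{s-1}}\lesssim \|A\|_{L^\infty}\|B\|_{H^{s-1}}+\|A\|_{H^{s-1}}\|B\|_{L^\infty}.
\end{equation*}
Placing the factor carrying the gradient in $L^\infty$ and the other factor in $H^{s-1}\hookleftarrow H^s$, every such contribution is dominated by $\|\textbf{y}\|_{\mathbb{W}^{1,\infty}}\|\textbf{y}\|_{\mathbb{H}^s}$, and summing closes \eqref{2.37}.

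For the Lipschitz estimate \eqref{2.38}, I would write $F(\textbf{y}_1)-F(\textbf{y}_2)$ as a sum of bilinear differences and apply the elementary decomposition $A_1B_1-A_2B_2=(A_1-A_2)B_1+A_2(B_1-B_2)$ to each term. The same smoothing bounds again reduce the problem to $\mathbb{H}^{s-1}$-product estimates; this time, since $s>d/2$ makes $\mathbb{H}^s(\mathbb{T}^d)$ a Banach algebra with $\mathbb{H}^s\hookrightarrow L^\infty$, every $L^\infty$ factor can be bounded by the corresponding $\mathbb{H}^s$ norm, and Lemma \ref{lem:moser} delivers the quadratic bound $(\|\textbf{y}_1\|_{\mathbb{H}^s}+\|\textbf{y}_2\|_{\mathbb{H}^s})\|\textbf{y}_1-\textbf{y}_2\|_{\mathbb{H}^s}$ after summing.

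The work is essentially bookkeeping: for each of the many terms in $\mathscr{L}_1,\mathscr{L}_2,\mathscr{L}_3$ one has to match the order of smoothing ($-1$ or $-2$) against the number of derivatives inside the product so that no regularity is lost. The slightly delicate cases are the pure-$\Lambda^{-2}$ terms such as $\Lambda^{-2}((\mathrm{div}\,u)u)$, $\Lambda^{-2}(u\cdot(\nabla u)^T)$ and $\Lambda^{-2}((\mathrm{div}\,u)\gamma)$, where two derivatives of smoothing are available but only one sits inside the product; these are handled by the crude embedding $H^{s-2}\hookrightarrow H^{s-1}$ before invoking Moser, thereby still routing through a single product estimate. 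No new ingredient beyond the multiplier bounds, the Moser inequality, and the algebra property is needed.
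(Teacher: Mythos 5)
Your proposal is correct and follows essentially the same route as the paper's appendix proof: split $F$ into $\mathscr{L}_1,\mathscr{L}_2,\mathscr{L}_3$, use that $\Lambda^{-2}\mathrm{div}$ is an $S^{-1}$ multiplier and $\Lambda^{-2}$ an $S^{-2}$ multiplier to reduce to $H^{s-1}$ (or $H^{s-2}$) product bounds, telescope the bilinear differences, and close with the Moser estimate of Lemma \ref{lem:moser} together with Sobolev embedding. One small caveat, shared with the paper's own proof: bounding gradient factors such as $\|\nabla(u_1-u_2)\|_{L^\infty}$ by $\|u_1-u_2\|_{H^s}$ really uses $H^{s-1}\hookrightarrow L^\infty$, i.e.\ $s>1+\frac{d}{2}$ rather than the stated $s>\frac{d}{2}$, which is harmless since the lemma is only ever applied in that regime.
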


The following theorem ensures the existence of global martingale solutions to  SDEs \eqref{2.16} in a new probability space.

\begin{lemma} \label{lem:2.5}
Fix any $s>4+\frac{d}{2}$, $d\geq 1$ and $R\geq1$. Suppose that the conditions   \eqref{a1}-\eqref{a2} hold, and  $\mu_0\in Pr(\mathcal {X} ^{s-1})$ is a  given initial distribution satisfying $\int_{\mathcal {X} ^{s-1}}|\textbf{y}|^r \mu_0(\mathrm{d}\textbf{y})$, for some $r>2$. Then there exists a new stochastic basis $\widetilde{\mathcal {S}}\triangleq( \widetilde{\Omega} , \widetilde{\mathcal {F}},\{\widetilde{\mathcal {F}}_t\}_{t\geq0},\widetilde{\mathbb{P}},\widetilde{\mathcal {W}})$ and a $\widetilde{\mathcal {F}}_t$-predictable process
$$
\widetilde{\textbf{y}}_{R}(\cdot):\Omega\mapsto  \mathcal {C}([0,T];\mathbb{H}^{s-1}(\mathbb{T}^d) ) ,\quad \textrm{for any}~~ T>0,
$$
such that $\widetilde{\mathbb{P}}\circ \widetilde{\textbf{y}}_{R}(0)^{-1}=\mathbb{P}\circ \textbf{y}_0^{-1}$, and the following equation
\begin{equation}\label{2.38}
\begin{split}
& \widetilde{\textbf{y}}_{R}(t)+ \int_0^t\varpi_R  (\|\widetilde{\textbf{y}}_{R}\|_{\mathbb{W}^{1,\infty}}) B(\widetilde{\textbf{y}}_{R},\widetilde{\textbf{y}}_{R})\mathrm{d} \tau\\
&\quad =\widetilde{\textbf{y}}_{R}(0)+\int_0^t\varpi_R (\|\widetilde{\textbf{y}}_{R}\|_{\mathbb{W}^{1,\infty}})F(\widetilde{\textbf{y}}_{R})\mathrm{d} \tau+\int_0^t \varpi _R (\|\widetilde{\textbf{y}}_{R}\|_{\mathbb{W}^{1,\infty}})G(\tau,\widetilde{\textbf{y}}_{R}) \mathrm{d}\widetilde{\mathcal {W}}
\end{split}
\end{equation}
holds $\widetilde{\mathbb{P}}$-almost surely, for all $t\in [0,T]$.
\end{lemma}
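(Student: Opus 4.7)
\vspace{0.2cm}
\noindent
\textbf{Proof Proposal for Lemma \ref{lem:2.5}.}

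The plan is to carry out a standard stochastic compactness argument based on Prokhorov's theorem and the Skorokhod representation theorem, and then identify the limit as a martingale solution to the truncated SMCH2 system \eqref{2.15}. First, I would invoke Lemma \ref{lem:2.3} to deduce that the family $\{\mu^{R,\epsilon}\}_{0<\epsilon<1}$ is tight on the Polish space $\mathcal{X}^{s-1}$, and by Prokhorov's theorem there exists a subsequence (still indexed by $\epsilon$) and a probability measure $\mu^R$ on $\mathcal{X}^{s-1}$ such that $\mu^{R,\epsilon} \rightharpoonup \mu^R$ weakly. The Skorokhod representation theorem then provides a new probability space $(\widetilde{\Omega},\widetilde{\mathcal{F}},\widetilde{\mathbb{P}})$ and random elements $(\widetilde{\textbf{y}}_{R,\epsilon}, \widetilde{\mathcal{W}}_{R,\epsilon})$, $(\widetilde{\textbf{y}}_R, \widetilde{\mathcal{W}})$ taking values in $\mathcal{X}^{s-1}$ with the same laws as $(\textbf{y}_{R,\epsilon}, \mathcal{W})$ and limiting law $\mu^R \times \mu_{\mathcal{W}}$ respectively, so that
\begin{equation*}
(\widetilde{\textbf{y}}_{R,\epsilon}, \widetilde{\mathcal{W}}_{R,\epsilon}) \longrightarrow (\widetilde{\textbf{y}}_R, \widetilde{\mathcal{W}}) \quad \textrm{in}~ \mathcal{X}^{s-1},~~ \widetilde{\mathbb{P}}\textrm{-a.s.}
\end{equation*}
The natural filtration $\widetilde{\mathcal{F}}_t$ would be taken as the usual augmentation of $\sigma(\widetilde{\textbf{y}}_R|_{[0,t]}) \vee \sigma(\widetilde{\mathcal{W}}|_{[0,t]})$.

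Next, since equality in law preserves integrals of measurable functionals, the uniform moment bounds established in Lemma \ref{lem:2.2} transfer to the new probability space; in particular $\widetilde{\textbf{y}}_{R,\epsilon}$ is uniformly bounded in $L^p(\widetilde{\Omega}; C([0,T];\mathbb{H}^s))$ and by Fatou's lemma this gives $\widetilde{\mathbb{E}}\sup_{t \in [0,T]}\|\widetilde{\textbf{y}}_R(t)\|_{\mathbb{H}^s}^p < \infty$. Combining the a.s.\ convergence in $\mathbb{H}^{s-1}$ with this $\mathbb{H}^s$-boundedness, Vitali's convergence theorem and interpolation between $\mathbb{H}^{s-1}$ and $\mathbb{H}^s$ yield strong convergence of $\widetilde{\textbf{y}}_{R,\epsilon} \to \widetilde{\textbf{y}}_R$ in $L^q(\widetilde{\Omega}; C([0,T];\mathbb{H}^{s'}))$ for any $s' \in [s-1,s)$ and suitable $q$. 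Since $s > 4 + \frac{d}{2}$, we have $s-1 > 1 + \frac{d}{2}$, so the Sobolev embedding $\mathbb{H}^{s-1} \hookrightarrow \mathbb{W}^{1,\infty}$ gives a.s.\ convergence of $\|\widetilde{\textbf{y}}_{R,\epsilon}\|_{\mathbb{W}^{1,\infty}}$, and by continuity of $\varpi_R(\cdot)$ the truncation factors converge pointwise.

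I would then pass to the limit in each term of the equation satisfied by $\widetilde{\textbf{y}}_{R,\epsilon}$. For the drift, the key point is that the mollification $J_\epsilon \to \textrm{Id}$ strongly on $\mathbb{H}^{s-1}$ by \eqref{1.q1}; combining this with the strong convergence of $\widetilde{\textbf{y}}_{R,\epsilon}$, the bilinearity of $B$, the locally Lipschitz nature of $F$ from Lemma \ref{lem:2.4} and the truncation, one obtains
\begin{equation*}
\int_0^t \varpi_R(\|\widetilde{\textbf{y}}_{R,\epsilon}\|_{\mathbb{W}^{1,\infty}}) J_\epsilon B(J_\epsilon \widetilde{\textbf{y}}_{R,\epsilon}, J_\epsilon \widetilde{\textbf{y}}_{R,\epsilon})\,\mathrm{d}\tau \longrightarrow \int_0^t \varpi_R(\|\widetilde{\textbf{y}}_R\|_{\mathbb{W}^{1,\infty}}) B(\widetilde{\textbf{y}}_R, \widetilde{\textbf{y}}_R)\,\mathrm{d}\tau
\end{equation*}
in $\mathbb{H}^{s-1}$, and similarly for the $F$-term. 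For the stochastic term, I would use the standard martingale identification procedure: show that $\widetilde{\mathcal{W}}$ is a cylindrical Wiener process relative to $\widetilde{\mathcal{F}}_t$ by checking that for each finite collection of elementary functionals the quadratic-variation characterization of martingales passes to the limit via the uniform bounds on $G$ from \eqref{a1}-\eqref{a2} and Vitali's theorem; then identify the limit stochastic integral using the continuity of the Itô map under $L^2$ convergence of integrands (as in Lemma 2.1 of \cite{43} or the standard Bensoussan argument). The limiting pair $(\widetilde{\textbf{y}}_R, \widetilde{\mathcal{W}})$ then satisfies \eqref{2.38} $\widetilde{\mathbb{P}}$-a.s.

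The main technical obstacle is the identification of the stochastic integral in the limit, because the diffusion coefficient $\varpi_R(\|\cdot\|_{\mathbb{W}^{1,\infty}}) G(\tau,\cdot)$ is only locally Lipschitz and the Wiener process itself changes with $\epsilon$ under the Skorokhod construction. This requires showing that for every pair of test elements $\phi, \psi \in \mathbb{H}^{s-1}$, the real-valued processes
\begin{equation*}
M^\phi(t) \triangleq \bigl(\widetilde{\textbf{y}}_R(t) - \widetilde{\textbf{y}}_R(0) + \smallint_0^t \textrm{drift}\,\mathrm{d}\tau, \phi\bigr)_{\mathbb{H}^{s-1}}
\end{equation*}
are continuous square-integrable martingales with the correct cross-variation with $(\widetilde{\mathcal{W}}, \psi)_{\mathfrak{A}}$, which is done by transferring these identities (known for the approximations) via the a.s.\ convergence and the uniform integrability provided by the moment bounds of Lemma \ref{lem:2.2}. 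Once this is established, a martingale representation theorem produces the required stochastic integral, completing the verification of \eqref{2.38}.
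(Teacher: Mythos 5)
Your proposal follows essentially the same route as the paper's proof: tightness from Lemma \ref{lem:2.3} plus Prokhorov and Skorokhod, transfer of the uniform moment bounds of Lemma \ref{lem:2.2} to the new space via Fatou, passage to the limit in the truncated drift terms using the local Lipschitz estimates of Lemma \ref{lem:2.4} and the mean-value theorem for $\varpi_R$, and identification of the noise term by showing the limit drift-corrected process is a square-integrable martingale with the correct quadratic and cross variations (uniform integrability/Vitali) and then invoking a martingale representation theorem. This matches the paper's argument in substance, so the proposal is correct; the extra interpolation step and the alternative It\^{o}-map remark are harmless additions rather than a different method.
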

\begin{proof}[\emph{\textbf{Proof.}}]
{\textsf{Step 1 (Existence):}} Due to the weakly compactness result in Lemma \ref{lem:2.3} and $\mathcal {X} ^{s-1}$ is a separable complete metric space, one infer from the Prokhorov Theorem (cf. Theorem 2.3 in \cite{43}) that there exists a probability measure $\mu  ^{R}\in \textrm{Pr}(\mathcal {X} ^{s-1})$ and a subsequence of $\{\mu  ^{R,\epsilon}\}_{0<\epsilon <1}$, still denoted by itself, such that
$
\mu ^{R,\epsilon}\rightharpoonup \mu  ^{R}$, as $\epsilon\rightarrow 0$.
It then follows from the Skorokhod Representation Theorem (cf. Theorem 2.6.1 in \cite{breit2018local}) that there exist a new  probability space $( \widetilde{\Omega} , \widetilde{\mathcal {F}},\widetilde{\mathbb{P}})$, on which defined a sequence of $\mathcal {X} ^{s-1}$-valued random elements $\{(\widetilde{\textbf{y}}_{R,\epsilon},\widetilde{\mathcal {W}}_{\epsilon})\}_{0<\epsilon <1}=\{(\widetilde{u}_{R,\epsilon},\widetilde{\gamma}_{R,\epsilon},\widetilde{\mathcal {W}}_{1,\epsilon},\widetilde{\mathcal {W}}_{2,\epsilon})\}_{0<\epsilon <1}$ converging almost surely in $\mathcal {X}^{s-1}$ to an element $(\widetilde{\textbf{y}}_{R},\widetilde{\mathcal {W}})=(\widetilde{u}_{R},\widetilde{\gamma}_{R},\widetilde{\mathcal {W}}_1,\widetilde{\mathcal {W}}_2)$, that is,
\begin{equation}\label{2.39}
\begin{split}
 &\widetilde{u}_{R,\epsilon}\rightarrow \widetilde{u}_{R} , \quad \widetilde{\gamma}_{R,\epsilon}\rightarrow \widetilde{\gamma}_{R},  \quad   \quad \textrm{in}  ~~~ C([0,T];\mathbb{H}^{s-1}(\mathbb{T}^d)), \quad  \widetilde{\mathbb{P}}\textrm{-a.s.,}
\end{split}
\end{equation}
and
\begin{equation}\label{2.40}
\begin{split}
 &\widetilde{\mathcal {W}}_{1,\epsilon}\rightarrow \widetilde{\mathcal {W}}_{1}, \quad \widetilde{\mathcal {W}}_{2,\epsilon}\rightarrow \widetilde{\mathcal {W}}_{2},  \quad \textrm{in}~~~ C([0,T];\mathfrak{U}_1), \quad \widetilde{\mathbb{P}}\textrm{-a.s.}
\end{split}
\end{equation}
Notice that by Theorem 2.1.35 and Corollary 2.1.36 in \cite{breit2018stochastically}, the random elements $\widetilde{\mathcal {W}}_{\epsilon}$ and $\widetilde{\mathcal {W}} $ are both cylindrical Wiener processes relative to the filters $\mathcal {F}_{\epsilon}^t\triangleq \sigma\{(\widetilde{\textbf{y}}_{R,\epsilon}(\tau),\widetilde{\mathcal {W}}_{\epsilon}(\tau))\}_{\tau\in [0,t]}$ and $\mathcal {F} ^t\triangleq \sigma\{(\widetilde{\textbf{y}}_{R}(\tau),\widetilde{\mathcal {W}}(\tau))\}_{\tau\in [0,t]}$, respectively.

In order to verify that the limitation $\widetilde{\textbf{y}}_{R}$ is a martingale solution to the truncated SMEP2 \eqref{2.15}, we first observe that, the uniform bounds in Lemma \ref{lem:2.2} hold true in the new probability $( \widetilde{\Omega}, \widetilde{\mathcal {F}},\widetilde{\mathbb{P}})$. So we get by using the Fatou Lemma that
\begin{equation}\label{2.41}
\begin{split}
 \widetilde{\mathbb{E}}\left(\|\widetilde{\textbf{y}}_{R}\|_{L^\infty([0,T];\mathbb{H}^{s})}^p\right)
 +\widetilde{\mathbb{E}}\left(\|\widetilde{\textbf{y}}_{R}\|_{\mathbb{W}^{\alpha,p}([0,T];\mathbb{H}^{s-1})}^p\right)
 \leq C,
\end{split}
\end{equation}
for any $T>0$ and  some positive constant $C$ independent of $\epsilon$.

Define the stochastic process
\begin{equation}\label{2.42}
\begin{split}
\mathcal {E}_\epsilon(t)\triangleq& \widetilde{\textbf{y}}_{R,\epsilon}(t)-\widetilde{\textbf{y}}_{R,\epsilon}(0) +\int_0^t\varpi_R  (\|\widetilde{\textbf{y}}_{R,\epsilon}\|_{\mathbb{W}^{1,\infty}}) B(\widetilde{\textbf{y}}_{R,\epsilon},\widetilde{\textbf{y}}_{R,\epsilon})\mathrm{d} r\\
&-\int_0^t\varpi_R  (\|\widetilde{\textbf{y}}_{R,\epsilon}\|_{\mathbb{W}^{1,\infty}})F(\widetilde{\textbf{y}}_{R,\epsilon})\mathrm{d} r .
\end{split}
\end{equation}
By Lemma \ref{lem:2.2} (under the probability space $( \widetilde{\Omega}, \widetilde{\mathcal {F}},\widetilde{\mathbb{P}})$) and \eqref{a1}-\eqref{a2}, it is easy to verify that $\mathcal {E}_\epsilon(t)$ is a $\mathbb{H}^{s-2}(\mathbb{T}^d)$-valued square integrable $\widetilde{\mathbb{P}}$-martingale, and the associated quadratic variation process is given by
\begin{equation*}
\begin{split}
[\mathcal {E}_\epsilon](t)=\int_0^t\varpi _R ^2 (\|\widetilde{\textbf{y}}_{R,\epsilon}\|_{\mathbb{W}^{1,\infty}})G(t,\widetilde{\textbf{y}}_{R,\epsilon})
G(t,\widetilde{\textbf{y}}_{R,\epsilon})^* \mathrm{d}r.
\end{split}
\end{equation*}
For any vector valued function $\varphi\in \mathscr{S}(\mathbb{T}^d)\times \mathscr{S}(\mathbb{T}^d)$ and bounded continuous function $\phi$ on $\mathcal {C}([0,s];\mathbb{H}^{s-2}(\mathbb{T}^d))\times \mathcal {C}([0,s];\mathfrak{U}_1\times\mathfrak{U}_1 )$ with $0\leq s < t \leq T$, there hold
\begin{equation*}
\begin{split}
\widetilde{\mathbb{E}}\left[(\mathcal {E}_\epsilon(t)-\mathcal {E}_\epsilon(s),\varphi)_{\mathbb{H}^{s-2}} \cdot \phi(\widetilde{\textbf{y}}_{R,\epsilon},\widetilde{\mathcal {W}})|_{[0,s]}  \right]=0,
\end{split}
\end{equation*}
and
\begin{equation*}
\begin{split}
&\widetilde{\mathbb{E}}\bigg[\bigg((\mathcal {E}_\epsilon(t),\varphi)_{\mathbb{H}^{s-2}} ^2 -(\mathcal {E}_\epsilon(t),\varphi)_{\mathbb{H}^{s-2}} ^2\\
  &\quad \quad- \int_0^t\varpi _R ^2 (\|\widetilde{\textbf{y}}_{R,\epsilon}\|_{\mathbb{W}^{1,\infty}})\|
G(t,\widetilde{\textbf{y}}_{R,\epsilon})^*\varphi\|_{\mathfrak{A}}^2 \mathrm{d}r \bigg)           \cdot \phi(\widetilde{\textbf{y}}_{R,\epsilon},\widetilde{\mathcal {W}})|_{[0,s]} \bigg]=0.
\end{split}
\end{equation*}
By applying the It\^{o} product rule, we have
\begin{equation*}
\begin{split}
 \mathrm{d}\Big(\widetilde{\textbf{b}}_k(t)(\mathcal {E}_\epsilon(t),\varphi)_{\mathbb{H}^{s-2}}\Big)=(\mathcal {E}_\epsilon(t),\varphi)_{\mathbb{H}^{s-2}}\mathrm{d} \widetilde{\textbf{b}}_k(t) + \widetilde{\textbf{b}}_k(t)\mathrm{d}(\mathcal {E}_\epsilon(t),\varphi)_{\mathbb{H}^{s-2}}+\mathrm{d} \widetilde{\textbf{b}}_k(t)\mathrm{d} (\mathcal {E}_\epsilon(t),\varphi)_{\mathbb{H}^{s-2}},
\end{split}
\end{equation*}
where $\widetilde{\mathcal {W}}(t)=\sum_{k\geq 1} e_k  \widetilde{\textbf{b}}_k(t)$ is a cylindrical Wiener process on $\mathfrak{A}\times \mathfrak{A}$, and $\widetilde{\textbf{b}}_k=(\beta_k^1,\beta_k^2)^T$ denotes the two dimensional Brownian motion. Integrating the last identity on $[s,t]$ and then taking the expectation leads to
\begin{equation*}
\begin{split}
 &\widetilde{\mathbb{E}}\bigg[\bigg(\widetilde{\textbf{b}}_k(t)(\mathcal {E}_\epsilon(t),\varphi)_{\mathbb{H}^{s-2}} -\widetilde{\textbf{b}}_k(s)(\mathcal {E}_\epsilon(s),\varphi)_{\mathbb{H}^{s-2}}\\
 &\quad \quad -\int_s^t\varpi _R (\|\widetilde{\textbf{y}}_{R,\epsilon}\|_{\mathbb{W}^{1,\infty}})
(G(t,\widetilde{\textbf{y}}_{R,\epsilon})^*\varphi,e_j)_\mathfrak{A} \mathrm{d}r\bigg)        \cdot \phi(\widetilde{\textbf{y}}_{R,\epsilon},\widetilde{\mathcal {W}})|_{[0,s]}\bigg]=0.
\end{split}
\end{equation*}
Thanks to Lemma \ref{lem:2.4}, we find
\begin{equation}\label{2.43}
\begin{split}
 &\left\|\int_0^t\varpi_R  (\|\widetilde{\textbf{y}}_{R,\epsilon}\|_{\mathbb{W}^{1,\infty}})F(\widetilde{\textbf{y}}_{R,\epsilon}) - \varpi_R  (\|\widetilde{\textbf{y}}_{R}\|_{\mathbb{W}^{1,\infty}})F(\widetilde{\textbf{y}}_{R})\mathrm{d} r\right\|_{\mathbb{H}^s}\\
  &\quad \quad \leq C\int_0^t \varpi_R  (c\|\widetilde{\textbf{y}}_{R}\|_{\mathbb{H}^{s}}) ( \|\widetilde{\textbf{y}}_{R}\|_{\mathbb{H}^s}+ \|\widetilde{\textbf{y}}_{R,\epsilon}\|_{\mathbb{H}^s}) \|\widetilde{\textbf{y}}_{R,\epsilon}-\widetilde{\textbf{y}}_{R}\|_{\mathbb{H}^s}\mathrm{d} r\\
 &\quad \quad\quad \quad+C\int_0^t\|\varpi_R '\|_{L^\infty}\|
 \widetilde{\textbf{y}}_{R}\|_{\mathbb{W}^{1,\infty}}\|\widetilde{\textbf{y}}_{R}\|_{\mathbb{H}^s}
 \|\widetilde{\textbf{y}}_{R,\epsilon}-\widetilde{\textbf{y}}_{R}\|_{\mathbb{W}^{1,\infty}}\mathrm{d} r\\
 &\quad \quad \leq   C\int_0^t \|\widetilde{\textbf{y}}_{R,\epsilon}(r)-\widetilde{\textbf{y}}_{R}(r)\|_{\mathbb{H}^s}\mathrm{d} r \rightarrow 0,\quad \textrm{as} ~~\epsilon\rightarrow0,
\end{split}
\end{equation}
where the second inequality used the properties for the cut-off functions and the uniform bounds for $\widetilde{\textbf{y}}_{R,\epsilon}$ and  $\widetilde{\textbf{y}}_{R}$ (cf. Lemma \ref{lem:2.2}, \eqref{2.43} and \eqref{2.41}). Moreover, by using the Moser-type estimates (see Lemma \ref{lem:moser}) for $s-1>1+\frac{d}{2}$, one get
\begin{equation}\label{2.44}
\begin{split}
&\|\varpi_R  (\|\widetilde{\textbf{y}}_{R,\epsilon}\|_{\mathbb{W}^{1,\infty}}) B(\widetilde{\textbf{y}}_{R,\epsilon},\widetilde{\textbf{y}}_{R,\epsilon})-\varpi_R  (\|\widetilde{\textbf{y}}_{R}\|_{\mathbb{W}^{1,\infty}}) B(\widetilde{\textbf{y}}_{R},\widetilde{\textbf{y}}_{R})\|_{\mathbb{H}^s}\\
 &\quad \leq\|\varpi_R '\|_{L^\infty} \|\widetilde{\textbf{y}}_{R,\epsilon}-\widetilde{\textbf{y}}_{R}\|_{\mathbb{W}^{1,\infty}}
\|\widetilde{\textbf{y}}_{R,\epsilon}\|_{\mathbb{H}^s}^2\\
&\quad\quad+\varpi_R  (\|\widetilde{\textbf{y}}_{R }\|_{\mathbb{W}^{1,\infty}}) (\|\widetilde{\textbf{y}}_{R,\epsilon}-\widetilde{\textbf{y}}_{R}\|_{\mathbb{H}^s}\|\widetilde{\textbf{y}}_{R,\epsilon}\|_{\mathbb{H}^s}+ \|\widetilde{\textbf{y}}_{R}\|_{\mathbb{H}^s}\|\widetilde{\textbf{y}}_{R,\epsilon}-\widetilde{\textbf{y}}_{R}\|_{\mathbb{H}^s})\\
&\quad \leq C\|\widetilde{\textbf{y}}_{R,\epsilon}-\widetilde{\textbf{y}}_{R}\|_{\mathbb{H}^s} \rightarrow 0, \quad \textrm{as} ~~\epsilon\rightarrow0,
\end{split}
\end{equation}
where the last inequality used the $\mathbb{P}$-almost surely convergence result in \eqref{2.39}. From \eqref{2.43} and \eqref{2.44}, we deduce that
\begin{equation*}
\begin{split}
 &\lim_{\epsilon\rightarrow 0}\widetilde{\mathbb{E}}\sup_{t\in[0,T]}\left|(\int_0^t\varpi_R  (\|\widetilde{\textbf{y}}_{R,\epsilon}\|_{\mathbb{W}^{1,\infty}}) B(\widetilde{\textbf{y}}_{R,\epsilon},\widetilde{\textbf{y}}_{R,\epsilon})\mathrm{d} r-\int_0^t\varpi_R  (\|\widetilde{\textbf{y}}_{R}\|_{\mathbb{W}^{1,\infty}}) B(\widetilde{\textbf{y}}_{R},\widetilde{\textbf{y}}_{R})\mathrm{d} r,\varphi)_{\mathbb{H}^{s-2}}\right| \\
 &\quad=0,\\
&\lim_{\epsilon\rightarrow 0}\widetilde{\mathbb{E}}\sup_{t\in[0,T]}\left|(\int_0^t\varpi_R  (\|\widetilde{\textbf{y}}_{R,\epsilon}\|_{\mathbb{W}^{1,\infty}})F(\widetilde{\textbf{y}}_{R,\epsilon})-\varpi_R  (\|\widetilde{\textbf{y}}_{R}\|_{\mathbb{W}^{1,\infty}})F(\widetilde{\textbf{y}}_{R})\mathrm{d} r,\varphi)_{\mathbb{H}^{s-2}}\right|=0.
\end{split}
\end{equation*}
Thereby, one can deduce from the definition of $\mathcal {E}_\epsilon(t)$ that
\begin{equation}\label{2.45}
\begin{split}
\lim_{\epsilon\rightarrow 0}\widetilde{\mathbb{E}}\sup_{t\in[0,T]}\left|(\mathcal {E}_\epsilon(t)-\mathcal {E}(t),\varphi)_{\mathbb{H}^{s-2}}\right|=0,
\end{split}
\end{equation}
where $
\mathcal {E}(t)= \widetilde{\textbf{y}}_{R}(t)-\widetilde{\textbf{y}}_{R}(0)+\int_0^t\varpi_R  (\|\widetilde{\textbf{y}}_{R}\|_{\mathbb{W}^{1,\infty}}) B(\widetilde{\textbf{y}}_{R},\widetilde{\textbf{y}}_{R})\mathrm{d} r-\int_0^t\varpi_R  (\|\widetilde{\textbf{y}}_{R}\|_{\mathbb{W}^{1,\infty}})F(\widetilde{\textbf{y}}_{R})\mathrm{d} r $.

Now let us consider the convergence result related to high order momentum in \eqref{2.45}. Indeed, since for any $\varphi\in \mathscr{S}(\mathbb{T}^d)\times \mathscr{S}(\mathbb{T}^d)$, the process $(\mathcal {E}_\epsilon(t),\varphi)_{\mathbb{H}^{s-2}}$ is a real valued martingale, so by using the assumption on $F(\cdot)$ and the BDG inequality, we obtain
\begin{equation*}
\begin{split}
&\sup_{0<\epsilon< 1} \widetilde{\mathbb{E}}|(\mathcal {E}_\epsilon(t),\varphi)_{\mathbb{H}^{s-2}}|^p\\
&\quad \leq C\sup_{0<\epsilon< 1} \widetilde{\mathbb{E}}\left(\int_0^t\mu^2 (t) \chi^2(\|\widetilde{\textbf{y}}_{R,\epsilon}\|_{\mathbb{W}^{1,\infty}})\varpi_R  (\|\widetilde{\textbf{y}}_{R,\epsilon}\|_{\mathbb{W}^{1,\infty}})
(1+\|\widetilde{\textbf{y}}_{R,\epsilon}\|_{\mathbb{H}^{s}}^2)\right)^{\frac{p}{2}}\\
&\quad \leq C\chi^p(2R)\|\mu\|_{L^p}^p\left(1+\sup_{0<\epsilon< 1}\widetilde{\mathbb{E}}\sup_{t\in [0,T]}\|\widetilde{\textbf{y}}_{R,\epsilon}(t)\|_{\mathbb{H}^{s}}^p\right)\\
&\quad \leq C\chi^p(2R)\|\mu\|_{L^p}^p,
\end{split}
\end{equation*}
for some positive constant $C$ independent of $\epsilon$. This implies that the process $|(\mathcal {E}_\epsilon(t),\varphi)_{\mathbb{H}^{s-2}}|^2$ is uniformly integrable. It then follows from the Vitali Convergence Theorem (cf. pp.187 in [19]) that
\begin{equation}\label{2.46}
\begin{split}
\lim_{\epsilon\rightarrow 0}\widetilde{\mathbb{E}}\sup_{t\in[0,T]}\left|(\mathcal {E}_\epsilon(t)-\mathcal {E}(t),\varphi)_{\mathbb{H}^{s-2}}\right|^2=0.
\end{split}
\end{equation}
Thanks to \eqref{2.46}, one can take the limit as $\epsilon\rightarrow0$ to derive that
\begin{equation*}
\begin{split}
&\widetilde{\mathbb{E}}\left[(\mathcal {E}(t)-\mathcal {E}(s),\varphi)_{\mathbb{H}^{s-2}} \cdot \phi(\widetilde{\textbf{y}}_{R},\widetilde{\mathcal {W}})|_{[0,s]}  \right]=0,\\
&\widetilde{\mathbb{E}}\left[\left((\mathcal {E}(t),\varphi)_{\mathbb{H}^{s-2}} ^2 -(\mathcal {E}(t),\varphi)_{\mathbb{H}^{s-2}} ^2- \int_0^t\varpi _R ^2 (\|\widetilde{\textbf{y}}_{R}\|_{\mathbb{W}^{1,\infty}})\|
G(t,\widetilde{\textbf{y}}_{R})^*\varphi\|_{\mathfrak{A}}^2 \mathrm{d}r \right) \cdot \phi(\widetilde{\textbf{y}}_{R},\widetilde{\mathcal {W}})|_{[0,s]} \right]\\
&=0,
\end{split}
\end{equation*}
and
\begin{equation*}
\begin{split}
 &\widetilde{\mathbb{E}}\left[\Big(\widetilde{\textbf{b}}_k(t)(\mathcal {E}(t),\varphi)_{\mathbb{H}^{s-2}} -\widetilde{\textbf{b}}_k(t)(\mathcal {E}(t),\varphi)_{\mathbb{H}^{s-2}}-\int_s^t\varpi _R (\|\widetilde{\textbf{y}}_{R}\|_{\mathbb{W}^{1,\infty}})
(G(t,\widetilde{\textbf{y}}_{R})^*\varphi,e_j)_\mathfrak{A} \mathrm{d}r\right.\Big)  \\
 &\left.\quad \cdot \phi(\widetilde{\textbf{y}}_{R},\widetilde{\mathcal {W}})|_{[0,s]}\right]=0,
\end{split}
\end{equation*}
which indicate that the limit process $\mathcal {E}(t)$ is in fact an $\widetilde{\mathcal {F}}_t$-adapted square integrable martingale taking values in $\mathbb{H}^s(\mathbb{T}^d)$. As a result, one can apply the generalized Martingale Presentation Theorem (cf. Proposition A.1 in  \cite{hofmanova2013degenerate}) to obtain
\begin{equation*}
\begin{split}
\mathcal {E}(t)= \int_0^t\varpi_R  (\|\widetilde{\textbf{y}}_{R}\|_{\mathbb{W}^{1,\infty}})G(r,\widetilde{\textbf{y}}_{R})\mathrm{d} \widetilde{\mathcal {W}},\quad \forall t\in [0,T].
\end{split}
\end{equation*}
Meanwhile, there exists a new  filtered probability space $(\widetilde{\Omega}, \widetilde{\mathcal {F}}, \widetilde{\mathcal {F}}_t ,\widetilde{\mathbb{P}})$, a cylindrical Wiener process $\widetilde{\mathcal {W}}$ and an $\widetilde{\mathcal {F}}_t$-adapted process $
\widetilde{\textbf{y}}_{R}\in L^\infty([0, T ]; \mathbb{H}^s (\mathbb{T}^d)) \cap \mathcal {C}([0, T ]; \mathbb{H}^{s-1}(\mathbb{T}^d))$,
such
that $\widetilde{\textbf{y}}_{R}$ satisfies Eq.\eqref{2.38} $\widetilde{\mathbb{P}}$-almost surely, and $\widetilde{\textbf{y}}_{R}(0)$ has the same distribution with $\widetilde{\textbf{y}}(0)= \widetilde{\textbf{y}}_0$. In addition, the solution $\widetilde{\textbf{y}}_{R}(\cdot)$ exists globally as $T > 0 $ is arbitrary.

{\textsf{Step 2 (Regularity):}} We are going to show that $
\widetilde{\textbf{y}}_{R}\in \mathcal {C}([0, T ]; \mathbb{H}^s (\mathbb{T}^d))$, $ \widetilde{\mathbb{P}}$-a.s. Indeed, it is shown in \textsf{Step 1} that $\widetilde{\textbf{y}}_{R}$ belongs to $ L^\infty([0, T ]; \mathbb{H}^s (\mathbb{T}^d)) \cap \mathcal {C}([0, T ]; \mathbb{H}^{s-1}(\mathbb{T}^d))$, so by the continuous embedding $\mathbb{H}^{s}(\mathbb{T}^d)\subset \mathbb{H}^{s-1}(\mathbb{T}^d)$, we infer that the solution is weakly continuous in $\mathbb{H}^{s}(\mathbb{T}^d)$ (cf. Lemma II.5.9 in \cite{boyer2012mathematical}), that is, $
\widetilde{\textbf{y}}_{R}\in \mathcal {C}([0, T ]; \mathbb{H}^{s}(\mathbb{T}^d)_{\textrm{weak}})$,
which implies that
\begin{equation}\label{2.47}
\begin{split}
\widetilde{\mathbb{E}}\|\widetilde{\textbf{y}}_{R}(\varsigma)\|_{\mathbb{H}^{s}}^2
=&\widetilde{\mathbb{E}}\left|\sup_{\varphi\in \mathbb{H}^{-s},~~\|\varphi\|_{\mathbb{H}^{-s}}=1}\lim_{t\rightarrow \varsigma^+}(\widetilde{\textbf{y}}_{R}(t),\varphi)_{\mathbb{H}^{s},\mathbb{H}^{-s}}\right|^2\\
\leq &\widetilde{\mathbb{E}}\liminf_{t\rightarrow \varsigma^+} \left(\sup_{\varphi\in \mathbb{H}^{-s},~~\|\varphi\|_{\mathbb{H}^{-s}}=1}
|(\widetilde{\textbf{y}}_{R}(t),\varphi)_{\mathbb{H}^{s},\mathbb{H}^{-s}}|^2\right)
\\
\leq & \liminf_{t\rightarrow \varsigma^+}\widetilde{\mathbb{E}} \|\widetilde{\textbf{y}}_{R}(t)\|_{\mathbb{H}^{s}}^2.
\end{split}
\end{equation}
As $\widetilde{\textbf{y}}_{R}(\cdot)$ solves the truncated SMEP2 \eqref{2.14} $\widetilde{\mathbb{P}}$-almost surely, one can apply the It\^{o} formula to $ \|\widetilde{\textbf{y}}_{R}(r)\|_{\mathbb{H}^{s}}^2$.  After utilizing the BDG inequality and the assumption \eqref{a1}, we obtain
\begin{equation*}
\begin{split}
\widetilde{\mathbb{E}}\sup_{r\in [\varsigma,t]}\| \widetilde{\textbf{y}}_{R} (r)\|_{\mathbb{H}^s}^2\leq& \| \widetilde{\textbf{y}}_{R} (\varsigma)\|_{\mathbb{H}^s}^2 +C\int_\varsigma^t  \varpi_R  (\|\widetilde{\textbf{y}}_{R}\|_{\mathbb{W}^{1,\infty}}) \|\widetilde{\textbf{y}}_{R}\|_{\mathbb{W}^{1,\infty}}
\|\widetilde{\textbf{y}}_{R}\|_{\mathbb{H}^s}^2\mathrm{d}r \\
&+ C\widetilde{\mathbb{E}}\left(\int_\varsigma^t \mu^2 (t) \varpi_R ^2 (\|\widetilde{\textbf{y}}_{R}\|_{\mathbb{W}^{1,\infty}}) \chi^2(\|\textbf{y}\|_{\mathbb{W}^{1,\infty}})\| \widetilde{\textbf{y}}_{R}\|_{\mathbb{H}^{s}} ^2 (1+\|\widetilde{\textbf{y}}_{R}\|_{\mathbb{H}^{s}}^2)\mathrm{d} r\right)^{\frac{1}{2}}\\
&+C\widetilde{\mathbb{E}}\int_\varsigma^t\mu^2 (t) \varpi_R ^2 (\|\widetilde{\textbf{y}}_{R}\|_{\mathbb{W}^{1,\infty}}) \chi^2(\|\textbf{y}\|_{\mathbb{W}^{1,\infty}}) (1+\|\widetilde{\textbf{y}}_{R}\|_{\mathbb{H}^{s}}^2)\mathrm{d}r.
\end{split}
\end{equation*}
 Thereby, it follows from \eqref{2.48} and the uniform bound for $\widetilde{\textbf{y}}_{R}$ in $\mathbb{H}^{s}(\mathbb{T}^d)$ (cf. \eqref{2.41}) that
\begin{equation*}
\begin{split}
\widetilde{\mathbb{E}} \| \widetilde{\textbf{y}}_{R} (t)\|_{\mathbb{H}^s}^2\leq\widetilde{\mathbb{E}}\sup_{r\in [\varsigma,t]}\| \widetilde{\textbf{y}}_{R} (r)\|_{\mathbb{H}^s}^2\leq \widetilde{\mathbb{E}}\| \widetilde{\textbf{y}}_{R} (\varsigma)\|_{\mathbb{H}^s}^2 +C(|t-\varsigma|+|t-\varsigma|^{\frac{1}{2}}).
\end{split}
\end{equation*}
By taking the superior limit $t\rightarrow \varsigma^+$ in last inequality leads to
\begin{equation} \label{2.48}
\begin{split}
\limsup_{t\rightarrow \varsigma^+}\widetilde{\mathbb{E}}\|\widetilde{\textbf{y}}_{R}(t)\|_{\mathbb{H}^{s}}^2 \leq\limsup_{t\rightarrow \varsigma^+}\Big(\widetilde{\mathbb{E}}\| \widetilde{\textbf{y}}_{R} (\varsigma)\|_{\mathbb{H}^s}^2 +C(|t-\varsigma|+|t-\varsigma|^{\frac{1}{2}}\Big) = \widetilde{\mathbb{E}} \|\widetilde{\textbf{y}}_{R}(\varsigma)\|_{\mathbb{H}^{s}}^2 .
\end{split}
\end{equation}
Combining \eqref{2.47} and \eqref{2.48} yields that
\begin{equation*}
\begin{split}
\widetilde{\mathbb{E}} \|\widetilde{\textbf{y}}_{R}(\varsigma)\|_{\mathbb{H}^{s}}^2\leq \liminf_{t\rightarrow \varsigma^+}\widetilde{\mathbb{E}} \|\widetilde{\textbf{y}}_{R}(t)\|_{\mathbb{H}^{s}}^2\leq  \limsup_{t\rightarrow \varsigma^+}\widetilde{\mathbb{E}} \|\widetilde{\textbf{y}}_{R}(t)\|_{\mathbb{H}^{s}}^2\leq \widetilde{\mathbb{E}} \|\widetilde{\textbf{y}}_{R}(\varsigma)\|_{\mathbb{H}^{s}}^2,
\end{split}
\end{equation*}
which implies that
$$
\widetilde{\mathbb{E}} \|\widetilde{\textbf{y}}_{R}(\varsigma)\|_{\mathbb{H}^{s}}^2= \lim_{t\rightarrow \varsigma^+}\widetilde{\mathbb{E}} \|\widetilde{\textbf{y}}_{R}(t)\|_{\mathbb{H}^{s}}^2.
$$
In a similar manner, one can also prove that $\widetilde{\mathbb{E}} \|\widetilde{\textbf{y}}_{R}(\varsigma)\|_{\mathbb{H}^{s}}^2= \lim_{t\rightarrow \varsigma^-}\widetilde{\mathbb{E}} \|\widetilde{\textbf{y}}_{R}(t)\|_{\mathbb{H}^{s}}^2$. Hence, the solution $\widetilde{\textbf{y}}_{R}$ is actually strongly continuous in $\mathbb{H}^{s}(\mathbb{T}^d)$.
 This completes the proof of Lemma \ref{lem:2.5}.
\end{proof}
\subsubsection{Pathwise uniqueness}

\begin{lemma} \label{lem:2.6}
Let $s>4+\frac{d}{2}$, $R\geq 1$, and the conditions \eqref{a1}-\eqref{a2} hold.  Suppose that $(\textbf{y}_{R}^{(i)},\mathcal {S})$,  $\textbf{y}_{R}^{(i)}=(u_{R}^{(i)},\gamma_{R}^{(i)})$, $i=1,2$, are two global solutions of \eqref{2.38} in the sense of Lemma \ref{lem:2.5}, relative to the same stochastic basis $\mathcal {S}=( \Omega , \mathcal {F},\{{\mathcal {F}}_t\}_{t\geq0},{\mathbb{P}})$. If ${\textbf{y}}_{R}^{(1)}(0)={\textbf{y}}_{R}^{(2)}(0)=\textbf{y} _0$ $\mathbb{P}$-almost surely with $\mathbb{E}\|\textbf{y} _0\|_{\mathbb{H}^s}^r<\infty$ for some $r>2$, then we have
$$
\mathbb{P}\left\{\textbf{y}_{R}^{(1)}(t)=\textbf{y}_{R}^{(2)}(t),~~~\forall t\geq 0\right\}=1.
$$
\end{lemma}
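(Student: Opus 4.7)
The plan is to estimate the difference $\textbf{z} \triangleq \textbf{y}_{R}^{(1)} - \textbf{y}_{R}^{(2)}$ in the topology $\mathbb{H}^{s-1}(\mathbb{T}^d)$, which is the natural space in which the drifts of the two SPDEs actually live. Using Lemma \ref{lem:2.5} I know that both $\textbf{y}_{R}^{(i)}$ belong $\mathbb{P}$-a.s.\ to $C([0,T]; \mathbb{H}^s(\mathbb{T}^d))$ for every $T > 0$, so I would introduce the localizing stopping times
$$\tau_M \triangleq \inf\left\{t \geq 0 : \|\textbf{y}_{R}^{(1)}(t)\|_{\mathbb{H}^s} + \|\textbf{y}_{R}^{(2)}(t)\|_{\mathbb{H}^s} > M\right\},$$
which satisfy $\tau_M \nearrow \infty$ a.s.\ as $M \to \infty$. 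It then suffices to show $\textbf{z}(\cdot \wedge \tau_M) \equiv 0$ for every fixed $M$, since indistinguishability follows by taking the union over $M \in \mathbb{N}$.

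Writing $\varpi_R^{(i)} = \varpi_R(\|\textbf{y}_R^{(i)}\|_{\mathbb{W}^{1,\infty}})$, I would apply the Hilbert-space It\^o formula to $\|\textbf{z}\|_{\mathbb{H}^{s-1}}^2$ and split each nonlinear difference by the telescope
$$\varpi_R^{(1)}\Phi(\textbf{y}_R^{(1)}) - \varpi_R^{(2)}\Phi(\textbf{y}_R^{(2)}) = \varpi_R^{(1)}\bigl[\Phi(\textbf{y}_R^{(1)}) - \Phi(\textbf{y}_R^{(2)})\bigr] + \bigl[\varpi_R^{(1)} - \varpi_R^{(2)}\bigr]\Phi(\textbf{y}_R^{(2)}),$$
for $\Phi \in \{B, F, G\}$. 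The argument-difference piece of the transport is handled by the commutator estimate of Lemma \ref{lem:commutator} applied after pairing with $\Lambda^{s-1}\textbf{z}$, together with the integration by parts that kills the top-order transport, producing a contribution bounded by $C(\|\textbf{y}_R^{(1)}\|_{\mathbb{H}^s} + \|\textbf{y}_R^{(2)}\|_{\mathbb{H}^s})\|\textbf{z}\|_{\mathbb{H}^{s-1}}^2$. The corresponding pieces for $F$ and $G$ are controlled by a version of Lemma \ref{lem:2.4} at regularity $s-1$ (the same proof works since $s - 1 > d/2$) and by Assumption \ref{assume}(2) applied at level $s-1 > 1 + d/2$, respectively. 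The cutoff-difference pieces rest on the key Lipschitz bound
$$|\varpi_R^{(1)} - \varpi_R^{(2)}| \leq \frac{\sup|\varpi'|}{R}\bigl|\|\textbf{y}_R^{(1)}\|_{\mathbb{W}^{1,\infty}} - \|\textbf{y}_R^{(2)}\|_{\mathbb{W}^{1,\infty}}\bigr| \leq C\|\textbf{z}\|_{\mathbb{W}^{1,\infty}} \leq C\|\textbf{z}\|_{\mathbb{H}^{s-1}},$$
where the last inequality is the Sobolev embedding $\mathbb{H}^{s-1} \hookrightarrow \mathbb{W}^{1,\infty}$; multiplying by the $M$-dependent bound on $\|\Phi(\textbf{y}_R^{(2)})\|_{\mathbb{H}^{s-1}}$ and by $\|\textbf{z}\|_{\mathbb{H}^{s-1}}$ again yields at most $C_M \|\textbf{z}\|_{\mathbb{H}^{s-1}}^2$.

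Collecting these estimates, taking expectation (the stopped stochastic integral is a true martingale by the $L^p$-moment bounds inherited from Lemma \ref{lem:2.2}), and absorbing the quadratic variation from the noise, I obtain
$$\mathbb{E}\|\textbf{z}(t \wedge \tau_M)\|_{\mathbb{H}^{s-1}}^2 \leq C_M \int_0^t \mathbb{E}\|\textbf{z}(r \wedge \tau_M)\|_{\mathbb{H}^{s-1}}^2 \, \mathrm{d}r,$$
so that Gronwall's inequality combined with $\textbf{z}(0) = 0$ forces $\textbf{z}(\cdot \wedge \tau_M) \equiv 0$ $\mathbb{P}$-a.s., and sending $M \to \infty$ completes the proof. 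The principal obstacle is the cutoff-difference term, whose Lipschitz dependence on $\|\textbf{z}\|_{\mathbb{W}^{1,\infty}}$ is first-order in derivatives and therefore cannot be reabsorbed into $\|\textbf{z}\|_{\mathbb{L}^2}$; the decision to work in $\mathbb{H}^{s-1}$ is precisely what converts this derivative norm back to the same topology via Sobolev embedding, which requires $s - 1 > 1 + d/2$---a condition amply satisfied by the standing hypothesis $s > 4 + d/2$.
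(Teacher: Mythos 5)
Your proposal follows essentially the same route as the paper's proof: localize by stopping times, apply the It\^{o} formula to the squared Sobolev norm of the difference at a regularity level below $s$, split each nonlinearity into an argument-difference part (handled by the commutator estimate, Lemma \ref{lem:2.4}, and Assumption \ref{assume}) and a cutoff-difference part (handled by the mean value theorem for $\varpi_R$ plus Sobolev embedding into $\mathbb{W}^{1,\infty}$), then control the stochastic terms and apply Gronwall before letting the localization parameter tend to infinity. The only difference is cosmetic: the paper estimates in $\mathbb{H}^{s-2}$ with stopping times built on the pathwise-continuous $\mathbb{H}^{s-1}$-norms, whereas you estimate in $\mathbb{H}^{s-1}$ with $\mathbb{H}^{s}$-based stopping times, which just requires the small additional observation that these hitting times are genuine stopping times (and tend to infinity) even though solutions in the sense of Lemma \ref{lem:2.5} are a priori only weakly continuous and essentially bounded in $\mathbb{H}^{s}$.
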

\begin{proof}[\emph{\textbf{Proof.}}]
Notice that the continuity in time is  ensured for $\mathbb{H}^{s-1}$-norm, so by  $s>4+\frac{d}{2}$ one can define the stopping times $
\mathbbm{t}_K\triangleq \inf \{t\geq 0;~~ \|\textbf{y}_{R}^{(1)}(t)\|_{\mathbb{H}^{s-1}}^2+ \|\textbf{y}_{R}^{(2)}(t)\|_{\mathbb{H}^{s-1}}^2> K \}$, $  K>0$.
It is sufficient to verify the Lipschitz continuous of the coefficients in $\mathbb{H}^{s-2}(\mathbb{T}^d)$, and then uniqueness follows by classical results (cf. \cite{43}). Here we just deal with the term $\mathscr{A}(\textbf{y})\triangleq\varpi_R  (\|y\|_{\mathbb{W}^{1,\infty}}) B(y,y)$, since the other terms can be treated in a similar manner.

Indeed, we note that
\begin{equation}\label{ww}
\begin{split}
&\mathscr{A}(\textbf{y}_{R}^{(1)})-\mathscr{A}(\textbf{y}_{R}^{(2)})\\
&\quad =\varpi_{1,2}(t)B(\textbf{y}_{R}^{(2)},\textbf{y}_{R}^{(2)})+\varpi_R  (\|\textbf{y}_{R}^{(1)}\|_{\mathbb{W}^{1,\infty}}) (B(\textbf{z}^{12},\textbf{y}_{R}^{(1)})+ B(\textbf{y}_{R}^{(2)},\textbf{z}^{12}) ),
\end{split}
\end{equation}
where $\textbf{z}^{12}=\textbf{y}_{R}^{(1)} - \textbf{y}_{R}^{(2)}$ and $\varpi_{1,2}(t)=\varpi_R  (\|\textbf{y}_{R}^{(1)}\|_{\mathbb{W}^{1,\infty}})-\varpi_R  (\|\textbf{y}_{R}^{(2)}\|_{\mathbb{W}^{1,\infty}})$.

For the first term on the R.H.S. of \eqref{ww}, we first get by   Mean-Value Theorem that
\begin{equation}\label{zz}
\begin{split}
|\varpi_{1,2} | &=|\varpi_R'  (\theta\|\textbf{y}_{R}^{(1)}\|_{\mathbb{W}^{1,\infty}} +(1-\theta) \|\textbf{y}_{R}^{(2)}\|_{\mathbb{W}^{1,\infty}}  )(\|\textbf{y}_{R}^{(1)}\|_{\mathbb{W}^{1,\infty}}-\|\textbf{y}_{R}^{(2)}\|_{\mathbb{W}^{1,\infty}})|\\
 &\leq C \|\textbf{y}_{R}^{(1)}-\textbf{y}_{R}^{(2)}\|_{\mathbb{W}^{1,\infty}}.
\end{split}
\end{equation}
Then by using the Moser-type estimate and   $\mathbb{H}^{s-1}(\mathbb{T}^d)\hookrightarrow\mathbb{H}^{s-2}(\mathbb{T}^d)\hookrightarrow \mathbb{W}^{1,\infty}(\mathbb{T}^d)$, we get
\begin{equation*}
\begin{split}
\|B(\textbf{y}_{R}^{(2)},\textbf{y}_{R}^{(2)})\|_{\mathbb{H}^{s-2}}
&\leq C\left(\|u_{R}^{(2)}\|_{L^\infty}\|\nabla \textbf{y}_{R}^{(2)} \|_{\mathbb{H}^{s-2}}+\|u_{R}^{(2)}\|_{H^{s-2}}\| \nabla \textbf{y}_{R}^{(2)} \|_{\mathbb{L}^{\infty}}  \right) \\
&\leq C \|\textbf{y}_{R}^{(2)} \|_{\mathbb{H}^{s-1}}^2,
\end{split}
\end{equation*}
which combined with \eqref{zz} lead to
\begin{equation}\label{cc}
\begin{split}
\|\varpi_{1,2}B(\textbf{y}_{R}^{(2)},\textbf{y}_{R}^{(2)})\|_{\mathbb{H}^{s-2}}
 &\leq C \|\textbf{y}_{R}^{(2)} \|_{\mathbb{H}^{s-1}}^2 \|\textbf{y}_{R}^{(1)}-\textbf{y}_{R}^{(2)}\|_{\mathbb{H}^{s-2}}\\
 &\leq C  \|\textbf{y}_{R}^{(1)}-\textbf{y}_{R}^{(2)}\|_{\mathbb{H}^{s-2}},\quad \textrm{on the interval}~~[0,t\wedge\mathbbm{t}_K],
\end{split}
\end{equation}
for some constant $C>0$ depending on $R$ and $K$.

Similarly,  we also have
\begin{equation} \label{xx}
\begin{split}
&|\varpi_R  (\|\textbf{y}_{R}^{(1)}\|_{\mathbb{W}^{1,\infty}}) (B(\textbf{z}^{12},\textbf{y}_{R}^{(1)})+ B(\textbf{y}_{R}^{(2)},\textbf{z}^{12}) )|\leq C  \|\textbf{y}_{R}^{(1)}-\textbf{y}_{R}^{(2)}\|_{\mathbb{H}^{s-2}}.
\end{split}
\end{equation}
It follows from \eqref{ww}, \eqref{cc} and \eqref{xx} that
\begin{equation*}
\begin{split}
&\|\mathscr{A}(\textbf{y}_{R}^{(1)})-\mathscr{A}(\textbf{y}_{R}^{(2)}) \|_{\mathbb{H}^{s-2}}\leq C  \|\textbf{y}_{R}^{(1)}-\textbf{y}_{R}^{(2)}\|_{\mathbb{H}^{s-2}},\quad \textrm{on the interval}~~[0,t\wedge\mathbbm{t}_K],
\end{split}
\end{equation*}
for arbitrarily  fixed $R$ and $K$. This proves the Lipschitz continuity of $\mathscr{A}(\cdot)$ in $ \mathbb{H}^{s-2}(\mathbb{T}^d)$.  The proof of Lemma \ref{lem:2.6} is now completed.
\end{proof}

\subsubsection{Regularized pathwise solution}
With the pathwise uniqueness and the existence of martingale solutions at hand, we could now establish the existence of local pathwise solutions in sufficient regular spaces, whose proof is based on the following result.

\begin{lemma} [Gy\"{o}ngy-Krylov Lemma \cite{47}] \label{lem:2.7}
Let $X$ be a Polish space, and $\{Y_j \}_{j\geq0}$ be a
sequence of $X$-valued random elements. We define the collection of joint
laws $\{\nu_{j,l}\}_{j,l\geq1}$ of $\{Y_j \}_{j\geq1}$ by
$$
\nu_{j,l}(E)=\mathbb{P}\{(Y_j,Y_l)\in E\},~~~E\in \mathcal {B}(X\times X).
$$
Then $\{Y_j \}_{j\geq0}$ converges in probability if and only if for every subsequence $\{\nu_{j_k,l_k}\}_{k\geq0}$, there exists a further subsequence which converges
weakly to a probability measure $\nu$ such that
$$
\nu(\{(u,v)\in X \times X:u=v\})=1.
$$
\end{lemma}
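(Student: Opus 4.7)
The plan is to prove both directions of the equivalence. The forward implication is essentially routine, while the converse requires extracting a Cauchy-in-probability property by combining a contradiction argument with the diagonal-support hypothesis and the completeness of $X$.

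For the \emph{necessity} ($\Rightarrow$), I would start from the assumption that $Y_j \to Y$ in probability for some $X$-valued random variable $Y$. Then $(Y_j, Y_l) \to (Y, Y)$ in probability along any pair of subsequences $j_k, l_k \to \infty$ (using the triangle inequality in the product metric), hence also in distribution. Thus $\nu_{j_k,l_k}$ converges weakly to the law $\nu$ of $(Y,Y)$, which is manifestly supported on the diagonal $\Delta = \{(x,x) : x \in X\}$. Every subsequence therefore automatically satisfies the required property without any further extraction.

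For the \emph{sufficiency} ($\Leftarrow$), I would argue by contradiction. Since $X$ is Polish (in particular complete), it is enough to show that $\{Y_j\}$ is Cauchy in probability; completeness then furnishes a limit $Y \in X$ $\mathbb{P}$-almost surely. Suppose this Cauchy property fails: there exist $\epsilon, \delta > 0$ and subsequences $j_k, l_k$ with
$$\mathbb{P}\{d_X(Y_{j_k}, Y_{l_k}) \geq \epsilon\} \geq \delta \quad \text{for all } k.$$
Applying the hypothesis to $\{\nu_{j_k, l_k}\}$ produces a further subsequence (relabeled) that converges weakly to a probability measure $\nu$ on $X \times X$ with $\nu(\Delta) = 1$. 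Testing against the bounded continuous function $\varphi(x,y) = 1 \wedge (d_X(x,y)/\epsilon)$, which vanishes identically on $\Delta$, yields $\int \varphi \, d\nu = 0$; whereas the lower bound gives
$$\int \varphi \, d\nu_{j_k, l_k} = \mathbb{E}[\varphi(Y_{j_k}, Y_{l_k})] \geq \mathbb{P}\{d_X(Y_{j_k}, Y_{l_k}) \geq \epsilon\} \geq \delta.$$
Passing to the limit via the Portmanteau theorem contradicts the previous equality, establishing the Cauchy property.

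The main obstacle is conceptual rather than computational: one must recognize that the diagonal-support hypothesis is strong enough to rule out any probabilistic separation between $Y_{j_k}$ and $Y_{l_k}$, and must select an appropriate bounded continuous test function vanishing on $\Delta$ to bridge the weak-convergence hypothesis and the probabilistic Cauchy criterion. Completeness of $X$ is then invoked one last time to promote Cauchy-in-probability to genuine convergence in probability, which is the statement being proved.
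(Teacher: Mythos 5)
Your proof is correct: the necessity direction via convergence in distribution of $(Y_{j_k},Y_{l_k})$ to $(Y,Y)$, and the sufficiency direction via contradiction with the test function $1\wedge(d_X(x,y)/\epsilon)$ followed by completeness to upgrade Cauchy-in-probability to convergence in probability, is exactly the standard Gy\"{o}ngy--Krylov argument. The paper itself does not prove this lemma but only cites it (reference [47]), so there is nothing to compare beyond noting that your argument reproduces the classical proof from that source.
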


The main result in this subsection can be stated as follows.
\begin{lemma}  \label{lem:2.8}
Fix $s>4+\frac{d}{2}$ and $d\geq 1$.  Assume that the conditions in Assumption \ref{assume} hold, and $\textbf{y}_0=(u_0,\gamma_0)\in L^r(\Omega; \mathbb{H}^s(\mathbb{T}^d))$ is a $\mathcal {F}_0$-measurable initial random variable for some $r>2$. Then the Cauchy problem \eqref{2.15} has a unique local smooth maximal pathwise solution $ \textbf{y}_{R}=(u_{R},\gamma _{R})$ in the sense of Definition \ref{def1}.
\end{lemma}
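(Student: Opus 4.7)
The plan is to combine the existence of martingale solutions (Lemma \ref{lem:2.5}), pathwise uniqueness (Lemma \ref{lem:2.6}), and the Gy\"{o}ngy-Krylov lemma (Lemma \ref{lem:2.7}) to upgrade the convergence in law of the approximations $\{\textbf{y}_{R,\epsilon}\}_{0<\epsilon<1}$ from Lemma \ref{lem:2.1} to convergence in probability on the original stochastic basis $\mathcal{S}$. Fix any two subsequential indices $\epsilon_j, \epsilon_k \to 0$ and consider the joint laws $\nu_{j,k}$ of the triple $(\textbf{y}_{R,\epsilon_j}, \textbf{y}_{R,\epsilon_k}, \mathcal{W})$ on $\mathcal{X}_{\textbf{y}}^{s-1} \times \mathcal{X}_{\textbf{y}}^{s-1} \times \mathcal{X}_{\mathcal{W}}$. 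Since each marginal is tight by Lemma \ref{lem:2.3}, and the law of $\mathcal{W}$ is constant, the joint family $\{\nu_{j,k}\}$ is tight. By Prokhorov's theorem, there exists a further subsequence converging weakly to some $\nu$.

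Next, I would invoke the Skorokhod representation theorem to construct, on a new probability space $(\tilde{\Omega}, \tilde{\mathcal{F}}, \tilde{\mathbb{P}})$, random elements $(\tilde{\textbf{y}}_{R,\epsilon_j}^{(1)}, \tilde{\textbf{y}}_{R,\epsilon_k}^{(2)}, \tilde{\mathcal{W}})$ with the same joint law as $(\textbf{y}_{R,\epsilon_j}, \textbf{y}_{R,\epsilon_k}, \mathcal{W})$ and converging $\tilde{\mathbb{P}}$-almost surely to a limit $(\tilde{\textbf{y}}_R^{(1)}, \tilde{\textbf{y}}_R^{(2)}, \tilde{\mathcal{W}})$ in $\mathcal{X}^{s-1}$. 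Repeating the martingale identification procedure from Steps~1--2 of the proof of Lemma \ref{lem:2.5} separately for each component, and using the uniform estimate \eqref{2.41} together with the strong convergence \eqref{2.39}, one shows that both $(\tilde{\textbf{y}}_R^{(1)}, \tilde{\mathcal{W}})$ and $(\tilde{\textbf{y}}_R^{(2)}, \tilde{\mathcal{W}})$ are martingale solutions to the truncated system \eqref{2.15} with the common driving Wiener process $\tilde{\mathcal{W}}$ and common initial datum $\tilde{\textbf{y}}_R^{(1)}(0) = \tilde{\textbf{y}}_R^{(2)}(0) = \tilde{\textbf{y}}_0$ (the latter because $\mathbb{P}\circ\textbf{y}_0^{-1}$ is fixed across the approximations).

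Since both limits live on the same stochastic basis, the pathwise uniqueness result of Lemma \ref{lem:2.6} applies and yields $\tilde{\textbf{y}}_R^{(1)} = \tilde{\textbf{y}}_R^{(2)}$ $\tilde{\mathbb{P}}$-almost surely. Hence $\nu(\{(u,v):u=v\}) = 1$, and Lemma \ref{lem:2.7} gives convergence in probability of $\{\textbf{y}_{R,\epsilon}\}_{0<\epsilon<1}$ in $\mathcal{X}_{\textbf{y}}^{s-1}$ on the original basis $\mathcal{S}$ to some $\mathcal{F}_t$-predictable limit $\textbf{y}_R$. Passing to the limit in \eqref{2.16} along an almost-surely convergent subsequence as in the identification step of Lemma \ref{lem:2.5} shows that $\textbf{y}_R$ satisfies \eqref{2.15} $\mathbb{P}$-almost surely, while the uniform bounds of Lemma \ref{lem:2.2} together with the weak-strong continuity argument of Step~2 in Lemma \ref{lem:2.5} upgrade the regularity to $\textbf{y}_R \in C([0,T]; \mathbb{H}^s(\mathbb{T}^d))$. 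Pathwise uniqueness of $\textbf{y}_R$ itself is already contained in Lemma \ref{lem:2.6}, and maximality in the sense of Definition \ref{def1} follows by a standard pasting argument using uniqueness (indeed, due to the truncation $\varpi_R$, the solution persists globally so $\mathbbm{t}_R = \infty$ $\mathbb{P}$-almost surely and the blow-up criterion holds vacuously).

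The main obstacle I anticipate is the correct identification, on the Skorokhod space, of both $\tilde{\textbf{y}}_R^{(1)}$ and $\tilde{\textbf{y}}_R^{(2)}$ as solutions driven by the \emph{same} cylindrical Wiener process $\tilde{\mathcal{W}}$ with respect to a common filtration. Concretely, one must choose the augmented filtration $\tilde{\mathcal{F}}_t = \sigma\{(\tilde{\textbf{y}}_R^{(1)}(\tau), \tilde{\textbf{y}}_R^{(2)}(\tau), \tilde{\mathcal{W}}(\tau)) : \tau \leq t\}$, verify that $\tilde{\mathcal{W}}$ remains a cylindrical Wiener process with respect to this enlarged filtration, and then push the convergence arguments \eqref{2.43}--\eqref{2.46} through for both components simultaneously -- so that the generalized martingale representation theorem can be applied to recover the same stochastic integrand on both sides. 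Everything else is a careful bookkeeping of the uniform estimates already established.
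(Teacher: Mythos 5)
Your Gy\"{o}ngy--Krylov core is essentially the paper's own Step 1: joint laws of pairs $(\textbf{y}_{R,\epsilon_1},\textbf{y}_{R,\epsilon_2},\mathcal {W})$, tightness as in Lemma \ref{lem:2.3}, Prokhorov plus Skorokhod, identification of both limits as martingale solutions of \eqref{2.15} relative to the common filtration $\sigma\{\widetilde{\textbf{y}}_R(r),\widetilde{\textbf{y}}_R^\sharp(r),\widetilde{W}(r)\}_{r\le t}$, pathwise uniqueness from Lemma \ref{lem:2.6} to place the limit law on the diagonal, Lemma \ref{lem:2.7} to get convergence in probability on the original basis, and the weak--strong continuity argument of Step 2 of Lemma \ref{lem:2.5} for the $C([0,T];\mathbb{H}^s)$ regularity. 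One imprecision: almost sure equality of the two limiting initial data on the Skorokhod space does not follow from the mere fact that the law of $\textbf{y}_0$ is fixed; you need the joint law of the pair of initial data to be carried by the diagonal, which holds because every approximation starts from the same random variable $\textbf{y}_0$ on the original space and the Skorokhod construction preserves joint laws.

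Where you genuinely diverge is the final third. You dismiss maximality by observing that the truncated dynamics are global, so the blow-up alternative is vacuous. The paper's proof contains a substantive second step that your proposal omits: for data bounded by $l$ one takes $m$ larger than the Sobolev embedding constant times $l$ and $R\geq m$, so that on $[0,\mathbbm{t}_m]$ one has $\varpi_R(\|\textbf{y}\|_{\mathbb{W}^{1,\infty}})\equiv 1$ and the global solution of \eqref{2.15} restricted to $[0,\mathbbm{t}_m]$ becomes a local pathwise solution of the untruncated SMCH2 system; for general $\textbf{y}_0\in L^r(\Omega;\mathbb{H}^s)$ one decomposes $\textbf{y}_0=\sum_n\textbf{y}_0\textbf{1}_{\{n-1\le\|\textbf{y}_0\|_{\mathbb{H}^s}<n\}}$, solves on each event, pastes the solutions and stopping times together, and then extends to a maximal existence time by the standard argument. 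This is precisely the content invoked later (e.g. for the mollified problems \eqref{2.61} in the proof of Theorem \ref{th1}(1)), so if the lemma is read only as a statement about the truncated problem your argument is complete but the word ``maximal'' carries no content, while the version of the lemma actually used downstream --- local maximal solutions of the original system for smooth data --- is left unproved by your proposal.
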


\begin{proof}[\emph{\textbf{Proof.}}]
The proof of Lemma \ref{lem:2.8} will be divided into two steps.

{\textsf{Step 1 (Existence of global pathwise solution)}}: Let $\{\textbf{y}_{R,\epsilon}\}_{0<\epsilon < 1}$ be a sequence of strong solutions for the SDEs \eqref{2.16} in Hilbert space $\mathbb{H}^s(\mathbb{T}^d)$ with respect to the stochastic basis $\mathbb{S}$ fixed in advance. Set
$$
\nu_{R,\epsilon_1,\epsilon_2}(E)=\mathbb{P}\{(\textbf{y}_{R,\epsilon_1},\textbf{y}_{R,\epsilon_2})\in E\}, ~~~ \pi_{R,\epsilon_1,\epsilon_2}(E')=\mathbb{P}\{(\textbf{y}_{R,\epsilon_1},\textbf{y}_{R,\epsilon_2},\mathcal {W})\in E'\},
$$
for any $E\in \mathcal {B}(\mathcal {X} _{\textbf{y}}^s\times\mathcal {X} _{\textbf{y}}^s)$, $E'\in \mathcal {B}(\mathcal {X} _{\textbf{y}}^s\times\mathcal {X} _{\textbf{y}}^s\times \mathcal {X} _\mathcal {W} )$, where $\mathcal {X} ^s _{\textbf{y}} = \mathcal {C}([0,T];\mathbb{H}^s(\mathbb{T}^d)) $ and $\mathcal {X} _\mathcal {W}= \mathcal {C}([0,T];\mathfrak{U}_1\times \mathfrak{U}_1)$.

With only minor modifications to the arguments in Lemma \ref{lem:2.3}, one can easily prove that  the collection $\{\pi_{R,\epsilon_1,\epsilon_2}\}_{0<\epsilon_1,\epsilon_2<1}$ is tight and hence weakly compact. By using the Prokhorov Theorem, there exist two   subsequences of $\{\epsilon_{1}\},\{\epsilon_{2}\}$, denoted by $\{\epsilon_{1,k}\},\{\epsilon_{2,k}\}$ respectively, converging to $0$ as  $k\rightarrow\infty$, such that $
\pi_{R,\epsilon_{1,k},\epsilon_{2,k}}\rightarrow \pi_R$ weakly in $\mathcal {P}_r(\mathcal {X} _{\textbf{y}}^s\times\mathcal {X} _{\textbf{y}}^s\times \mathcal {X} _\mathcal {W})$, as $ k\rightarrow\infty $.
Furthermore, it follows from the Skorokhod Representing Theorem that one can choose a new probability space $(\widetilde{\Omega},\widetilde{\mathcal {F}},\widetilde{\mathbb{P}})$, on which a sequence of random elements $(\widetilde{\textbf{y}}_{R,\epsilon_{1,k}},\widetilde{\textbf{y}}_{R,\epsilon_{2,k}},\widetilde{W}_k)$ are defined such that
$$
\widetilde{\mathbb{P}}\circ (\widetilde{\textbf{y}}_{R,\epsilon_{1,k}},\widetilde{\textbf{y}}_{R,\epsilon_{2,k}},\widetilde{W}_k)^{-1}
=\pi_{R,\epsilon_{1,k},\epsilon_{2,k}} \quad \mbox{$\widetilde{\mathbb{P}}$-a.s.,} \quad \textrm{as}~~ k\rightarrow\infty,
$$
and
$$
(\widetilde{\textbf{y}}_{R,\epsilon_{1,k}},\widetilde{\textbf{y}}_{R,\epsilon_{2,k}},\widetilde{W}_k)\rightarrow (\widetilde{\textbf{y}}_R,\widetilde{\textbf{y}}_R^\sharp,\widetilde{W})\quad \textrm{in}\quad\mathcal {X} _{\textbf{y}}^s\times\mathcal {X} _{\textbf{y}}^s\times \mathcal {X} _\mathcal {W}\quad \mbox{$\widetilde{\mathbb{P}}$-a.s.,} \quad \textrm{as}~~ k\rightarrow\infty,
$$
with $\widetilde{\mathbb{P}}\circ (\widetilde{\textbf{y}}_R,\widetilde{\textbf{y}}_R^\sharp,\widetilde{W})^{-1}
=\pi_R$.  Especially, we have obtained
$$
\nu_{R,\epsilon_{1,k},\epsilon_{2,k}}=\widetilde{\mathbb{P}} \circ (\widetilde{\textbf{y}}_{R,\epsilon_{1,k}},\widetilde{\textbf{y}}_{R,\epsilon_{1,k}})^{-1}\rightarrow \nu_R\quad \textrm{weakly in} \quad \mathcal {P}_r(\mathcal {X} _{\textbf{y}}^s\times\mathcal {X} _{\textbf{y}}^s ),\quad k\rightarrow\infty,
$$
with
$
\widetilde{\mathbb{P}}\circ (\widetilde{\textbf{y}}_R,\widetilde{\textbf{y}}_R^\sharp)^{-1}
=\nu_R$.
Following the argument at the beginning of Subsection 2.3, we infer that $\widetilde{\textbf{y}}_R$ and $\widetilde{\textbf{y}}_R^\sharp$ are both global martingale solutions to SDEs \eqref{2.14} related to the stochastic basis $\widetilde{\mathbb{S}} =(\widetilde{\Omega} ,\widetilde{\mathcal {F}}, \widetilde{\mathcal {F}}_{t},\widetilde{\mathbb{P}})$, where $\widetilde{\mathcal {F}}_{t}=\sigma\{\widetilde{\textbf{y}}_R(r),\widetilde{\textbf{y}}_R^\sharp(r),\widetilde{W} (r)\}_{r \leq t}$. Since $\widetilde{\textbf{y}}_R(0)=\widetilde{\textbf{y}}_R^\sharp(0)$, it follows from the pathwise uniqueness (cf. Lemma \ref{lem:2.6}) again that
$$
\nu_R \{(\widetilde{\textbf{y}}_R,\widetilde{\textbf{y}}_R^\sharp)\in \mathcal {X} ^s _{\textbf{y}}\times\mathcal {X} ^s _{\textbf{y}};~\widetilde{\textbf{y}}_R=\widetilde{\textbf{y}}_R^\sharp\}=1. $$
Therefore, one can conclude from the Gy\"{o}ngy-Krylov Lemma that, the sequence $\{\textbf{y}_{R,\epsilon}\}_{0<\epsilon<1}$ defined on the original probability space $(\Omega,\mathcal {F},\mathbb{P})$ converges to an element $\textbf{y}_{R}$ almost surely, that is,
$\textbf{y}_{R,\epsilon}\rightarrow\textbf{y}_{R}$  as $\epsilon\rightarrow 0$, $\mathbb{P}$-a.s.,
in the strong topology of $\mathcal {X} ^s _{\textbf{y}}$. This convergence combined with \eqref{2.38} imply that the limit $\textbf{y}_{R}$ is a global pathwise solution to the SMEP2 with cut-off functions.

{\textsf{Step 2 (Construction of local pathwise solution)}}: The goal will be achieved by decomposing the random initial data $\textbf{y}_0=(u_0,\gamma_0)$ and using the result in \textsf{Step 1}.

\underline{Case 1}. Assume that the random initial data $\textbf{y}_0 (\omega) \in \mathbb{H}^s(\mathbb{T}^d)$ is bounded by some positive deterministic constant, i.e., there exists a real number $l>0$ such that
\begin{equation}\label{2.59}
\begin{split}
\|\textbf{y}_0(\omega)\|_{\mathbb{H}^s}\leq l,\quad \forall\omega\in \Omega.
\end{split}
\end{equation}
Let $c>0$ be the embedding constant from $\mathbb{H}^s(\mathbb{T}^d)$ into $\mathbb{W}^{1,\infty}(\mathbb{T}^d)$, for all $s>1+\frac{d}{2}$. Then we get from \eqref{2.59} that
\begin{equation}\label{2.60}
\begin{split}
\|\textbf{y}_0\|_{\mathbb{W}^{1,\infty}}\leq c\|\textbf{y}_0\|_{\mathbb{H}^s}\leq cl.
\end{split}
\end{equation}
Considering the stopping times $
\mathbbm{t}_{m}\triangleq  \{t>0; ~~ \|\textbf{y}(t) \|_{ \mathbb{H}^s }\geq m  \}$, $\forall m \in \mathbb{R}^+$.
If $m>cl$, then it follows from \eqref{2.60} that $\mathbbm{t}_{m}>0$ $\mathbb{P}$-almost surely, and hence $
\sup_{t\in [0,\mathbbm{t}_{m}]}\|\textbf{y}(t) \|_{ \mathbb{W}^{1,\infty} }\leq m$,
which implies that $\varpi_R(\|\textbf{y}\|_{\mathbb{W}^{1,\infty}} )=1$ for any $R\geq m$. As a result, the pair $(\textbf{y}|_{[0,\mathbbm{t}_{m}]},\mathbbm{t}_{m})$ is a unique local pathwise solution to the SMEP2 \eqref{2.5}.

\underline{Case 2}. For general data $\textbf{y}_0\in L^2(\Omega; \mathbb{H}^s(\mathbb{T}^d))$, there holds $\|\textbf{y}_0\|_{\mathbb{H}^s}<\infty$ $\mathbb{P}$-almost surely. In this case one can make the decomposition
$$
\textbf{y}_0(\omega,x)=\sum_{n\in \mathbb{N}^+}  \textbf{y}_0(\omega,x) \textbf{1}_{\{n-1\leq \|\textbf{y}_0\|_{\mathbb{H}^s} <n\}}(\omega)\triangleq\sum_{n\in \mathbb{N}^+} \textbf{y}_0(\omega,x) \textbf{1}_{\Omega_n}(\omega),\quad \mathbb{P}\textrm{-a.s.}
$$
For each $n\geq 1$, setting $
\textbf{y}_{0,n}(\omega,x)\triangleq \textbf{y}_0(\omega,x) \textbf{1}_{\{n-1\leq \|\textbf{y}_0\|_{\mathbb{H}^s} <n\}}(\omega)$.
It follows that $\Omega_n\cap \Omega_{n'}=\emptyset$ when $n\neq n'$, $\cup _{n\geq 1}\Omega_n$ is a set of full
measure, and the sequence $\{\textbf{y}_{0,n}(\omega)\}\in \mathbb{H}^s(\mathbb{T}^d)$ is uniformly bounded. By replacing the initial data $\textbf{y}_{0}$ with $\textbf{y}_{0,n}$ in the truncated system \eqref{2.14}, one can  concludes from  \textsf{Step 1} that the system admits a unique global pathwise solution $
\textbf{y}_{n}(\cdot) \in C([0,\infty);\mathbb{H}^s(\mathbb{T}^d))$, for any fixed $R>0 $.
Thereby, by considering the stopping time $\mathbbm{t}_{m}$ with $m>cn$, one get that the solution limited on $[0,\mathbbm{t}_{m}]$, denoted by $(\widetilde{\textbf{y}}_{n},\tilde{\mathbbm{t}}_{n})$, provides a local pathwise solution to system \eqref{2.5} with initial data $\textbf{y}_{0,n}$.

Now we define a stochastic process $\textbf{y}(t)$ by piecing together these solutions, i.e.,
$$
\textbf{y}(\omega,t,x)\triangleq \sum_{n\in \mathbb{N}^+}\widetilde{\textbf{y}}_{n}(\omega,t,x)\textbf{1}_{\Omega_n}(\omega), ~~~\mathbbm{t}\triangleq \sum_{n\in \mathbb{N}^+}\tilde{\mathbbm{t}}_{n}\textbf{1}_{\Omega_n}(\omega) .
$$
As $\mathbb{E}(\|\textbf{y}_0\|^2_{\mathbb{H}^s})<\infty$, one infer from the uniform bound for $\widetilde{\textbf{y}}_{n}$ (cf. Lemma \ref{2.2}) that $\textbf{y}(\cdot \wedge \mathbbm{t})\in L^2(\Omega;C([0,\infty); \mathbb{H}^s(\mathbb{T}^d)))$. Moreover, since $\widetilde{\textbf{y}}_{n}$ is a local solution to \eqref{2.5} with initial data $\textbf{y}_{0,n}$, we deduce that
\begin{equation*}
\begin{split}
  \textbf{y}(r\wedge \mathbbm{t})  =&\sum_{n\geq 1}\left(\textbf{y}_{0,n}-\int_0^{r\wedge \textbf{1}_{\Omega_n} \tilde{\mathbbm{t}}_n}B(\textbf{1}_{\Omega_n}\widetilde{\textbf{y}}_{n},\textbf{1}_{\Omega_n}\widetilde{\textbf{y}}_{n})\mathrm{d} r -\int_0^{r\wedge \textbf{1}_{\Omega_n}\tilde{\mathbbm{t}}_n}F(\textbf{1}_{\Omega_n}\widetilde{\textbf{y}}_{n})\mathrm{d} r\right.\\
  &\left.+\int_0^{r\wedge \textbf{1}_{\Omega_n}\tilde{\mathbbm{t}}_n}G(r,1_{\Omega_n}\widetilde{\textbf{y}}_{n}) \mathrm{d}\mathcal {W}\right)\\
  =&\textbf{y}_{0}-\int_0^{r\wedge \mathbbm{t}}B(\textbf{y},\textbf{y})\mathrm{d} r-\int_0^{r\wedge \mathbbm{t}}F(\textbf{y})\mathrm{d} r+\int_0^{r\wedge \mathbbm{t}}G(r,\textbf{y}) \mathrm{d}\mathcal {W}.
\end{split}
\end{equation*}
Therefore, the pair $(\textbf{y},\mathbbm{t})$ is a unique local pathwise solution to (1.5). By using a standard argument (cf. \cite{hofmanova2013degenerate,41}), we can extend the solution  $(\textbf{y},\mathbbm{t})$  to a maximal time of existence.

The proof of Lemma \ref{lem:2.8} is now completed.
\end{proof}

\subsection{Proof of Theorem \ref{th1}(1)}
In this subsection, we are going to prove the existence of local pathwise solutions to the SMEP2 \eqref{1.8} in the sharp case of $s>1+\frac{d}{2}$, $d\geq 1$. To this end, we shall apply a stability density argument. Precisely, we consider the following SMEP2 with regularized initial data:
\begin{equation}
\left\{
\begin{aligned}\label{2.61}
&\mathrm{d}\textbf{y}_{j}+ B(\textbf{y}_{j},\textbf{y}_{j})\mathrm{d} t+F(\textbf{y}_{j})\mathrm{d} t=G(t,\textbf{y}_{j}) \mathrm{d}\mathcal {W},\\
&\textbf{y}_{j}(0)=J_{1/j}\textbf{y}_0,\quad j\in \mathbb{N}^+,
\end{aligned}
\right.
\end{equation}
where $\textbf{y}_{j}=(u_j,\gamma_j)^T$ and  $J_{1/j}$ stands for the standard Friedrichs mollifier.

By Lemma \ref{2.8}, for each $j\geq1$, the Eq.\eqref{2.61} admits a unique local smooth strong solution $(\textbf{y}_{j},\mathbbm{t}_j)$ in the sense of Definition \ref{def1}. In the following, we shall show that $\{\textbf{y}_{j}\}_{j\geq 1}$ is a Cauchy sequence in the strong topology of $ \mathcal {C}([0,\mathbbm{t}^*];\mathbb{H}^s(\mathbb{T}^d))$
for some $\mathbbm{t}^*>0$ $\mathbb{P}$-almost surely. In view of the decomposition method as that in Lemma \ref{2.8}, one can first consider uniformly bounded data, i.e., $
\|\textbf{y}_0(\omega)\|_{\mathbb{H}^s}\leq M$,
for some deterministic $M>0$ independent of $j$.

\begin{lemma}\label{lem:2.9}
Let $T>0$, $s>1+\frac{d}{2}$, $d\geq 1$, and $ (\textbf{y}_j,\mathbbm{t}_j) _{j\geq1}$ be a sequence of local strong pathwise solutions to the system \eqref{2.61} related to the random variables $(J_{1/j}\textbf{y}_0)_{j\geq1}$. For each $j,k\in \mathbb{N}^+$, define the existing times $
\mathbbm{t}_{j,k}^T\triangleq \mathbbm{t}_{j}^T\wedge \mathbbm{t}_{k}^T$, where
$$
\mathbbm{t}_{j}^T \triangleq T\wedge \inf\left\{t>0;~~\|\textbf{y}_{j}(t)\|_{\mathbb{H}^s}^2\geq \|J_{1/j}\textbf{y}_0 \|_{\mathbb{H}^s}^2+3\right\}.
$$
Then we have
\begin{equation}\label{2.62}
\begin{split}
 \mathbb{E}\sup_{r\in [0, \mathbbm{t}_{j,k}^T ]}\|\textbf{y}_{j} -\textbf{y}_{k}\|_{\mathbb{H}^s}^2 \leq C \left(\mathbb{E} \|J_{1/j}\textbf{y}_0-J_{1/k}\textbf{y}_0\|_{\mathbb{H}^s}^2+  \mathbb{E} \sup_{r\in [0, \mathbbm{t}_{j,k}^T ]}(\|u_{j} -u_{k} \|_{H^{s-1}}^2\|\textbf{y}_{j}\|_{\mathbb{H}^{s+1}}^2 ) \right).
\end{split}
\end{equation}
\end{lemma}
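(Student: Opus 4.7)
The plan is to set $\textbf{z}\triangleq\textbf{y}_j-\textbf{y}_k$ with components $(u,\gamma)=(u_j-u_k,\gamma_j-\gamma_k)$, and derive an energy inequality for $\|\textbf{z}\|_{\mathbb{H}^s}^2$ via It\^o's formula. Using the bilinearity of $B$ and the definition \eqref{2.61}, the difference satisfies
\[
\mathrm{d}\textbf{z}+\bigl[B(\textbf{z},\textbf{y}_j)+B(\textbf{y}_k,\textbf{z})+F(\textbf{y}_j)-F(\textbf{y}_k)\bigr]\mathrm{d}t
=\bigl[G(t,\textbf{y}_j)-G(t,\textbf{y}_k)\bigr]\mathrm{d}\mathcal{W}.
\]
Since the solutions $\textbf{y}_j$, $\textbf{y}_k$ built in Lemma \ref{lem:2.8} correspond to smooth mollified data and hence enjoy enough regularity locally, I would first apply the Friedrichs mollifier $J_\epsilon$ to this equation (as in \emph{Step 2} of the proof of Theorem \ref{th1}(2)), apply It\^o's formula in $\mathbb{H}^s$ to $\|J_\epsilon\textbf{z}\|_{\mathbb{H}^s}^2$, and pass to the limit $\epsilon\to0$ using the commutator estimate \eqref{1.q4} to obtain the identity for $\|\textbf{z}(t)\|_{\mathbb{H}^s}^2$.

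The crux is the drift term $(\Lambda^s\textbf{z},\Lambda^s B(\textbf{z},\textbf{y}_j))_{\mathbb{L}^2}$, which is the only place where one derivative of $\textbf{y}_j$ is exposed. Writing $B(\textbf{z},\textbf{y}_j)=(u\cdot\nabla u_j,u\cdot\nabla\gamma_j)^T$ and applying the Moser estimate (Lemma \ref{lem:moser}) gives
\[
\|u\cdot\nabla u_j\|_{H^s}+\|u\cdot\nabla\gamma_j\|_{H^s}
\leq C\bigl(\|u\|_{H^s}\|\textbf{y}_j\|_{\mathbb{H}^s}+\|u\|_{L^\infty}\|\textbf{y}_j\|_{\mathbb{H}^{s+1}}\bigr),
\]
and Sobolev embedding $H^{s-1}\hookrightarrow L^\infty$ (since $s-1>\tfrac{d}{2}$) converts $\|u\|_{L^\infty}$ into $\|u\|_{H^{s-1}}$. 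The complementary term $B(\textbf{y}_k,\textbf{z})=u_k\cdot\nabla\textbf{z}$ is the standard transport term: commuting $\Lambda^s$ with $u_k\cdot\nabla$, using Lemma \ref{lem:commutator} and integration by parts exactly as in the computation \eqref{2.21}--\eqref{2.22}, it is bounded by $C\|u_k\|_{H^s}\|\textbf{z}\|_{\mathbb{H}^s}^2$. The nonlocal drift $F(\textbf{y}_j)-F(\textbf{y}_k)$ is handled directly by the Lipschitz estimate \eqref{2.38} of Lemma \ref{lem:2.4}, which yields $C(\|\textbf{y}_j\|_{\mathbb{H}^s}+\|\textbf{y}_k\|_{\mathbb{H}^s})\|\textbf{z}\|_{\mathbb{H}^s}^2$.

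For the stochastic contributions, Assumption \ref{assume}(2) combined with the fact that $\Lambda^{-2}$ is a $S^{-2}$ multiplier gives the diffusion-Lipschitz bound \eqref{a2}, so the quadratic-variation integrand is $\lesssim \|\textbf{z}\|_{\mathbb{H}^s}^2$, and the martingale part is absorbed into $\tfrac12\mathbb{E}\sup\|\textbf{z}\|_{\mathbb{H}^s}^2$ via the BDG and Young inequalities, as was done in \eqref{2.30}. Summing all contributions and invoking the a priori bound $\|\textbf{y}_j(t)\|_{\mathbb{H}^s}^2,\|\textbf{y}_k(t)\|_{\mathbb{H}^s}^2\leq M^2+3$ that is enforced by the stopping times $\mathbbm{t}_j^T,\mathbbm{t}_k^T$, one obtains
\[
\mathbb{E}\sup_{r\in[0,t\wedge\mathbbm{t}_{j,k}^T]}\|\textbf{z}(r)\|_{\mathbb{H}^s}^2
\leq C\,\mathbb{E}\|J_{1/j}\textbf{y}_0-J_{1/k}\textbf{y}_0\|_{\mathbb{H}^s}^2
+C\int_0^t\mathbb{E}\sup_{r'\in[0,r\wedge\mathbbm{t}_{j,k}^T]}\|\textbf{z}(r')\|_{\mathbb{H}^s}^2\,\mathrm{d}r
+ C\,\mathbb{E}\sup_{r\in[0,t\wedge\mathbbm{t}_{j,k}^T]}\|u\|_{H^{s-1}}^2\|\textbf{y}_j\|_{\mathbb{H}^{s+1}}^2,
\]
after applying Young's inequality to split $\|u\|_{H^{s-1}}\|\textbf{y}_j\|_{\mathbb{H}^{s+1}}\|\textbf{z}\|_{\mathbb{H}^s}$. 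Gronwall's lemma then produces the claimed estimate \eqref{2.62}.

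The main obstacle is the loss-of-one-derivative in the term $(u\cdot\nabla)\textbf{y}_j$, which forces the bound to contain $\|\textbf{y}_j\|_{\mathbb{H}^{s+1}}$; tracking carefully that this loss arises only from the $u$-component (not the $\gamma$-component), so that the prefactor is $\|u_j-u_k\|_{H^{s-1}}^2$ rather than the full $\|\textbf{z}\|_{\mathbb{H}^{s-1}}^2$, is the subtle bookkeeping. This asymmetry reflects the fact that in the bilinear form $B$, the advecting velocity is always $u$, so only differences in $u$ see the ``bad'' derivative on $\textbf{y}_j$, while differences in $\gamma$ propagate through the standard commutator route.
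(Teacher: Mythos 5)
Your proposal is correct and follows essentially the same route as the paper: the same decomposition of the drift difference, the same Moser/commutator treatment in which the one-derivative loss falls only on the advecting difference $u_j-u_k$ measured in $H^{s-1}$ against $\|\textbf{y}_j\|_{\mathbb{H}^{s+1}}$, the Lipschitz bound \eqref{2.38} for $F$, the bound \eqref{a2} with BDG for the noise, and Gronwall with the stopping-time bounds. The only cosmetic differences (pre-mollifying before applying It\^o, and using $u_k$ rather than $u_j$ as the advecting field in the transport part) are immaterial.
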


\begin{proof}[\emph{\textbf{Proof.}}]
For each $j,k\in \mathbb{N}^+$, denoting $\textbf{y}_{j,k}=\textbf{y}_{j}-\textbf{y}_{k}$ and $\textbf{y}_{j,k}=(u_{j,k},\gamma _{j,k})$, then it follows from \eqref{2.61} that $\Lambda^s \textbf{y}_{j,k}$ satisfies the following system:
\begin{equation}\label{2.63}
\begin{split}
&\mathrm{d}\Lambda^s\textbf{y}_{j,k}+ \Lambda^s(B(\textbf{y}_{j},\textbf{y}_{j,k}) + B(\textbf{y}_{j,k},\textbf{y}_{j}))\mathrm{d} t+\Lambda^s(F(\textbf{y}_{j})-F(\textbf{y}_{k}))\mathrm{d} t\\
&\quad =\sum_{l\geq 1}\Lambda^s(G_l(t,\textbf{y}_{j})-G_l(t,\textbf{y}_{k})) \mathrm{d}\beta ^l_t ,
\end{split}
\end{equation}
 where $
\textbf{y}_{j,k}(0)= J_{1/j}\textbf{y}_0-J_{1/k}\textbf{y}_0$, $G_l(r,\cdot)=G(r,\cdot)e_l$ and $\{e_l\}$ is an orthogonal basis in $\mathfrak{A}$.   Applying the Ito formula to $\|\textbf{y}_{j,k}\|_{\mathbb{H}^s}^2$, one find
\begin{equation}\label{2.64}
\begin{split}
\|\textbf{y}_{j,k}(t\wedge\mathbbm{t}_{j,k}^T )\|_{\mathbb{H}^s}^2 =& \|\textbf{y}_{j,k}(0)\|_{\mathbb{H}^s}^2+ 2 \int_0^{t\wedge\mathbbm{t}_{j,k}^T }\left(\Lambda^s\textbf{y}_{j,k}(r), \Lambda^s(B(\textbf{y}_{j},\textbf{y}_{j,k}) + B(\textbf{y}_{j,k},\textbf{y}_{j}))\right) _{\mathbb{L}^2}\mathrm{d} r\\
&+ 2 \int_0^{t\wedge\mathbbm{t}_{j,k}^T }\left(\Lambda^s\textbf{y}_{j,k}(r), \Lambda^s(F(\textbf{y}_{j})-F(\textbf{y}_{k})\right) _{\mathbb{L}^2}\mathrm{d} r\\
&+\sum_{l\geq 1}\int_0^{t\wedge\mathbbm{t}_{j,k}^T }\|\Lambda^s(G_l(r,\textbf{y}_{j})-G_l(r,\textbf{y}_{k})) \|_{\mathbb{L}^2 }^2\mathrm{d} r\\
&+ 2 \sum_{l\geq 1}\int_0^{t\wedge\mathbbm{t}_{j,k}^T }\left(\Lambda^s\textbf{y}_{j,k}(r), \Lambda^s(G_l(t,\textbf{y}_{j})-G_l(t,\textbf{y}_{k})) \right) _{\mathbb{L}^2}\mathrm{d}\beta ^l_t\\
=& \|\textbf{y}_{j,k}(0)\|_{\mathbb{H}^s}^2+ U_1(t)+ U_2(t)+ U_3(t)+ U_4(t).
\end{split}
\end{equation}
Let us estimate the terms $U_i(t)$, $i=1,2,3,4$ in \eqref{2.64} one by one. For the term $B(\textbf{y}_{j},\textbf{y}_{j,k})$, by using the Moser-type estimate and the fact of $H^{s-1}(\mathbb{T}^d)\hookrightarrow L^\infty (\mathbb{T}^d)$, for all $s>\frac{d}{2}+1$, one can get
\begin{equation}\label{2.65}
\begin{split}
(\Lambda^s\textbf{y}_{j,k}, \Lambda^sB(\textbf{y}_{j,k},\textbf{y}_{j})) _{\mathbb{L}^2} \leq& C\Big( \| \nabla u_{j}\|_{L^\infty}\|u_{j,k}\|_{H^s}^2+\|u_{j,k} \|_{L^\infty}\|u_{j,k}\|_{H^s}\| \nabla u_{j}\|_{H^s}\\
&+   \|\gamma_{j,k}\|_{H^s} \|u_{j,k}\|_{H^s}\| \nabla \gamma_{j}\|_{L^\infty}+\|\gamma_{j,k}\|_{H^s} \|u_{j,k} \|_{L^\infty}\| \nabla \gamma_{j}\|_{H^s} \Big)        \\
 \leq& C\|u_{j,k} \|_{H^{s-1}}^2(\| u_{j}\|_{H^{s+1}}^2+\| \gamma_{j}\|_{H^{s+1}}^2)\\
&+ C \Big((\| u_{j}\|_{H^{s}}+1)\|u_{j,k}\|_{H^s}^2+ \|\gamma_{j,k}\|_{H^s}^2 (\|  \gamma_{j}\|_{H^{s}}^2+1)\Big).
\end{split}
\end{equation}
For the term $B(\textbf{y}_{j,k},\textbf{y}_{j})$, by commutating the operator $\Lambda^s$ with $u_j$ and integrating by parts, we find
\begin{equation*}
\begin{split}
(\Lambda^s\textbf{y}_{j,k}, \Lambda^s B(\textbf{y}_{j},\textbf{y}_{j,k})_{\mathbb{L}^2} =& (\Lambda^su_{j,k}, [\Lambda^s, u_j\cdot \nabla] u_{j,k})_{L^2}-\frac{1}{2}(|\Lambda^su_{j,k}|^2, \textrm{div} u_j)_{L^2} \\
&+(\Lambda^s\gamma_{j,k}, [\Lambda^s ,u_j\cdot \nabla] \gamma_{j,k})_{L^2}-\frac{1}{2}(|\Lambda^s\gamma_{j,k}|^2, \textrm{div} u_j)_{L^2}\\
\triangleq& D_1+D_2+D_3+D_4.
\end{split}
\end{equation*}
By applying the Cauchy-Schwartz inequality and the commutator estimate Lemma \ref{lem:commutator} for the first and third terms on the R.H.S., one get
\begin{equation*}
\begin{split}
|D_1+D_3|\leq & C\|u_{j,k}\|_{H^s}(\|\Lambda^s u_j\|_{L^2}\|\nabla u_{j,k}\|_{L^\infty}+\|\nabla u_j\|_{L^\infty}\|\Lambda^{s-1} \nabla u_{j,k} \|_{L^2})\\
&+C\|\gamma_{j,k}\|_{H^s}(\|\Lambda^s u_j\|_{L^2}\|\nabla \gamma_{j,k}\|_{L^\infty}+\|\nabla u_j\|_{L^\infty}\|\Lambda^{s-1} \nabla \gamma_{j,k} \|_{L^2})\\
\leq & C \|u_j\|_{H^s}(\|u_{j,k}\|_{H^s}^2+\|\gamma_{j,k}\|_{H^s}^2).
\end{split}
\end{equation*}
The other terms  can be estimated as
\begin{equation*}
\begin{split}
|D_2+D_4|\leq   C \|u_j\|_{H^s}(\|u_{j,k}\|_{H^s}^2+\|\gamma_{j,k}\|_{H^s}^2).
\end{split}
\end{equation*}
Putting the estimates for $\{D_i\}_{i=1}^4$ together, we get
\begin{equation*}
\begin{split}
(\Lambda^s\textbf{y}_{j,k}, \Lambda^s B(\textbf{y}_{j},\textbf{y}_{j,k})_{\mathbb{L}^2} \leq   C \|u_j\|_{H^s}(\|u_{j,k}\|_{H^s}^2+\|\gamma_{j,k}\|_{H^s}^2),
\end{split}
\end{equation*}
which combined with \eqref{2.65} and the definition of the stopping time $\mathbbm{t}_{j,k}^T$ yield that
\begin{equation}
\begin{split}
U_1(t)\leq& C\int_0^{t\wedge\mathbbm{t}_{j,k}^T }    \Big(\|u_{j,k} \|_{H^{s-1}}^2(\| u_{j}\|_{H^{s+1}}^2+\| \gamma_{j}\|_{H^{s+1}}^2)\\
&+  \|u_{j,k}\|_{H^s}^2(\| u_{j}\|_{H^{s}}+1)+ \|\gamma_{j,k}\|_{H^s}^2 (\|  \gamma_{j}\|_{H^{s}}^2+\| u_{j}\|_{H^{s}}+1) \Big)\mathrm{d} r\\
\leq& C\int_0^{t\wedge\mathbbm{t}_{j,k}^T }    \Big(\|u_{j,k} \|_{H^{s-1}}^2\| \textbf{y}_{j}\|_{\mathbb{H}^{s+1}}^2 +  \|\textbf{y}_{j,k}\|_{\mathbb{H}^s}^2  \Big)\mathrm{d} r,
\end{split}
\end{equation}
where the second inequality used the boundedness of $J_\epsilon$ in $H^s(\mathbb{T}^d)$. For $U_2(t)$, it follows from the Lemma \ref{2.6} and Young inequality that
\begin{equation}
\begin{split}
U_2(t) &\leq C\int_0^{t\wedge\mathbbm{t}_{j,k}^T }\|\textbf{y}_{j,k}(r)\|_{\mathbb{H}^s}( \|\textbf{y}_j\|_{\mathbb{H}^s}^2+ \|\textbf{y}_k\|_{\mathbb{H}^s}^2+1) \|\textbf{y}_j-\textbf{y}_k\|_{\mathbb{H}^s}\mathrm{d} r \\
&\leq C\int_0^{t\wedge\mathbbm{t}_{j,k}^T }\|\textbf{y}_{j,k}(r)\|_{\mathbb{H}^s}^2\mathrm{d} r.
\end{split}
\end{equation}
For $U_4(t)$, by using the BDG inequality and condition \eqref{a2}, we get
\begin{equation}
\begin{split}
&\mathbb{E}\sup_{r\in [0,t\wedge\mathbbm{t}_{j,k}^T ]}|U_4(r)|  \leq \mathbb{E}\left(\int_0^{t\wedge\mathbbm{t}_{j,k}^T }\tilde{\mu}^2 (r) \| \textbf{y}_{j,k}(r)\|_{\mathbb{H}^s}^2 \tilde{\chi}^2(\|\textbf{y}_{j}\|_{\mathbb{W}^{1,\infty}}
+\|\textbf{y}_{k}\|_{\mathbb{W}^{1,\infty}}) \|\textbf{y}_{j}-\textbf{y}_{k}\|_{\mathbb{H}^{s}}^2\mathrm{d}r\right)^{\frac{1}{2}}\\
&\quad \leq \mathbb{E}\left[\sup_{r\in [0,t\wedge\mathbbm{t}_{j,k}^T ]}\| \textbf{y}_{j,k}(r)\|_{\mathbb{H}^s} \left(\int_0^{t\wedge\mathbbm{t}_{j,k}^T }\tilde{\mu}^2 (r)\tilde{\chi}^2(\|\textbf{y}_{j}\|_{\mathbb{W}^{1,\infty}}
+\|\textbf{y}_{k}\|_{\mathbb{W}^{1,\infty}}) \| \textbf{y}_{j,k}(r)\|_{\mathbb{H}^s}^2 \mathrm{d}r\right)^{\frac{1}{2}}\right]\\
&\quad \leq \frac{1}{2}\mathbb{E} \sup_{r\in [0,t\wedge\mathbbm{t}_{j,k}^T ]}\| \textbf{y}_{j,k}(r)\|_{\mathbb{H}^s}^2+ C\tilde{\chi}^2(C(1+M^2))\mathbb{E} \int_0^{t\wedge\mathbbm{t}_{j,k}^T }\tilde{\mu}^2 (r) \| \textbf{y}_{j,k}(r)\|_{\mathbb{H}^s}^2 \mathrm{d}r,
\end{split}
\end{equation}
where the last inequality used the facts that $\tilde{\chi}(\cdot)$ is nondecreasing, the Sobolev embedding $H^s(\mathbb{T}^d)\subset W^{1,\infty}(\mathbb{T}^d)$ for $s>\frac{d}{2}+1$ as well as the uniform bound
$$
\sup_{j\geq 1}\|\textbf{y}_{j}(r)\|_{\mathbb{H}^s}
\leq \frac{1}{2}+\frac{1}{2}\sup_{j \geq 1}\|\textbf{y}_{j}(r)\|_{\mathbb{H}^s}^2\leq \frac{1}{2}(\|J_{1/j}\textbf{y}_0 \|_{\mathbb{H}^s}^2+2)\leq C(M^2+1),
$$
for all $r\in [0,t\wedge\mathbbm{t}_{j,k}^T ]$ with some constant $C>0$ independent of $j$.

For $U_3(t)$, one can use the condition \eqref{a2} to obtain
\begin{equation}
\begin{split}
|U_3(t)| \leq& \mathbb{E} \int_0^{t\wedge\mathbbm{t}_{j,k}^T }\tilde{\mu}^2 (r)  \tilde{\chi}^2(\|\textbf{y}_{j}\|_{\mathbb{W}^{1,\infty}}
+\|\textbf{y}_{k}\|_{\mathbb{W}^{1,\infty}}) \|\textbf{y}_{j}-\textbf{y}_{k}\|_{\mathbb{H}^{s}}^2\mathrm{d}r \\
 \leq& C\tilde{\chi}^2(C(1+M^2))\mathbb{E} \int_0^{t\wedge\mathbbm{t}_{j,k}^T }\tilde{\mu}^2 (r) \| \textbf{y}_{j,k}(r)\|_{\mathbb{H}^s}^2 \mathrm{d}r.
\end{split}
\end{equation}

Set $
G(t)\triangleq\mathbb{E}\sup_{r\in [0,t\wedge\mathbbm{t}_{j,k}^T ]}\|\textbf{y}_{j,k}(r)\|_{\mathbb{H}^s}^2.$
After plugging the estimates for $U_i$ into \eqref{2.64}, we get
\begin{equation*}
\begin{split}
G(t)\leq &\mathbb{E} \|\textbf{y}_{j,k}(0)\|_{\mathbb{H}^s}^2+C\int_0^{t}\mathbb{E} \sup_{\varsigma\in [0,r\wedge\mathbbm{t}_{j,k}^T ]}(\|u_{j,k} \|_{H^{s-1}}^2\| \textbf{y}_{j}\|_{\mathbb{H}^{s+1}}^2 ) \mathrm{d} r +C\int_0^{t}(1+\tilde{\mu}^2 (r)) G(r)\mathrm{d} r.
\end{split}
\end{equation*}
An application of the Gronwall Lemma to above inequality leads to
\begin{equation*}
\begin{split}
\mathbb{E}\sup_{r\in [0, \mathbbm{t}_{j,k}^T ]}\|\textbf{y}_{j,k}(r)\|_{\mathbb{H}^s}^2  \leq e^{C\int_0^{T}(1+\tilde{\mu}^2 (r))\mathrm{d} r} \left(\mathbb{E} \|\textbf{y}_{j,k}(0)\|_{\mathbb{H}^s}^2+C \mathbb{E} \sup_{\varsigma\in [0, \mathbbm{t}_{j,k}^T ]}(\|u_{j,k} \|_{H^{s-1}}^2\| \textbf{y}_{j}\|_{\mathbb{H}^{s+1}}^2 ) \right).
\end{split}
\end{equation*}
This completes the proof of Lemma \ref{lem:2.9}.
\end{proof}

\begin{lemma}\label{lem:2.10}
Under the assumptions of Lemma \ref{lem:2.9}, we have
\begin{equation}\label{2.70}
\begin{split}
 &\mathbb{E}\sup_{\varsigma\in [0, \mathbbm{t}_{j,k}^T]}\left(\|u_{j,k}  \|_{H^{s-1}}^2\|\textbf{y}_{j}\|_{\mathbb{H}^{s+1}}^2\right)\\
  &\quad \leq C\left(\mathbb{E}(\|J_{1/j}u_0-  J_{1/k}u_0 \|_{H^{s-1}}^2\|J_{1/j}\textbf{y}_0\|_{\mathbb{H}^{s+1}}^2) + \mathbb{E}\sup_{\varsigma\in [0, \mathbbm{t}_{j,k}^T]}\|u_{j,k}(\varsigma)  \|_{H^{s-1}}^2\right).
\end{split}
\end{equation}
\end{lemma}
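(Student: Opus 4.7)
The plan is to derive the coupled product estimate via the Itô product formula applied to the process $\phi(t) \triangleq \|u_{j,k}(t)\|_{H^{s-1}}^2 \|\textbf{y}_j(t)\|_{\mathbb{H}^{s+1}}^2$, combining an $H^{s-1}$-energy identity for the difference $u_{j,k}=u_j-u_k$ with an $\mathbb{H}^{s+1}$-energy identity for the smooth solution $\textbf{y}_j$. The key observations justifying this strategy are: (i) the equation for $u_{j,k}$ admits a loss-free estimate at the lower regularity $H^{s-1}$, since $u_j,u_k$ are controlled in $H^s$ by the stopping time $\mathbbm{t}_{j,k}^T$; and (ii) the smooth solution $\textbf{y}_j$, arising from the mollified initial datum $J_{1/j}\textbf{y}_0 \in \bigcap_{\sigma\geq 0}\mathbb{H}^{\sigma}$, belongs to $\mathbb{H}^{s+1}$ with the Itô formula available at this regularity by the smooth-solution construction underlying Lemma \ref{lem:2.2}.

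I would first derive $\mathrm{d}\|u_{j,k}\|_{H^{s-1}}^2$ exactly as in Lemma \ref{lem:2.9} but with one derivative less, using the commutator estimate of Lemma \ref{lem:commutator}, the Moser-type product estimate of Lemma \ref{lem:moser}, the Lipschitz bound on $F$ from Lemma \ref{lem:2.4}, and Assumption \ref{assume}(2). This yields a drift dominated by $C\|\textbf{y}_{j,k}\|_{\mathbb{H}^{s-1}}^2$ under the stopping time, plus the corresponding martingale. In parallel, the analogue of the $\mathbb{H}^s$-computation in Lemma \ref{lem:2.2} lifted to $\mathbb{H}^{s+1}$ yields $\mathrm{d}\|\textbf{y}_j\|_{\mathbb{H}^{s+1}}^2$ with drift bounded by $C(1+\|\textbf{y}_j\|_{\mathbb{W}^{1,\infty}})\|\textbf{y}_j\|_{\mathbb{H}^{s+1}}^2$; the prefactor is bounded on $[0,\mathbbm{t}_{j,k}^T]$ via the Sobolev embedding $\mathbb{H}^s \hookrightarrow \mathbb{W}^{1,\infty}$. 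The Itô product rule then gives
\begin{equation*}
\mathrm{d}\phi(t) = \|\textbf{y}_j\|_{\mathbb{H}^{s+1}}^2\,\mathrm{d}\|u_{j,k}\|_{H^{s-1}}^2 + \|u_{j,k}\|_{H^{s-1}}^2\,\mathrm{d}\|\textbf{y}_j\|_{\mathbb{H}^{s+1}}^2 + \mathrm{d}\bigl\langle\|u_{j,k}\|_{H^{s-1}}^2,\|\textbf{y}_j\|_{\mathbb{H}^{s+1}}^2\bigr\rangle,
\end{equation*}
whose deterministic drift is bounded by $C\phi(t) + C\|\textbf{y}_{j,k}\|_{\mathbb{H}^{s-1}}^2\|\textbf{y}_j\|_{\mathbb{H}^{s+1}}^2$, while the cross-variation reduces via Cauchy--Schwarz and Assumption \ref{assume}(2) to $C\|u_{j,k}\|_{H^{s-1}}\|\textbf{y}_{j,k}\|_{\mathbb{H}^{s-1}}\|\textbf{y}_j\|_{\mathbb{H}^{s+1}}^2$, split by Young's inequality into $\phi(t)$ plus a residual.

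After taking supremum over $[0,t\wedge\mathbbm{t}_{j,k}^T]$, applying BDG to the stochastic integrals (with one factor of $\phi^{1/2}$ absorbed by Young into $\tfrac{1}{2}\mathbb{E}\sup\phi$), and using the pointwise a priori bound $\sup_{t\in[0,\mathbbm{t}_{j,k}^T]}\|\textbf{y}_j(t)\|_{\mathbb{H}^{s+1}}^2 \leq C\|J_{1/j}\textbf{y}_0\|_{\mathbb{H}^{s+1}}^2$ (obtained by Gronwall applied to the $\mathbb{H}^{s+1}$-estimate alone), the residuals collapse to a constant multiple of the first term on the right-hand side of \eqref{2.70}. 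A final Gronwall on $\mathbb{E}\sup_{r\in[0,\cdot\wedge\mathbbm{t}_{j,k}^T]}\phi(r)$ then closes the argument.

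The principal obstacle will be the cross-appearance of $\|\gamma_{j,k}\|_{H^{s-1}}^2\|\textbf{y}_j\|_{\mathbb{H}^{s+1}}^2$ in the drift, which is not directly dominated by $\phi(t)$ and does not appear on the right-hand side of \eqref{2.70}. The resolution is to treat $\|u_{j,k}\|_{H^{s-1}}^2$ and $\|\gamma_{j,k}\|_{H^{s-1}}^2$ on equal footing: run the product-rule argument first with $\|\textbf{y}_{j,k}\|_{\mathbb{H}^{s-1}}^2\|\textbf{y}_j\|_{\mathbb{H}^{s+1}}^2$ in place of $\phi(t)$, then invoke an $\mathbb{H}^{s-1}$-analogue of Lemma \ref{lem:2.9} to estimate $\|\gamma_{j,k}\|_{H^{s-1}}^2 \lesssim \|u_{j,k}\|_{H^{s-1}}^2 + \|J_{1/j}\gamma_0-J_{1/k}\gamma_0\|_{H^{s-1}}^2$ up to absorbable lower-order terms. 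Substitution then produces exactly the additional summand $\mathbb{E}\sup\|u_{j,k}\|_{H^{s-1}}^2$ on the right-hand side of \eqref{2.70}, after merging the initial-data contributions into the first term.
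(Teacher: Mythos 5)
Your skeleton is the same as the paper's: Itô's product rule applied to $\|u_{j,k}\|_{H^{s-1}}^2\|\textbf{y}_{j}\|_{\mathbb{H}^{s+1}}^2$, an $H^{s-1}$ energy identity for the $u$-difference combined with an $\mathbb{H}^{s+1}$ identity for $\textbf{y}_j$, the cross-variation term, then BDG and Gronwall under the stopping time. Where you diverge is the treatment of the $\gamma_{j,k}$-contribution coming from $\mathscr{L}_2(\gamma_j)-\mathscr{L}_2(\gamma_k)$ in the $u_{j,k}$-equation: the paper never needs smallness of $\gamma_{j,k}$ at all; it bounds $\|\gamma_{j,k}\|_{H^{s}}\leq\|\gamma_j\|_{H^{s}}+\|\gamma_k\|_{H^{s}}$, which is uniformly bounded on $[0,\mathbbm{t}_{j,k}^T]$, so only $\|u_{j,k}\|_{H^{s-1}}$-factors and the product survive and no lower-order analogue of Lemma \ref{lem:2.9} is ever invoked. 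Your plan instead carries the full difference in the product and re-imports a difference estimate for $\gamma_{j,k}$, which is workable but not what the stated inequality \eqref{2.70} requires.

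Two concrete soft spots. First, "exactly as in Lemma \ref{lem:2.9} but one derivative less, using Lemma \ref{lem:2.4}" skips the actual technical content of the paper's proof: Lemma \ref{lem:2.4} applied at regularity $s-1$ would need $s-2>\tfrac d2$ (its proof uses $H^{\sigma-1}\hookrightarrow L^\infty$), which is not assumed, and for $1+\tfrac d2<s\leq 2+\tfrac d2$ the terms $u_k\cdot\nabla u_{j,k}$ and $\mathscr{L}_1(u_j)-\mathscr{L}_1(u_k)$ cannot be closed in $H^{s-1}$ by Lemma \ref{lem:commutator} and Lemma \ref{lem:moser} alone (e.g. $\|\nabla u_{j,k}\|_{L^\infty}$ is not controlled by $\|u_{j,k}\|_{H^{s-1}}$); this is exactly why the paper runs the dimension-dependent case analysis (Taylor's commutator estimate for $d=1$, $W^{\sigma,p}$ embeddings for $d=2$ and $d\geq3$, and the Besov bilinear estimate $\|fg\|_{H^{s-2}}\lesssim\|f\|_{H^{s-2}}\|g\|_{H^{s-1}}$ for the $\mathscr{L}_1$-difference). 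Second, two of your closing moves are not legitimate as written: the bound $\sup_{t\leq\mathbbm{t}_{j,k}^T}\|\textbf{y}_j(t)\|_{\mathbb{H}^{s+1}}^2\leq C\|J_{1/j}\textbf{y}_0\|_{\mathbb{H}^{s+1}}^2$ holds only after taking $\mathbb{E}\sup$ (the stochastic integral forbids a pathwise Gronwall), and an $\mathbb{E}\sup$ estimate for $\|\gamma_{j,k}\|_{H^{s-1}}^2$ cannot be "substituted" inside $\mathbb{E}\sup$ of the product with $\|\textbf{y}_j\|_{\mathbb{H}^{s+1}}^2$, since expectations of products of random suprema do not factor. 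If you repair this by running Gronwall on the full product $\|\textbf{y}_{j,k}\|_{\mathbb{H}^{s-1}}^2\|\textbf{y}_j\|_{\mathbb{H}^{s+1}}^2$ (so the $\gamma_{j,k}$-term appears additively and the classical lower-order estimate applies), you obtain a variant of \eqref{2.70} with the extra summands $\mathbb{E}\bigl(\|J_{1/j}\gamma_0-J_{1/k}\gamma_0\|_{H^{s-1}}^2\|J_{1/j}\textbf{y}_0\|_{\mathbb{H}^{s+1}}^2\bigr)$ and $\mathbb{E}\sup\|\gamma_{j,k}\|_{H^{s-1}}^2$; these cannot be merged into the stated right-hand side, although they decay at the right rate and would still serve Lemma \ref{lem:2.11}. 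So the route can be made to work, but it proves a modified inequality and needs the low-regularity product and commutator machinery you currently gloss over.
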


\begin{proof}[\emph{\textbf{Proof.}}]
The proof of Lemma \ref{lem:2.10} is based on  a priori estimate  for $\|u_{j,k}  \|_{H^{s-1}}^2\|\textbf{y}_{j}\|_{\mathbb{H}^{s+1}}^2 $.  To this end, we get by applying the It\^{o} product rule that
\begin{equation}\label{2.71}
\begin{split}
&\mathrm{d}\left( \|u_{j,k}  \|_{H^{s-1}}^2\|\textbf{y}_{j}\|_{\mathbb{H}^{s+1}}^2\right)\\
&\quad =\|\textbf{y}_{j}\|_{\mathbb{H}^{s+1}}^2\mathrm{d}\|u_{j,k}  \|_{H^{s-1}}^2+\|u_{j,k}  \|_{H^{s-1}}^2\mathrm{d}\|\textbf{y}_{j}\|_{\mathbb{H}^{s+1}}^2+\mathrm{d} \|u_{j,k}  \|_{H^{s-1}}^2\mathrm{d}\|\textbf{y}_{j}\|_{\mathbb{H}^{s+1}}^2.
\end{split}
\end{equation}
From the first component of the system \eqref{2.61}, we have
\begin{equation*}
\begin{split}
\mathrm{d}\Lambda^{s-1}u_{j,k}=&- \Lambda^{s-1}( u_{j,k}\cdot \nabla u_j +u_k\cdot \nabla u_{j,k} )\mathrm{d} t- \Lambda^{s-1}(  \mathscr{L}_1(u_j)-\mathscr{L}_1(u_k) )\mathrm{d} t \\
& - \Lambda^{s-1} ( \mathscr{L}_2 (\gamma_j)-\mathscr{L}_2 (\gamma_k)  )\mathrm{d} t+\Lambda^{s-3}(g_1(t,m_j,\rho_j)-g_1(t,m_k,\rho_k))\mathrm{d}W_1.
\end{split}
\end{equation*}
Using the It\^{o} formula again, one infer that
\begin{equation}\label{2.72}
\begin{split}
 \mathrm{d}\|u_{j,k}\|_{H^{s-1}}^2&= - 2(\Lambda^{s-1}u_{j,k},\Lambda^{s-1}( u_{j,k}\cdot \nabla u_j +u_k\cdot \nabla u_{j,k} ))_{L^2} \mathrm{d} t\\
 &\quad- 2(\Lambda^{s-1}u_{j,k},\Lambda^{s-1}(  \mathscr{L}_1(u_j)-\mathscr{L}_1(u_k) ))_{L^2} \mathrm{d} t \\
&\quad - 2(\Lambda^{s-1}u_{j,k},\Lambda^{s-1} ( \mathscr{L}_2 (\gamma_j)-\mathscr{L}_2 (\gamma_k)  ))_{L^2} \mathrm{d} t\\
&\quad + \|\Lambda^{s-3}(g_1(t,m_j,\rho_j)-g_1(t,m_k,\rho_k))\|_{L_2(\mathfrak{A},L^2)}^2\mathrm{d} t\\
&\quad+2(\Lambda^{s-1}u_{j,k},\Lambda^{s-3}(g_1(t,m_j,\rho_j)-g_1(t,m_k,\rho_k))\mathrm{d}\mathcal {W}_1)_{L^2} \\
&=\sum_{i=1}^4 I_i(t) \mathrm{d} t+I_5(t)\mathrm{d}\mathcal {W}_1.
\end{split}
\end{equation}
For $\|\textbf{y}_{j}\|_{\mathbb{H}^{s+1}}^2$, we get by replacing $s$ with $s+1$ in \eqref{2.63} throughout that
\begin{equation}\label{2.73}
\begin{split}
 \mathrm{d}\|\textbf{y}_{j}\|_{H^{s+1}}^2=& -2(\Lambda^{s+1}\textbf{y}_{j}, \Lambda^{s+1}B(\textbf{y}_{j},\textbf{y}_{j}))_{\mathbb{L}^2}\mathrm{d} t-2(\Lambda^{s+1}\textbf{y}_{j}, \Lambda^{s+1}F(\textbf{y}_{j}))_{\mathbb{L}^2} \mathrm{d} t\\
 &+  \|G(t,\textbf{y}_{j})\|_{L_2(\mathfrak{A},\mathbb{H}^{s+1})}^2\mathrm{d} t+2(\Lambda^{s+1}\textbf{y}_{j}, \Lambda^{s+1} G(t,\textbf{y}_{j})\mathrm{d}\mathcal {W})_{\mathbb{L}^2}\\
 =&\sum_{i=1}^3 J_i(t) \mathrm{d} t+J_4(t)\mathrm{d}\mathcal {W}.
\end{split}
\end{equation}
After plugging the identities \eqref{2.72}-\eqref{2.73}  into \eqref{2.71}, we get
\begin{equation}\label{2.74}
\begin{split}
\mathrm{d}( \|u_{j,k}  \|_{H^{s-1}}^2\|\textbf{y}_{j}\|_{\mathbb{H}^{s+1}}^2)=& \left (\sum_{i=1}^4 \|\textbf{y}_{j}\|_{\mathbb{H}^{s+1}}^2I_i(t)+\sum_{i=1}^3 \|u_{j,k}  \|_{H^{s-1}}^2J_i(t)+\mathcal {K} \right) \mathrm{d} t\\
& +\|\textbf{y}_{j}\|_{\mathbb{H}^{s+1}}^2I_5(t)\mathrm{d}W_1 +\|u_{j,k}  \|_{H^{s-1}}^2J_4(t)\mathrm{d}\mathcal {W} .
\end{split}
\end{equation}
Here $\mathcal {K}$ is the term  arising from $\mathrm{d} \|u_{j,k}  \|_{H^{s-1}}^2\mathrm{d}\|\textbf{y}_{j}\|_{\mathbb{H}^{s+1}}^2$, and is given by
$$
\mathcal {K}=4\sum_{q\geq 1} (\Lambda^{s+1}u_{j}, \Lambda^{s-1} g_1(t,,m_j,\rho_j)e_q)_{L^2} (\Lambda^{s-1}u_{j,k},\Lambda^{s-3}(g_1(t,m_j,\rho_j)-g_1(t,m_k,\rho_k)) e_q)_{L^2}.
$$
It remains to estimates the terms on the R.H.S. of \eqref{2.74}.

\textsf{Estimate for $I_1$.} The estimate for $I_1$ will be classified according to the dimension $d\geq 1$.

\underline{Case of $d=1$.}
In this case, the unknown is a scalar quantity, we observe from the identity $u_j\cdot \nabla u_j=u_j\partial_x u_j=\frac{1}{2}\partial_x(u_j^2)$ that $4
I_1(t) =\frac{1}{2}(\Lambda^{s-1}u_{j,k},\Lambda^{s-1}\partial_x (u_{j,k} ( u_j +u_k )))_{L^2}$.
By commutating $\Lambda^{s-1}\partial_x$ with $u_j +u_k$, and using the Cauchy-Schwartz inequality, we get
\begin{equation*}
\begin{split}
 |I_1(t) |&\leq\frac{1}{2} \left|(\Lambda^{s-1}u_{j,k},[\Lambda^{s-1}\partial_x, u_j +u_k ]u_{j,k})_{L^2} \right|+\frac{1}{2}\left|(\partial_x(u_j +u_k) ,(\Lambda^{s-1}u_{j,k})^2)_{L^2}\right|\\
&\leq C\| u_{j,k}\|_{H^{s-1}}\left(\|[\Lambda^{s-1}\partial_x, u_j +u_k ]u_{j,k}\|_{L^2}  + \| u_j +u_k \|_{W^{1,\infty}}\| u_{j,k}\|_{H^{s-1}}\right)\\
 &\leq C(\| u_j\|_{H^s} +\|u_k  \|_{H^s}  )\| u_{j,k}\|_{H^{s-1}}^2,
\end{split}
\end{equation*}
where the third inequality used the commutator estimates (cf. \cite[Proposition 4.2]{taylor2003commutator})
and the last inequality used the Sobolev embedding $H^s(\mathbb{T})\subset W^{1,\infty}(\mathbb{T})$ for $s>\frac{3}{2}$.

The discussion for the cases of $d=2$ and $d\geq 3$ is more involved, we first get from the Moser-type estimates that
\begin{equation}\label{2.75}
\begin{split}
\|\textbf{y}_{j}\|_{\mathbb{H}^{s+1}}^2|I_1(t)|\leq& C \|\textbf{y}_{j}\|_{\mathbb{H}^{s+1}}^2\Big(\| u_{j,k}\|_{H^{s-1}}(\|u_{j,k}\|_{L^\infty}\|\nabla u_j\|_{H^{s-1}}+\|u_{j,k}\|_{H^{s-1}}\|\nabla u_j\|_{L^\infty}) \\
& + |(\Lambda^{s-1}u_{j,k},\Lambda^{s-1}( u_k\cdot \nabla u_{j,k} ))_{L^2}|\Big)\\
\leq& C \|\textbf{y}_{j}\|_{\mathbb{H}^{s+1}}^2\| u_{j,k}\|_{H^{s-1}}^2\| u_j\|_{H^{s}} \\
&+ C\|\textbf{y}_{j}\|_{\mathbb{H}^{s+1}}^2|(\Lambda^{s-1}u_{j,k},\Lambda^{s-1}( u_k\cdot \nabla u_{j,k} ))_{L^2}| .
\end{split}
\end{equation}
The main difficulty comes from the second term on the R.H.S. of \eqref{2.75}.

\underline{Case of $d=2$}.   It is clear that $H^{s}(\mathbb{T}^2)\subset W^{1,\infty}(\mathbb{T}^2)$ for $s>2$. If $2< s < 3$, then $H^{s-1}(\mathbb{T}^2)\subset W^{1,q}(\mathbb{T}^2)$, for some $q > 2$ such that $s-2=1-\frac{2}{q}$. Choosing $p>2$ satisfying $1=\frac{2}{p}+\frac{2}{q}$, then the following embedding  holds: $
 H^s(\mathbb{T}^2)\subset W^{s-1+\frac{2}{p},p}(\mathbb{T}^2)\subset W^{s-1,p}(\mathbb{T}^2)$.
By using Lemma \ref{lem:commutator} and integrating by parts, we have
\begin{equation*}
\begin{split}
&|(\Lambda^{s-1}u_{j,k},\Lambda^{s-1}( u_k\cdot \nabla u_{j,k} ))_{L^2}|\\
 &\quad \leq C \| u_{j,k}\|_{H^{s-1}} (\|[\Lambda^{s-1}, u_k\cdot \nabla] u_{j,k} \|_{L^2}+ \|\textrm{div} u_k  \|_{L^\infty} \| u_{j,k}\|_{H^{s-1}})\\
 &\quad \leq C \| u_{j,k}\|_{H^{s-1}} ( \| u_k\|_{W^{s-1,p}} \|u_{j,k}  \|_{W^{1,q}} +\| u_k\|_{W^{1,\infty}} \|u_{j,k} \|_{H^{s-1}} ) \\
 &\quad\leq C\|u_k \|_{H^s} \| u_{j,k}\|_{H^{s-1}} ^2.
\end{split}
\end{equation*}
If $s\geq 3$, then $H^{s-1}(\mathbb{T}^2)\subset \mathcal {C}^1(\mathbb{T}^2)\subset W^{1,q}(\mathbb{T}^2)$, for all $q\in (2,\infty)$. The result also holds ture.

\underline{Case of $d\geq 3$}.  By the Sobolev embeddings $
H^{s}(\mathbb{T}^d)\subset W^{1,\infty}(\mathbb{T}^d)$, $H^s(\mathbb{T}^d)\subset W^{s-1,\frac{2d}{d-2}}(\mathbb{T}^d)$, $H^{s-1}(\mathbb{T}^d)\subset W^{1,d}(\mathbb{T}^d)$,
and the commutator estimates, we have
\begin{equation*}
\begin{split}
&|(\Lambda^{s-1}u_{j,k},\Lambda^{s-1}( u_k\cdot \nabla u_{j,k} ))_{L^2}|\\
&\quad \leq |(\Lambda^{s-1}u_{j,k},[\Lambda^{s-1}, u_k\cdot \nabla] u_{j,k} ))_{L^2}|+\frac{1}{2}|(|\Lambda^{s-1}u_{j,k}|^2,\textrm{div} u_k  )_{L^2}|\\
&\quad \leq C\Big(\|u_k \|_{W^{s-1,\frac{2d}{d-2}}} \|  u_{j,k}\|_{W^{1,d}}\| u_{j,k}\|_{H^{s-1}} +\|  u_k\|_{W^{1,\infty}} \| u_{j,k}\|_{H^{s-1}}^2 + \|\textrm{div} u_k  \|_{L^\infty} \| u_{j,k}\|_{H^{s-1}}^2\Big)\\
&\quad \leq C\|u_k \|_{H^s} \| u_{j,k}\|_{H^{s-1}} ^2.
\end{split}
\end{equation*}
In summary, the term involving $I_1$ can be estimated by
\begin{equation}
\begin{split}
\|\textbf{y}_{j}\|_{\mathbb{H}^{s+1}}^2|I_1(t)| \leq  C \| u_{j,k}\|_{H^{s-1}} ^2\|\textbf{y}_{j}\|_{\mathbb{H}^{s+1}}^2 ( \| u_j\|_{H^{s}} +\|u_k \|_{H^s} ).
\end{split}
\end{equation}

\textsf{Estimate for $I_2$.} We get by using the Cauchy-Schwartz inequality that
\begin{equation*}
\begin{split}
|I_2(t)| \leq C \| u_{j,k}\|_{H^{s-1}} \|\mathscr{L}_1(u_j)-\mathscr{L}_1(u_k)\|_{H^{s-1}}.
\end{split}
\end{equation*}
To estimate $\|\mathscr{L}_1(u_j)-\mathscr{L}_1(u_k)\|_{H^{s-1}}$, we need the following Moser-type estimates (cf. Proposition 2.82 in \cite{bahouri2011fourier}): For any $s_1\leq \frac{d}{2}< s_2$ ($s_2\geq \frac{d}{2}$ if $r=1$), $s_1+s_2>0$, then $
\|fg\|_{B_{2,r}^{s_1}}\leq C \|f\|_{B_{2,r}^{s_1}}\|g\|_{B_{2,r}^{s_2}}$.
We divide the discussion into two cases.

\underline{Case of $\frac{d}{2}+1<s\leq\frac{d}{2}+2$.}  Since $s-2\leq \frac{d}{2}< s-1$, and $(s-2)+(s-1)=2s-3>0$, it follows from the last Moser estimates that
\begin{equation*}
\begin{split}
 &\|\mathscr{L}_1(u_j)-\mathscr{L}_1(u_k)\|_{H^{s-1}}\\
 &\quad \leq C \left\|  (|\nabla u_1|+|\nabla u_2|)|\nabla u_{j,k}|\textbf{I}_d+\nabla u_{j,k}\nabla u_j+\nabla u_k\nabla u_{j,k}+\nabla u_{j,k} \nabla u_j ^T \right.\\
&\quad\quad\left.+\nabla u_k\nabla u_{j,k}^T+\nabla u_{j,k}^T  \nabla u_k+\nabla u_j ^T  \nabla u_{j,k}-\textrm{div}u_{j,k} \nabla u_k- \textrm{div}u_j  \nabla u_{j,k} \right\|_{H^{s-2}} \\
&\quad\quad+\left\|u_j \textrm{div} u_{j,k}  +u_{j,k} \textrm{div}u_k  +u_{j,k}\cdot   \nabla u_j ^T+u_k\cdot \nabla u_{j,k}^T \right\|_{H^{s-3}}\\
 &\quad \leq C  \| \nabla u_{j,k}\|_{H^{s-2}} (\|\nabla u_j\|_{H^{s-1}}+\|\nabla u_k\|_{H^{s-1}})+C  \| \nabla  u_{j,k}\|_{H^{s-2}} \\
 &\quad\quad\times  (\|  u_j\|_{H^{s-1}}+\|  u_k\|_{H^{s-1}})+C\| u_{j,k}\|_{H^{s-1}} (\|\nabla u_j\|_{H^{s-2}}+\|\nabla u_k\|_{H^{s-2}})\\
 &\quad \leq C   \|  u_{j,k}\|_{H^{s-1}} (\| u_j\|_{H^{s}}+\|u_k\|_{H^{s}}),
\end{split}
\end{equation*}
which implies
\begin{equation*}
\begin{split}
\|\textbf{y}_{j}\|_{\mathbb{H}^{s+1}}^2|I_2(t)| \leq C \|\textbf{y}_{j}\|_{\mathbb{H}^{s+1}}^2\| u_{j,k}\|_{H^{s-1}}^2 (\| u_j\|_{H^{s}}+\|u_k\|_{H^{s}}).
\end{split}
\end{equation*}

\underline{Case of $ s >\frac{d}{2}+2$.} In this case, the Sobolev spaces  $H^{s-2}(\mathbb{T}^d)$ are  Banach algebras, and hence it follows from the embedding $H^{s}(\mathbb{T}^d)\subset H^{t}(\mathbb{T}^d)$ for $s>t$   that
\begin{equation*}
\begin{split}
 &\|\mathscr{L}_1(u_j)-\mathscr{L}_1(u_k)\|_{H^{s-1}}\\
 &\quad \leq C  \| \nabla u_{j,k}\|_{H^{s-2}} (\|\nabla u_j\|_{H^{s-2}}+\|\nabla u_k\|_{H^{s-2}})+C  \| \nabla  u_{j,k}\|_{H^{s-2}}   (\|  u_j\|_{H^{s-2}}\\
 &\quad\quad+\|  u_k\|_{H^{s-2}})+C\| u_{j,k}\|_{H^{s-2}} (\|\nabla u_j\|_{H^{s-2}}+\|\nabla u_k\|_{H^{s-2}})\\
 &\quad \leq C   \|  u_{j,k}\|_{H^{s-1}} (\| u_j\|_{H^{s}}+\|u_k\|_{H^{s}}),
\end{split}
\end{equation*}
which lead to the similar estimate.

\textsf{Estimate for $I_3$.} There holds
\begin{equation*}
\begin{split}
\|\textbf{y}_{j}\|_{\mathbb{H}^{s+1}}^2I_3 \leq&   C \|\textbf{y}_{j}\|_{\mathbb{H}^{s+1}}^2\| u_{j,k}\|_{H^{s-1}}^2      \Big(\|\gamma_{j,k}(\gamma_j+\gamma_k)\|_{H^{s-2}}+ \| \nabla\gamma_{j,k}(|\nabla\gamma_j|+|\nabla\gamma_k|)\|_{H^{s-2}}\\
&+\|(\nabla\gamma_{j,k})^T\nabla\gamma_j \|_{H^{s-2}}+\|(\nabla\gamma_k)^T\nabla\gamma_{j,k} \|_{H^{s-2}}\Big)\\
\leq&   C \|\textbf{y}_{j}\|_{\mathbb{H}^{s+1}}^2\| u_{j,k}\|_{H^{s-1}}^2 \| \gamma_{j,k}\|_{H^{s}}(\| \gamma_j\|_{H^{s}}+\|\gamma_k\|_{H^{s}})\\
\leq&   C \|\textbf{y}_{j}\|_{\mathbb{H}^{s+1}}^2\| u_{j,k}\|_{H^{s-1}}^2  (\| \gamma_j\|_{H^{s}}^2+\|\gamma_k\|_{H^{s}}^2).
\end{split}
\end{equation*}

\textsf{Estimate for $I_4$ and $I_5$.} In terms of the Assumption \ref{assume}, we have
\begin{equation*}
\begin{split}
\|\textbf{y}_{j}\|_{\mathbb{H}^{s+1}}^2I_4\leq& C \|\textbf{y}_{j}\|_{\mathbb{H}^{s+1}}^2 \|g_1(t,m_j)-g_1(t,m_k)\|_{L_2(\mathfrak{A},H^{s-3})}^2\\
\leq& C \tilde{\mu}_1^2 (t) \tilde{\chi}_1^2(\|u_j\|_{\mathbb{W}^{1,\infty}}+\|u_k\|_{\mathbb{W}^{1,\infty}})
\|\textbf{y}_{j}\|_{\mathbb{H}^{s+1}}^2
\|u_{j,k}\|_{\mathbb{H}^{s-1}}^2.
\end{split}
\end{equation*}
Using the BDG inequality and   Assumption \ref{assume}, we have for any stopping time $\mathbbm{t}$ that
\begin{equation}
\begin{split}
&\mathbb{E}\sup_{t\in [0,\mathbbm{t}]}\left|\int_0^t\|\textbf{y}_{j}\|_{\mathbb{H}^{s+1}}^2I_5(\varsigma)\mathrm{d}\mathcal {W}_1\right|\\
  &\quad \leq C\mathbb{E} \left [\sup_{\varsigma\in [0,\mathbbm{t}]}(\|\textbf{y}_{j}\|_{\mathbb{H}^{s+1}} \|u_{j,k}\| _{H^{s-1}})\right.\\
&\quad\quad \left.\times \left(\int_0^\mathbbm{t} \tilde{\mu}_1^2 (\varsigma) \tilde{\chi}_1^2(\|u_j\|_{\mathbb{W}^{1,\infty}}+\|u_k\|_{\mathbb{W}^{1,\infty}})
\|\textbf{y}_{j}\|_{\mathbb{H}^{s+1}}^2
\|u_{j,k}\|_{\mathbb{H}^{s-1}}^2
\mathrm{d}\varsigma\right)^{\frac{1}{2}} \right]\\
&\quad \leq  \frac{1}{4}\mathbb{E}  \sup_{\varsigma\in [0,\mathbbm{t}]}(\|\textbf{y}_{j}\|_{\mathbb{H}^{s+1}}^2 \|u_{j,k}\| _{H^{s-1}}^2)\\
&\quad\quad +C\mathbb{E} \int_0^\mathbbm{t} \tilde{\mu}_1^2 (\varsigma) \tilde{\chi}_1^2(\|u_j\|_{\mathbb{W}^{1,\infty}}+\|u_k\|_{\mathbb{W}^{1,\infty}})
\|\textbf{y}_{j}\|_{\mathbb{H}^{s+1}}^2
\|u_{j,k}\|_{\mathbb{H}^{s-1}}^2
\mathrm{d}\varsigma  .
\end{split}
\end{equation}

\textsf{Estimate for $J_1$.} Using the definition of $\mathbb{H}^s(\mathbb{T}^d)$,  commutating the operator $\Lambda^{s+1}$ with $u_j$ and then integrating by parts, we have
\begin{equation}
\begin{split}
\|u_{j,k}  \|_{H^{s-1}}^2J_1 \leq& C\|u_{j,k}  \|_{H^{s-1}}^2\Big(\| u_{j}\|_{H^{s+1}}(\|\Lambda^{s+1} u_{j}\|_{L^2}\|\nabla u_{j}\|_{L^{\infty}}+\|\nabla u_j\|_{L^{\infty}}\|\Lambda^{s} \nabla u_{j}\|_{L^2}) \\
&+\|\gamma_{j} \|_{H^{s+1}}
(\|\Lambda^{s+1} u_{j}\|_{L^2}\|\nabla\gamma_{j}\|_{L^{\infty}}+\|\nabla u_j\|_{L^{\infty}}\|\Lambda^{s} \nabla\gamma_{j}\|_{L^2})\\
&+\|u_{j}\|_{W^{1,\infty}} \|u_{j} \|_{H^{s+1}}^2+\|u_{j}\|_{W^{1,\infty}}\|\gamma_{j} \|_{H^{s+1}}^2\Big)\\
\leq& C\| u_j\|_{H^{s}}\|\gamma_{j} \|_{H^{s}} \|u_{j,k}  \|_{H^{s-1}}^2  \| \textbf{y}_{j}\|_{\mathbb{H}^{s+1}}^2  .
\end{split}
\end{equation}

\textsf{Estimate for $J_2$ and $J_3$.} By using Lemma \ref{2.6},  the embedding $\mathbb{H}^s(\mathbb{T}^d)\subset\mathbb{W}^{1,\infty}(\mathbb{T}^d)$ ($s>\frac{d}{2}+1$) as well as  assumption \eqref{a1}, we have
\begin{equation*}
\begin{split}
 \|u_{j,k}  \|_{H^{s-1}}^2J_2&\leq C\|\textbf{y}_{j}\|_{\mathbb{H}^s}^2\|u_{j,k}  \|_{H^{s-1}}^2 \|\textbf{y}_{j} \|_{\mathbb{H}^{s+1}}^2,\\
 \|u_{j,k}  \|_{H^{s-1}}^2J_3&\leq  \mu (t) \chi(\|\textbf{y}_{j}\|_{\mathbb{W}^{1,\infty}})\|u_{j,k}  \|_{H^{s}}^2 +\mu (t) \chi(\|\textbf{y}_{j}\|_{\mathbb{W}^{1,\infty}})\|u_{j,k}  \|_{H^{s-1}}^2 \|\textbf{y}_{j} \|_{\mathbb{H}^{s+1}}^2.
\end{split}
\end{equation*}

\textsf{Estimate for $J_4$.}  By applying the BDG inequality and the assumption \eqref{a1}, we have
\begin{equation}
\begin{split}
&\mathbb{E}\sup_{t\in [0,\mathbbm{t}]}\left|\int_0^t\|u_{j,k}  \|_{H^{s-1}}^2J_4(t)\mathrm{d}\mathcal {W}\right|\\
 &\quad \leq  \frac{1}{4}\mathbb{E} \sup_{t\in [0,\mathbbm{t}]}\left(\|u_{j,k}  \|_{H^{s-1}}^2\|\textbf{y}_{j}\|_{\mathbb{H}^{s+1}}^2\right)+\mathbb{E}\int_0^\mathbbm{t}\mu ^2 (\varsigma) \chi^2(\|\textbf{y}\|_{\mathbb{W}^{1,\infty}})\|u_{j,k}  \|_{H^{s-1}}^2 \mathrm{d}\varsigma \\
 &\quad \quad+\mathbb{E}\int_0^\mathbbm{t}\mu ^2 (\varsigma) \chi^2(\|\textbf{y}\|_{\mathbb{W}^{1,\infty}})\|u_{j,k}  \|_{H^{s-1}}^2 \|\textbf{y}\|_{\mathbb{H}^{s+1}}^2\mathrm{d}\varsigma.
\end{split}
\end{equation}

\textsf{Estimate for $\mathcal {K}$.} By using the H\"{o}lder inequality, we deduce that
\begin{equation*}
\begin{split}
|\mathcal {K}| 
\leq& C \mu_1 (t)\tilde{\mu}_1 (t) \chi_1(\|\textbf{y}_j\|_{\mathbb{W}^{1,\infty}})\tilde{\chi}_1(\|\textbf{y}_j\|_{\mathbb{W}^{1,\infty}}+\|\textbf{y}_k\|_{\mathbb{W}^{1,\infty}})
  (\| u_{j,k}\|_{H^{s-1}}^2+\| u_{j,k}\|_{H^{s-1}}^2\|\textbf{y}_j\|_{\mathbb{H}^{s+1}}^2).
\end{split}
\end{equation*}
Noting that from the definition of $\mathbbm{t}_{j,k}^T$, there holds $
\|\textbf{y}_\kappa\|_{\mathbb{W}^{1,\infty}}\leq C\|\textbf{y}_i\|_{\mathbb{H}^{s}}\leq C(M^2+1)$, for any $\kappa\in \{j,k\}$ and $ t\in [0,\mathbbm{t}_{j,k}^T] $,
for some $C>0$ independent of $j$ and $k$. Using the nondecreasing property of  $\chi_1(\cdot),\tilde{\chi}_1(\cdot)$, we get by taking the supremum  over $[0,\mathbbm{t}_{j,k}^T]$ that
\begin{equation*}
\begin{split}
&\mathbb{E}\sup_{\varsigma\in [0,t\wedge\mathbbm{t}_{j,k}^T]}(\|u_{j,k}  \|_{H^{s-1}}^2\|\textbf{y}_{j}\|_{\mathbb{H}^{s+1}}^2)\\
&\quad \leq C\mathbb{E}(\|u_{j,k}(0)  \|_{H^{s-1}}^2\|\textbf{y}_{j}(0)\|_{\mathbb{H}^{s+1}}^2)+C\int_0^t(1+\mu^2 (t)+\tilde{\mu}_1^2(t)) \mathbb{E}\sup_{\varsigma\in [0,r\wedge\mathbbm{t}_{j,k}^T]}\|u_{j,k}  \|_{H^{s-1}}^2\mathrm{d}r\\
&\quad\quad+C\int_0^t (1+\mu^2 (r)+\tilde{\mu}_1^2(r))\mathbb{E}\sup_{\varsigma\in [0,r\wedge\mathbbm{t}_{j,k}^T]}\| u_{j,k}\|_{H^{s-1}} ^2\|\textbf{y}_{j}\|_{\mathbb{H}^{s+1}}^2\mathrm{d}r,
\end{split}
\end{equation*}
for some positive constant $C$ independent of $j$ and $k$. An application of the Gronwall inequality to above integral inequality leads to
\begin{equation*}
\begin{split}
 &\mathbb{E}\sup_{\varsigma\in [0,t\wedge\mathbbm{t}_{j,k}^T]}(\|u_{j,k}  \|_{H^{s-1}}^2\|\textbf{y}_{j}\|_{\mathbb{H}^{s+1}}^2)\leq Ce^{\int_0^T(1+\mu^2 (r)+\tilde{\mu}_1^2(r))\mathrm{d}r }\left(1+\int_0^T (1+\mu^2 (r)+\tilde{\mu}_1^2(r))\mathrm{d}r\right)\\
 &\quad \quad  \times \left(\mathbb{E}(\|J_{1/j}u_0-  J_{1/k}u_0 \|_{H^{s-1}}^2\|J_{1/j}\textbf{y}_0\|_{\mathbb{H}^{s+1}}^2) + \mathbb{E}\sup_{\varsigma\in [0,t\wedge\mathbbm{t}_{j,k}^T]}\|u_{j,k}(\varsigma)  \|_{H^{s-1}}^2\right).
\end{split}
\end{equation*}
which implies the desired inequality \eqref{2.70}, and this completes the proof of Lemma \ref{2.10}.
\end{proof}

Based on the last two lemmas, one can prove the following convergence results.
\begin{lemma}\label{lem:2.11}
Under the same conditions of Lemma 2.9, we have
\begin{equation}\label{2.81}
\begin{split}
\lim_{j\rightarrow\infty}\sup_{k\geq j}\mathbb{E}\sup_{r\in [0, \mathbbm{t}_{j,k}^T ]}\|\textbf{y}_{j} -\textbf{y}_{k}\|_{\mathbb{H}^s}^2=0,
\end{split}
\end{equation}
\begin{equation}\label{2.82}
\begin{split}
\lim_{\omega\rightarrow 0}\sup_{j\geq 0}\mathbb{P}\left\{\sup_{r\in [0, \omega\wedge \mathbbm{t}_{j}^T ]}\|\textbf{y}_{j}(r)\|_{\mathbb{H}^s}\geq \|J_{1/j}\textbf{y}_0\|_{\mathbb{H}^s}+3\right\}=0.
\end{split}
\end{equation}
\end{lemma}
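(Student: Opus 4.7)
The plan for \eqref{2.81} is to combine Lemmas \ref{lem:2.9} and \ref{lem:2.10} with mollifier-convergence properties and one auxiliary low-regularity stability estimate. Substituting the conclusion of Lemma \ref{lem:2.10} into \eqref{2.62} reduces the claim to showing that the three contributions
\[
 \mathbb{E}\|J_{1/j}\textbf{y}_0-J_{1/k}\textbf{y}_0\|_{\mathbb{H}^s}^2,\qquad \mathbb{E}\bigl(\|J_{1/j}u_0-J_{1/k}u_0\|_{H^{s-1}}^2\|J_{1/j}\textbf{y}_0\|_{\mathbb{H}^{s+1}}^2\bigr),\qquad \mathbb{E}\sup_{r\in [0,\mathbbm{t}_{j,k}^T]}\|u_{j,k}\|_{H^{s-1}}^2,
\]
each tend to zero as $j\to\infty$, uniformly over $k\geq j$, under the standing assumption $\|\textbf{y}_0\|_{\mathbb{H}^s}\leq M$ from the decomposition preceding Lemma \ref{lem:2.9}.

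The first contribution is immediate from the strong mollifier approximation $J_{1/j}\textbf{y}_0\to \textbf{y}_0$ in $\mathbb{H}^s$ and the dominated convergence theorem with dominant $4M^2$. For the third contribution, I would derive a separate $\mathbb{H}^{s-1}$-stability estimate for $\textbf{y}_{j,k}:=\textbf{y}_j-\textbf{y}_k$: applying $\Lambda^{s-1}$ to the difference equation and It\^{o}'s formula to $\|\textbf{y}_{j,k}\|_{\mathbb{H}^{s-1}}^2$, then mimicking the transport/commutator/BDG analysis of the uniqueness proof in Lemma \ref{lem:2.6} one derivative higher, and exploiting the uniform $\mathbb{H}^s$-bound $\|\textbf{y}_j\|_{\mathbb{H}^s}+\|\textbf{y}_k\|_{\mathbb{H}^s}\leq 2\sqrt{M^2+3}$ on $[0,\mathbbm{t}_{j,k}^T]$ to render the Gronwall constant deterministic and independent of $j,k$, yields
\[
\mathbb{E}\sup_{r\in[0,\mathbbm{t}_{j,k}^T]}\|\textbf{y}_{j,k}(r)\|_{\mathbb{H}^{s-1}}^2 \leq C\,\mathbb{E}\|J_{1/j}\textbf{y}_0-J_{1/k}\textbf{y}_0\|_{\mathbb{H}^{s-1}}^2,
\]
and the right-hand side vanishes as $j\to\infty$ by mollifier approximation.

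The second contribution is the delicate one and constitutes the main obstacle. Plugging in the individual estimates $\|J_{1/j}u_0-J_{1/k}u_0\|_{H^{s-1}}=O(j^{-1})M$ and $\|J_{1/j}\textbf{y}_0\|_{\mathbb{H}^{s+1}}=O(j)M$ produces only an $O(M^4)$ bound, which is bounded but not small. To circumvent this I would return to the Fourier-side representation: since the symbol $\mathcal{F}j_\epsilon(\xi)=\mathcal{F}j(\epsilon\xi)$ equals $1$ on $|\xi|\leq 1/\epsilon$, for $k\geq j$ the difference $J_{1/j}u_0-J_{1/k}u_0$ is supported in frequencies $|\xi|>j$, so that
\[
j^2\|J_{1/j}u_0-J_{1/k}u_0\|_{H^{s-1}}^2 \leq 4\sum_{|\xi|>j}(1+|\xi|^2)^s|\widehat{u_0}(\xi)|^2\longrightarrow 0\quad\text{as }j\to\infty,
\]
pointwise in $\omega$, uniformly in $k\geq j$, and dominated by $4M^2$. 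Multiplying by $\|J_{1/j}\textbf{y}_0\|_{\mathbb{H}^{s+1}}^2\leq Cj^2M^2$ and invoking dominated convergence closes this step.

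For \eqref{2.82}, I would apply It\^{o}'s formula to $\|\textbf{y}_j\|_{\mathbb{H}^s}^2$, proceeding via a Friedrichs regularization as in the proof of Theorem \ref{th1}(2) to circumvent the regularity gap of $B(\textbf{y}_j,\textbf{y}_j)$. On $[0,\omega\wedge\mathbbm{t}_j^T]$ the $\mathbb{H}^s$-norm is bounded by $\sqrt{M^2+3}$, so the Moser/commutator machinery of Subsection 2.3 bounds the drift by a constant $C(M)$ and BDG bounds the martingale increment by $C(M)\sqrt{\omega}$, giving
\[
\mathbb{E}\sup_{r\in[0,\omega\wedge\mathbbm{t}_j^T]}\bigl(\|\textbf{y}_j(r)\|_{\mathbb{H}^s}^2 - \|J_{1/j}\textbf{y}_0\|_{\mathbb{H}^s}^2\bigr) \leq C(M)(\omega+\sqrt{\omega})
\]
uniformly in $j$. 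A Chebyshev inequality then converts this into the required probability bound, which vanishes as $\omega\to 0$.
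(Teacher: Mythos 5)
Your proof is correct and follows the same skeleton as the paper's: reduce \eqref{2.81} via Lemmas \ref{lem:2.9} and \ref{lem:2.10} to the initial-data and lower-order contributions, and obtain \eqref{2.82} from an It\^{o} estimate on $[0,\omega\wedge\mathbbm{t}_{j}^T]$ followed by Chebyshev. The genuine divergence is in the product term $\mathbb{E}\big(\|J_{1/j}u_0-J_{1/k}u_0\|_{H^{s-1}}^2\|J_{1/j}\textbf{y}_0\|_{\mathbb{H}^{s+1}}^2\big)$: the paper invokes the mollifier property \eqref{1.q1} in its $o(\epsilon^{s-r})$ form, raises it to the fourth power and separates the two factors by Cauchy--Schwarz, so smallness comes from $j^2\cdot o(1/j^2)\to 0$; you instead observe that for $k\geq j$ the difference $J_{1/j}u_0-J_{1/k}u_0$ is supported on frequencies larger than $j$, so that $j^2\|J_{1/j}u_0-J_{1/k}u_0\|_{H^{s-1}}^2$ is controlled by the $H^s$-tail of $u_0$ beyond frequency $j$, uniformly in $k\geq j$, and conclude by dominated convergence. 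These are the same underlying fact, but your version is more self-contained and avoids passing the little-$o$ through the expectation, which the paper glosses over. Likewise, where the paper merely asserts that a ``classical $L^2$-estimate'' bounds $\sup_{[0,\mathbbm{t}_{j,k}^T]}\|u_j-u_k\|_{H^{s-1}}$ by the initial difference, you sketch the It\^{o}/Gronwall derivation. For \eqref{2.82}, the paper splits the bad event into drift and martingale parts and applies Chebyshev to each (a bound linear in $\omega$), while you bound the expected supremum directly via BDG (a bound of order $\omega+\sqrt{\omega}$); both vanish as $\omega\to 0$ uniformly in $j$.

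One caveat on your auxiliary $\mathbb{H}^{s-1}$-stability step: ``mimicking Lemma \ref{lem:2.6} one derivative higher'' is not quite the right recipe. Lemma \ref{lem:2.6} works at the level $H^{s-2}$ with $s>4+\frac{d}{2}$, where there is ample regularity and the plain Kato--Ponce commutator plus the algebra property suffice; at the sharp level $s>1+\frac{d}{2}$ the commutator contribution $\|\Lambda^{s-1}u_k\|_{L^2}\|\nabla(u_j-u_k)\|_{L^\infty}$ cannot be closed by $\|u_j-u_k\|_{H^{s-1}}$ alone. The estimates actually needed are the low-regularity product and commutator bounds, with the case analysis in $d$ and in the range of $s$, carried out for the terms $I_1$--$I_5$ in the proof of Lemma \ref{lem:2.10}; these are available under your standing hypotheses and do yield the claimed inequality with a constant depending only on the uniform $\mathbb{H}^s$-bound on $[0,\mathbbm{t}_{j,k}^T]$. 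So this is a matter of citing the right ingredient rather than a gap in the argument.
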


\begin{proof}[\emph{\textbf{Proof.}}]
Since $\{J_\epsilon u\}_{j\geq 1}$ is a Cauchy sequence in $H^s(\mathbb{T}^d)$, we have $
\lim_{j\rightarrow\infty}\sup_{k\geq j}\mathbb{E} \|J_{1/j}\textbf{y}_0-J_{1/k}\textbf{y}_0\|_{\mathbb{H}^s}^2=0$,
which implies that the first term on the R.H.S. of \eqref{2.62} converges to $0$ as $j,k\rightarrow \infty$. By Lemma \ref{lem:2.9}, it suffices to prove that
$$\lim_{j\rightarrow\infty}\sup_{k\geq j}\mathbb{E}\sup_{\varsigma\in [0, \mathbbm{t}_{j,k}^T]}(\|u_{j,k}  \|_{H^{s-1}}^2\|\textbf{y}_{j}\|_{\mathbb{H}^{s+1}}^2)=0.$$ First,  we get from the property of the mollifier $J_\epsilon$ that
 \begin{equation*}
\begin{split}
 \mathbb{E}\left( \|J_{1/j}\textbf{y}_0\|_{\mathbb{H}^{s+1}}^4\right) &\leq C j^4 \mathbb{E}\left( \|\textbf{y}_0\|_{\mathbb{H}^{s}}^4\right)\leq C j^4,\\
 \mathbb{E}\left( \|J_{1/j}u_0-  J_{1/k}u_0 \|_{H^{s-1}}^4\right) &\leq  8 \mathbb{E}\left( \|J_{1/j}u_0- u_0 \|_{H^{s-1}}^4\right)+ 8\mathbb{E}\left( \|u_0- J_{1/k}u_0 \|_{H^{s-1}}^4\right) \\
 &= o(\frac{1}{j^4})+o(\frac{1}{k^4}).
\end{split}
\end{equation*}
From the last two estimates, we have
  \begin{equation*}
\begin{split}
 &\lim_{j\rightarrow\infty}\sup_{k\geq j}\mathbb{E}\left(\|J_{1/j}u_0-  J_{1/k}u_0 \|_{H^{s-1}}^2\|J_{1/j}\textbf{y}_0\|_{\mathbb{H}^{s+1}}^2\right) \\
 &\quad \leq \lim_{j\rightarrow\infty}\sup_{k\geq j}\Big(\mathbb{E}\|J_{1/j}u_0-  J_{1/k}u_0 \|_{H^{s-1}}^4\Big)^{\frac{1}{2}}\Big(\mathbb{E} \|J_{1/j}\textbf{y}_0\|_{\mathbb{H}^{s+1}}^4\Big)^{\frac{1}{2}}\\
 &\quad \leq  C\lim_{j\rightarrow\infty}\sup_{k\geq j}j^2\Big(o(1/j^4)+o(1/k^4)\Big)^{\frac{1}{2}} \\
 &\quad=  C\lim_{j\rightarrow\infty}\sup_{k\geq j} \bigg( \frac{o(1/j^4)}{1/j^4}+\frac{j^4}{k^4}
 \frac{o(1/k^4)}{1/k^4}\bigg)^{\frac{1}{2}}=0.
\end{split}
\end{equation*}
Second, the standard $L^2$-estimate shows that $\|(\textbf{y}_{j} -\textbf{y}_{k})(t)  \|_{H^{s-1}}$ can be
bounded by $C\|\textbf{y}_{j}(0)-\textbf{y}_{k}(0)\|_{\mathbb{H}^{s-1}}$, where $C$ is a positive constant independent of $j$ and $k$, and hence we get
\begin{equation*}
\begin{split}
 &\lim_{j\rightarrow\infty}\sup_{k\geq j} \mathbb{E}\sup_{\varsigma\in [0, \mathbbm{t}_{j,k}^T]}\|(u_{j} -u_{k})(\varsigma)  \|_{H^{s-1}}^2 \leq C\lim_{j\rightarrow\infty}\sup_{k\geq j}\mathbb{E} \|J_{1/j}\textbf{y}_0-J_{1/k}\textbf{y}_0\|_{\mathbb{H}^{s}}=0.
\end{split}
\end{equation*}
Then the convergence \eqref{2.81} follows.

Now we prove \eqref{2.82}. For each $j\in \mathbb{N}^+$ and $\omega>0$, we apply the It\^{o} formula to get
\begin{equation*}
\begin{split}
\sup_{t\in [0,\omega\wedge \mathbbm{t}_{j}^T ]}\| \textbf{y}_j (t)\|_{\mathbb{H}^s}^2\leq& \| \textbf{y}_j (0)\|_{\mathbb{H}^s}^2+ \int_0^{\omega\wedge \mathbbm{t}_{j}^T} \left|\int_{\mathbb{T}^d}  2\Lambda^s \textbf{y}_j \cdot\Lambda^s B( \textbf{y}_j, \textbf{y}_j) \mathrm{d}x\right|\mathrm{d}r\\
&+ \int_0^{\omega\wedge \mathbbm{t}_{j}^T} \left|\int_{\mathbb{T}^d}2\Lambda^s \textbf{y}_j \cdot \Lambda^s  F(\textbf{y}_j) \mathrm{d}x\right|\mathrm{d}r +\int_0^{\omega\wedge \mathbbm{t}_{j}^T} \| G (r,\textbf{y} _j )\|_{L_2(\mathfrak{U}_1,\mathbb{H}^{s})}^2\mathrm{d}r \\
   &+  \sup_{t\in [0,\omega\wedge \mathbbm{t}_{j}^T ]} \left|\int_0^t\int_{\mathbb{T}^d}  2\Lambda^s\textbf{y}_j\cdot\Lambda^sG (r,\textbf{y}_j )\mathrm{d}x \mathrm{d} \mathcal {W}\right| \\
   =&\|J_{1/j}\textbf{y}_0\|_{\mathbb{H}^s}^2+\int_0^{\omega\wedge \mathbbm{t}_{j}^T} (T_1+T_2+T_3)\mathrm{d}r +\sup_{t\in [0,\omega\wedge \mathbbm{t}_{j}^T ]} T_4.
\end{split}
\end{equation*}
By using the Chebyshev inequality and the relationship
\begin{equation*}
\begin{split}
&\left\{\sup_{r\in [0, \omega\wedge \mathbbm{t}_{j}^T ]}\|\textbf{y}_{j}(r)\|_{\mathbb{H}^s}\geq \|J_{1/j}\textbf{y}_0\|_{\mathbb{H}^s}+3 \right\}\\
 &\quad \subset \left\{\int_0^{\omega\wedge \mathbbm{t}_{j}^T} (T_1+T_2+T_3)\mathrm{d}r > \frac{3}{2} \right\}\bigcup \left\{\sup_{t\in [0,\omega\wedge \mathbbm{t}_{j}^T ]} |\int_0^t T_4 \mathrm{d} \mathcal {W} |> \frac{3}{2} \right\},
\end{split}
\end{equation*}
we infer that
\begin{equation*}
\begin{split}
& \mathbb{P}\left\{\int_0^{\omega\wedge \mathbbm{t}_{j}^T} (T_1+T_2+T_3)\mathrm{d}r > \frac{3}{2}\right\} \leq C\mathbb{E} \sup_{[0,\omega\wedge \mathbbm{t}_{j}^T]} ( |T_1| + |T_2| + |T_3| )\\
&\quad\leq C\mathbb{E} \sup_{[0,\omega\wedge \mathbbm{t}_{j}^T]}\Big ( \|\textbf{y}_j\|_{\mathbb{W}^{1,\infty}}\|\textbf{y}_j\|_{\mathbb{H}^s}^2+  \mu^2 (t) \chi^2(\|\textbf{y}_j\|_{\mathbb{W}^{1,\infty}})(1+\|\textbf{y}_j\|_{\mathbb{H}^{s}}^2)  \Big)\\
 &\quad\leq C(1+M^2)(\omega\wedge T),
\end{split}
\end{equation*}
and
\begin{equation*}
\begin{split}
& \mathbb{P} \left\{\sup_{t\in [0,\omega\wedge \mathbbm{t}_{j}^T ]} \left|\int_0^t T_4 \mathrm{d} \mathcal {W} \right|> \frac{3}{2}\right\} \leq C\mathbb{E}\left(\int_0^{\omega\wedge \mathbbm{t}_{j}^T}\|\textbf{y}_j\|_{\mathbb{H}^s}^2\|G (r,\textbf{y}_j \|_{L_2(\mathfrak{A};\mathbb{H}^s)}^2 \mathrm{d} t\right)\\
&\quad \leq C\mathbb{E}\sup_{t\in[0,\omega\wedge \mathbbm{t}_{j}^T]} \Big( \mu^2 (t) \chi^2(\|\textbf{y}_j\|_{\mathbb{W}^{1,\infty}})\|\textbf{y}_j\|_{\mathbb{H}^s}^2
(1+\|\textbf{y}_j\|_{\mathbb{H}^{s}}^2) \Big) \\
&\quad\leq C (1+M^4)(\omega\wedge T).
\end{split}
\end{equation*}
Therefore, we get
$$
\mathbb{P} \left\{\sup_{r\in [0, \omega\wedge \mathbbm{t}_{j}^T ]}\|\textbf{y}_{j}(r)\|_{\mathbb{H}^s}\geq \|J_{1/j}\textbf{y}_0\|_{\mathbb{H}^s}+3 \right\} \leq C (\omega\wedge T) \rightarrow0,~~ \textrm{as} ~~ \omega\rightarrow 0.
$$
This completes the proof of Lemma \ref{lem:2.11}.
\end{proof}

Based on above lemmas, one can now prove the local well-posedness result of Theorem \ref{th1} in the sharp case $s> \frac{d}{2}+1$, $d\geq1$.

\begin{proof}[\emph{\textbf{Proof of Theorem \ref{th1} (1)}}] By Lemmas \ref{lem:2.9}-\ref{lem:2.11} and the uniform bound $\|\textbf{y}_0\|_{\mathbb{H}^s}< M$, one can conclude from the Abstract Cauchy Theorem (cf.  \cite[Lemma 7.1]{41}) that there exists a  stopping time $\mathbbm{t}$ with $\mathbb{P}\{0< \mathbbm{t}\leq T\}=1$ such that
\begin{equation*}
\begin{split}
 \sup_{j\geq 1}\sup_{t\in [0,\mathbbm{t}]}\|\textbf{y}_{j}\|_{\mathbb{H}^s}\leq CM+3,
\end{split}
\end{equation*}
and
$$
 \textbf{y}_{j} \rightarrow  \textbf{y} ~~ \textrm{in} ~~\mathcal {C}([0,\mathbbm{t}];\mathbb{H}^s(\mathbb{T}^d))~~  \textrm{as} ~~j\rightarrow \infty, ~~\mathbb{P}-a.s.
$$
Note that the approximate solutions $\{\textbf{y}_{j}\}_{j\geq 1}$ are continuous $\mathcal {F}_t$-adapted processes with values in
$\mathbb{H}^s(\mathbb{T}^d)$, and hence $\mathcal {F}_t$-predictable ones. As the pointwise limits preserve the measurability, it then follows that the limit process $\textbf{y} $ is also $\mathcal {F}_t$-predictable. By using the decomposition method as that in Subsection 2.4, we infer that $(\textbf{y}, \mathbbm{t} )$ is a local pathwise solution in the sense of Definition \ref{def1}.

Now, we prove that the solution map from $L^\infty(\Omega;\mathbb{H}^s(\mathbb{T}^d))$ into $L^2(\Omega;\mathcal {C}([0,\mathbbm{t}'];\mathbb{H}^s(\mathbb{T}^d)))$ is continuous, for some $\mathbbm{t}'>0$ $\mathbb{P}$-almost surely. Assume that $\textbf{y}_i(t)$ is solution to the SMEP2 with respect to the data $\textbf{y}_{0,i}$, $i=1,2$. Our aim is to find a $\delta>0$ small enough and a stopping time $\mathbbm{t}'$ such that whenever $\|\textbf{y}_{0,1}-\textbf{y}_{0,2}\|_{L^\infty(\Omega;\mathbb{H}^s)}<\delta$, there holds
\begin{equation*}
\begin{split}
\mathbb{E}\sup_{t\in [0,\mathbbm{t}']}\|\textbf{y}_1(t)-\textbf{y}_2(t)\|_{\mathbb{H}^s} ^2 <\epsilon,\quad \mathbb{P}\textrm{-a.s.}
\end{split}
\end{equation*}
To avoid the difficulty caused by the convection term $B(\textbf{y}_i,\textbf{y}_i)$,  similar to \eqref{2.61}, let us consider the mollified initial data $\{J_{1/j}\textbf{y}_{0,i}\}_{j>1}$, and the corresponding solutions are denoted by $\{\textbf{y}_{i,j}\}_{j>1}$, $i=1,2$. For any $T>0$, we define
\begin{equation*}
\begin{split}
 \mathbbm{t}_{i,j}^T \triangleq T\wedge \inf\left\{t>0;~~\|\textbf{y}_{i,j}(t)\|_{\mathbb{H}^s}^2\geq \|J_{1/j}\textbf{y}_{0,i} \|_{\mathbb{H}^s}^2+3\right\},\quad i=1,2.
\end{split}
\end{equation*}
In view of the proof for the Lemma \ref{2.9}-Lemma \ref{2.11}, one can conclude again from the Abstract Cauchy Theorem that, there exists a subsequence  $\{j_k\}$ of $\{j\}$ with $j_k\rightarrow \infty$ as $k\rightarrow\infty$, a sequence of stopping times $\{\bar{\mathbbm{t}}_{i,j_k}\} $ as well as a stopping time $\bar{\mathbbm{t}}_i$, such that
\begin{equation}\label{2.83}
\begin{split}
\mathbbm{t}_{i,j_k}^T\geq \bar{\mathbbm{t}}_{i,j_k} ~~ \textrm{for each}~~ k\geq 1,  \quad  \lim_{k\rightarrow\infty}\bar{\mathbbm{t}}_{i,j_k} = \bar{\mathbbm{t}}_i,~~ \mathbb{P}\textrm{-a.s.},
\end{split}
\end{equation}
and
\begin{equation}\label{2.84}
\begin{split}
 \lim_{k\rightarrow\infty}\sup_{t\in [0,\bar{\mathbbm{t}}_i]}\|\textbf{y}_i-\textbf{y}_{i,j_k}\|_{\mathbb{H}^s}=0,\quad \sup_{t\in [0,\bar{\mathbbm{t}}_i]} \|\textbf{y}_i(t)\|_{\mathbb{H}^s}\leq \| \textbf{y}_{0,i} \|_{\mathbb{H}^s} +3,\quad i=1,2.
\end{split}
\end{equation}
Moreover, there exists $\Omega_{i,j_k}\uparrow \Omega$ as $k\rightarrow\infty$ such that
\begin{equation}\label{2.85}
\begin{split}
 \textbf{1}_{\Omega_{i,j_k}} \sup_{t\in [0,\bar{\mathbbm{t}}_i]} \|\textbf{y}_{i,j_k}(t)\|_{\mathbb{H}^s}\leq \| \textbf{y}_{0,i} \|_{\mathbb{H}^s} +3,\quad \mathbb{P}\textrm{-a.s.},\quad i=1,2.
\end{split}
\end{equation}
Define $
\Omega_{ j_k}\triangleq\Omega_{1,j_k}\cap \Omega_{2,j_k}$. Clearly, $ \Omega_{ j_k} \uparrow \Omega$ as $k\rightarrow\infty $. It then follows from \eqref{2.84} and \eqref{2.85} that $\lim_{k\rightarrow\infty}\mathbb{E}\sup_{t\in [0,\bar{\mathbbm{t}}_i]}\|\textbf{y}_i-\textbf{1}_{\Omega_{ j_k}}\textbf{y}_{i,j_k}\|_{\mathbb{H}^s}^2=0$, $i=1,2$, which implies that for any $\epsilon>0$, there exists  a $k_0>0$ such that
\begin{equation}
\begin{split}
 \mathbb{E}\sup_{t\in [0,\bar{\mathbbm{t}}_i]}\|\textbf{y}_i-\textbf{1}_{\Omega_{ j_k}}\textbf{y}_{i,j_k}\|_{\mathbb{H}^s}^2<\frac{\epsilon}{20},\quad \forall k>k_0,\quad i=1,2.
\end{split}
\end{equation}
By \eqref{2.83}, we get for all $k\geq1$
\begin{equation}
\begin{split}
 &\mathbb{E}\sup_{t\in [0,\bar{\mathbbm{t}}_1\wedge \bar{\mathbbm{t}}_2]}\| \textbf{1}_{\Omega_{ j_k}}\textbf{y}_{1,j_k}(t)-\textbf{1}_{\Omega_{ j_k}}\textbf{y}_{2,j_k}(t)\|_{\mathbb{H}^s}^2\\
 &\quad \leq\mathbb{E}\sup_{t\in [0,\mathbbm{t}_{1,j_k}^T\wedge \mathbbm{t}_{2,j_k}^T]}\| \textbf{y}_{1,j_k}(t)-\textbf{y}_{2,j_k}(t)\|_{\mathbb{H}^s}^2\\
 &\quad\quad +\mathbb{E}\sup_{t\in [\bar{\mathbbm{t}}_1\wedge \bar{\mathbbm{t}}_2\wedge \bar{\mathbbm{t}}_{1,j_k} \wedge \bar{\mathbbm{t}}_{2,j_k} ,\bar{\mathbbm{t}}_1\wedge \bar{\mathbbm{t}}_2]}\| \textbf{1}_{\Omega_{ j_k}}\textbf{y}_{1,j_k}(t)-\textbf{1}_{\Omega_{ j_k}}\textbf{y}_{2,j_k}(t)\|_{\mathbb{H}^s}^2\\
 & \quad\triangleq \textrm{I}_k +\textrm{II}_k.
\end{split}
\end{equation}
For $\textrm{I}_k$, using a similar argument in Lemma \ref{lem:2.9} and \ref{lem:2.10}, one can deduce that
\begin{equation*}
\begin{split}
 \textrm{I} _k
\leq& C \Bigg( \mathbb{E}\sup_{\varsigma\in [0, \mathbbm{t}_{1,j_k}^T\wedge \mathbbm{t}_{2,j_k}^T]}\|u_{1,j_k} -u_{2,j_k} \|_{H^{s-1}}^2+\mathbb{E} \|J_{1/j_k}\textbf{y}_{0,1}-J_{1/j_k}\textbf{y}_{0,2}\|_{\mathbb{H}^s}^2 \\
& +\mathbb{E}(\|J_{1/j_k}u_{0,1}-  J_{1/j_k}u_{0,2}\|_{H^{s-1}}^2\|J_{1/j_k}\textbf{y}_{0,1}\|_{\mathbb{H}^{s+1}}^2)\Bigg)\\
\leq& C \Bigg( \mathbb{E}\sup_{t\in [0, \mathbbm{t}_{1,j_k}^T\wedge \mathbbm{t}_{2,j_k}^T]}\|u_{1,j_k} -u_{2,j_k} \|_{H^{s-1}}^2+\mathbb{E} \| \textbf{y}_{0,1}- \textbf{y}_{0,2}\|_{\mathbb{H}^s}^2 + \frac{1}{j_k^2}\mathbb{E} \| u_{0,1}-   u_{0,2}\|_{H^{s}}^2 \Bigg).
\end{split}
\end{equation*}
For the first term on the R.H.S. of last inequality, we refer back to the Eq.\eqref{2.72} and the estimates for $I_i$, $i=1,...,5$. Due to the continuity of the function $\tilde{\mu}$, the nondecreasing property of $\tilde{\chi}$ and the embedding from $H^{s}(\mathbb{T}^d)$ into $W^{1,\infty}(\mathbb{T}^d)$, one can deduce that $\mathbb{E}\sup_{t\in [0, \mathbbm{t}_{1,j_k}^T\wedge \mathbbm{t}_{2,j_k}^T]}\|u_{1,j_k} -u_{2,j_k} \|_{H^{s-1}}^2\leq C\mathbb{E} \| \textbf{y}_{0,1}- \textbf{y}_{0,2}\|_{\mathbb{H}^s}^2$, which indicates that
\begin{equation} \label{2.88}
\begin{split}
 \textrm{I}_k\leq C \mathbb{E} \| \textbf{y}_{0,1}- \textbf{y}_{0,2}\|_{\mathbb{H}^s}^2 + \frac{C}{j_k^2}\mathbb{E} \| u_{0,1}-   u_{0,2}\|_{H^{s}}^2 .
\end{split}
\end{equation}
For $\textrm{II}_k$, we get by using the convergence \eqref{2.83}
\begin{equation*}
\begin{split}
 \textrm{II}_k \leq&  \mathbb{E}\sup_{t\in [\bar{\mathbbm{t}}_1\wedge \bar{\mathbbm{t}}_2\wedge \bar{\mathbbm{t}}_{1,j_k} \wedge \bar{\mathbbm{t}}_{2,j_k} ,\bar{\mathbbm{t}}_1\wedge \bar{\mathbbm{t}}_2]}(4\| \textbf{y}_{0,1} \|_{\mathbb{H}^s}^2 +4\| \textbf{y}_{0,2} \|_{\mathbb{H}^s}^2 +72)\rightarrow 0,\quad k\rightarrow\infty,
\end{split}
\end{equation*}
which implies that there exists  $k_1>0$ such that $
\textrm{II}_k< \frac{\epsilon}{20}$, $\forall k> k_1$, we have
For fixed $k'=k_0+k_1$,
\begin{equation}\label{2.89}
\begin{split}
 \mathbb{E}\sup_{t\in [0,\bar{\mathbbm{t}}_i]}\|\textbf{y}_i-\textbf{1}_{\Omega_{ j_{k'}}}\textbf{y}_{i,j_{k'}}\|_{\mathbb{H}^s}^2&< \frac{\epsilon}{20},\quad i=1,2;\quad
   \textrm{II}_{k'} < \frac{\epsilon}{20}.
\end{split}
\end{equation}
Moreover, if $\| \textbf{y}_{0,1}- \textbf{y}_{0,2}\|_{L^\infty(\Omega;\mathbb{H}^s)}<\delta < \sqrt{\frac{\epsilon j_{k'}^2}{20C+20C j_{k'}^2}}$, then we get from \eqref{2.88} that
\begin{equation*}
\begin{split}
 \textrm{I}_{k'}\leq C \| \textbf{y}_{0,1}- \textbf{y}_{0,2}\|_{L^\infty(\Omega;\mathbb{H}^s)} ^2 + \frac{C}{j_{k'}^2} \| u_{0,1}-   u_{0,2}\|_{L^\infty(\Omega;\mathbb{H}^s)}^2 <\frac{\epsilon}{20},
\end{split}
\end{equation*}
which combined with the second estimate in \eqref{2.89}  lead to
\begin{equation}\label{2.90}
\begin{split}
 \mathbb{E}\sup_{t\in [0,\bar{\mathbbm{t}}_1\wedge \bar{\mathbbm{t}}_2]}\| \textbf{1}_{\Omega_{j_{k'}}}\textbf{y}_{1,j_{k'}}(t)-\textbf{1}_{\Omega_{ j_{k'}}}\textbf{y}_{2,j_{k'}}(t)\|_{\mathbb{H}^s}^2<\frac{\epsilon}{10} .
\end{split}
\end{equation}
Thereby, by choosing $\mathbbm{t}'=\bar{\mathbbm{t}}_1 \wedge \bar{\mathbbm{t}}_2$, if $\| \textbf{y}_{0,1}- \textbf{y}_{0,2}\|_{L^\infty(\Omega;\mathbb{H}^s)}<\delta$ with $\delta>0$ chosen as before, then we deduce from \eqref{2.89}$_1$ and  \eqref{2.90} that
\begin{equation*}
\begin{split}
&\mathbb{E}\sup_{t\in [0,\mathbbm{t}']}\|\textbf{y}_1(t)-\textbf{y}_2(t)\|_{\mathbb{H}^s} ^2 \\
& \quad \leq 3\bigg(\mathbb{E}\sup_{t\in [0,\mathbbm{t}']}\|\textbf{y}_1(t)- \textbf{1}_{\Omega_{j_{k'}}} \textbf{y}_{1,j_{k'}}(t)\|_{\mathbb{H}^s} ^2+\mathbb{E}\sup_{t\in [0,\mathbbm{t}']}\| \textbf{1}_{\Omega_{j_{k'}}} \textbf{y}_{1,j_{k'}}(t)-\textbf{1}_{\Omega_{j_{k'}}} \textbf{y}_{2,j_{k'}}(t)\|_{\mathbb{H}^s} ^2\\
&\quad\quad +\mathbb{E}\sup_{t\in [0,\mathbbm{t}']}\| \textbf{1}_{\Omega_{j_{k'}}} \textbf{y}_{2,j_{k'}}(t)-\textbf{y}_2(t)\|_{\mathbb{H}^s} ^2\bigg)  \\
&\quad  < 3(\frac{\epsilon}{20}+\frac{\epsilon}{10}+\frac{\epsilon}{20})< \epsilon.
\end{split}
\end{equation*}
The proof of Theorem \ref{th1} is now completed.
\end{proof}

\section{Global existence and blow-up criteria} \label{section3}

\subsection{Global result-I} In this subsection, we shall prove that the nonlocal-type multiplicative noises have a regularization effect on $t$-variable of solutions to the SMEP2.

\begin{proof} [\textbf{\emph{Proof of Theorem \ref{th3}.}}]
Recall that $W$ is a standard real-valued Brownian motion, the functionals $B(\cdot,\cdot)$ and $F(\cdot)$ are defined as in \eqref{1.8}, and the diffusion coefficient $G(t,\textbf{y})$ is now explicitly formulated by
$$
G(t,\textbf{y})=\begin{pmatrix}
c(1+\|\textbf{y}\|_{\mathbb{W}^{1,\infty}})^{\delta} u&0 \\
0&c(1+\|\textbf{y}\|_{\mathbb{W}^{1,\infty}})^{\delta}\gamma
\end{pmatrix},
$$
where $\textbf{y}=(m,\rho)$, $m=\Lambda u$ and $\rho=\Lambda\gamma$. Therefore, the diffusion terms can now be formulated as $G(t,\textbf{y})\textrm{d}\mathcal {W} = c(1+\|\textbf{y}\|_{\mathbb{W}^{1,\infty}})^{\delta}\textbf{y}\textrm{d} W$.
Under the assumptions stated in Theorem \ref{th3}, we conclude from the  Theorem \ref{th1} that the system \eqref{1.8} admits a local strong pathwise solution $(u,\gamma,\bar{\mathbbm{t}})$ in the sense of Definition \ref{def1}, where $\bar{\mathbbm{t}}$ denotes the maximum existence time. To finish the proof of  Theorem \ref{th3}, it is sufficient to prove that $\bar{\mathbbm{t}}=\infty$, $\mathbb{P}$-almost surely.

To this end, let us apply first the Friedrichs mollifier $J_\epsilon$ and then the differential operator $\Lambda^s$ to \eqref{1.8} to obtain
$$\mathrm{d}\Lambda^sJ_\epsilon\textbf{y}+ \Lambda^sJ_\epsilon B(\textbf{y},\textbf{y})\mathrm{d} t+\Lambda^sJ_\epsilon F(\textbf{y})\mathrm{d} t=c(1+\|\textbf{y}\|_{\mathbb{W}^{1,\infty}})^{\delta}\Lambda^s J_\epsilon \textbf{y} \mathrm{d}W(t).
$$
By applying the It\^{o} formula in Hilbert space to $\|J_\epsilon \textbf{y}(t)\|_{\mathbb{H}^s}^2=\|\Lambda^{s}J_\epsilon \textbf{y}(t)\|_{\mathbb{L}^2}^2$, we arrive at
\begin{equation*}
\begin{split}
 \|J_\epsilon \textbf{y}(t)\|_{\mathbb{H}^s}^2 =&\|J_\epsilon \textbf{y}_0\|_{\mathbb{H}^s}^2-2\int_0^t(\Lambda^s J_\epsilon \textbf{y},\Lambda^s (J_\epsilon B(\textbf{y},\textbf{y})+ J_\epsilon F(\textbf{y})))_{\mathbb{L}^2}\mathrm{d}r\\
 & + \int_0^t\|\Lambda^sJ_\epsilon G(r,\textbf{y}) \|_{\mathcal {L}_2(\mathfrak{U};\mathbb{L}^2)}^2 \mathrm{d} r+  \int_0^tc(1+\|\textbf{y}\|_{\mathbb{W}^{1,\infty}})^\delta\|J_\epsilon \textbf{y}\|_{\mathbb{H}^s}^2  \textrm{d}W(r).
\end{split}
\end{equation*}
In order to show that the lifespan can be extended to infinity, let us apply the It\^{o} formula again to the logarithmic functional $\ln(e+\|J_\epsilon \textbf{y}(t)\|_{\mathbb{H}^s}^2)$ to find
\begin{equation}\label{4..1}
\begin{split}
 \mathrm{d} \ln(e+\|J_\epsilon \textbf{y}(t)\|_{\mathbb{H}^s}^2)
  &= -\frac{2(\Lambda^s J_\epsilon \textbf{y},\Lambda^s (J_\epsilon B(\textbf{y},\textbf{y})+ J_\epsilon F(\textbf{y})))_{\mathbb{L}^2}}{e+\|J_\epsilon \textbf{y}(t)\|_{\mathbb{H}^s}^2} \mathrm{d} t \\
  &\quad + \frac{ \|\Lambda^sJ_\epsilon G(t,\textbf{y}) \|_{\mathcal {L}_2(\mathfrak{U};\mathbb{L}^2)}^2}{e+\|J_\epsilon \textbf{y}(t)\|_{\mathbb{H}^s}^2} \mathrm{d} t-  \frac{2 c^2(1+\|\textbf{y}\|_{\mathbb{W}^{1,\infty}})^{2\delta}\|J_\epsilon \textbf{y}\|_{\mathbb{H}^s}^4 }{(e+\|J_\epsilon \textbf{y}(t)\|_{\mathbb{H}^s}^2)^2} \mathrm{d} t\\
  &\quad+ \frac{2 c(1+\|\textbf{y}\|_{\mathbb{W}^{1,\infty}})^\delta\|J_\epsilon \textbf{y}\|_{\mathbb{H}^s}^2 }{e+\|J_\epsilon \textbf{y}(t)\|_{\mathbb{H}^s}^2} \textrm{d}W(t).
\end{split}
\end{equation}
Note that by using the special structure of the diffusion coefficient $G(t,\textbf{y})$, we have
\begin{equation} \label{4..2}
\begin{split}
\|\Lambda^sJ_\epsilon G(t,\textbf{y}) \|_{\mathcal {L}_2(\mathfrak{U};\mathbb{L}^2)}^2 &= c^2(1+\|\textbf{y}\|_{\mathbb{W}^{1,\infty}})^{2\delta} \|J_\epsilon u\|_{H^s}^2+c^2(1+\|\textbf{y}\|_{\mathbb{W}^{1,\infty}})^{2\delta} \|J_\epsilon \gamma\|_{H^s}^2\\
&= c^2(1+\|\textbf{y}\|_{\mathbb{W}^{1,\infty}})^{2\delta}\|J_\epsilon \textbf{y}\|_{\mathbb{H}^s}^2.
\end{split}
\end{equation}
Moreover, by using the commutator estimates, the Sobolev embedding $H^s(\mathbb{T}^d)\subset W^{1,\infty}(\mathbb{T}^d)$ for $s>1+\frac{d}{2}$, and the similar procedure as we did in the proof of Lemma \ref{lem:2.2}, we have
\begin{equation*}
\begin{split}
 &|(J_\epsilon\Lambda^s u,J_\epsilon\Lambda^s ( u\cdot \nabla u + \mathscr{L}_1(u)+ \mathscr{L}_2 (\gamma)  ))_{L^2}|\\
 &\quad \leq C(\| \gamma\|_{W^{1,\infty}}  \| u\|_{H^s}  \|\gamma\|_{H^{s}} + \| u\|_{W^{1,\infty}} \| u\|_{H^s} ^2),
\end{split}
\end{equation*}
and
$$
|(J_\epsilon \Lambda^s\gamma, J_\epsilon\Lambda^s\left(u\cdot \nabla \gamma +\mathscr{L}_3 (u,\gamma)\right))_{L^2}|\leq C (\| \gamma\|_{W^{1,\infty}}+\| u\|_{W^{1,\infty}})( \| u\|_{H^s} ^2+ \|\gamma\|_{H^{s}} ^2),
$$
for some positive constant $C$ independent of $\epsilon$. Therefore, we get from the last two inequalities that there is a constant $\varrho>0$ independent of $\epsilon$ such that
\begin{equation}\label{4..3}
\begin{split}
 |2(\Lambda^s J_\epsilon \textbf{y},\Lambda^s (J_\epsilon B(\textbf{y},\textbf{y})+ J_\epsilon F(\textbf{y})))_{\mathbb{L}^2} |\leq \varrho\|\textbf{y}\|_{\mathbb{W}^{1,\infty}} \|\textbf{y}\|_{\mathbb{H}^s}^2.
\end{split}
\end{equation}
Integrating both sides of \eqref{4..1} over $[0, t\wedge \mathbbm{t}_\ell]$, where
$$
\mathbbm{t}_\ell \triangleq \inf \{t\geq0;~\|\textbf{y}(t)\|_{\mathbb{H}^s}\geq  \ell \},\quad \forall\ell \in \mathbb{N},
$$
and then taking the mathematical expectation, it follows from the estimates \eqref{4..2}-\eqref{4..3} that
\begin{equation}\label{4..4}
\begin{split}
&\mathbb{E}\ln(e+\|J_\epsilon \textbf{y}(t\wedge\mathbbm{t}_\ell)\|_{\mathbb{H}^s}^2)\\
&\leq\mathbb{E}\ln(e+\|J_\epsilon \textbf{y}_0\|_{\mathbb{H}^s}^2)
  +\mathbb{E}\int_0^{t\wedge\mathbbm{t}_\ell}\frac{ \varrho\|\textbf{y}\|_{\mathbb{W}^{1,\infty}} \|\textbf{y}\|_{\mathbb{H}^s}^2}{e+\|J_\epsilon \textbf{y}(t)\|_{\mathbb{H}^s}^2} \mathrm{d} r \\
  &\quad + \mathbb{E}\int_0^{t\wedge\mathbbm{t}_\ell}\frac{ c^2(1+\|\textbf{y}\|_{\mathbb{W}^{1,\infty}})^{2\delta}\|J_\epsilon \textbf{y}\|_{\mathbb{H}^s}^2}{e+\|J_\epsilon \textbf{y}(r)\|_{\mathbb{H}^s}^2} \mathrm{d} r-  \mathbb{E}\int_0^{t\wedge\mathbbm{t}_\ell}\frac{2 c^2(1+\|\textbf{y}\|_{\mathbb{W}^{1,\infty}})^{2\delta}\|J_\epsilon \textbf{y}\|_{\mathbb{H}^s}^4 }{(e+\|J_\epsilon \textbf{y}(r)\|_{\mathbb{H}^s}^2)^2} \mathrm{d} r.
\end{split}
\end{equation}
As the sequence of functions $\{J_\epsilon \textbf{y}\}_{\epsilon>0}$ converges to $\textbf{y}$ in $\mathcal {C}([0,T];\mathbb{H}^s(\mathbb{T}^d))$ as $\epsilon\rightarrow 0$. $\mathbb{P}$-a.s., one can take the limit $\epsilon\rightarrow 0$ in the inequality \eqref{4..4} and using Dominated Convergence Theorem to get
\begin{equation}\label{4..5}
\begin{split}
&\mathbb{E}\ln(e+\|\textbf{y}(t\wedge\mathbbm{t}_\ell)\|_{\mathbb{H}^s}^2)\\
&\leq\mathbb{E}\ln(e+\|\textbf{y}_0\|_{\mathbb{H}^s}^2)
  +\mathbb{E}\int_0^{t\wedge\mathbbm{t}_\ell}\frac{ \varrho\|\textbf{y}\|_{\mathbb{W}^{1,\infty}} \|\textbf{y}\|_{\mathbb{H}^s}^2+c^2(1+\|\textbf{y}\|_{\mathbb{W}^{1,\infty}})^{2\delta}\| \textbf{y}\|_{\mathbb{H}^s}^2}{e+\|\textbf{y}(t)\|_{\mathbb{H}^s}^2} \mathrm{d} r\\
  &\quad -  \mathbb{E}\int_0^{t\wedge\mathbbm{t}_\ell}\frac{2 c^2(1+\|\textbf{y}\|_{\mathbb{W}^{1,\infty}})^{2\delta}\| \textbf{y}\|_{\mathbb{H}^s}^4 }{(e+\|\textbf{y}(t)\|_{\mathbb{H}^s}^2)^2} \mathrm{d} r.
\end{split}
\end{equation}
To get a better understanding of the terms on the R.H.S. of \eqref{4..5}, let us define
\begin{equation*}
\begin{split}
\mathcal {E}( \textbf{y})\triangleq&\frac{\varrho\| \textbf{y}\|_{\mathbb{W}^{1,\infty}} \| \textbf{y}\|_{\mathbb{H}^s}^2+ c^2(1+\|\textbf{y}\|_{\mathbb{W}^{1,\infty}})^{2\delta}\|  \textbf{y}\|_{\mathbb{H}^s}^2}{e+\| \textbf{y} \|_{\mathbb{H}^s}^2}-  \frac{2 c^2(1+\|\textbf{y}\|_{\mathbb{W}^{1,\infty}})^{2\delta}\|\textbf{y}\|_{\mathbb{H}^s}^4 }{(e+\| \textbf{y}\|_{\mathbb{H}^s}^2)^2} \\
&+\frac{c^2 (1+\|\textbf{y}\|_{\mathbb{W}^{1,\infty}})^{2\delta} \| \textbf{y}\|_{\mathbb{H}^s}^{4}}{(e+\| \textbf{y}\|_{\mathbb{H}^s}^2)^2(1+\ln(e+\| \textbf{y}\|_{\mathbb{H}^s}^2))}.
\end{split}
\end{equation*}
Then it follows from \eqref{4..5} that
\begin{equation} \label{4..6}
\begin{split}
&\mathbb{E}\ln(e+\|\textbf{y}(t\wedge\mathbbm{t}_\ell)\|_{\mathbb{H}^s}^2)\\
&\quad \leq\mathbb{E}\ln(e+\|\textbf{y}_0\|_{\mathbb{H}^s}^2)
  +\mathbb{E}\int_0^{t\wedge\mathbbm{t}_\ell}\left(\mathcal {E}( \textbf{y}(r))- \frac{c^2 (1+\|\textbf{y}\|_{\mathbb{W}^{1,\infty}})^{2\delta} \| \textbf{y}\|_{\mathbb{H}^s}^{4}}{(e+\| \textbf{y}\|_{\mathbb{H}^s}^2)^2(1+\ln(e+\| \textbf{y}\|_{\mathbb{H}^s}^2))}\right)\mathrm{d} r.
\end{split}
\end{equation}
We claim that the above defined functional $\mathcal {E}( \textbf{y})$ is uniformly bounded from above.

\textsf{Case 1 ($\delta>\frac{1}{2},c\neq 0$).} Due to the Sobolev embedding $\| \textbf{y}\|_{\mathbb{W}^{1,\infty}}\leq c_{\textrm{emb}}\| \textbf{y}\|_{\mathbb{H}^s}$ and the fact that the function $h(x)=\frac{x}{e+x}$ is monotony increasing for all $x\geq 0$, we find that for any $\delta >\frac{1}{2}$
\begin{equation*}
\begin{split}
\mathcal {E}(\textbf{y})\leq& \varrho\| \textbf{y}\|_{\mathbb{W}^{1,\infty}} +c^2(1+\|\textbf{y}\|_{\mathbb{W}^{1,\infty}})^{2\delta}\\
&-  2 c^2(1+\|\textbf{y}\|_{\mathbb{W}^{1,\infty}})^{2\delta}\left(\frac{\| \textbf{y}\|_{\mathbb{H}^s}^2}{e+\| \textbf{y}(t)\|_{\mathbb{H}^s}^2}\right)^2+c^2\frac{ (1+\|\textbf{y}\|_{\mathbb{W}^{1,\infty}})^{2\delta}  }{ 1+\ln(e+\| \textbf{y}\|_{\mathbb{H}^s}^2)}\\
\leq& (1+\|\textbf{y}\|_{\mathbb{W}^{1,\infty}})^{2\delta} \bigg(\frac{\varrho\| \textbf{y}\|_{\mathbb{W}^{1,\infty}}}{(1+\|\textbf{y}\|_{\mathbb{W}^{1,\infty}})^{2\delta}} +c^2 -2c^2\left(\frac{\| \textbf{y}\|_{\mathbb{H}^s}^2}{e+\| \textbf{y}(t)\|_{\mathbb{H}^s}^2}\right)^2\\
&+ c^2\frac{1}{ 1+\ln(e+\| \textbf{y}\|_{\mathbb{H}^s}^2)} \bigg)\\
\leq& (1+\|\textbf{y}\|_{\mathbb{W}^{1,\infty}})^{2\delta} \bigg[\frac{\varrho}{(1+\|\textbf{y}\|_{\mathbb{W}^{1,\infty}})^{2\delta-1}}+c^2 -2c^2\left(\frac{\| \textbf{y}\|_{\mathbb{W}^{1,\infty}}^2}{c_{\textrm{emb}}^2e+\| \textbf{y}(t)\|_{\mathbb{W}^{1,\infty}}^2}\right)^2 \\
&+ \frac{c^2}{ 1-2\ln(c_{\textrm{emb}})+\ln(c_{\textrm{emb}}^2e+\| \textbf{y}\|_{\mathbb{W}^{1,\infty}}^2)} \bigg].
\end{split}
\end{equation*}
Setting
$$
f_{\delta}(x)=\frac{\varrho}{(1+x)^{2\delta-1}}+c^2 -2c^2\left(\frac{x}{c_{\textrm{emb}}^2e+ x}\right)^2 + \frac{c^2}{ 1-2\ln(c_{\textrm{emb}})+\ln(c_{\textrm{emb}}^2e+x)},~~x\geq0.
$$
It is not difficult to see that the function $f(x)$ is continuous on $[0,\infty)$ and $\lim_{x\rightarrow +\infty}f(x)=-c^2<0$, for any $c\neq 0$. We deduce that
$$
\lim_{\|\textbf{y}\|_{\mathbb{W}^{1,\infty}}\rightarrow +\infty}\mathcal {E}(\textbf{y})\leq \lim_{\|\textbf{y}\|_{\mathbb{W}^{1,\infty}}\rightarrow +\infty} (1+\|\textbf{y}\|_{\mathbb{W}^{1,\infty}})^{2\delta}f(\|\textbf{y}\|_{\mathbb{W}^{1,\infty}})=-\infty.
$$
Thereby, since $[(1+x)^{2\delta}f(x)]|_{x=0}=\varrho+\frac{c^2}{2}>0$, it follows from the last inequality that there exists a positive constant $J$ such that $\mathcal {E}(\textbf{y})\leq J$.

\textsf{Case 2 ($\delta=\frac{1}{2},|c|> \sqrt{\varrho}$).} If $\delta =\frac{1}{2}$, then we have
\begin{equation*}
\begin{split}
f_{\frac{1}{2}}(x)&=\varrho+c^2 -2c^2\left(\frac{x}{c_{\textrm{emb}}^2e+ x}\right)^2 + \frac{c^2}{ 1-2\ln(c_{\textrm{emb}})+\ln(c_{\textrm{emb}}^2e+x)} \\
&\sim \varrho-c^2 , \quad \textrm{as} ~x\rightarrow +\infty,
\end{split}
\end{equation*}
where $\varrho>0$ is the universal constant in \eqref{4..3}. For any $|c|>\sqrt{\varrho}$, we have
$$
\lim_{\|\textbf{y}\|_{\mathbb{W}^{1,\infty}}\rightarrow +\infty}f_{\frac{1}{2}}(\textbf{y})= \varrho-c^2<0,
$$
which implies that $
\lim_{\|\textbf{y}\|_{\mathbb{W}^{1,\infty}}\rightarrow +\infty}\mathcal {E}(\textbf{y})=-\infty$, and hence the functional $\mathcal {E}(\textbf{y})$ is bounded by a positive constant from above.

In both cases, by \eqref{4..6}, one can find a positive constant $J>0$ such that
\begin{equation} \label{4..7}
\begin{split}
&\mathbb{E}\ln\left(e+\|\textbf{y}(t\wedge\mathbbm{t}_\ell)\|_{\mathbb{H}^s}^2\right)
+\mathbb{E}\int_0^{t\wedge\mathbbm{t}_\ell}\frac{c^2 (1+\|\textbf{y}\|_{\mathbb{W}^{1,\infty}})^{2\delta} \| \textbf{y}\|_{\mathbb{H}^s}^{4}}{(e+\| \textbf{y}\|_{\mathbb{H}^s}^2)^2(1+\ln(e+\| \textbf{y}\|_{\mathbb{H}^s}^2))}\mathrm{d} r\\
&\quad\leq\mathbb{E}\ln(e+\|\textbf{y}_0\|_{\mathbb{H}^s}^2)+Jt.
\end{split}
\end{equation}
Since $J_\epsilon \textbf{y}\rightarrow \textbf{y}$ in the topology of $\mathcal {C}([0,T];\mathbb{H}^s(\mathbb{T}^d))$ as $\epsilon\rightarrow\infty$, there exists a non-negative function $\psi(\epsilon)$ such that $\lim_{\epsilon \rightarrow 0}\psi(\epsilon) =0$ and
\begin{equation}\label{4..8}
\begin{split}
&\left|\frac{ \varrho\|\textbf{y}\|_{\mathbb{W}^{1,\infty}} \|\textbf{y}\|_{\mathbb{H}^s}^2+ c^2(1+\|\textbf{y}\|_{\mathbb{W}^{1,\infty}})^{2\delta}\|J_\epsilon \textbf{y}\|_{\mathbb{H}^s}^2}{e+\|J_\epsilon \textbf{y}\|_{\mathbb{H}^s}^2}-\frac{2 c^2(1+\|\textbf{y}\|_{\mathbb{W}^{1,\infty}})^{2\delta}\|J_\epsilon \textbf{y}\|_{\mathbb{H}^s}^4 }{(e+\|J_\epsilon \textbf{y}(r)\|_{\mathbb{H}^s}^2)^2}\right|\\
&\quad = \left|\mathcal {E}(\textbf{y})-\frac{c^2 (1+\|\textbf{y}\|_{\mathbb{W}^{1,\infty}})^{2\delta} \|\textbf{y}\|_{\mathbb{H}^s}^{4}}{(e+\| \textbf{y}\|_{\mathbb{H}^s}^2)^2(1+\ln(e+\| \textbf{y}\|_{\mathbb{H}^s}^2))}+ \psi(\epsilon)\right|\\
&\quad\leq J+\frac{c^2 (1+\|\textbf{y}\|_{\mathbb{W}^{1,\infty}})^{2\delta} \|\textbf{y}\|_{\mathbb{H}^s}^{4}}{(e+\| \textbf{y}\|_{\mathbb{H}^s}^2)^2(1+\ln(e+\| \textbf{y}\|_{\mathbb{H}^s}^2))}+ \psi(\epsilon).
\end{split}
\end{equation}
Now by using the BDG inequality, we deduce from \eqref{4..1} and \eqref{4..8} that for any $T>0$
\begin{equation}\label{4..9}
\begin{split}
&\mathbb{E}\sup_{r\in [T\wedge \mathbbm{t}_\ell]}\ln\left(e+\|J_\epsilon \textbf{y}(r)\|_{\mathbb{H}^s}^2\right)\\
&\leq\mathbb{E}\ln\left(e+\|J_\epsilon \textbf{y}_0\|_{\mathbb{H}^s}^2\right)
  + \mathbb{E}\int_0^{T\wedge\mathbbm{t}_\ell}\bigg(\frac{ \varrho\|\textbf{y}\|_{\mathbb{W}^{1,\infty}} \|\textbf{y}\|_{\mathbb{H}^s}^2+ c^2(1+\|\textbf{y}\|_{\mathbb{W}^{1,\infty}})^{2\delta}\|J_\epsilon \textbf{y}\|_{\mathbb{H}^s}^2}{e+\|J_\epsilon \textbf{y}\|_{\mathbb{H}^s}^2}\\
  &\quad -\frac{2 c^2(1+\|\textbf{y}\|_{\mathbb{W}^{1,\infty}})^{2\delta}\|J_\epsilon \textbf{y}\|_{\mathbb{H}^s}^4 }{(e+\|J_\epsilon \textbf{y}(r)\|_{\mathbb{H}^s}^2)^2} \bigg) \mathrm{d} r \\
  &\quad+\mathbb{E}\left(\sup_{r\in [T\wedge \mathbbm{t}_\ell]}(1+\ln(e+\| \textbf{y}\|_{\mathbb{H}^s}^2))\int_0^{T\wedge\mathbbm{t}_\ell}\frac{4 c^2(1+\|\textbf{y}\|_{\mathbb{W}^{1,\infty}})^{2\delta}\|J_\epsilon \textbf{y}\|_{\mathbb{H}^s}^4 }{(e+\|J_\epsilon \textbf{y} \|_{\mathbb{H}^s}^2)^2(1+\ln(e+\| \textbf{y}\|_{\mathbb{H}^s}^2))} \textrm{d}r\right)^{\frac{1}{2}}\\
  &\leq\mathbb{E}\ln\left(e+\|J_\epsilon \textbf{y}_0\|_{\mathbb{H}^s}^2\right)+\frac{1}{2}\mathbb{E} \sup_{r\in [T\wedge \mathbbm{t}_\ell]}(1+\ln(e+\|J_\epsilon\textbf{y}\|_{\mathbb{H}^s}^2))
  \\
  &\quad+ \mathbb{E}\int_0^{T\wedge\mathbbm{t}_\ell}\left(J+\frac{c^2 (1+\|\textbf{y}\|_{\mathbb{W}^{1,\infty}})^{2\delta} \|\textbf{y}\|_{\mathbb{H}^s}^{4}}{(e+\| \textbf{y}\|_{\mathbb{H}^s}^2)^2(1+\ln(e+\| \textbf{y}\|_{\mathbb{H}^s}^2))}+ \psi(\epsilon)\right) \mathrm{d} r \\
  &\quad+C\mathbb{E}\int_0^{T\wedge\mathbbm{t}_\ell}\frac{  c^2(1+\|\textbf{y}\|_{\mathbb{W}^{1,\infty}})^{2\delta}\|J_\epsilon \textbf{y}\|_{\mathbb{H}^s}^4 }{(e+\|J_\epsilon \textbf{y} \|_{\mathbb{H}^s}^2)^2(1+\ln(e+\|J_\epsilon\textbf{y}\|_{\mathbb{H}^s}^2))} \textrm{d}r \\
  &\leq C\mathbb{E}\ln\left(e+\|\textbf{y}_0\|_{\mathbb{H}^s}^2\right)+\frac{1}{2}\mathbb{E} \sup_{r\in [T\wedge \mathbbm{t}_\ell]}(1+\ln(e+\|J_\epsilon\textbf{y}\|_{\mathbb{H}^s}^2))
  +JT+ (J+\psi(\epsilon))T\\
  &\quad+C\mathbb{E}\int_0^{T\wedge\mathbbm{t}_\ell}\frac{  c^2(1+\|\textbf{y}\|_{\mathbb{W}^{1,\infty}})^{2\delta}\|J_\epsilon \textbf{y}\|_{\mathbb{H}^s}^4 }{(e+\|J_\epsilon \textbf{y} \|_{\mathbb{H}^s}^2)^2(1+\ln(e+\|J_\epsilon\textbf{y}\|_{\mathbb{H}^s}^2))} \textrm{d}r,
\end{split}
\end{equation}
where the second inequality used the estimate \eqref{4..7}. By absorbing the second term on the R.H.S. of \eqref{4..9}, taking the limit as $\epsilon \rightarrow 0$ and using again \eqref{4..7}, we get
\begin{equation} \label{4..10}
\begin{split}
\mathbb{E}\sup_{r\in [T\wedge \mathbbm{t}_\ell]}\ln\left(e+\|\textbf{y}(r)\|_{\mathbb{H}^s}^2\right) \leq C\left(\mathbb{E}\ln\left(e+\|\textbf{y}_0\|_{\mathbb{H}^s}^2\right)+JT\right),
\end{split}
\end{equation}
In view of the definition of $\mathbbm{t}_{\ell}$, we see that $\mathbbm{t}_{\ell}\nearrow \bar{\mathbbm{t}}$ as $\ell\rightarrow\infty$ and
\begin{equation*}
\begin{split}
\{\bar{\mathbbm{t}}< T\} \subset   \{\mathbbm{t}_{\ell}< T\} &\subset  \left\{\sup_{t\in [0,T]} \| \textbf{y}(t)\|_{\mathbb{H}^s}^2 \geq \ell^2 \right\}\\
&\subset  \left\{\sup_{t\in [0,T]}\ln(e+\| \textbf{y}(t)\|_{\mathbb{H}^s}^2) \geq \ln(e+\ell^2) \right\}.
\end{split}
\end{equation*}
By using the Chebyshev inequality to above  events and then using the uniform estimate \eqref{4..10}, we get for any $T>0$
\begin{equation*}
\begin{split}
0\leq\mathbb{P}\{\bar{\mathbbm{t}}< T\} &\leq \mathbb{P}\left\{\sup_{t\in [0,T]}\ln(e+\| \textbf{y}(t)\|_{\mathbb{H}^s}^2) \geq \ln(e+m^2) \right\}\\
&\leq \frac{\mathbb{E} \sup\limits_{t\in [0,T]}\ln\left(e+\| \textbf{y}(t)\|_{\mathbb{H}^s}^2\right) }{\ln(e+m^2)} \\
&\leq \frac{C\left(\mathbb{E}\ln\left(e+\|\textbf{y}_0\|_{\mathbb{H}^s}^2\right)+JT\right)}{\ln(e+m^2)} \rightarrow 0,
\end{split}
\end{equation*}
as $m\rightarrow\infty$, which implies that $\mathbb{P}\{\bar{\mathbbm{t}}< k\}=0$  for any $k\in \mathbb{N}$. It follows that
$$
\mathbb{P}\{\bar{\mathbbm{t}}=\infty\} =1-\mathbb{P}\Bigg(\bigcup_{k\in \mathbb{N}}\{\bar{\mathbbm{t}}<k\}\Bigg)\geq 1-\sum_{k\in \mathbb{N}^+}\mathbb{P}  \{\bar{\mathbbm{t}}<k\} =1.
$$
By means of Theorem \ref{th1} and the similar proof in Theorem \ref{th1}(2), we find that the stoping time $\bar{\mathbbm{t}} $ is actually the maximal existence time $\overline{\mathbbm{t}}$ of the strong pathwise solution to the SMEP2  in the sense of Definition of \ref{def1}. As a consequence, the local strong pathwise solution $(u,\gamma,\bar{\mathbbm{t}})$ is actually a global-in-time  one. The proof of Theorem \ref{th3} is now completed.
\end{proof}

\subsection{Global result-II} In order to prove Theorem \ref{th4}, we shall first transform \eqref{1.15} into a system of random PDEs  (note that $\delta_1=\delta_2=0$ in present case). Define
\begin{equation}\label{trans}
\begin{split}
\mu (t)=e^{\frac{1}{2} c ^2 t -c W(t)}, ~~~\tilde{u}(\omega,t,x)= \mu (t)u(\omega,t,x) ~~~\textrm{and}~~~ \tilde{\gamma}(\omega,t,x)= \mu(t)\gamma(\omega,t,x).
\end{split}
\end{equation}
In terms of \eqref{1.15}$_1$, we get
\begin{equation*}
\begin{split}
 \mathrm{d}\tilde{u}=& \mu \mathrm{d}u +u \mathrm{d}\mu+ \mathrm{d}u \mathrm{d}\mu\\
=&\mu \left(cu\mathrm{d}W-\left( u\cdot \nabla u + \mathscr{L}_1(u) + \mathscr{L}_2 (\gamma) \right)\mathrm{d} t \right) +u (c^2\mu  \mathrm{d}t - c\mu\mathrm{d}W)- c^2  \mu u \mathrm{d} t \\
=& -\mu\left( u\cdot \nabla u + \mathscr{L}_1(u) + \mathscr{L}_2 (\gamma) \right)\mathrm{d} t  \\
=& -\left( \mu^{-1}\tilde{u}\cdot \nabla \tilde{u} + \mu^{-1}\mathscr{L}_1(\tilde{u}) + \mu^{-1}\mathscr{L}_2 (\tilde{\gamma}) \right)\mathrm{d} t.
\end{split}
\end{equation*}
In a similar manner, one can also deduce from \eqref{1.15}$_2$ that
\begin{equation*}
\begin{split}
 \mathrm{d}\tilde{\gamma}= -\left( \mu^{-1}\tilde{u}\cdot \nabla \tilde{\gamma} + \mu^{-1}\mathscr{L}_3 (\tilde{u},\tilde{\gamma})\right)\mathrm{d} t.
\end{split}
\end{equation*}
Therefore, the system \eqref{1.15} can be reformulated as
\begin{equation}\label{4.15}
\left\{
\begin{aligned}
&\partial_t\tilde{u}+  \mu^{-1}\tilde{u}\cdot \nabla \tilde{u} + \mu^{-1}\mathscr{L}_1(\tilde{u}) + \mu^{-1}\mathscr{L}_2 (\tilde{\gamma}) =0,\\
&\partial_t\tilde{\gamma}  +  \mu^{-1}\tilde{u}\cdot \nabla \tilde{\gamma} + \mu^{-1}\mathscr{L}_3 (\tilde{u},\tilde{\gamma})=0,\\
&\tilde{u}|_{t=0}=u_0,\quad \tilde{\gamma}|_{t=0}=\gamma_0 ,
\end{aligned}\quad t\geq 0,~x\in \mathbb{T}^d,~~ \mathbb{P}\textrm{-a.s.,}
\right.
\end{equation}
where the nonlocal terms $\mathscr{L}_1(\cdot),\mathscr{L}_2(\cdot)$ and $\mathscr{L}_3(\cdot,\cdot)$ are defined as before. Given a $\mathcal {F}_0$--adapted initial data $(u_0,\gamma_0)\in L^2(\Omega;\mathbb{H}^s(\mathbb{T}^d))$, Theorem \ref{th1} indicates that the system \eqref{1.15} admits a unique local maximal pathwise solution $(u,\gamma)\in \mathcal {C} ([0,\bar{\mathbbm{t}}),\mathbb{H}^s(\mathbb{T}) \cap \mathcal {C} ^1 ([0,\bar{\mathbbm{t}}),\mathbb{H}^{s-1}(\mathbb{T}))$ $\mathbb{P}$-almost surely. According to the transformation \eqref{trans}, one find that the pair $(\tilde{u},\tilde{\gamma})$ satisfies the random system \eqref{4.15}, which ensures the existence of a unique local strong solution $(\tilde{u},\tilde{\gamma})\in \mathcal {C}  ([0,\bar{\mathbbm{t}}),\mathbb{H}^s(\mathbb{T}) \cap \mathcal {C} ^1 ([0,\bar{\mathbbm{t}}),\mathbb{H}^{s-1}(\mathbb{T}))$ $\mathbb{P}$-almost surely.

\begin{proof} [\textbf{\emph{Proof of Theorem \ref{th4}.}}]
Applying the Littlewood-Paley block $\triangle_j$ (cf. \cite[Chapter 2]{bahouri2011fourier}) to Eq.\eqref{4.15}$_1$ , we get
\begin{equation*}
\begin{split}
&\partial_t\triangle_j\tilde{u}+  \mu^{-1} \tilde{u}\cdot \nabla \triangle_j\tilde{u} \\
 &\quad =\mu^{-1} (\tilde{u}\cdot \nabla \triangle_j\tilde{u}-  \triangle_j(\tilde{u}\cdot \nabla \tilde{u})) -\mu^{-1}(\triangle_j\mathscr{L}_1(\tilde{u}) +\triangle_j\mathscr{L}_2 (\tilde{\gamma})),\quad \mathbb{P}\textrm{-a.s.}
\end{split}
\end{equation*}
Multiplying both sides of above equation by $\triangle_j\tilde{u}$ and integrating on $\mathbb{T}$, we get
\begin{equation}\label{4.16}
\begin{split}
\frac{1}{2} \frac{\mathrm{d}}{\mathrm{d}t}\|\triangle_j\tilde{u}\|_{L^2}^2=&\frac{1}{2}\mu^{-1} (\textrm{div} \tilde{u} ,|\triangle_j\tilde{u}|^2)_{L^2} +\mu^{-1} \Big((\tilde{u}\cdot \nabla \triangle_j\tilde{u}-  \triangle_j(\tilde{u}\cdot \nabla \tilde{u})) ,\triangle_j\tilde{u} \Big)_{L^2}\\
&-\mu^{-1}(\triangle_j\mathscr{L}_1(\tilde{u}) +\triangle_j\mathscr{L}_2 (\tilde{\gamma}),\triangle_j\tilde{u} )_{L^2}\\
\leq & \frac{1}{2}\mu^{-1} \|\textrm{div} \tilde{u}\|_{L^\infty}\|\triangle_j\tilde{u}\|^2_{L^2}+C\mu^{-1}c_j2^{-js} \|\nabla\tilde{u}\|_{L^\infty} \|\triangle_j\tilde{u}\|_{L^2} \|\tilde{u}\|_{B_{2,2}^s}\\
&+\mu^{-1}\|\triangle_j\mathscr{L}_1(\tilde{u}) +\triangle_j\mathscr{L}_2 (\tilde{\gamma})\|_{L^2}\|\triangle_j\tilde{u} \|_{L^2},\quad \mathbb{P}\textrm{-a.s.,}
\end{split}
\end{equation}
where we used the Lie bracket $[A,B]=AB-BA$, and the following commutator estimate (cf. \cite{bahouri2011fourier}) to the second term on the R.H.S. of \eqref{4.16}.
\begin{equation*}
\begin{split}
 \|[\triangle_j,\tilde{u}\cdot \nabla ]\tilde{u}\|_{L^2}  \leq Cc_j2^{-js} \|\nabla\tilde{u}\|_{L^\infty} \|\tilde{u}\|_{B_{2,2}^s},\quad \|\{c_j\}_{j\geq -1}\|_{l^2}=1.
\end{split}
\end{equation*}
So we get from \eqref{4.16} that
\begin{equation}\label{4.17}
\begin{split}
 \frac{\mathrm{d}}{\mathrm{d}t}2^{js}\|\triangle_j\tilde{u}\|_{L^2}\leq & \frac{1}{2}\mu^{-1} \|\nabla\tilde{u}\|_{L^\infty}2^{js}\|\triangle_j\tilde{u}\|_{L^2}+C\mu^{-1}c_j \|\nabla\tilde{u}\|_{L^\infty} \|\tilde{u}\|_{B_{2,2}^s}\\
&+\mu^{-1}2^{js}\|\triangle_j\mathscr{L}_1(\tilde{u}) +\triangle_j\mathscr{L}_2 (\tilde{\gamma})\|_{L^2},\quad \mathbb{P}\textrm{-a.s.}
\end{split}
\end{equation}
Taking the $l^2$-norm on both sides of \eqref{4.17} with respect to $j$ leads to
\begin{equation}\label{4.18}
\begin{split}
 \frac{\mathrm{d}}{\mathrm{d}t}\| \tilde{u}\|_{B_{2,2}^2}\leq & C\mu^{-1} \|\nabla\tilde{u}\|_{L^\infty}\| \tilde{u}\|_{B_{2,2}^s} +\mu^{-1}\| \mathscr{L}_1(\tilde{u}) + \mathscr{L}_2 (\tilde{\gamma})\|_{B_{2,2}^s},\quad \mathbb{P}\textrm{-a.s.}
\end{split}
\end{equation}
Next, by applying the blocks  $\triangle_j$ to Eq.\eqref{4.15}$_2$ yields
\begin{equation*}
\begin{split}
 \partial_t\triangle_j\tilde{\gamma}+  \mu^{-1}\tilde{u}\cdot \nabla \triangle_j\tilde{\gamma} +  \mu^{-1}[\triangle_j, \tilde{u}\cdot \nabla] \tilde{\gamma}+ \mu^{-1}\triangle_j\mathscr{L}_3 (\tilde{u},\tilde{\gamma})=0.
\end{split}
\end{equation*}
Multiplying the both sides of last equation by $\triangle_j\tilde{\gamma}$, integrating the resulted inequality on $\mathbb{T}$ and applying the commutator estimate (cf. Lemma 2.100 in \cite{bahouri2011fourier})
$$
\|\{2^{js}\|[\triangle_j, \tilde{u}\cdot \nabla] \tilde{\gamma}\|_{L^2}\}_{j\geq -1}\|_{l^2} \leq C(\|\nabla\tilde{u}\|_{L^\infty} \|\tilde{\gamma}\|_{B_{2,2}^s}+\|\nabla\tilde{\gamma}\|_{L^\infty} \|\nabla\tilde{u}\|_{B_{2,2}^{s-1}}).
$$
After taking the $l^2$-norm $j\geq -1$ and simplifying the terms, we get
\begin{equation}\label{4.19}
\begin{split}
 \frac{\mathrm{d}}{\mathrm{d}t}\| \tilde{\gamma}\|_{B_{2,2}^s}\leq & C\mu^{-1} (\|\nabla\tilde{u}\|_{L^\infty}\| \tilde{\gamma}\|_{B_{2,2}^s}+\|\nabla\tilde{\gamma}\|_{L^\infty}\| \tilde{u}\|_{B_{2,2}^s}) +\mu^{-1}\| \mathscr{L}_3 (\tilde{u},\tilde{\gamma})\|_{B_{2,2}^s},\quad \mathbb{P}\textrm{-a.s.}
\end{split}
\end{equation}
Since the operator $(I-\Delta)^{-1}\textrm{div}$, $(I-\Delta)^{-1} $ are $S^{-1}$ multipliers, by using the similar method as we did in the proof of Lemma \ref{lem:2.2}, we get
\begin{equation} \label{4.20}
\begin{split}
 \| \mathscr{L}_1(\tilde{u}) + \mathscr{L}_2 (\tilde{\gamma})\|_{B_{2,2}^s}  & \leq C (\|\tilde{u}\|_{L^\infty} +\|\nabla\tilde{u}\|_{L^\infty}) \|\tilde{u}\|_{B_{2,2}^s}+C(\| \tilde{\gamma}\|_{L^\infty}+\|\nabla\tilde{\gamma}\|_{L^\infty}) \| \tilde{\gamma}\|_{B_{2,2}^{s}} ,
\end{split}
\end{equation}
and
\begin{equation}\label{4.21}
\begin{split}
 \| \mathscr{L}_3 (\tilde{u},\tilde{\gamma})\|_{B_{2,2}^s}&\leq C(\| \tilde{\gamma}\|_{L^\infty}+\|\nabla\tilde{\gamma}\|_{L^\infty}) \|\tilde{u}\|_{B_{2,2}^s}+C\|\nabla \tilde{u}\| _{L^\infty}\|\tilde{\gamma}\|_{B_{2,2}^s} .
\end{split}
\end{equation}
By \eqref{4.18}-\eqref{4.21} and the equivalence  $B_{2,2}^s(\mathbb{T})\approx H^s(\mathbb{T})$, we deduce that
\begin{equation}\label{4.22}
\begin{split}
 &\frac{\mathrm{d}}{\mathrm{d}t}(\| \tilde{u}(t)\|_{H^s}+\|\tilde{\gamma}(t)\|_{H^s}) \\
 & \quad \leq   C_1\mu^{-1} (\| \tilde{u}(t)\|_{W^{1,\infty}}+\| \tilde{\gamma}(t)\|_{W^{1,\infty}})(\| \tilde{u}(t)\|_{H^s}+\|\tilde{\gamma}(t)\|_{H^s}) ,\quad \mathbb{P}\textrm{-a.s.,}
\end{split}
\end{equation}
for some positive constant $C_1$ depending only on $s$ and $d$.

Introducing
$$
\phi (t)\triangleq e^{-\frac{c^2}{2}t}\tilde{u}(t)=e^{ -c W(t)}u(t),\quad \varphi(t)\triangleq e^{-\frac{c^2}{2}t}\tilde{\gamma}(t)=e^{ -c W(t)}\gamma(t).
$$
As $\mu(t)=e^{\frac{c^2}{2}t- c W(t)}$, it then follows from \eqref{4.22} that
\begin{equation}\label{4.23}
\begin{split}
& \frac{\mathrm{d}}{\mathrm{d}t}(\| \phi (t)\|_{H^s}+\|\varphi(t)\|_{H^s}) + \frac{c^2}{2}(\| \phi (t)\|_{H^s}+\|\varphi(t)\|_{H^s})\\
 &\quad  \leq  C_1e^{ tW(t)}(\| \phi (t)\|_{W^{1,\infty}}+\| \varphi(t)\|_{W^{1,\infty}})(\| \phi (t)\|_{H^s}+\|\varphi(t)\|_{H^s}) ,\quad \mathbb{P}\textrm{-a.s.}
\end{split}
\end{equation}
For any $\kappa\geq 2$, define the stopping times
\begin{equation*}
\begin{split}
\tilde{\mathbbm{t}}&\triangleq \inf_{t\geq0}\left\{e^{ tW(t)}(\| \phi (t)\|_{W^{1,\infty}}+\| \varphi(t)\|_{W^{1,\infty}})\geq \frac{c^2}{2\kappa C_1}\right\}\\
&=\inf_{t\geq0}\left\{ \| u (t)\|_{W^{1,\infty}}+\| \gamma(t)\|_{W^{1,\infty}} \geq \frac{c^2}{2\kappa C_1}\right\}.
\end{split}
\end{equation*}
As $s>1+\frac{d}{2}$, there is a Sobolev embedding constant  $C_2>0$ such that
$$
\| u_0\|_{W^{1,\infty}}+\| \gamma_0\|_{W^{1,\infty}}\leq C_2 (\| u_0\|_{H^s}+\| \gamma_0\|_{H^s}) \leq  \frac{c^2}{4R\kappa C_1 },
$$
where we used the assumption $ \| u_0\|_{H^s}+\| \gamma_0\|_{H^s} \leq  \frac{c^2}{4R\kappa C_1C_2} $, $R>1$. From the definition of $\tilde{\mathbbm{t}}$, we find that $\tilde{\mathbbm{t}}>0$ $\mathbb{P}$-almost surely. Moreover, \eqref{4.23} indicates
\begin{equation*}
\begin{split}
 \frac{\mathrm{d}}{\mathrm{d}t}(\| \phi (t)\|_{H^s}+\|\varphi(t)\|_{H^s}) + \left(\frac{c^2}{2}-\frac{c^2}{2\kappa }\right)(\| \phi (t)\|_{H^s}+\|\varphi(t)\|_{H^s}) \leq  0,
\end{split}
\end{equation*}
for all $t\in [0,\tilde{\mathbbm{t}})$ $\mathbb{P}$-almost surely, and so $\mathbb{P}$-almost surely
\begin{equation}\label{4.24}
\begin{split}
 \| u(t)\|_{H^s}+\|\gamma(t)\|_{H^s} &\leq e^{cW(t)-(\frac{c^2}{2}-\frac{c^2}{2\kappa  })t}(\| u_0\|_{H^s}+\| \gamma_0\|_{H^s}) \\
 &\leq e^{cW(t)-\frac{1}{2}(\frac{c^2}{2}-\frac{c^2}{2\kappa  })t}e^{-\frac{1}{2}(\frac{c^2}{2}-\frac{c^2}{2\kappa  })t}(\| u_0\|_{H^s}+\| \gamma_0\|_{H^s}),\quad \forall t\in [0,\tilde{\mathbbm{t}}).
\end{split}
\end{equation}
Define the stopping time
\begin{equation*}
\begin{split}
\tilde{\tilde{\mathbbm{t}}}(R)\triangleq \inf \left\{t\geq0;~~ e^{cW(t)-\frac{1}{2}(\frac{c^2}{2}-\frac{c^2}{2\kappa  })t}\geq R \right\},\quad \forall R>1.
\end{split}
\end{equation*}
Clearly, $\tilde{\tilde{\mathbbm{t}}}(R)>0$ $\mathbb{P}$-almost surely, and we get from \eqref{4.24} and the conditions on initial data that
\begin{equation*}
\begin{split}
 \| u(t)\|_{H^s}+\|\gamma(t)\|_{H^s} \leq \frac{c^2}{4R\kappa C_1C_2}R e^{-\frac{1}{2}(\frac{c^2}{2}-\frac{c^2}{2\kappa  })t} \leq \frac{c^2 }{4\kappa C_1C_2}  ,\quad \forall t\in [0,\tilde{\mathbbm{t}}\wedge \tilde{\tilde{\mathbbm{t}}}(R)).
\end{split}
\end{equation*}
According to the definition of $\tilde{\mathbbm{t}}$, the above bound shows that $
\tilde{\mathbbm{t}} \wedge \tilde{\tilde{\mathbbm{t}}}(R)=\tilde{\tilde{\mathbbm{t}}}(R) $.
Therefore,
\begin{equation*}
\begin{split}
\sup_{t\in [0,\tilde{\tilde{\mathbbm{t}}}(R)]}( \| u(t)\|_{W^{1,\infty}}+\|\gamma(t)\|_{W^{1,\infty}}) \leq  C_2\sup_{t\in [0,\tilde{\tilde{\mathbbm{t}}}(R)]}( \| u(t)\|_{H^s}+\|\gamma(t)\|_{H^s})\leq \frac{c^2 }{4\kappa C_1 },
\end{split}
\end{equation*}
which implies that $\tilde{\mathbbm{t}}\geq \tilde{\tilde{\mathbbm{t}}}(R)$. Observing from last estimate  that the maximal pathwise solution $(u,\gamma,\mathbbm{t})$ of \eqref{4.15} is global in time on the set $\{\tilde{\tilde{\mathbbm{t}}}(R)=\infty\}$, that is, on the set where $\Theta(t)\triangleq e^{cW(t)-\frac{1}{2}(\frac{c^2}{2}-\frac{c^2}{2\kappa  })t}$ always stay below $R$. To make sense the existence of global solution, one have to estimate the probability $\mathbb{P}\{\tilde{\tilde{\mathbbm{t}}}(R)=\infty\}$.

First note that on  the event $\{\tilde{\tilde{\mathbbm{t}}}(R)=\infty\}$, $
0<\Theta(t) \leq R$, for all $t\geq0 $.
Note that $\Theta(t)$ is a geometric Brownian motion satisfying
\begin{equation*}
\begin{split}
 \mathrm{d}\Theta(t)=\left(\frac{c^2}{4}+\frac{c^2}{4\kappa  } \right)\Theta(t) \mathrm{d}t + c \Theta(t)\mathrm{d}W(t).
\end{split}
\end{equation*}
Applying the Ito\^{o} formula to $\Theta^\lambda(t)$, $\lambda\in \mathbb{R}$, we find
\begin{equation}\label{4.25}
\begin{split}
 \mathrm{d}\Theta^\lambda (t)&= \lambda\Theta^{\lambda-1} (t)\mathrm{d} \Theta(t) + \frac{\lambda(\lambda-1)}{2} \Theta^{\lambda-2}(t) \mathrm{d} \Theta(t)\mathrm{d} \Theta(t)\\
 &= \left[\left(\frac{c^2}{4}+\frac{c^2}{4\kappa  } \right)\lambda+\frac{ c^2 \lambda(\lambda-1)}{2} \right]\Theta^{\lambda} (t) \mathrm{d}t + c \lambda\Theta^{\lambda} (t)\mathrm{d}W(t).
\end{split}
\end{equation}
Integrating \eqref{4.25} up to $t\wedge \tilde{\tilde{\mathbbm{t}}}(R)$ and taking the expectation value, we have
\begin{equation*}
\begin{split}
 \mathbb{E}\left[\Theta^\lambda (t\wedge \tilde{\tilde{\mathbbm{t}}}(R)) \right]= 1+\mathbb{E}\int_0^{t\wedge \tilde{\tilde{\mathbbm{t}}}(R)}\bigg[\left(\frac{c^2}{4}+\frac{c^2}{4\kappa  } \right)\lambda+\frac{ c^2 \lambda(\lambda-1)}{2} \bigg] \Theta^{\lambda} (t) \mathrm{d}r.
\end{split}
\end{equation*}
Choosing $\lambda=\frac{1}{2}-\frac{1}{2\kappa}$ in last equality, it follows that
\begin{equation}\label{4.26}
\begin{split}
 \mathbb{E}\left[\Theta^{\frac{1}{2}-\frac{1}{2\kappa}} (t\wedge \tilde{\tilde{\mathbbm{t}}}(R)) \right]= 1,\quad \forall t>0.
\end{split}
\end{equation}
Using the continuity  of the measures and the fact that $\tilde{\tilde{\mathbbm{t}}}(R)\leq \mathbbm{t}$ is increasing in $R$, we get
\begin{equation*}
\begin{split}
 \mathbb{P}\{\mathbbm{t}=\infty\}&\geq\mathbb{P}\{\tilde{\tilde{\mathbbm{t}}}(R)=\infty\} = \mathbb{P}\left\{\bigcap_{n}(\tilde{\tilde{\mathbbm{t}}}(R)>n)\right\}=\lim_{n\rightarrow\infty} \mathbb{P}\{\tilde{\tilde{\mathbbm{t}}}(R)>n\}\\
 & \geq \lim_{n\rightarrow\infty} \mathbb{P}\{\Theta^{\frac{1}{2}-\frac{1}{2\kappa}} (n\wedge \tilde{\tilde{\mathbbm{t}}}(R))< R^{\frac{1}{2}-\frac{1}{2\kappa}}\}\\
 & \geq 1-\lim_{n\rightarrow\infty} \mathbb{P}\{\Theta^{\frac{1}{2}-\frac{1}{2\kappa}} (n\wedge \tilde{\tilde{\mathbbm{t}}}(R))\geq R^{\frac{1}{2}-\frac{1}{2\kappa}}\}\\
 & \geq 1-\frac{1}{R^{\frac{1}{2}-\frac{1}{2\kappa}}}\lim_{n\rightarrow\infty} \mathbb{E} \left[\Theta^{\frac{1}{2}-\frac{1}{2\kappa}} (n\wedge \tilde{\tilde{\mathbbm{t}}}(R))\right] =1- \frac{1}{R^{\frac{1}{2}-\frac{1}{2\kappa}}},
\end{split}
\end{equation*}
where the third inequality used the Chebyshev inequality, and the final limit used the identity \eqref{4.26} with $t=n\in \mathbb{N}^+$. This completes the proof of Theorem \ref{th4}.
\end{proof}

\subsection{Blow-up phenomena}
 In this subsection, we prove that, if $\delta_1= \delta_2=0$ and $d=1$, then the strong pathwise solutions to the SMEP2 will blow up in finite time  with some shape condition on initial data. In present case, the system \eqref{4.15} reduces to the following one dimensional random PDEs:
\begin{equation}\label{4.27}
\left\{
\begin{aligned}
&\partial_t\tilde{u}+ \mu ^{-1}\tilde{u} \tilde{u}_x +\mu ^{-1}\partial_xG\star (\tilde{u}^2+\frac{1}{2}\tilde{u}_x^2+\frac{1}{2}\tilde{\gamma}^2-\tilde{\gamma}_x^2)=0,\\
&\partial_t\tilde{\gamma}  + \mu ^{-1}\tilde{u}\tilde{\gamma}_x+ \mu ^{-1}G\star ((\tilde{u}_x\tilde{\gamma}_x)_x+\tilde{u}_x\tilde{\gamma})=0,\\
&\tilde{u}|_{t=0}=u_0,\quad \tilde{\gamma}|_{t=0}=\gamma_0 ,
\end{aligned}\quad t\geq 0,~x\in \mathbb{T}^d.
\right.
\end{equation}

The following lemma tells us that the solutions to \eqref{4.27} are $H^1$-conserved.

\begin{lemma} \label{lem:4.1}
Assume that $0\neq c\in \mathbb{R}$, and $ (u_0,\gamma_0)$ is a $\mathbb{H}^s$-valued $\mathcal {F}_0$-measurable initial data in $L^2(\Omega;\mathbb{H}^s(\mathbb{T}^d))$.  Let $(u,\gamma, \bar{\mathbbm{t}})$ be the maximum local strong solution to the system \eqref{4.27}.
Then we have
\begin{equation*}
\begin{split}
E( t)&\triangleq\int_\mathbb{T}\left(\tilde{u}^2(t)+(\partial_xu)^2(t)+\tilde{\gamma}^2(t)+(\partial_x \gamma)^2 (t)
\right) \mathrm{d} x\\
&=\int_\mathbb{T}\left(u_0^2+(\partial_xu_{0 })^2+\gamma_0^2+(\partial_x\gamma_{0 })^2\right) \mathrm{d} x=E(0),\quad \mathbb{P}\textrm{-a.s.},
\end{split}
\end{equation*}
for all $t>0$. Moreover, we have for all $t\geq0$
\begin{equation*}
\begin{split}
 \|u(t)\|_{L^\infty}^2+\|\gamma(t)\|_{L^\infty}^2 \leq  \frac{1}{2}(\|u_0\|_{H^1}^2+\|\gamma_0\|_{H^1}^2 ),\quad \mathbb{P}\textrm{-a.s.}
\end{split}
\end{equation*}
\end{lemma}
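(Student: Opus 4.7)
The strategy is to work with the transformed pair $(\tilde u,\tilde\gamma)$, which by \eqref{trans} satisfies the random PDE system \eqref{4.27} pathwise with initial data $(u_0,\gamma_0)$ and contains no It\^o integral; the conservation law therefore reduces to a deterministic pathwise identity, and classical calculus applies on each sample path. In particular, the quantity $E(t)$ is most naturally read as $\tilde E(t):=\int_{\mathbb T}(\tilde u^{2}+\tilde u_{x}^{2}+\tilde\gamma^{2}+\tilde\gamma_{x}^{2})\,\mathrm dx$, so $\tilde E(0)=\|u_0\|_{H^1}^{2}+\|\gamma_0\|_{H^1}^{2}$, matching the right-hand side of the claimed identity.

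The core step is to differentiate $\tilde E$ in time and show the resulting integrals telescope. Writing $\tilde E=\int\tilde u\,(1-\partial_x^{2})\tilde u+\int\tilde\gamma\,(1-\partial_x^{2})\tilde\gamma$ and using self-adjointness of $1-\partial_x^{2}$ under periodic boundary conditions yields
$$
\tfrac{\mathrm d}{\mathrm dt}\tilde E=-2\mu^{-1}\!\int_{\mathbb T}(1-\partial_x^{2})\tilde u\cdot(\tilde u\tilde u_x+\partial_x G\!\star\!K)\,\mathrm dx-2\mu^{-1}\!\int_{\mathbb T}(1-\partial_x^{2})\tilde\gamma\cdot(\tilde u\tilde\gamma_x+G\!\star\!L)\,\mathrm dx,
$$
where $K=\tilde u^{2}+\tfrac12\tilde u_x^{2}+\tfrac12\tilde\gamma^{2}-\tfrac12\tilde\gamma_x^{2}$ and $L=(\tilde u_x\tilde\gamma_x)_x+\tilde u_x\tilde\gamma$. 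The key identity $(1-\partial_x^{2})(G\!\star\! f)=f$, a consequence of $(1-\partial_x^{2})G=\delta_0$, converts the nonlocal pieces into $-\int\tilde u_x K$ and $\int\tilde\gamma L$. Expanding $K,L$ and using the antisymmetry $\int\tilde u^{2}\tilde u_x=0$, the identities $\tilde u_{xx}\tilde u_x=\tfrac12(\tilde u_x^{2})_x$ and $\tilde\gamma_{xx}\tilde\gamma_x=\tfrac12(\tilde\gamma_x^{2})_x$, plus one integration by parts on $(\tilde u_x\tilde\gamma_x)_x$ inside $L$, yields $\tfrac{\mathrm d}{\mathrm dt}\|\tilde u\|_{H^1}^{2}=\mu^{-1}\bigl[\int\tilde u_x\tilde\gamma^{2}-\int\tilde u_x\tilde\gamma_x^{2}\bigr]$ and $\tfrac{\mathrm d}{\mathrm dt}\|\tilde\gamma\|_{H^1}^{2}$ equal to the negative of the same quantity; summing gives $\tfrac{\mathrm d}{\mathrm dt}\tilde E\equiv 0$ pathwise, with the common positive factor $\mu^{-1}(t)$ playing no role. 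Integrating in $t$ delivers $\tilde E(t)=\tilde E(0)$.

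For the pointwise bound, use the Green's representation $v(x)=\int_{\mathbb T} G(x-y)(v-v_{yy})(y)\,\mathrm dy$ for $v\in H^{1}(\mathbb T)$, integrate by parts once in $y$ to convert $v_{yy}$ into $v_y$, and apply the vector Cauchy--Schwarz inequality to obtain $\|v\|_{L^\infty}^{2}\le(\|G\|_{L^{2}}^{2}+\|G'\|_{L^{2}}^{2})\|v\|_{H^{1}}^{2}$. The constant is evaluated via $(1-\partial_x^{2})G=\delta_0$: $\|G\|_{L^{2}}^{2}+\|G'\|_{L^{2}}^{2}=(G-G'',G)_{L^{2}}=G(0)=\tfrac12\coth(\pi)$, which the authors round to $\tfrac12$. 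Applying this to $v=\tilde u$ and $v=\tilde\gamma$, adding the two bounds, and invoking the conservation identity completes the proof.

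The main obstacle is the algebraic cancellation in the energy identity: MCH2 is a cross-coupled two-component system, so besides the scalar $\tilde u_x^{3}$ balance familiar from Camassa--Holm, one must track the precise matching between the $\tilde\gamma$-terms in $K$ and the structure of $L$ via an integration by parts on $(\tilde u_x\tilde\gamma_x)_x$; any mismatch in the coefficient of $\tilde\gamma_x^{2}$ inside $K$ leaves an uncancelled residual $\mu^{-1}\!\int\tilde u_x\tilde\gamma_x^{2}\,\mathrm dx$ and breaks conservation.
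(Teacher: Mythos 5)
Your proof is correct and follows essentially the same route as the paper's: a pathwise energy identity for the transformed random system (4.27), with the nonlocal terms resolved through $(1-\partial_x^2)(G\star f)=f$ and the cubic terms cancelling after integration by parts; whether one moves $(1-\partial_x^2)$ onto the nonlinearity by self-adjointness (as you do) or first differentiates the equations in $x$ and uses $\partial_x^2G\star h=G\star h-h$ (as the paper does) is immaterial. One substantive remark: your coefficient $-\tfrac12\tilde\gamma_x^2$ in $K$ is the correct one --- it is what the one-dimensional reduction of $\mathscr{L}_2$ in (1.7) actually produces (since $\Lambda^{-2}(\rho\,\overline{\rho}_x)=\tfrac12\partial_xG\star(\gamma^2-\gamma_x^2)$), and, as you observe, it is exactly what makes the residual $\int_{\mathbb{T}}\tilde u_x\tilde\gamma_x^2\,\mathrm{d}x$ cancel --- whereas (1.19), (4.27) and the definition of $f$ in the paper's proof carry $-\tilde\gamma_x^2$, under which the paper's final display would in fact equal $-\tfrac12\mu^{-1}\int_{\mathbb{T}}\tilde u_x\tilde\gamma_x^2\,\mathrm{d}x$ rather than zero; so you have silently corrected a typo in the source. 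Your explicit evaluation of the embedding constant as $\tfrac12\coth(\pi)=G(0)$ is likewise more careful than the paper's bare appeal to $H^1(\mathbb{T})\subset L^\infty(\mathbb{T})$ with constant $\tfrac12$, though the discrepancy is harmless for the later applications.
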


\begin{proof}[\emph{\textbf{Proof.}}]
By using a density argument, it is sufficient to prove Lemma \ref{lem:4.1} in the case of $s>3$.  Differentiating the equation in \eqref{4.27} with respect to $x$
and using the identity $\partial_x^2G \star h= G \star h-h$ for all  $h\in L^2(\mathbb{T})$, we get
\begin{equation*}
\begin{split}
\partial_t\tilde{u}_x+ \mu ^{-1}\tilde{u} \tilde{u}_{xx} + \mu ^{-1} \tilde{u}_x^2 +\mu ^{-1} G\star f= \mu ^{-1}  f ,\quad \mathbb{P}\textrm{-a.s.},
\end{split}
\end{equation*}
and
 \begin{equation*}
\begin{split}
\partial_t\tilde{\gamma}_x  + \mu ^{-1}\tilde{u}_x\tilde{\gamma}_x + \mu ^{-1}\tilde{u}\tilde{\gamma}_{xx}+ \mu ^{-1}\partial_xG\star g=0,\quad \mathbb{P}\textrm{-a.s.},
\end{split}
\end{equation*}
where $f= (\tilde{u}^2+\frac{1}{2}\tilde{u}_x^2+\frac{1}{2}\tilde{\gamma}^2-\tilde{\gamma}_x^2)$ and $g=(\tilde{u}_x\tilde{\gamma}_x)_x+\tilde{u}_x\tilde{\gamma}$.
Using previous equations and integrating by parts on $\mathbb{T}$, we obtain
 \begin{equation*}
\begin{split}
 \frac{ \mathrm{d}}{ \mathrm{d} t} E(t)
   =&  2\int_\mathbb{T} \bigg( -\mu ^{-1}\tilde{u}\partial_xG\star f   - \frac{1}{2}\mu ^{-1} \tilde{u}_x^3 -\mu ^{-1} \tilde{u}_xG\star f+\mu ^{-1}  \tilde{u}_xf \\
 &+\frac{1}{2}\mu ^{-1}\tilde{u}_x\tilde{\gamma}^2-\mu ^{-1}\tilde{\gamma}G\star g  -\mu ^{-1}\tilde{u}_x\tilde{\gamma}_x^2 +\frac{1}{2}\mu ^{-1}\tilde{u}_x\tilde{\gamma}_x^2 - \mu ^{-1}\tilde{\gamma}_x\partial_xG\star g \bigg) \mathrm{d} x\\
 =&  2\int_\mathbb{T} \left(\frac{1}{2}\mu ^{-1}\tilde{u}_x\tilde{\gamma}^2+\mu ^{-1}  \tilde{u}_xf-\frac{1}{2}\mu ^{-1}\tilde{u}_x\tilde{\gamma}_x^2- \frac{1}{2}\mu ^{-1} \tilde{u}_x^3 -\mu ^{-1}\tilde{\gamma} g \right) \mathrm{d} x= 0,
\end{split}
\end{equation*}
 which implies the desired identity. The $L^\infty$-estimate follows from the embedding $H^1(\mathbb{T})\subset L^\infty (\mathbb{T})$, and this completes the proof of Lemma \ref{lem:4.1}.
\end{proof}

Considering the random characteristic flow
\begin{equation}\label{4.28}
\left\{
\begin{aligned}
& \frac{\mathrm{d}\Phi (\omega,t,x) }{\mathrm{d} t}= \mu ^{-1}(t)\tilde{u}(\omega,t,\Phi (t,x)) ,\cr
&\Phi (\omega,0,x) = x,
\end{aligned}\quad t\geq 0,~x\in \mathbb{T}^d,
\right.
\end{equation}
where the function $\tilde{u}$ denotes the unique solution to \eqref{4.27}.

\begin{lemma} \label{lem:4.3}
Let $s\geq3$, and $(u_0,\gamma_0)\in L^2(\Omega;\mathbb{H}^s(\mathbb{T}))$ be a $\mathcal {F}_0$-measurable initial data. Assume that $(u,\gamma,\mathbbm{t})$ is the associated  local pathwise solution of \eqref{4.27}. Then

\begin{itemize}
\item [(1)] Eq.\eqref{4.28} has a unique solution $\Phi\in  \mathcal {C} ^1([0,\mathbbm{t})\times \mathbb{T})$, $\mathbb{P}$-almost surely. For a.e. $\omega \in \Omega$, the map  $\Phi (\omega,t,\cdot):\mathbb{T}\rightarrow\mathbb{T}$ is an increasing diffeomorphism of $\mathbb{T}$, and
\begin{eqnarray*}
\mathbb{P}\{\Phi _x(\omega,t,x)>0,~\forall (t,x)\in [0,\mathbbm{t})\times \mathbb{T}\}=1,
\end{eqnarray*}
Moreover, for all $(t,x)\in [0,\mathbbm{t})\times\mathbb{T}$,
\begin{eqnarray*}\label{4.29}
\tilde{\rho}( t, \Phi (\omega,t,x) )\Phi_x(\omega,t,x)=\rho_0(x),\quad \mathbb{P}\textrm{-a.s.}
\end{eqnarray*}

\item [(2)] If there exists a $M>0$ such that $u_x(t,x)\geq -M$ for all $(t,x)\in [0,\mathbbm{t})\times\mathbb{T}$, then
\begin{eqnarray*}\label{4.30}
\|\tilde{\rho}( t, \cdot)\|_{L^\infty} \leq e^{Mt}\|\rho_0 \|_{L^\infty},
\end{eqnarray*}
for all $t\in[0,\mathbbm{t})$, $\mathbb{P}$-almost surely.
\end{itemize}
\end{lemma}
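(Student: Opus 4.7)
The plan is to work pathwise: fix $\omega$ outside a $\mathbb{P}$-null set so that $\tilde{u}(\omega,\cdot,\cdot)$ is a deterministic function on $[0,\mathbbm{t}(\omega))\times\mathbb{T}$ inheriting the regularity of the strong solution, and then treat \eqref{4.28} as a classical ODE. Since $s\geq 3>5/2$, Sobolev embedding gives $\tilde{u}(\omega,\cdot)\in C([0,\mathbbm{t});C^2(\mathbb{T}))$, and $\mu^{-1}(t)=e^{cW(t)-c^2t/2}$ is continuous in $t$. Hence the right-hand side $\mu^{-1}(t)\tilde{u}(\omega,t,\Phi)$ is continuous in $(t,\Phi)$ and Lipschitz in $\Phi$ uniformly on each $[0,T]\subset[0,\mathbbm{t})$. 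Picard--Lindel\"of yields a unique local $C^1$ solution, which extends to all of $[0,\mathbbm{t})$ by the standard continuation argument (non-explosion, because $|\Phi_t|\leq\mu^{-1}\|\tilde{u}\|_{L^\infty}$ stays finite on compact subintervals). Periodicity $\Phi(t,x+2\pi)=\Phi(t,x)+2\pi$ follows from $2\pi$-periodicity of $\tilde{u}$ and uniqueness, so $\Phi(t,\cdot)$ descends to a map of $\mathbb{T}$.

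Next I would differentiate \eqref{4.28} in $x$, which is justified by the $C^2$ regularity of $\tilde{u}$, obtaining $\partial_t\Phi_x=\mu^{-1}(t)\tilde{u}_x(t,\Phi)\Phi_x$ with $\Phi_x(0,x)=1$. Solving this scalar linear ODE gives
\begin{equation*}
\Phi_x(\omega,t,x)=\exp\!\left(\int_0^t\mu^{-1}(r)\tilde{u}_x(r,\Phi(\omega,r,x))\,\mathrm{d}r\right)>0,
\end{equation*}
so $\Phi(\omega,t,\cdot)$ is a strictly increasing $C^1$ diffeomorphism of $\mathbb{T}$ for every $t\in[0,\mathbbm{t})$ and almost every $\omega$.

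For the transport identity I would first derive a conservation law for $\tilde{\rho}\triangleq\tilde{\gamma}-\tilde{\gamma}_{xx}$. Applying $(1-\partial_x^2)$ to \eqref{4.27}$_2$ and using $(1-\partial_x^2)G\star h=h$, a direct calculation (expanding $(1-\partial_x^2)(\tilde{u}\tilde{\gamma}_x)$ and cancelling the pieces produced by $(\tilde{u}_x\tilde{\gamma}_x)_x+\tilde{u}_x\tilde{\gamma}$) collapses everything to
\begin{equation*}
\partial_t\tilde{\rho}+\mu^{-1}(\tilde{u}\tilde{\rho})_x=0.
\end{equation*}
Then computing along the flow,
\begin{equation*}
\frac{\mathrm{d}}{\mathrm{d}t}\bigl[\tilde{\rho}(t,\Phi)\Phi_x\bigr]=\bigl(\partial_t\tilde{\rho}+\mu^{-1}\tilde{u}\,\tilde{\rho}_x\bigr)(t,\Phi)\Phi_x+\tilde{\rho}(t,\Phi)\,\partial_t\Phi_x,
\end{equation*}
the first bracket equals $-\mu^{-1}\tilde{u}_x\tilde{\rho}$ by the conservation law, which exactly cancels $\tilde{\rho}\,\mu^{-1}\tilde{u}_x(t,\Phi)\Phi_x$. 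Hence $\tilde{\rho}(t,\Phi(t,x))\Phi_x(t,x)$ is constant in $t$, equal to $\tilde{\rho}(0,x)\Phi_x(0,x)=\rho_0(x)$ since $\mu(0)=1$.

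Finally, for the $L^\infty$ bound I observe that $\tilde{u}_x=\mu u_x$, so the exponent in the formula for $\Phi_x$ simplifies to $\int_0^t u_x(r,\Phi(r,x))\,\mathrm{d}r$. Inverting the transport identity gives
\begin{equation*}
|\tilde{\rho}(t,\Phi(t,x))|=|\rho_0(x)|\exp\!\left(-\int_0^t u_x(r,\Phi(r,x))\,\mathrm{d}r\right)\leq e^{Mt}|\rho_0(x)|
\end{equation*}
whenever $u_x\geq-M$; surjectivity of $\Phi(t,\cdot)$ on $\mathbb{T}$ allows taking supremum in $x$ to conclude $\|\tilde{\rho}(t,\cdot)\|_{L^\infty}\leq e^{Mt}\|\rho_0\|_{L^\infty}$. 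The only genuinely delicate point is the algebraic collapse leading to $\partial_t\tilde{\rho}+\mu^{-1}(\tilde{u}\tilde{\rho})_x=0$, so I would present that expansion in full detail; everything else is a pathwise ODE exercise enabled by the $s\geq 3$ regularity.
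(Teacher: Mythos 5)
Your proposal is correct and follows the same overall route as the paper: pathwise Picard--Lindel\"of for the flow, differentiation in $x$ to get the exponential formula for $\Phi_x>0$, the conservation law $\partial_t\tilde{\rho}+\mu^{-1}(\tilde{u}\tilde{\rho})_x=0$ (which the paper simply reads off from its equivalent formulation \eqref{4.33} rather than deriving from \eqref{4.27}$_2$ as you do) to show $\tilde{\rho}(t,\Phi)\Phi_x$ is constant along characteristics, and inversion of that identity for the $L^\infty$ bound. The one place where you genuinely diverge is the final bullet, and your version is the better one: you use $\mu^{-1}\tilde{u}_x=u_x$ to reduce the exponent in $\Phi_x=\exp\bigl(\int_0^t\mu^{-1}\tilde{u}_x(r,\Phi)\,\mathrm{d}r\bigr)$ to $\int_0^t u_x(r,\Phi)\,\mathrm{d}r\geq -Mt$, which delivers exactly $e^{Mt}$. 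The paper instead keeps the factor $\mu^{-1}$ in the exponent and invokes the law of the iterated logarithm to bound $\sup_{t>0}\mu^{-1}(\omega,t)$ by a random constant $C(\omega)$; as written that only yields $e^{C(\omega)Mt}$, not $e^{Mt}$, so your observation both simplifies the argument and repairs a small inconsistency. One shared loose end, present in the paper as well: at the endpoint $s=3$ one only has $\tilde{\rho}\in H^{1}\hookrightarrow C^0$, so the pointwise chain-rule computation of $\frac{\mathrm{d}}{\mathrm{d}t}[\tilde{\rho}(t,\Phi)\Phi_x]$ (which uses $\tilde{\rho}_x$) strictly requires a density/mollification argument of the kind the paper invokes for Lemma \ref{lem:4.1}; it would be worth a sentence acknowledging this.
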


\begin{proof}[\emph{\textbf{Proof.}}]
 For fixed $x\in \mathbb{T}$, Eq.\eqref{4.28} is a random ODE. For a.e. $\omega\in \Omega$, the generator $\mu^{-1}\tilde{u}$ is bounded and Lipschitz continuous in $x$.  Then one can conclude from the classical theory for ODEs that Eq.\eqref{4.28} has a unique solution $\Phi(\omega,t,x)\in  \mathcal {C}  ^1([0,\tau)\times\mathbb{T})$,  $\mathbb{P}$-almost surely.

 Differentiating \eqref{4.28} with respect to $x$,  we get  for a.e. $\omega\in \Omega$,
\begin{equation}\label{4.31}
\left\{
\begin{aligned}
&\frac{\mathrm{d}\Phi_x(\omega,t,x)}{\mathrm{d} t}= \mu^{-1}(\omega,t) u_x (\omega,t,\Phi(\omega,t,x))\Phi_x(\omega,t,x),\cr
&\Phi_x(\omega,0,x)=1,
\end{aligned}\quad t\geq 0,~x\in \mathbb{T}^d,
\right.
\end{equation}
which implies that
$$
\Phi_x(\omega,t,x)= e^ { \int_0^t \mu^{-1}(\omega,t) u_x (\omega,t,q(\omega,t,x)) \mathrm{d} s } >0,\quad                       \mathbb{P}\mbox{-a.s.}
$$
Hence, the function $\Phi(\omega,t,x)$ is an increasing diffeomorphism of $\mathbb{T}$ before blow-up $\mathbb{P}$-almost surely. For a.e. $\omega\in\Omega$, we get from \eqref{4.27}, \eqref{4.31} that
 \begin{equation*}
\begin{split}
  &\frac{\mathrm{d}}{\mathrm{d} t}(\tilde{\rho}( t, \Phi (\omega,t,x) )\Phi_x(\omega,t,x))\\
 & \quad=(\partial_t \tilde{\rho})( t, \Phi (\omega,t,x) )\Phi_x(\omega,t,x)+ \tilde{\rho}_x( t, \Phi (\omega,t,x) ) \Phi_x(\omega,t,x)\Phi_t (\omega,t,x)\\
 & \quad \quad + \tilde{\rho}( t, \Phi (\omega,t,x) )\Phi_{xt}(\omega,t,x)\\
 &\quad = \Big(-\mu ^{-1}(\tilde{\rho} \tilde{u})_x(\omega, t, \Phi (\omega,t,x) ) + \mu ^{-1}\tilde{\rho}_x( t, \Phi (\omega,t,x) )\tilde{u}(t,\Phi (\omega,t,x))  \\
 & \quad \quad + \mu^{-1}\tilde{\rho}( t, \Phi (\omega,t,x) )  \tilde{u}_x (\omega,t,\Phi(\omega,t,x))\Big)\Phi_x(\omega,t,x) = 0.
\end{split}
\end{equation*}
Integrating above equation leads to the desired identity. Moreover, by using the iterated logarithm
$\limsup_{t\rightarrow\infty}\frac{W(t)}{\sqrt{2t \log\log t}}=1$, we get $
 \sup_{t> 0}\mu^{-1}(\omega,t)=\sup_{t> 0}e^{c W(t)-\frac{c^2}{2}t}\leq C<\infty$, $ \mathbb{P}$-a.s.
Thereby,
\begin{eqnarray*}
\|\tilde{\rho}( t, \cdot)\|_{L^\infty}=\|\tilde{\rho}( t, \Phi (\omega,t,\cdot))\|_{L^\infty}\leq e^ { -\int_0^t \mu^{-1}(\omega,r) u_x (\omega,r,\Phi(r,x)) \mathrm{d}r } \|\rho_0 \|_{L^\infty}\leq e^ {Mt} \|\rho_0 \|_{L^\infty} .
\end{eqnarray*}
This finishes the proof of Lemma \ref{lem:4.3}.
\end{proof}

Based on above lemmas, we can now give the proof of the main result.

\begin{proof}[\emph{\textbf{Proof  of Theorem \ref{th5}.}}] The proof  will be divided into two steps.

{\textsf{Step 1:}} We show that for any $s>\frac{3}{2}$
 \begin{equation}\label{4.32}
\begin{split}
\mathbb{P}\Big(\textbf{1}_{\{\lim _{t\rightarrow \mathbbm{t}} \inf_{x\in \mathbb{T}}u_x(t,x)=-\infty\}}=\textbf{1}_{\{\lim _{t\rightarrow \mathbbm{t}} \|\textbf{y}(t)\|_{H^s}= \infty \} }\Big)=1,
\end{split}
\end{equation}
which combined with Theorem \ref{th1}(1) yield that the singularities of the solutions can occur only in the form of wave breaking.

By using the Sobolev embedding theorem, we get $\{\omega;~\lim _{t\rightarrow \mathbbm{t}} \inf_{x\in \mathbb{T}}u_x(t,x)=-\infty\}\subseteq \{\omega;~\lim _{t\rightarrow \mathbbm{t}} \|\textbf{y}(t)\|_{H^s}= \infty \}$. Conversely, we will prove
$$
A\triangleq\left\{\lim _{t\rightarrow \mathbbm{t}} \|\textbf{y}(t)\|_{H^s}= \infty \right\}^C      \supseteq B \triangleq\left\{\lim _{t\rightarrow \mathbbm{t}} \inf_{x\in \mathbb{T}}u_x(t,x)=-\infty\right\}^C.
$$
Notice that the even $B$ happens if and only if there exists a positive constant $M=M(\omega)>0$ such that
$\tilde{u}_x(t,x)\geq -M$ for all $(t,x)\in \mathbb{R}^+ \times \mathbb{T}$,$\mathbb{P}$-almost surely. It follows from Lemma \ref{lem:4.3} that for all $t>0$
$$
\|\tilde{\rho}( t, \cdot)\|_{L^\infty} \leq e^ {Mt} \|\rho_0 \|_{L^\infty} ~~~   \textrm{on}~~ B.
$$
In order to prove $B\subseteq A$, we shall utilize another equivalent form of the system \eqref{4.27}:
\begin{equation}\label{4.33}
\left\{
\begin{aligned}
& \partial_t\tilde{m} +   \mu ^{-1}\tilde{u}\tilde{m}_x+2 \mu ^{-1}\tilde{m}\tilde{u}_x+ \mu ^{-1}\tilde{\rho}\bar{\tilde{\rho}}_x=0,\\
&\partial_t\tilde{\rho} + \mu ^{-1}(\tilde{\rho} \tilde{u})_x=0,\\
& \tilde{\rho}= (1-\partial_x^2 ) (\bar{\tilde{\rho}}-\bar{\tilde{\rho}}_0) \\
&\tilde{m}|_{t=0}=(1-\partial_x^2)  u_0,\quad \tilde{\rho}|_{t=0}=(1-\partial_x^2)  \gamma_0 ,
\end{aligned}\quad t\geq 0,~x\in \mathbb{T}^d,
\right.
\end{equation}
where $\tilde{m}= (1-\partial_x^2)\tilde{u}$, $\tilde{\rho}= (1-\partial_x^2)\tilde{\gamma}$.

Multiplying $\eqref{4.33}_1$ by $\tilde{m}$ and integrating on $\mathbb{T}$, we get
 \begin{equation*}
\begin{split}
\frac{1}{2}\frac{\mathrm{d}}{\mathrm{d} t} \int_\mathbb{T}\tilde{m}^2\mathrm{d} x&=-\int_\mathbb{T}\tilde{m} (\mu ^{-1}\tilde{u}\tilde{m}_x+2 \mu ^{-1}\tilde{m}\tilde{u}_x+ \mu ^{-1}\tilde{\rho}\bar{\tilde{\rho}}_x )\mathrm{d} x\\
&=-\int_\mathbb{T} ( \frac{3}{2} \mu ^{-1}\tilde{m}^2\tilde{u}_x+ \mu ^{-1}\tilde{m}\tilde{\rho}\bar{\tilde{\rho}}_x )\mathrm{d} x.
\end{split}
\end{equation*}
Repeating the same procedure to \eqref{4.33}$_2$, we obtain $
\frac{1}{2}\frac{\mathrm{d}}{\mathrm{d} t} \int_\mathbb{T}\tilde{\rho}^2\mathrm{d} x = -\frac{1}{2}\int_\mathbb{T} \mu ^{-1} \tilde{u}_x\tilde{\rho}^2 \mathrm{d} x$.
Thereby, we arrive at
 \begin{equation}\label{4.34}
\begin{split}
\frac{1}{2}\frac{\mathrm{d}}{\mathrm{d} t} \int_\mathbb{T}(\tilde{m}^2+\tilde{\rho}^2)\mathrm{d} x =& -\int_\mathbb{T} ( \frac{3}{2} \mu ^{-1}\tilde{m}^2\tilde{u}_x+ \mu ^{-1}\tilde{m}\tilde{\rho}\bar{\tilde{\rho}}_x )\mathrm{d} x-\frac{1}{2}\int_\mathbb{T} \mu ^{-1} \tilde{u}_x\tilde{\rho}^2 \mathrm{d} x\\
\leq&\frac{3M}{2}\int_\mathbb{T}(\mu ^{-1}\tilde{m}^2  + \mu ^{-1}  \tilde{\rho}^2) \mathrm{d} x+ \|\bar{\tilde{\rho}}_x\|_{L^\infty} \int_\mathbb{T} \mu ^{-1}\tilde{m}\tilde{\rho} \mathrm{d} x,\quad \mathbb{P}\textrm{-a.s.}
\end{split}
\end{equation}
Meanwhile, it follows from the formulation \eqref{4.33}$_3$ that
$$
\|\bar{\tilde{\rho}}_x\|_{L^\infty}=\|(\bar{\tilde{\rho}}-\bar{\tilde{\rho}}_0)_x\|_{L^\infty}=\|(1-\partial_x^2)^{-1}\partial_x\tilde{\rho}\|_{L^\infty}=\|\tilde{\gamma}_x(t,\cdot)\|_{L^\infty}.
$$
Moreover, by using the identity $\partial_xG\star f = G\star f-f$, the equation for $\tilde{\gamma}_x$ (cf. proof of Lemma \ref{lem:4.1}), the $H^1$-conservation law as well as the fact that $\Phi$ is an increasing diffeomorphism of $\mathbb{T}$, we have
 \begin{equation*}
\begin{split}
\left| \frac{\mathrm{d}\tilde{\gamma}_x(t,\Phi(\omega,t,x) )}{\mathrm{d} t }\right|  =&\left|\tilde{\gamma}_{xt}(t,\Phi(\omega,t,x) ) +\mu ^{-1}\tilde{\gamma}_{xx}(t,\Phi(\omega,t,x) )u(\omega,t,\Phi(\omega,t,x)) \right|\\
 \leq& \left|[ \mu ^{-1}\tilde{u}_x\tilde{\gamma}_x +\mu ^{-1}\partial_xG\star ((\tilde{u}_x\tilde{\gamma}_x)_x+\tilde{u}_x\tilde{\gamma})](t,\Phi(\omega,t,x) ) \right|\\
  \leq& \mu ^{-1}(\|G\|_{L^\infty}+\|G_x\|_{L^\infty})(\|\tilde{u}_x\tilde{\gamma}_x\|_{L^1} +\|\tilde{u}_x\tilde{\gamma}\|_{L^1})
  \leq  CE^2(0), \quad \textrm{on}~~ B,
\end{split}
\end{equation*}
which implies that
 \begin{equation}\label{4.35}
\begin{split}
\|\tilde{\gamma}_x(t,\cdot)\|_{L^\infty}=\|\tilde{\gamma}_x(t,\Phi(\omega,t,\cdot))\|_{L^\infty}\leq CE^2(0)t+\| \gamma _{0,x}\|_{L^\infty}, \quad \textrm{on}~~ B.
\end{split}
\end{equation}
Thereby, we get from \eqref{4.34}-\eqref{4.35} that
\begin{equation*}
\begin{split}
\frac{1}{2}\frac{\mathrm{d}}{\mathrm{d} t} \int_\mathbb{T}(\tilde{m}^2+\tilde{\rho}^2)\mathrm{d} x  \leq  C( E^2(0)t+\| \gamma _{0,x}\|_{L^\infty}+M) \int_\mathbb{T} (\tilde{m}^2+\tilde{\rho}^2) \mathrm{d} x,\quad \textrm{on}~~ B.
\end{split}
\end{equation*}
An application of the Gronwall inequality yields that the solution $\|(\tilde{u},\tilde{\gamma})(t)\|_{\mathbb{H}^s}< \infty$  on $B$, for any $t>0$, which shows that $B\subseteq A$. Hence, $A=B$ $\mathbb{P}$-almost surely. This proves \eqref{4.32}.

{\textsf{Step 2:}} Define the quantity
$$
H(\omega,t)\triangleq \inf_{x\in \mathbb{T}}\tilde{u}_x(\omega,t,x).
$$
Theorem \ref{th1} implies that the first component of the solution $\tilde{u} \in \mathcal {C} ^1([0,\mathbbm{t});H^{2}(\mathbb{T}))$ $\mathbb{P}$-almost surely for $s>3$, it follows from Constantin's theorem  (cf. Theorem 2.1 in \cite{constantin1998}) that, for almost $\omega\in \Omega$, there exists a point $\xi(\omega,t)\in \mathbb{R}$ such that
$$
H(\omega,t)=\inf_{x\in \mathbb{T}}\tilde{u}_x(\omega,t,x)=\tilde{u}_x(\omega,t,\xi(\omega,t)),\quad \tilde{u}_{xx}(\omega,t,\xi(\omega,t))=0.
$$
Moreover, the function $H(\omega,t) $ is absolutely continuous in $t$ $\mathbb{P}$-almost surely, and
\begin{equation}\label{4.36}
\begin{split}
\frac{\mathrm{d}}{\mathrm{d} t}H(\omega,t)= \tilde{u}_{xt}(\omega,t,\xi(\omega,t)),\quad \mathbb{P}\textrm{-a.s.}
\end{split}
\end{equation}
Thanks to the facts of $
\mu ^{-1}(t)> 0$, $G\star f(x)\geq 0$, if $f\geq 0$, we get from \eqref{4.36} and the estimate $\|f\|_{L^\infty}\leq \frac{1}{\sqrt{2}}\|f\|_{H^1}$ that
\begin{equation}\label{4.37}
\begin{split}
 \frac{\mathrm{d}}{\mathrm{d} t}H( t)&= \left(\mu ^{-1}  f-\mu ^{-1}\tilde{u} \tilde{u}_{xx} - \mu ^{-1} \tilde{u}_x^2 -\mu ^{-1} G\star f\right)(t,\xi(t))\\
 &\leq  -\frac{1}{2}\mu ^{-1} H^2( t)+\mu ^{-1} \tilde{u}^2(t,\xi(t))+\frac{1}{2}\mu ^{-1} \tilde{\gamma}^2(t,\xi(t)) +\mu ^{-1} G\star ( \tilde{\gamma}_x^2)(t,\xi(t))\\
 &\leq  -\frac{1}{2}\mu ^{-1} H^2( t)+\frac{1}{2} \mu ^{-1} \|\tilde{u}(t,\cdot)\|_{H^1}^2 +\frac{1}{4} \mu ^{-1} \|\tilde{\gamma}(t,\cdot)\|_{H^1}^2+ \frac{1}{2}\mu ^{-1}  \|  \tilde{\gamma}_x(t,\cdot) \|_{L^2}^2\\
 &\leq  -\frac{1}{2}\mu ^{-1} H^2( t)+\frac{1}{2} \mu ^{-1} E (0),
\end{split}
\end{equation}
for all $t\in [0,\mathbbm{t})$ $\mathbb{P}$-almost surely. From the assumption $H(0)= \inf_{x\in \mathbb{T}} (\partial_xu)(x)\leq (\partial_xu)(x_0)\leq - \sqrt{E (0)}$ $\mathbb{P}$-almost surely, one can deduce that
\begin{equation}\label{4.38}
\begin{split}
 H(t)\leq - \sqrt{E (0)}, \quad \textrm{for any}~~ t\in [0,\mathbbm{t}),~~ \mathbb{P}\textrm{-a.s.}
\end{split}
\end{equation}
Otherwise, we define
$$
\mathbbm{t}'\triangleq \inf\left\{t\geq0;~~H(\omega,t)>- \frac{1}{2}\sqrt{E (0)}\right\} \wedge \mathbbm{t}.
$$
Clearly, $\mathbb{P}\{\mathbbm{t}'>0\}=1$. By \eqref{4.36}, there is an event $\Omega'\in \Omega $ with $\mathbb{P}(\Omega')=1$ such that $H(\omega',t)$ is absolutely continuous for all $\omega'\in\Omega'$. If $\mathbbm{t}' = \mathbbm{t}$ $\mathbb{P}$-almost surely does not hold, then there must be a subset $\Omega''\subset \Omega$ with $\mathbb{P}(\Omega'')>0$ such that $0<\mathbbm{t}' (\omega'')< \mathbbm{t}(\omega'')$ for all $\omega''\in \Omega '$. For any $\omega'''\in \Omega'\cap \Omega ''$, we get from the continuity of $H(t)$ that $H(\omega''',\mathbbm{t}' (\omega'''))=- \sqrt{E (0)}$. However, in view of \eqref{4.37}, we get by using again the continuity of $H(t)$ that
$$
\frac{\mathrm{d}}{\mathrm{d} t}H(\omega''',t) <0,\quad \textrm{ for} ~~ t\in [0,\mathbbm{t}' (\omega''')),
$$
which implies $
H(\omega''',\mathbbm{t}' (\omega'''))<- \sqrt{E (0)}$.
This is a contradiction, so $\mathbbm{t}'=\mathbbm{t}$ $\mathbb{P}$-almost surely, and \eqref{4.38} holds.

Since $H(t)< H(0)<- \sqrt{E (0)}< 0$ for all $t\in [0,\mathbbm{t})$, we have
$$
0<\frac{H(0)}{H(t)}<1,\quad 0<\frac{E(0)}{H^2(0)}<1,\quad \forall t\in [0,\mathbbm{t}),~~~\mathbb{P}\textrm{-a.s.}
$$
It then follows from \eqref{4.37} that
\begin{equation*}
\begin{split}
-\frac{\mathrm{d}}{\mathrm{d} t}\frac{1}{H( t)}= \frac{1}{H^2( t)}\frac{\mathrm{d}}{\mathrm{d} t}H( t) &\leq  -\frac{1}{2}\mu ^{-1}  +\frac{1}{2} \mu ^{-1}\frac{E (0)}{H^2( t) }\\
&= -\frac{1}{2}\mu ^{-1}  +\frac{1}{2} \mu ^{-1}\frac{E(0)}{H^2( 0)}\frac{H^2( 0) }{H^2( t) }\\
&\leq -\frac{1}{2}\mu ^{-1} \left(1-  \frac{E(0)}{H^2( 0)} \right),\quad \forall t\in [0,\mathbbm{t}),~~~\mathbb{P}\textrm{-a.s.}
\end{split}
\end{equation*}
Integrating the last inequality leads  to
\begin{equation}\label{4.39}
\begin{split}
-\frac{1}{H( 0)}  \geq \frac{1}{H( t)}-\frac{1}{H( 0)}  \geq  \frac{1}{2}\left(1-  \frac{E(0)}{H^2( 0)} \right)\int_0^te^{cW(r)-\frac{1}{2}c^2r} \mathrm{d} r,\quad \forall t\in [0,\mathbbm{t}),~~~\mathbb{P}\textrm{-a.s.}
\end{split}
\end{equation}
For any given $\lambda\in (0,1)$ and $c\neq 0$, define
$$
\Omega _\lambda\triangleq \left\{\omega;~~e^{cW(t)-\frac{3}{8}c^2t} \geq \lambda e^{ -\frac{3}{8}c^2t}~~\textrm{for all} ~~t\right\}.
$$
It follows from \eqref{4.39} that
\begin{equation*}
\begin{split}
-\frac{1}{H( 0)} &\geq  \frac{1}{2}\left(1-  \frac{E(0)}{H^2( 0)} \right)\int_0^\mathbbm{t}e^{cW(r)-\frac{3}{8}c^2r} e^{-\frac{1}{8}c^2r}\mathrm{d} r \\
&\geq  \frac{\lambda}{2}\left(1-  \frac{E(0)}{H^2( 0)} \right)\int_0^\mathbbm{t} e^{-\frac{1}{2}c^2r}\mathrm{d} r\\
&=\frac{ \lambda}{c^2}(1-e^{-\frac{1}{2}c^2\mathbbm{t}} )\left(1-  \frac{E(0)}{H^2( 0)} \right),\quad \textrm{on} ~~\Omega _\lambda.
\end{split}
\end{equation*}
Assume that $\mathbbm{t}(\omega)=\infty$ on some subset $ \Omega_\lambda '\subset \Omega_\lambda$ with positive probability, we deduce from the last inequality that
\begin{equation}\label{4.40}
\begin{split}
 \frac{1}{H( 0)} +  \frac{\lambda}{c^2} \left(1-  \frac{E(0)}{H^2( 0)} \right)\leq 0,\quad \textrm{on} ~~\Omega _\lambda'.
\end{split}
\end{equation}
Notice that
 $$
   \varsigma_1=\frac{-c^2 + \sqrt{c^4+ 4\lambda^2E(0)}}{2\lambda} > 0, \quad\varsigma_2= \frac{-c^2 - \sqrt{c^4+ 4\lambda^2E(0)}}{2\lambda} <0,
$$ are two real roots to the quadratic equation $\lambda x^2+c^2x-\lambda E(0)=0$. Hence if $H(0)\leq (\partial_xu_0)(x_0)< \varsigma_2 $ $\mathbb{P}$-almost surely, then  $\lambda H^2( 0)+c^2H( 0)-\lambda E(0)>0$ on $\Omega _\lambda'$, which contradicts to \eqref{4.40}, and this shows that $\mathbbm{t}(\omega)<\infty$ for almost every $\omega\in\Omega_\lambda$, i.e.,  $\Omega _\lambda \subseteq \{\mathbbm{t}<\infty\}$. Thereby, we get from $\Omega _\lambda \supset   \{ e^{cW(t)} \geq \lambda  ~~\textrm{for all} ~~t\}$ that
\begin{equation*}
\begin{split}
 \mathbb{P}\{\mathbbm{t}<\infty\} \geq \mathbb{P}\left\{ e^{cW(t) } \geq \lambda ~~ \textrm{for all} ~~t\right\}> 0,
\end{split}
\end{equation*}
which combined with \eqref{4.32} yield that the solution $(\tilde{u},\tilde{\gamma})$ breaks in finite time with positive probability.
 This completes the proof of Theorem \ref{th5}.
\end{proof}

\section{Acknowledgement}
The author wishes to thank the anonymous reviewers for their valuable comments and suggestions which resulted in a significant enhancement to the article. The author also would like to express his sincere gratitude to Professor Darryl D. Holm for sharing insights on the background of MEP2 system, and to Postdoctor Hao Tang for many fruitful discussions on the first version of Theorem \ref{th3}. Last but not least, the author warmly thanks Professor Bin Liu for beneficial discussions and supporting during the preparation of this paper.

This work was partially supported by the National Natural Science Foundation of China (Grant No. 12231008), and the National Key Research and Development Program of China (Grant No.  2023YFC2206100).

\bibliographystyle{apa}
\bibliography{zhangl}

\end{document}